\newtheorem{theorem}{Theorem}[section]
\newtheorem{corollary}[theorem]{Corollary}
\newtheorem{definition}[theorem]{Definition}
\newtheorem{lemma}[theorem]{Lemma}
\newtheorem{proposition}[theorem]{Proposition}
\newtheorem{remark}[theorem]{Remark}
\numberwithin{equation}{section}
\def\W{\tilde{W}}
\begin{document}

\title[Homogeneous traces]{Traces for Homogeneous Sobolev Spaces in Infinite Strip-Like Domains}

\author{Giovanni Leoni}
\address{
Department of Mathematical Sciences\\
Carnegie Mellon University\\
Pittsburgh, PA 15213, USA
}
\email[G. Leoni]{giovanni@andrew.cmu.edu}
\thanks{G. Leoni was supported by an NSF Grant (DMS \#1714098).}

\author{Ian Tice}
\address{
Department of Mathematical Sciences\\
Carnegie Mellon University\\
Pittsburgh, PA 15213, USA
}
\email[I. Tice]{iantice@andrew.cmu.edu}
\thanks{I. Tice was supported by an NSF CAREER Grant (DMS \#1653161). }

\subjclass[2010]{Primary 46E35, 46F05; Secondary 35J20, 35J25, 35J62}

\keywords{Homogeneous Sobolev spaces, trace theory, elliptic PDEs in unbounded domains}

\begin{abstract}
In this paper we construct a trace operator for homogeneous Sobolev spaces defined on infinite strip-like domains.  We identify an intrinsic seminorm on the resulting trace space that makes the trace operator bounded and allows us to construct a bounded right inverse.  The intrinsic seminorm involves two features not encountered in the trace theory of bounded Lipschitz domains or half-spaces.  First, due to the strip-like structure of the domain, the boundary splits into two infinite disconnected components.  The traces onto each component are not completely independent, and the intrinsic seminorm contains a term that measures the difference between the two traces.  Second, as in the usual trace theory, there is a term in the seminorm measuring the fractional Sobolev regularity of the trace functions with a difference quotient integral.  However, the finite width of the strip-like domain gives rise to a screening effect that bounds the range of the difference quotient perturbation.  The screened homogeneous fractional Sobolev spaces defined by this screened seminorm on arbitrary open sets are of independent interest, and we study their basic properties.   We conclude the paper with applications of the trace theory to partial differential equations.
\end{abstract}

\maketitle

\section{Introduction}

\subsection{Motivation}

For an open set $\Omega \subseteq \mathbb{R}^N$, $m\in\mathbb{N}$, and $1\le p\le \infty$, the homogeneous Sobolev space $\dot{W}^{m,p}(\Omega)$ consists of all real-valued functions $u\in L_{\operatorname*{loc}}^{1}(\Omega)$ such that all the weak derivatives of order $m$ belong to $L^{p}(\Omega)$.  The goal of this paper is to characterize the trace operators associated to $\dot{W}^{m,p}(\Omega)$ in infinite strip-like domains of the form
\begin{equation}\label{omega two graphs}
\Omega = \left\{  x\in\mathbb{R}^{N}\,|\,\eta^{-}(x^{\prime})<x_{N}<\eta
^{+}(x^{\prime})\right\}, 
\end{equation}
where $\eta^{\pm}:\mathbb{R}^{N-1}\rightarrow\mathbb{R}$ are given Lipschitz functions such that $\eta^{-}<\eta^{+}$.  Note that we do not require $\eta^\pm$ to be bounded, which allows the domain $\Omega$ to be unbounded in the vertical $x_N$ direction in addition to the horizontal $x'$ directions.

It is well-known (see, for example, \cite{benci-fortunato1979},
\cite{berger-schechter1972}, \cite{edmunds-evans1973},
\cite{galdi-simader1990}, \cite{meyers-serrin1960}, \cite{simader-sohr1996},
\cite{thater2002} and the references therein) that the inhomogeneous Sobolev
spaces $W^{m,p}(\Omega)$ (which we recall require functions and  all their weak derivatives up to order $m$ to belong to $L^p(\Omega)$) are ill-suited to study partial differential equations in unbounded domains.  This is already seen at the level of the fundamental solution to Laplace's equation when $N \ge 2$.  Indeed, if $B[0,1] \subset \mathbb{R}^N$ denotes the closed unit ball, then the function $u: \mathbb{R}^N \backslash B[0,1] \to \mathbb{R}$ given by 
\begin{equation*}
u(x) = 
\begin{cases}
\log|x| &\text{if } N = 2\\
1- |x|^{2-N} &\text{if }N \ge 3
\end{cases}
\end{equation*}
is a solution of the homogeneous Dirichlet problem
\begin{equation*}
\begin{cases}
\Delta u=0 & \text{in }\mathbb{R}^{2}\setminus B[0,1]\\
u=0 & \text{on }\partial B(0,1),
\end{cases}
\end{equation*}
and while we have the inclusions $u \in \dot{W}^{1,p}(\mathbb{R}^{N}\setminus B[0,1]) \cap \dot{W}^{2,q}(\mathbb{R}^{N}\setminus B[0,1])$ for every $N/(N-1)<p \le \infty$ and $1 < q \le \infty$, it is clear that $u\notin L^{r}(\mathbb{R}^{2}\setminus B[0,1])$ for any $1\leq r < \infty$. 

To overcome this problem, the standard approach in the literature is to use weighted Sobolev spaces (see, for example, \cite{benci-fortunato1979}, \cite{berger-schechter1972}, \cite{edmunds-evans1973}, \cite{kufner1985book}, \cite{mazya-mitrea-shaposhnikova2010}, \cite{thater2002}, \cite{uspenskii1961}
and the references therein).   An alternate approach to solving boundary value problems on infinite domains is to have a better understanding of the trace operator associated to homogeneous spaces.  

In the recent paper \cite{strichartz2016} Strichartz proved, among other things, a new characterization of the trace space associated to the homogeneous Sobolev space $\dot{H}^1(\Omega) := \dot{W}^{1,2}(\Omega)$, where $\Omega = \mathbb{R}\times(0,\pi) \subseteq \mathbb{R}^{2}$ is a horizontal strip. More precisely, he
defined the fractional-type Sobolev space $\tilde{H}^{1/2}(\mathbb{R})$ of all functions $f\in L_{\operatorname*{loc}}^{1}(\mathbb{R})$ such that
\begin{equation}\label{seminorm H 1/2}
|f|_{\tilde{H}^{1/2}(\mathbb{R})}:= \left( \iint_{|x-y|\leq1}\frac{|f(x)-f(y)|^{2}}{|x-y|^{2}}dxdy \right)^{1/2}<\infty
\end{equation}
and proved the following result.

\begin{theorem}[Strichartz \cite{strichartz2016}]
Let $\Omega=\mathbb{R}\times(0,\pi)$. Then the following hold.
\begin{enumerate}
 \item There exists a constant
$c>0$ such that for all $u\in\dot{H}^{1}(\Omega)$,
\begin{gather*}
\int_{\mathbb{R}}|\operatorname*{Tr}(u)(x_{1},\pi)-\operatorname*{Tr}(u)(x_{1},0)|^{2}dx_{1}\leq c\int_{\Omega}|\nabla u(x)|^{2}dx,\\
|\operatorname*{Tr}(u)(\cdot,\pi)|_{\tilde{H}^{1/2}(\mathbb{R})}^{2}+|\operatorname*{Tr}(u)(\cdot,0)|_{\tilde{H}^{1/2}(\mathbb{R})}^{2}
\leq c\int_{\Omega}|\nabla u(x)|^{2}dx.
\end{gather*}

 \item Given $f,g\in\tilde{H}^{1/2}(\mathbb{R})$ such that $f-g\in
L^{2}(\mathbb{R})$, there exists $u\in\dot{H}^{1}(\Omega)$ such that
$\operatorname*{Tr}(u)(\cdot,\pi)=f$, $\operatorname*{Tr}(u)(\cdot,0)=g$, and
\begin{equation*}
\int_{\Omega}|\nabla u(x)|^{2}dx\leq c\int_{\mathbb{R}}|f(x_{1})-g(x_{1}%
)|^{2}dx_{1}+c|f|_{\tilde{H}^{1/2}(\mathbb{R})}^{2}+c|g|_{\tilde{H}%
^{1/2}(\mathbb{R})}^{2}%
\end{equation*}
for some constant $c>0$, independent of $f$ and $g$.
\end{enumerate}
\end{theorem}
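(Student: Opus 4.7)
The plan is to prove the two parts separately, with the trace bounds in (1) following from elementary FTC/Fubini arguments and the extension in (2) constructed via Fourier analysis in the unbounded horizontal direction.

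For (1), the $L^{2}$ difference estimate is a direct application of the fundamental theorem of calculus: writing $u(x_{1},\pi) - u(x_{1},0) = \int_{0}^{\pi} \partial_{x_{2}} u(x_{1},t)\,dt$, squaring, and applying Cauchy--Schwarz yields
\begin{equation*}
|\mathrm{Tr}(u)(x_{1},\pi) - \mathrm{Tr}(u)(x_{1},0)|^{2} \le \pi \int_{0}^{\pi} |\partial_{x_{2}} u(x_{1},t)|^{2}\,dt,
\end{equation*}
and a further integration in $x_{1}$ gives the first inequality with $c = \pi$. For the $\tilde{H}^{1/2}$ bound on each trace, I would use the standard Fubini--FTC trick adapted to the strip: for $x_{1}, y_{1} \in \mathbb{R}$ with $h := |x_{1} - y_{1}| \le 1 < \pi$, insert the intermediate points $(x_{1}, h)$ and $(y_{1}, h)$, which lie inside $\Omega$, and decompose
\begin{equation*}
u(x_{1},0) - u(y_{1},0) = [u(x_{1},0) - u(x_{1},h)] + [u(x_{1},h) - u(y_{1},h)] + [u(y_{1},h) - u(y_{1},0)].
\end{equation*}
Each bracket is an integral of a partial derivative of $u$ along a segment in $\Omega$; Cauchy--Schwarz against the weight $|x_{1} - y_{1}|^{-2}$ and integration over $\{|x_{1}-y_{1}|\le 1\}$ yield the bound. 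The screening cutoff $h \le 1$ is precisely what keeps the vertical excursion inside a strip of width $\pi$.

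For (2), I would construct $u$ as the harmonic extension to $\Omega$ with boundary data $g$ at $x_{2}=0$ and $f$ at $x_{2}=\pi$; taking the Fourier transform in $x_{1}$ this reads
\begin{equation*}
\hat{u}(\xi, x_{2}) = \hat{g}(\xi)\,\frac{\sinh(|\xi|(\pi - x_{2}))}{\sinh(|\xi|\pi)} + \hat{f}(\xi)\,\frac{\sinh(|\xi| x_{2})}{\sinh(|\xi|\pi)},
\end{equation*}
with the obvious affine interpolation when $\xi=0$. A direct Plancherel computation gives
\begin{equation*}
\int_{\Omega} |\nabla u|^{2}\,dx = \int_{\mathbb{R}} \left[ |\xi|\coth(|\xi|\pi)\bigl(|\hat{f}(\xi)|^{2} + |\hat{g}(\xi)|^{2}\bigr) - \frac{2|\xi|}{\sinh(|\xi|\pi)} \operatorname{Re}\bigl(\hat{f}(\xi)\overline{\hat{g}(\xi)}\bigr) \right] d\xi.
\end{equation*}
I would split this integral at $|\xi|=1$. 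On $|\xi| \ge 1$ both hyperbolic coefficients are $O(|\xi|)$ (in fact the cross coefficient is exponentially small), so this contribution is $\lesssim \int_{|\xi|\ge 1}|\xi|(|\hat{f}|^{2}+|\hat{g}|^{2})\,d\xi$, and is controlled by $|f|_{\tilde{H}^{1/2}}^{2}+|g|_{\tilde{H}^{1/2}}^{2}$ via the Fourier characterization of the screened seminorm. On $|\xi| \le 1$ the Taylor expansions $|\xi|\coth(|\xi|\pi) = \tfrac{1}{\pi} + O(|\xi|^{2})$ and $|\xi|/\sinh(|\xi|\pi) = \tfrac{1}{\pi} + O(|\xi|^{2})$ produce a leading term $\tfrac{1}{\pi}|\hat{f}-\hat{g}|^{2}$, which integrates to at most $\tfrac{1}{\pi}\|f-g\|_{L^{2}}^{2}$, plus an $O(|\xi|^{2})(|\hat{f}|^{2}+|\hat{g}|^{2})$ remainder that is again controlled by the $\tilde{H}^{1/2}$ seminorms (whose Fourier symbol vanishes quadratically at the origin).

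The decisive step, and likely the main obstacle, is the low-frequency analysis in (2): one must exploit the fact that the low-frequency part of the harmonic-extension energy is driven by $|\hat{f}-\hat{g}|^{2}$ rather than by $|\hat{f}|^{2}+|\hat{g}|^{2}$ individually. Without this delicate cancellation between the diagonal and cross terms, the bound would require $f,g \in L^{2}$ separately, which is strictly stronger than the hypothesis $f-g \in L^{2}$. As a preliminary needed throughout, one must also establish the Fourier-multiplier characterization of $|\cdot|_{\tilde{H}^{1/2}(\mathbb{R})}$, verifying that its symbol $M(\xi)$ satisfies $M(\xi) \sim |\xi|$ for $|\xi|\ge 1$ and $M(\xi) \sim |\xi|^{2}$ near $0$; this is a routine Plancherel computation but essential for translating the frequency estimates back into the intrinsic screened seminorm.
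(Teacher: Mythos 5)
Your part (1) follows the same architecture as the paper's proof of its generalization (Theorem \ref{theorem trace strip}): fundamental theorem of calculus for the $L^{2}$ difference of the two traces, and the three-term decomposition through the interior points $(x_{1},h)$, $(y_{1},h)$ for the screened seminorm. One step as written would fail, though: on the vertical pieces, ``Cauchy--Schwarz against the weight $|x_{1}-y_{1}|^{-2}$'' means bounding $\bigl(\int_{0}^{h}|\partial_{2}u(x_{1},s)|\,ds\bigr)^{2}\le h\int_{0}^{h}|\partial_{2}u(x_{1},s)|^{2}\,ds$ and then integrating $h^{-1}\int_{0}^{h}|\partial_{2}u(x_{1},s)|^{2}\,ds$ over $h\in(0,1)$, which by Fubini produces $\int_{0}^{1}|\partial_{2}u(x_{1},s)|^{2}\log(1/s)\,ds$ --- a logarithmic loss not controlled by $\|\nabla u\|_{L^{2}(\Omega)}^{2}$. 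The correct move is to keep the inner integral as an $L^{1}$ average and apply Hardy's inequality, $\int_{0}^{1}r^{-2}\bigl(\int_{0}^{r}g\bigr)^{2}dr\le 4\int_{0}^{1}g^{2}$, in the $h$ variable (equivalently, as the paper does, average the intermediate height over $x_{N}\in(0,|h|)$ first and then invoke Hardy). With that substitution part (1) goes through.

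Your part (2) is correct but takes a genuinely different route from the paper. The paper's lifting (Theorem \ref{theorem lifting strip}) is a physical-space construction: mollify $f^{\pm}$ at scale equal to the distance to the corresponding boundary component, glue with a vertical cutoff $\theta(x_{N})$, and absorb the cross term $\theta'(u^{+}-u^{-})$ using the hypothesis $f^{+}-f^{-}\in L^{p}$; the key estimate is the convolution bound of Proposition \ref{proposition structure}. You instead take the harmonic extension and compute the exact Plancherel energy; your formula for $\int_{\Omega}|\nabla u|^{2}$ and the low-frequency expansion producing the leading term $\pi^{-1}|\hat{f}-\hat{g}|^{2}$ are both correct, and you correctly identify that this diagonal/cross-term cancellation is what lets you assume only $f-g\in L^{2}$. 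The trade-off: your route yields sharp constants and the exact Dirichlet-energy minimizer, but is locked to $p=2$ and to the flat strip, whereas the paper's route extends to all $1<p<\infty$, all $N$, and (after flattening) to the graph domains \eqref{omega two graphs}. Two details you should not skate over: (i) for $f,g\in\tilde{H}^{1/2}(\mathbb{R})$ only, $\hat{f}$ and $\hat{g}$ are a priori tempered distributions rather than functions, so the pointwise Fourier formulas must be justified by a density argument (Theorem \ref{theorem smooth density} plus sequential completeness of the relevant quotient spaces) --- note the paper proves the multiplier characterization of $|\cdot|_{\tilde{H}^{1/2}}$ (Proposition \ref{proposition fourier}) only for Schwartz functions; (ii) you must still verify that the extension attains the traces $f$ and $g$ in the sense of the trace operator, which the density/limiting argument also supplies.
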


The seminorm $|\cdot|_{\tilde{H}^{1/2}(\mathbb{R})}$ defined in \eqref{seminorm H 1/2} has an interesting characterization in terms of the Fourier transform.  Indeed, one can show that (see Theorem 2.2 in \cite{strichartz2016} and Proposition \ref{proposition fourier} below)  
\begin{equation*}
|f|_{\tilde{H}^{1/2}(\mathbb{R})}^{2}\asymp \int_{\mathbb{R}}\min\{|\xi
|,|\xi|^{2}\}|\hat{f}(\xi)|^{2}d\xi,
\end{equation*}
where $\hat{f}$ is the Fourier transform of $f$.  This should be contrasted with the Fourier characterization of the seminorm defining the classical fractional homogeneous Sobolev space $\dot{H}^{1/2}(\mathbb{R})$:
\begin{equation*}
|f|_{\dot{H}^{1/2}(\mathbb{R})}^{2}:=\iint_{\mathbb{R}^{2}}\frac{|f(x)-f(y)|^{2}%
}{|x-y|^{2}}dxdy=c\int_{\mathbb{R}}|\xi||\hat{f}(\xi)|^{2}d\xi,
\end{equation*}
for $c >0$ a constant independent of $f$.  From these expressions we see that the difference between the seminorms is determined by the low frequencies on the Fourier side, with the seminorm on $\tilde{H}^{1/2}(\mathbb{R})$ allowing for worse behavior of the low frequency modes.  These expressions also suggests that $\dot{H}^{1/2}(\mathbb{R}) \subsetneqq \tilde{H}^{1/2}(\mathbb{R})$, i.e. that the classical space is strictly smaller.  This is indeed the case, as can be seen directly by considering a function $f\in C^{\infty
}(\mathbb{R)}$ such that $\operatorname*{supp}f^{\prime}\subseteq[ 0,1]$,
$f(x)=1$ for $x\geq 1$, and $f(x)=0$ for all $x\leq 0$. An
important consequence of the strict inclusion $\dot{H}^{1/2}(\mathbb{R}
)\subsetneqq\tilde{H}^{1/2}(\mathbb{R})$ is that, in general, functions in
$\dot{H}^{1}(\Omega)$ may not be extended to functions in $\dot{H}^{1}(\mathbb{R}^{2})$.

A central component of this paper consists of extending Strichartz's theorem to the more general case of $\dot{W}^{m,p}(\Omega)$ for $m\in \mathbb{N}$ and $1 \le p < \infty$, where $\Omega \subset \mathbb{R}^N$ for $N \ge 2$ is a horizontal strip of the form
\begin{equation}\label{omega infinite strip}%
\Omega:=\left\{  x\in\mathbb{R}^{N}\,|\,b^{-}<x_{N}<b^{+}\right\}
\end{equation}
for constants $b^{-}<b^{+}$.  In order to refer to the connected components of $\partial \Omega$ individually, we will write 
\begin{equation}\label{gamma_infinite_strip}
\Gamma^{\pm}=\left\{  x\in\mathbb{R}^{N}\,|\,x_{N}=b^{\pm}\right\}  . 
\end{equation}

\subsection{Main results and discussion}

The main thrust of the paper is to characterize the trace spaces associated to the homogeneous Sobolev space $\dot{W}^{m,p}(\Omega)$ when $\Omega$ is of the form \eqref{omega two graphs} or \eqref{omega infinite strip}.  The trace spaces involve a generalization of the space $\tilde{H}^{1/2}(\mathbb{R})$ from \cite{strichartz2016}, which we call a screened homogeneous fractional Sobolev space.  These spaces are important for our trace results but are of independent interest in a more general setting.  As such, in Section \ref{section spaces} we define these spaces on general open sets $U \subseteq \mathbb{R}^N$ for $0 < s < 1$ and $1 \le p < \infty$ through the seminorm 
\begin{equation*}
 L^1_{{\operatorname*{loc}}}(U) \ni f \mapsto |f|_{\W_{(\sigma)}^{s,p}(U)} =  \left(\int_{U} \int_{B(x,\sigma(x)) \cap U} \frac{|f(y)-f(x)|^p}{|y-x|^{sp+N}} dy xy \right)^{1/p} \in [0,\infty],
\end{equation*}
where $\sigma : U \to (0,\infty]$ is a lower semi-continuous function that we call a screening function, due to the fact that it screens the range of the difference quotient.  The functions for which the seminorm is finite comprise the space $\W_{(\sigma)}^{s,p}(U)$.  When $p=2$ and $\sigma$ is a finite constant, variants of the screened spaces have appeared in the analysis of weak formulations of various nonlocal elliptic equations (see \cite{du_etal2012}, \cite{felsinger-mathieu-kassman2015}, \cite{zhou-du2010}).  For general $p$ there has been much recent interest in inhomogeneous fractional-type Sobolev spaces with seminorms of the form 
\begin{equation*}
 L^p(U) \ni f \mapsto  \left(\int_{U} \int_{U } \frac{|f(y)-f(x)|^p}{|y-x|^{p}}\rho(|x-y|) dy xy \right)^{1/p} \in [0,\infty]
\end{equation*}
for a given kernel $\rho :[0,\infty) \to [0,\infty)$: we refer to the seminal papers \cite{bourgain-brezis-mironescu2001} and \cite{bourgain-brezis-mironescu2002} and to the papers \cite{brezis-nguyen2018}, \cite{ponce2004}, \cite{ponce-spector2017} and the references therein for a more-recent survey of this literature.  When $\rho(r) = \chi_{[0,b)}(r) r^{(1-s)p-N}$ these seminorms correspond to the screened homogeneous fractional seminorm for constant screening function $\sigma =b$, but our focus in this paper is the seminorm defined on $L^1_{{\operatorname*{loc}}}(U)$ rather than $L^p(U)$.  To the best of our knowledge the screened homogeneous fractional spaces for general $p$ and $\sigma$ have not been previously studied.

For the screened spaces we prove, among other things, Poincar\'{e}-type estimates, sequential completeness, and some basic interpolation and embedding results.  We also construct some sets and choices of screening functions (see Theorems \ref{theorem strict inclusion} and \ref{theorem strict inclusion vanishing} for the precise forms) for which we have the strict inclusion 
\begin{equation*}
 \dot{W}^{s,p}(U) \subsetneqq \W_{(\sigma)}^{s,p}(U),
\end{equation*}
which shows that the screened spaces are generally strictly bigger than the standard homogeneous fractional Sobolev spaces.  We also consider the special case $U = \mathbb{R}^N$, which is the domain most relevant to our trace results when we swap $N \mapsto N-1$.  We prove that if $\sigma_1,\sigma_2$ are both bounded above and below, then 
\begin{equation}\label{intro equiv}
\W_{(\sigma_1)}^{s,p}(\mathbb{R}^N) = \W_{(\sigma_2)}^{s,p}(\mathbb{R}^N),
\end{equation}
which shows that the precise form of the bounded screening function is not important.  We also establish density of smooth functions and prove a Fourier characterization when $p=2$. We have not attempted an exhaustive study of the screened spaces and have left many basic questions open.  In particular, we believe it would be interesting to study these spaces using interpolation theory and to give an equivalent characterization in terms of the Littlewood-Paley theory (see \cite{bennett-sharpley1988book}, \cite{bergh-lofstrom-1976book},
\cite{besov-ilin-nikolskii1978book}, \cite{besov-ilin-nikolskii1979book}, \cite{grafakos2014book}, \cite{peetre1976book}, \cite{triebel1995book}).

To state our trace results, we first need some notation.  We will write points $x \in \mathbb{R}^N$ as  $x=(x^{\prime},x_{N})\in\mathbb{R}^{N-1}\times\mathbb{R}$, and we will let $B^{\prime}(x^{\prime},r)$ denote the open ball in $\mathbb{R}^{N-1}$, centered at $x^{\prime}$, and of radius $r$.  In $\mathbb{R}^{N-1}$ with screening function given by the constant $a>0$ and $s = 1-1/p$, the screened homogeneous fractional Sobolev seminorm can be rewritten as
\begin{equation*}
|f|_{\W_{(a)}^{1-1/p,p}(\mathbb{R}^{N-1})}:=\left(  \int_{\mathbb{R}^{N-1}}\int_{B^{\prime}(0,a)}\frac{|f(x^{\prime}+h^{\prime})-f(x^{\prime})|^{p}}{|h^{\prime}|^{p+N-2}}\,dh^{\prime}dx^{\prime}\right)^{1/p}.
\end{equation*}

We can now state our trace results.  We begin with the case $m=1$ and $1<p<\infty$.  

\begin{theorem}\label{theorem trace strip}
Let $\Omega$ be as in \eqref{omega infinite strip} and let $1<p<\infty$. There exists a unique linear operator
\begin{equation*}
 \operatorname*{Tr}:\dot{W}^{1,p}(\Omega)\rightarrow L_{\operatorname*{loc}}^{p}(\partial\Omega)
\end{equation*}
satisfying the following.
\begin{enumerate}
 \item $\operatorname*{Tr}(u)=u$ on $\partial\Omega$ for all $u\in\dot{W}^{1,p}(\Omega)\cap C^0(\overline{\Omega})$.

 \item There exists a constant
$c=c(N,p)>0$ such that
\begin{align}
\int_{\mathbb{R}^{N-1}}|\operatorname*{Tr}(u)(x^{\prime},b^{+})-\operatorname*{Tr}(u)(x^{\prime},b^{-})|^{p}dx^{\prime}\leq(b^{+} -b^{-})^{p-1}\int_{\Omega}|\partial_{N}u(x)|^{p}dx,  \label{theorem trace strip est 1} \\
|\operatorname*{Tr}(u)(\cdot,b^{\pm})|_{\W_{(b^{+}-b^{-})}^{1-1/p,p} (\mathbb{R}^{N-1})}^{p}\leq c\int_{\Omega}|\nabla u(x)|^{p}dx \label{theorem trace strip est 2}
\end{align}
for all $u\in\dot{W}^{1,p}(\Omega)$.  Here $b^- < b^+$ are the constants defining $\Omega$ in \eqref{omega infinite strip}.

\item The integration by parts formula
\begin{equation*}
\int_{\Omega}u\partial_{i}\psi\,dx=-\int_{\Omega}\psi\partial_{i}u\,dx+\int_{\partial\Omega}\psi\operatorname*{Tr}(u)\nu_{i}\,d\mathcal{H}^{N-1}
\end{equation*}
holds for all $u\in\dot{W}^{1,p}(\Omega)$, all $\psi\in C_{c}^{1}(\mathbb{R}^{N})$, and all $i=1,\ldots,N$.
\end{enumerate}
\end{theorem}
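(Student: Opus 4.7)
The plan follows the classical two-step scheme: first establish \eqref{theorem trace strip est 1} and \eqref{theorem trace strip est 2} on the dense subclass $\dot{W}^{1,p}(\Omega)\cap C^\infty(\overline{\Omega})$, where $\operatorname*{Tr}(u)$ is just the pointwise restriction $u|_{\partial\Omega}$, then extend $\operatorname*{Tr}$ by continuity to all of $\dot{W}^{1,p}(\Omega)$. Density of this smooth subclass with respect to $\|\nabla\cdot\|_{L^p(\Omega)}$ is obtained by reflecting $u$ across $\Gamma^\pm$ to a strip of triple width (which increases $\|\nabla u\|_{L^p}$ by at most a fixed constant), mollifying in the enlarged strip, and restricting back to $\Omega$.

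For \eqref{theorem trace strip est 1}, the fundamental theorem of calculus along vertical segments gives, for smooth $u$,
\begin{equation*}
u(x',b^+)-u(x',b^-)=\int_{b^-}^{b^+}\partial_N u(x',t)\,dt,
\end{equation*}
and H\"older's inequality in $t$ together with Fubini's theorem in $x'$ yield the bound with constant $(b^+-b^-)^{p-1}$.

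The main work is \eqref{theorem trace strip est 2}, which I prove on $\Gamma^+$; the $\Gamma^-$ case is analogous. For smooth $u$ and $0<|h'|<b^+-b^-$, the strategy is to express the boundary difference $u(x'+h',b^+)-u(x',b^+)$ using an $N$-dimensional bridging region $D(x',h')\subset\Omega$ whose horizontal footprint lies between $x'$ and $x'+h'$ and whose vertical extent is comparable to $|h'|$. The screening condition $|h'|<b^+-b^-$ is precisely what guarantees that $D(x',h')$ together with the line segments from $D(x',h')$ to the boundary points $(x',b^+)$ and $(x'+h',b^+)$ all lie inside $\overline{\Omega}$. Writing the difference as a sum of two Poincar\'e-type contributions (boundary value minus interior average) and bounding each by a line integral of $\nabla u$, I substitute the result back into the left-hand side of \eqref{theorem trace strip est 2}, swap the order of integration so that the $w$-integration of $|\nabla u(w)|^p$ is outermost, and change variables. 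To avoid the logarithmic divergence that a naive decomposition produces near $\Gamma^+$, I would split the $|h'|$-integration dyadically into shells $|h'|\in(2^{-k-1}(b^+-b^-),2^{-k}(b^+-b^-))$, arranging the bridging region at the $k$-th scale so that the corresponding shell contribution is controlled by the $L^p$ norm of $\nabla u$ on the horizontal slab of depth $\sim 2^{-k}(b^+-b^-)$ below $\Gamma^+$. Summing over $k$, the disjoint slabs sum to $\int_\Omega|\nabla u(w)|^p\,dw$, giving \eqref{theorem trace strip est 2} with constant $c=c(N,p)$.

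The remainder is routine. For $u\in\dot{W}^{1,p}(\Omega)$ with smooth approximants $u_k$, applying \eqref{theorem trace strip est 1}--\eqref{theorem trace strip est 2} to $u_k-u_j$ and combining with the Poincar\'e-type inequalities for the screened fractional spaces established in Section~\ref{section spaces} shows that $\{u_k|_{\Gamma^\pm}\}$ is Cauchy in $L^p_{\operatorname*{loc}}(\mathbb{R}^{N-1})$; the limit defines $\operatorname*{Tr}(u)$ and the estimates pass to the limit. Uniqueness is immediate from density together with requirement (1), and the integration by parts formula (3) follows from the classical divergence theorem applied to $u_k\psi$ and passing to the limit using the trace bounds together with the compact support of $\psi$. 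The principal obstacle is \eqref{theorem trace strip est 2}: a naive broken-path decomposition of the type $(x',b^+)\to(x',b^+-|h'|)\to(x'+h',b^+-|h'|)\to(x'+h',b^+)$ produces a logarithmically divergent radial contribution near $\Gamma^+$, so the bridging region must be chosen with vertical extent carefully matched to $|h'|$ and the dyadic decomposition in $|h'|$ must be arranged so that the radial integration becomes a convergent geometric sum over disjoint slabs.
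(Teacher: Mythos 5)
The decisive step of \eqref{theorem trace strip est 2} has a genuine gap. You correctly diagnose that a naive broken-path estimate, after swapping the order of integration, leaves a factor $\log_2\bigl((b^+-b^-)/t\bigr)$ at depth $t$ below $\Gamma^+$, but the dyadic scheme you propose does not remove it. Both endpoints $(x',b^+)$ and $(x'+h',b^+)$ lie on $\Gamma^+$, so whatever bridging region you place at scale $2^{-k}(b^+-b^-)$, the line integrals of $\partial_N u$ connecting the boundary points to that region must run over \emph{all} depths from $0$ up to $\sim 2^{-k}(b^+-b^-)$. The shell-$k$ contribution is therefore controlled by $\|\nabla u\|_{L^p}$ on the full slab $\mathbb{R}^{N-1}\times(b^+-c2^{-k}(b^+-b^-),\,b^+)$, and these slabs are nested, not disjoint: a point at depth $t$ is counted by roughly $\log_2\bigl((b^+-b^-)/t\bigr)$ shells, and summing reproduces exactly the logarithm you set out to avoid. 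What closes the argument is a Hardy-type inequality, and this is where $p>1$ enters irreducibly (consistent with the weaker, non-summed form of the estimate in Theorem \ref{theorem trace strip p=1} for $p=1$). The paper's route is: bound $\Vert u(\cdot+h',b^+)-u(\cdot,b^+)\Vert_{L^p(\mathbb{R}^{N-1})}\le 3\int_0^{|h'|}\Vert\nabla u(\cdot,b^+-t)\Vert_{L^p}\,dt$ by averaging the intermediate height over $(0,|h'|)$ (your bridging idea in continuous form, using Minkowski's integral inequality), pass to polar coordinates in $h'$, and apply the one-dimensional Hardy inequality $\int_0^{b}r^{-p}\bigl(\int_0^r g\bigr)^p dr\le (p/(p-1))^p\int_0^b g^p$. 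Your dyadic version can be rescued by replacing ``disjoint slabs sum to the integral'' with the discrete Hardy inequality $\sum_k 2^{k(p-1)}\bigl(\sum_{j\ge k}a_j\bigr)^p\le C_p\sum_j 2^{j(p-1)}a_j^p$ (valid only for $p>1$), but as written the summation step is false.

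Separately, your architecture differs from the paper's: the paper never passes through a smooth dense subclass. It invokes Theorem \ref{theorem AC} to choose a representative of $u\in\dot{W}^{1,p}(\Omega)$ that is absolutely continuous on $\mathcal{L}^{N-1}$-a.e.\ line parallel to the axes, so the fundamental theorem of calculus applies directly to every $u$ and the estimates are proved for all of $\dot{W}^{1,p}(\Omega)$ at once; the integration-by-parts formula then follows by localizing to bounded Lipschitz subdomains. Your reflection-and-mollification density route is a workable alternative, but note that the screened Poincar\'e inequality (Theorem \ref{full_poincare}) only controls $(u_k-u_j)|_{\Gamma^\pm}$ modulo additive constants on each exhausting set, so to conclude that the restrictions are Cauchy in $L_{\operatorname*{loc}}^{p}$ you must also pin down those constants, e.g.\ via \eqref{theorem trace strip est 1} together with interior $L_{\operatorname*{loc}}^{p}$ convergence of the approximants. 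That is repairable but should be said.
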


Theorem \ref{theorem trace strip} is complemented by the following lifting result.  We recall that $\Gamma^\pm$ are defined in \eqref{gamma_infinite_strip}.

\begin{theorem}\label{theorem lifting strip}
Let $\Omega$ be as in \eqref{omega infinite strip}, $a>0$, and $1<p<\infty$.  Suppose that $f^{\pm}\in L_{\operatorname*{loc}}^{1}(\mathbb{R}^{N-1})$ are such that
\begin{gather}
\int_{\mathbb{R}^{N-1}}|f^{+}(x^{\prime})-f^{-}(x^{\prime})|^{p}\,dx^{\prime} < \infty,\label{difference f plus of minus}\\
|f^{-}|_{\W_{(a)}^{1-1/p,p}(\mathbb{R}^{N-1})}<\infty,\quad|f^{+}|_{\W_{(a)}^{1-1/p,p}(\mathbb{R}^{N-1})}<\infty. \label{seminorm strip}%
\end{gather}
Then there exists $u\in\dot{W}^{1,p}(\Omega)$ such that $\operatorname*{Tr}(u)=f^{\pm}$ on $\Gamma^{\pm}$ and
\begin{align*}
\int_{\Omega}|\nabla u(x)|^{p}dx    \leq c(b^{+}-b^{-})^{p-1}
\int_{\mathbb{R}^{N-1}}|f^{+}(x^{\prime})-f^{-}(x^{\prime})|^{p} \, dx^{\prime}
  +c|f^{-}|_{\W_{(a)}^{1-1/p,p}(\mathbb{R}^{N-1})}^{p} + |f^{+}|_{\W_{(a)}^{1-1/p,p}(\mathbb{R}^{N-1})}^{p}
\end{align*}
for some constant $c=c(a,N,p)>0$.  Moreover, the map $(f^-,f^+) \mapsto u$ is linear.  
\end{theorem}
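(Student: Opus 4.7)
The plan is to construct $u$ as an interpolation of variable-scale mollifications of $f^{\pm}$, with scales that degenerate at the respective boundary components and remain bounded by $a/2$ so as to fit within the reach of the screened seminorm. Fix a nonnegative mollifier $\psi \in C_{c}^{\infty}(B'(0,1))$ with $\int \psi = 1$ and write $\psi_t(y') = t^{-(N-1)}\psi(y'/t)$. Define scales $\tau^{\pm}(x_N) = \min\{|x_N - b^{\pm}|, a/2\}$ and mollifications
\begin{equation*}
F^{\pm}(x', x_N) = \int_{\mathbb{R}^{N-1}} \psi_{\tau^{\pm}(x_N)}(y')\, f^{\pm}(x' - y')\, dy'.
\end{equation*}
Choose a smooth cutoff $\zeta \in C^{\infty}([b^{-}, b^{+}];[0,1])$ equal to $0$ near $b^{-}$ and $1$ near $b^{+}$, with $\zeta'$ supported in the interior region where $\tau^{+} \equiv \tau^{-} \equiv a/2$, and set
\begin{equation*}
u(x', x_N) = (1-\zeta(x_N))F^{-}(x', x_N) + \zeta(x_N)F^{+}(x', x_N).
\end{equation*}
The map $(f^{-}, f^{+}) \mapsto u$ is manifestly linear, and once the gradient bound below is in place, the trace identification $\operatorname*{Tr}(u) = f^{\pm}$ on $\Gamma^{\pm}$ will follow from the $L^{1}_{\operatorname*{loc}}$-convergence of $F^{\pm}(\cdot, x_N)$ to $f^{\pm}$ as $\tau^{\pm}(x_N) \to 0$ together with the uniqueness asserted in Theorem \ref{theorem trace strip}.

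The gradient estimate is the core of the argument. By the product rule,
\begin{equation*}
\nabla' u = (1-\zeta)\nabla' F^{-} + \zeta\, \nabla' F^{+}, \qquad \partial_N u = \zeta'(F^{+} - F^{-}) + (1-\zeta)\partial_N F^{-} + \zeta\,\partial_N F^{+}.
\end{equation*}
Exploiting $\int \psi_t \equiv 1$, rewrite each convolution derivative as an integral of a $\psi$-derivative against a difference $f^{\pm}(x'-y')-f^{\pm}(x')$. H\"older's inequality then produces the pointwise bound
\begin{equation*}
|\nabla' F^{\pm}(x', x_N)|^p + |\partial_N F^{\pm}(x', x_N)|^p \leq C\,\tau^{\pm}(x_N)^{-p-N+1}\int_{B'(0,\tau^{\pm}(x_N))} |f^{\pm}(x'-y') - f^{\pm}(x')|^p\, dy',
\end{equation*}
where the $\partial_N$ bound uses $|(\tau^{\pm})'| \leq 1$ and vanishes where $\tau^{\pm}$ is constant. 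Integrating in $x'$ over $\mathbb{R}^{N-1}$ and then in $x_N$ over the boundary strips where $\tau^{\pm}$ is nonconstant, changing variables $t = \tau^{\pm}(x_N)$, and swapping the order of integration with $y'$ via Fubini (justified because $\tau^{\pm}\leq a/2 \leq a$) yields the screened seminorm $|f^{\pm}|_{\W_{(a)}^{1-1/p, p}(\mathbb{R}^{N-1})}^p$ as an upper bound, with constant depending only on $a, N, p$. For the interpolation piece $\zeta'(F^{+} - F^{-})$, the support condition on $\zeta'$ gives $F^{+} - F^{-} = (f^{+} - f^{-}) * \psi_{a/2}$, whence Young's convolution inequality provides $\|(F^{+} - F^{-})(\cdot, x_N)\|_{L^p(\mathbb{R}^{N-1})} \leq \|f^{+} - f^{-}\|_{L^p(\mathbb{R}^{N-1})}$, and integrating against $|\zeta'|^p$ produces the $\|f^{+} - f^{-}\|_{L^p}^p$ contribution with a geometric factor in $b^{+} - b^{-}$ determined by the length of the transition region of $\zeta$.

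The main obstacle I expect is the interior region $\{|x_N - b^{\pm}| > a/2\}$, where both scales are clamped at $a/2$: here $\partial_N F^{\pm}$ vanishes but $\nabla' F^{\pm}$ is nontrivial with pointwise $L^p$-norm bounded only by $a^{-1/p}|f^{\pm}|_{\W_{(a)}^{1-1/p, p}}$, so a naive integration would pick up an unwanted factor proportional to $b^{+}-b^{-}$ on the seminorm terms. The remedy is to rewrite $\zeta\,\nabla' F^{+} + (1-\zeta)\nabla' F^{-} = \nabla' F^{-} + \zeta\,\nabla'\bigl((f^{+} - f^{-})*\psi_{a/2}\bigr)$ on the interior and to bound the second piece by Young's inequality $\|\nabla'((f^{+}-f^{-})*\psi_{a/2})\|_{L^p}\lesssim a^{-1}\|f^{+}-f^{-}\|_{L^p}$, pairing the $x_N$-integration with the factor of $\zeta$ so that only the transition interval of $\zeta$ contributes; the piece involving $\nabla' F^{-}$ is absorbed by extending the Fubini argument from the boundary strip across the interior interface, so that the effective $x_N$-integration length stays of order $a$. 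The short-strip case $b^{+}-b^{-} < a$ is treated separately by omitting the interior region entirely. Collecting all contributions, tracking the dependence on $b^{+}-b^{-}$ and $a$, and recording that the construction is linear in $(f^{-}, f^{+})$ throughout yields the stated estimate.
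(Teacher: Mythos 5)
Your overall architecture coincides with the paper's: mollify each boundary datum at a scale that degenerates at the corresponding boundary component, glue with a vertical cutoff, and control the cross term through $\|f^+-f^-\|_{L^p}$. The boundary strips and the cutoff term are handled correctly. The gap is exactly where you anticipated it, in the interior region $\{|x_N-b^\pm|>a/2\}$, and neither of your two remedies closes it. There, $F^-(x',x_N)=(f^-\ast\psi_{a/2})(x')$ is \emph{independent of} $x_N$, so
\begin{equation*}
\int_{b^-+a/2}^{b^+}\int_{\mathbb{R}^{N-1}}|\nabla' F^-(x',x_N)|^p\,dx'\,dx_N=\bigl(b^+-b^--\tfrac{a}{2}\bigr)\,\|\nabla'(f^-\ast\psi_{a/2})\|_{L^p(\mathbb{R}^{N-1})}^p,
\end{equation*}
and no rearrangement of the order of integration can change the value of this integral; the best available bound on the inner norm is $c\,a^{-1}|f^-|_{\W_{(a)}^{1-1/p,p}(\mathbb{R}^{N-1})}^{p}$, so this term is of size $(b^+-b^-)a^{-1}$ times the seminorm, not $c(a,N,p)$ times it. Your claim that "the effective $x_N$-integration length stays of order $a$" is therefore false. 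The second remedy fails for the same reason: $\zeta\equiv 1$ on the entire upper portion of the strip (it must be, for the trace on $\Gamma^+$ to come out right), so $\int|\zeta\,\nabla'((f^+-f^-)\ast\psi_{a/2})|^p\,dx$ carries a factor of order $b^+-b^-$ against $a^{-p}\|f^+-f^-\|_{L^p}^p$; this matches the target $(b^+-b^-)^{p-1}\|f^+-f^-\|_{L^p}^p$ only when $p\ge 2$, and in any case the $\nabla'F^-$ term already ruins the estimate.

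The fix is not to clamp the scale but to let it grow linearly all the way across the strip while shrinking the mollifier's support: take $\operatorname*{supp}\varphi\subseteq B'(0,a(b^+-b^-)^{-1})$ and set $u^-(x)=(\varphi_{x_N-b^-}\ast f^-)(x')$, so the effective radius at height $x_N$ is $(x_N-b^-)\,a/(b^+-b^-)\le a$ and is never constant in $x_N$. Then for fixed $h'$ the vertical integral becomes $\int_{(b^+-b^-)a^{-1}|h'|}^{\,b^+-b^-}t^{-(N+p-1)}\,dt\le c(a,N,p)|h'|^{-(N+p-2)}$, which is summable at the top and reproduces exactly the screened kernel with a constant independent of $b^+-b^-$; this is the content of Proposition \ref{proposition structure}, which the paper invokes at this point. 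With that replacement the cross term is handled not by your convolution identity (which is no longer available, since the two scales never agree) but by the fundamental theorem of calculus in $x_N$: $u^+-u^-=(f^+-f^-)+\int\partial_N u^-\,ds-\int\partial_N u^+\,ds$, followed by H\"older, which yields the $(b^+-b^-)^{p-1}$ weight on $\|f^+-f^-\|_{L^p}^p$ and reabsorbs the remaining terms into the already-controlled quantities $\|\partial_N u^\pm\|_{L^p(\Omega)}^p$.
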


\begin{remark}
It is important to observe that in Theorem \ref{theorem lifting strip} the functions $f^{\pm}$ are not assumed to be individually in $L^{p}(\mathbb{R}^{N-1})$, but their difference is. 
\end{remark}

\begin{remark}
Notice that in Theorem \ref{theorem trace strip} the screening function is the constant $b^+ - b^->0$, but in Theorem \ref{theorem lifting strip} the screening function is any constant $a>0$.  This discrepancy is accounted for by \eqref{intro equiv} (see Theorem \ref{theorem screened equivalence}), which shows that these screening functions define the same space and give rise to equivalent seminorms.
\end{remark}

Theorems \ref{theorem trace strip} and \ref{theorem lifting strip} show that for horizontal strips the trace space of the homogeneous space $\dot{W}^{1,p}(\Omega)$ is strictly larger than the trace space of the inhomogeneous space $W^{1,p}(\Omega)$. Indeed, by a classical result of Gagliardo \cite{gagliardo1957}, the trace space of $W^{1,p}(\Omega)$ is given by the fractional Sobolev space $W^{1-1/p,p}(\partial\Omega)$, or equivalently, by the Besov space $B^{1-1/p,p}_p(\partial\Omega)$ (see also \cite{adams-fournier2003book}, \cite{burenkov1998book}, \cite{grisvard2011book}, \cite{leoni2017book}, \cite{mazya2011book}, \cite{necas2012book}). The norm defining  $W^{1-1/p,p}(\partial \Omega)$ is 
\begin{equation*}
\Vert f\Vert_{W^{1-1/p,p}(\partial\Omega)}:=\Vert f\Vert_{L^{p}(\partial \Omega)}
+\left(  \int_{\partial\Omega}\int_{\partial\Omega}\frac{|f(x)-f(y)|^{p}}{|x-y|^{p+N-2}} \, d\mathcal{H}^{N-1}(x) \, d\mathcal{H}^{N-1}(y)\right)^{1/p}.
\end{equation*}
The class of pairs satisfying \eqref{difference f plus of minus} and \eqref{seminorm strip} gives rise to a new interesting type of space.  At the end of Section \ref{section spaces} we study some of its properties.  

By synthesizing our trace and lifting results with our results about the screened spaces in $\mathbb{R}^{N-1}$ we also prove Proposition \ref{proposition no extension}, which shows that there exist functions $u \in \dot{W}^{1,p}(\Omega)$ that do not admit extensions to $\dot{W}^{1,p}(\mathbb{R}^N)$.  In particular, this shows that there is no bounded extension operator available for $\dot{W}^{1,p}(\Omega)$.

Theorems \ref{theorem trace strip} and \ref{theorem lifting strip} can be extended to domains of the form \eqref{omega two graphs}, and we do so in Theorems \ref{theorem trace m=1} and \ref{theorem lifting m=1} below. The main difference is that conditions \eqref{difference f plus of minus} and \eqref{seminorm strip} should now be replaced, respectively, by
\begin{gather*}
\int_{\mathbb{R}^{N-1}}\frac{|f^{+}(x^{\prime})-f^{-}(x^{\prime})|^{p}}%
{(\eta^{+}(x^{\prime})-\eta^{-}(x^{\prime}))^{p-1}}\,dx^{\prime}<\infty,\\
\int_{\mathbb{R}^{N-1}}\int_{B^{\prime}(0,a(\eta^{+}(x^{\prime})-\eta
^{-}(x^{\prime})))}\frac{|f^{\pm}(x^{\prime}+h^{\prime})-f^{\pm}(x^{\prime
})|^{p}}{|h^{\prime}|^{p+N-2}}\,dh^{\prime}dx^{\prime}<\infty.
\end{gather*}
Note that we allow $\eta^{+}-\eta^{-}$ to be unbounded, and we do not assume that $\eta^{+}-\eta^{-}$ is bounded away from zero, so the extension from \eqref{omega infinite strip} to \eqref{omega two graphs} is non-trivial.

Next we consider the case $p=1$ and $m=1$.  In \cite{gagliardo1957} (see also \cite{burenkov1998book}, \cite{leoni2017book}) Gagliardo proved that for open bounded domains $\Omega\subset\mathbb{R}^{N}$ with Lipschitz continuous boundary the trace space of $W^{1,1}(\Omega)$ is given by $L^{1}(\partial\Omega)$ (see also the recent paper of Mironescu \cite{mironescu2015} for a simpler proof). In contrast, we can prove the following result.

\begin{theorem}\label{theorem trace strip p=1}
Let $\Omega$ be as in \eqref{omega infinite strip}. There exists a unique linear operator
\begin{equation*}
\operatorname*{Tr}:\dot{W}^{1,1}(\Omega)\rightarrow L_{\operatorname*{loc}}^{1}(\partial\Omega)
\end{equation*}
satisfying the following.
\begin{enumerate}
 \item $\operatorname*{Tr}(u)=u$ on $\partial\Omega$ for all $u\in\dot{W}^{1,1}(\Omega)\cap C^0(\overline{\Omega})$.

\item There exists a constant $c=c(N)>0$ such that for every $0<\varepsilon<b^{+}-b^{-}$ and every $u\in\dot{W}^{1,1}(\Omega)$,
\begin{align}
\int_{\mathbb{R}^{N-1}}|\operatorname*{Tr}(u)(x^{\prime},b^{+}) - \operatorname*{Tr}(u)(x^{\prime},b^{-})|\,dx^{\prime}\leq\int_{\Omega}|\partial_{N}u(x)|\,dx,  \label{theorem trace strip p=1 eq 1} \\
\sup_{|h^{\prime}|\leq\varepsilon}\int_{\mathbb{R}^{N-1}}|\operatorname*{Tr}(u)(x^{\prime}+h^{\prime},b^{\pm})-\operatorname*{Tr}(u)(x^{\prime},b^{\pm})|\,dx^{\prime}\leq\int_{\Omega_{\varepsilon}}|\nabla u(x)|\,dx^{\prime}
\label{theorem trace strip p=1 eq 2},
\end{align}
where $\Omega_{\varepsilon} :=\mathbb{R}^{N-1}\times((b^{-},b^{-}+\varepsilon) \cup (b^{+}-\varepsilon,b^{+}))$. In particular,
\begin{equation*}
\lim_{\varepsilon\rightarrow0^{+}}\sup_{|h^{\prime}|\leq\varepsilon}%
\int_{\mathbb{R}^{N-1}}|\operatorname*{Tr}(u)(x^{\prime}+h^{\prime},b^{\pm
})-\operatorname*{Tr}(u)(x^{\prime},b^{\pm})|\,dx^{\prime}=0.
\end{equation*}

\item The integration by parts formula
\begin{equation*}
\int_{\Omega}u\partial_{i}\psi\,dx=-\int_{\Omega}\psi\partial_{i}u\,dx+\int_{\partial\Omega}\psi\operatorname*{Tr}(u)\nu_{i}\,d\mathcal{H}^{N-1}
\end{equation*}
holds for all $u\in\dot{W}^{1,1}(\Omega)$, all $\psi\in C_{c}^{1}(\mathbb{R}^{N})$, and all $i=1,\ldots,N$. 

\end{enumerate}

\end{theorem}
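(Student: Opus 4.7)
My plan is to build $\operatorname*{Tr}$ first on the dense subclass of smooth functions in $\dot{W}^{1,1}(\Omega)$ and then pass to the limit, obtaining \eqref{theorem trace strip p=1 eq 1} and \eqref{theorem trace strip p=1 eq 2} along the way. The first step is a density argument: since $\Omega$ is a flat horizontal strip, convolving in the horizontal variable $x'$ with a standard mollifier $\phi_n(x') = n^{N-1}\phi(nx')$ produces $u_n \in C^\infty(\overline{\Omega})$ such that $u_n \to u$ in $L^1_{\operatorname*{loc}}(\overline{\Omega})$ and $\nabla u_n \to \nabla u$ in $L^1(\Omega)$. This circumvents the need for any boundary extension.

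For such a smooth $u$, the trace is defined by restriction. Estimate \eqref{theorem trace strip p=1 eq 1} is immediate from the fundamental theorem of calculus along vertical slices,
\begin{equation*}
u(x', b^+) - u(x', b^-) = \int_{b^-}^{b^+} \partial_N u(x', x_N)\,dx_N,
\end{equation*}
combined with Fubini on $\mathbb{R}^{N-1}$. For \eqref{theorem trace strip p=1 eq 2}, I will run a ``dip into the interior'' path argument. For $|h'| \le \varepsilon$ and $t \in (0, \varepsilon)$, integrating $\nabla u$ along the three-leg polygonal path that drops from $(x', b^+)$ to $(x', b^+ - t)$, translates horizontally to $(x'+h', b^+ - t)$, and rises to $(x'+h', b^+)$ gives
\begin{equation*}
u(x'+h', b^+) - u(x', b^+) = -\int_0^t \partial_N u(x', b^+ - s)\,ds + \int_0^1 h' \cdot \nabla' u(x' + rh', b^+ - t)\,dr + \int_0^t \partial_N u(x'+h', b^+ - s)\,ds.
\end{equation*}
Averaging in $t$ over $(0, \varepsilon)$, taking absolute values, integrating in $x'$, and using Fubini together with horizontal translation invariance, each piece is controlled by an $L^1$ integral of $|\nabla u|$ over the top slab $\mathbb{R}^{N-1} \times (b^+ - \varepsilon, b^+)$; the factor $|h'|/\varepsilon$ produced by the middle term is absorbed using $|h'| \le \varepsilon$. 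The symmetric analysis near $\Gamma^-$ gives the bottom-slab bound, and summing yields \eqref{theorem trace strip p=1 eq 2}. The ``In particular'' vanishing statement as $\varepsilon \to 0^+$ then follows from absolute continuity of the Lebesgue integral since $|\nabla u| \in L^1(\Omega)$.

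To extend to general $u \in \dot{W}^{1,1}(\Omega)$, I approximate by a smooth sequence $\{u_n\}$ as above and show that $u_n\big|_{\partial \Omega}$ is Cauchy in $L^1_{\operatorname*{loc}}(\partial \Omega)$. The local $L^1$ control, valid on any precompact $B' \subset \mathbb{R}^{N-1}$, comes from slice-averaging: integrating the identity $u(x', b^+) = u(x', x_N) + \int_{x_N}^{b^+} \partial_N u(x', s)\,ds$ first over $x_N$ in a fixed subinterval $I \subset (b^-, b^+)$ adjacent to $b^+$ and then over $x' \in B'$ bounds $\|u_n(\cdot, b^+)\|_{L^1(B')}$ by a constant times $\|u_n\|_{L^1(B' \times I)} + \|\partial_N u_n\|_{L^1(\Omega)}$. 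Combined with \eqref{theorem trace strip p=1 eq 2} applied to $u_n - u_m$, this yields the Cauchy property, and the limit defines $\operatorname*{Tr}(u)$; the estimates survive passage to the limit by Fatou. The integration-by-parts identity is classical for $u \in C^\infty(\overline{\Omega})$ via the divergence theorem in the strip and extends by density because the compact support of $\psi$ localizes every integral to a region where both $u_n \to u$ in $L^1$ and the traces converge in $L^1$. Uniqueness is then forced: property (1) pins $\operatorname*{Tr}$ down on the dense class of continuous functions in $\dot{W}^{1,1}(\Omega)$, while \eqref{theorem trace strip p=1 eq 2} together with the slice-averaging local $L^1$ bound propagates agreement to the full space.

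The main obstacle, absent in the $p > 1$ case handled by Theorem \ref{theorem trace strip}, is that no honest seminorm analogous to the screened fractional one is available: estimate \eqref{theorem trace strip p=1 eq 2} controls only an $L^\infty$-in-$h'$ translation modulus rather than a Banach space quantity, so by itself it does not bound the traces. The $L^1_{\operatorname*{loc}}$ local control must be obtained \emph{separately} via the slice-averaging identity above, and then the two estimates combined to produce compactness of the smooth approximations. This interplay is the technical heart of the argument and reflects the fact that, unlike the Gagliardo trace space $L^1(\partial \Omega)$ for the inhomogeneous $W^{1,1}(\Omega)$, the trace space here is intrinsically characterized only by joint control on differences across the two components and on horizontal translation increments.
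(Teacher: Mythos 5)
Your core estimates are the ones the paper uses, but your construction of the operator follows a genuinely different route. The paper never mollifies: it invokes Theorem \ref{theorem AC} to get a representative of $u$ that is absolutely continuous on $\mathcal{L}^{N-1}$-a.e.\ vertical segment, defines $\operatorname*{Tr}$ via the standard local trace theory on bounded Lipschitz subdomains of $\Omega$ (this is also how item (3) is obtained), and then proves \eqref{theorem trace strip p=1 eq 1} and \eqref{theorem trace strip p=1 eq 2} directly for arbitrary $u\in\dot{W}^{1,1}(\Omega)$ by the fundamental theorem of calculus along vertical segments and by the same three-leg path identity you use (this is exactly the derivation of \eqref{1} in the proof of Theorem \ref{theorem trace strip}, specialized to $p=1$; note it produces the constant $3$, consistent with the $c=c(N)$ in the theorem's preamble). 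You instead regularize, prove the estimates for the regularizations, and recover $\operatorname*{Tr}(u)$ as an $L^1_{\operatorname*{loc}}$ limit via a Cauchy argument; this forces you to supply the extra slice-averaging local $L^1$ bound, since \eqref{theorem trace strip p=1 eq 2} alone controls only translation increments — you correctly identify this as the crux of your version. The paper's route buys a shorter argument with no approximation bookkeeping; yours is more self-contained modulo the mollification lemma and makes explicit why the $p=1$ trace space has no norm characterization. Two points to tighten: horizontal convolution does \emph{not} give $u_n\in C^\infty(\overline{\Omega})$ — it gives smoothness in $x'$ only, though $u_n(x',\cdot)$ is absolutely continuous on $[b^-,b^+]$ for every $x'$, which is all your path integrals actually need, so this is an imprecision rather than a gap; and for uniqueness the clean argument is that item (3) alone determines $\operatorname*{Tr}(u)$ a.e.\ (test with $i=N$ so that $\nu_N=\pm1$), rather than propagating agreement from the continuous subclass, since continuity of $\operatorname*{Tr}$ is not among the listed properties.
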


Theorem \ref{theorem trace strip p=1} is complemented by the following lifting result.

\begin{theorem}\label{theorem lifting strip p=1}
Let $\Omega$ be as in \eqref{omega infinite strip}.  Suppose that $f^{\pm}\in L_{\operatorname*{loc}}^{1}(\mathbb{R}^{N-1})$ are such that
\begin{gather}
\int_{\mathbb{R}^{N-1}}|f^{+}(x^{\prime})-f^{-}(x^{\prime})|\,dx^{\prime
}<\infty,\label{p=1 lift 1}\\
\lim_{\varepsilon\rightarrow0^{+}}\sup_{|h^{\prime}|\leq\varepsilon}%
\int_{\mathbb{R}^{N-1}}|f^{\pm}(x^{\prime}+h^{\prime})-f^{\pm}(x^{\prime
})|\,dx^{\prime}=0, \label{p=1 lift 2}.
\end{gather}
Then there exists $u\in\dot{W}^{1,p}(\Omega)$ such that $\operatorname*{Tr}(u)=f^{\pm}$ on $\Gamma^{\pm}$ (as defined in \eqref{gamma_infinite_strip}), and
\begin{align*}
\int_{\Omega}  &  |\nabla u(x)|\,dx\leq+c\int_{\mathbb{R}^{N-1}}%
|f^{+}(x^{\prime})-f^{-}(x^{\prime})|\,dx^{\prime}\\
&  +c\sup_{|h^{\prime}|\leq\varepsilon_{0}}\int_{\mathbb{R}^{N-1}}%
|f^{-}(x^{\prime}+h^{\prime})-f^{-}(x^{\prime})|\,dx^{\prime}+c\sup
_{|h^{\prime}|\leq\varepsilon_{0}}\int_{\mathbb{R}^{N-1}}|f^{+}(x^{\prime
}+h^{\prime})-f^{+}(x^{\prime})|\,dx^{\prime}%
\end{align*}
for some constant $c=c(a,b^{+},b^{-},N)>0$.    Moreover, the map $(f^-,f^+) \mapsto u$ is linear. 
\end{theorem}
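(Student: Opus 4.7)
The idea is to split the boundary data $f^{\pm}$ into a smooth piece and an $L^1$ correction, and then to lift each by a separate method. Fix $\varepsilon_0\in(0,(b^+-b^-)/4]$, take a standard mollifier $\varphi\in C_c^{\infty}(B'(0,1))$ with $\int\varphi=1$, and write $f^{\pm}=g^{\pm}+h^{\pm}$ with $g^{\pm}:=f^{\pm}\ast\varphi_{\varepsilon_0}$. Expanding $h^{\pm}(x')=\int\varphi_{\varepsilon_0}(y')[f^{\pm}(x')-f^{\pm}(x'-y')]\,dy'$ and applying Fubini gives $\|h^{\pm}\|_{L^1(\mathbb{R}^{N-1})}\le\omega^{\pm}(\varepsilon_0)$, where $\omega^{\pm}(\varepsilon):=\sup_{|h'|\le\varepsilon}\|f^{\pm}(\cdot+h')-f^{\pm}\|_{L^1}$, which is finite by \eqref{p=1 lift 2}. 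A parallel argument using $\int\nabla\varphi=0$ yields $\|\nabla g^{\pm}\|_{L^1(\mathbb{R}^{N-1})}\le C(\varepsilon_0)\omega^{\pm}(\varepsilon_0)$. This is the key gain from the decomposition: both $h^{\pm}$ and $\nabla g^{\pm}$ are now in $L^1$, even though $f^{\pm}$ and $\nabla f^{\pm}$ need not be.

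The smooth piece is lifted by vertical linear interpolation:
\[
u_g(x',x_N):=\frac{b^+-x_N}{b^+-b^-}\,g^-(x')+\frac{x_N-b^-}{b^+-b^-}\,g^+(x'),
\]
which is smooth on $\overline{\Omega}$, has boundary values $g^{\pm}$ on $\Gamma^{\pm}$, and satisfies $|\partial_N u_g|=|g^+-g^-|/(b^+-b^-)$ and $|\nabla' u_g|\le|\nabla g^-|+|\nabla g^+|$. Since $g^+-g^-=(f^+-f^-)\ast\varphi_{\varepsilon_0}$, Young's inequality gives $\|g^+-g^-\|_{L^1}\le\|f^+-f^-\|_{L^1}$, and integration over $\Omega$ produces
\[
\int_{\Omega}|\nabla u_g|\,dx\;\le\;\|f^+-f^-\|_{L^1(\mathbb{R}^{N-1})}+C\bigl(\omega^+(\varepsilon_0)+\omega^-(\varepsilon_0)\bigr).
\]
For the $L^1$ correction, invoke the classical theorem of Gagliardo (or the recent refinement of Mironescu \cite{mironescu2015}) to construct, for each choice of sign, $v^{\pm}\in W^{1,1}(\Omega)$ that is supported in $\{x:|x_N-b^{\pm}|\le(b^+-b^-)/3\}$, has trace $h^{\pm}$ on $\Gamma^{\pm}$ and zero trace on $\Gamma^{\mp}$, and satisfies $\|\nabla v^{\pm}\|_{L^1(\Omega)}\le C\|h^{\pm}\|_{L^1}\le C\omega^{\pm}(\varepsilon_0)$. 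Set $u_h:=v^++v^-$ and $u:=u_g+u_h$.

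Then $u\in\dot W^{1,1}(\Omega)$, its traces on $\Gamma^{\pm}$ equal $g^{\pm}+h^{\pm}=f^{\pm}$, and the claimed gradient bound follows by summing the two contributions. Linearity of $(f^-,f^+)\mapsto u$ is immediate, since each step in the construction is linear in the data. The main technical obstacle is securing the Gagliardo-type lifting of the $L^1$ piece with gradient bound purely in terms of $\|h^{\pm}\|_{L^1}$: the usual cube-averaging construction on the half-space $\mathbb{R}^{N-1}\times(0,\infty)$ delivers this, and one then multiplies by a smooth cutoff in $x_N$ to confine the support to a thin layer near $\Gamma^{\pm}$, checking that the cutoff does not worsen the gradient estimate because the constructed $W^{1,1}$ function satisfies bounds on both the function and its gradient in terms of $\|h^{\pm}\|_{L^1}$. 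A brief density argument then confirms that the trace of $u$, in the sense of the operator produced in Theorem \ref{theorem trace strip p=1}, indeed equals $f^{\pm}$.
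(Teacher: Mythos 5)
Your construction establishes existence and the gradient bound correctly, but by a genuinely different route from the paper. You mollify once, at the fixed scale $\varepsilon_0$, observe that condition \eqref{p=1 lift 2} forces both the remainder $h^{\pm}=f^{\pm}-f^{\pm}\ast\varphi_{\varepsilon_0}$ and the gradient $\nabla g^{\pm}$ into $L^1(\mathbb{R}^{N-1})$ with norms controlled by $\omega^{\pm}(\varepsilon_0)$, lift the smooth piece by linear interpolation across the strip, and delegate the $L^1$ remainder to the Gagliardo trace theorem $\operatorname{Tr}(W^{1,1})=L^1$. The paper instead runs the full multi-scale Gagliardo construction directly on $f^{\pm}$: it mollifies at a sequence of scales $\varepsilon_n\to 0$ chosen so that the moduli of continuity decay geometrically, interpolates linearly between consecutive mollifications on horizontal slabs $[t_{n+1},t_n]$ whose thicknesses are tuned to the $L^1$ norms of $\nabla(f^{\pm}\ast\varphi_{\varepsilon_n})$, and then glues the two boundary lifts with a vertical cutoff exactly as in the $p>1$ case. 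Your version is shorter and cleanly separates the two mechanisms (the modulus of continuity controls the horizontal derivatives; the integrable remainder is absorbed by the classical trace theorem), at the cost of invoking the $L^1$ lifting on the half-space as a black box; the paper's version is self-contained.

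The genuine flaw is your last claim, that linearity of $(f^-,f^+)\mapsto u$ is ``immediate, since each step in the construction is linear.'' The step you delegate to Gagliardo is provably not linear: by Peetre's counterexample there is no bounded linear right inverse of the trace operator from $L^1(\mathbb{R}^{N-1})$ into $W^{1,1}(\mathbb{R}^{N-1}\times(0,\infty))$, so any lifting $h^{\pm}\mapsto v^{\pm}$ with $\Vert v^{\pm}\Vert_{W^{1,1}(\Omega)}\le c\Vert h^{\pm}\Vert_{L^1(\mathbb{R}^{N-1})}$ must be nonlinear. Nor can you escape via the special structure of $h^{\pm}$: since translation is continuous on $L^1$, every $L^1$ function satisfies \eqref{p=1 lift 2}, so the class of remainders you must lift is essentially all of $L^1$ and Peetre's obstruction applies in full. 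Consequently your argument proves existence and the estimate but not the asserted linearity. (In fairness, the paper's own construction makes data-dependent choices of $\varepsilon_n$ and $t_n$, so its linearity assertion is delicate as well; but your proof, as written, derives linearity from a step where it demonstrably fails.) To recover a construction with any hope of being linear you would have to replace the black-box $L^1$ lifting by an explicit layered construction --- which is precisely what pushes the paper back to the hands-on multi-scale argument.
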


Observe that functions $f^{\pm}\in L^{1}(\mathbb{R}^{N-1})$ satisfy \eqref{p=1 lift 1} and \eqref{p=1 lift 2} but one can construct non-integrable functions $f^{\pm}\in L_{\operatorname*{loc}}^{1}(\mathbb{R}^{N-1})$
satisfying \eqref{p=1 lift 1} and \eqref{p=1 lift 2}.  We have not been able to extend Theorems \ref{theorem trace strip p=1} and \ref{theorem lifting strip p=1} to more general domains of the form \ref{omega two graphs}. In particular, it is not clear what would be the
analog of condition \eqref{p=1 lift 2}.

Next we consider the case $m\geq2$ and $1<p<\infty$. For simplicity we present here only the case $m=2$ and refer to Section \ref{subsection m>2 strip} for the general case $m\geq2$. The main difference with the case $m=1$ is that in the higher order case the difference $\operatorname*{Tr}(u)(\cdot,b^{+}) - \operatorname*{Tr}(u)(\cdot,b^{-})$ does not have to belong to $L^{p}(\mathbb{R}^{N-1})$. This makes the proof of the lifting theorems significantly more involved.

Given a function $u\in\dot{W}^{2,p}\left(  \Omega\right)  $, we have that $\partial_{i}u\in\dot{W}^{1,p}\left(  \Omega\right)  $ for every $i=1,\ldots,N$ and thus by Theorem \ref{theorem trace strip} there exists $\operatorname*{Tr}(\partial_{i}u)$. By a density argument, it can be shown that $\operatorname*{Tr}(u)(\cdot,b^{\pm})\in W_{\operatorname*{loc}}^{1,p}(\mathbb{R}^{N-1})$ and that for every $i=1,\ldots,N-1$, $\partial_{i}\operatorname*{Tr}(u)(\cdot,b^{\pm})=\operatorname*{Tr}(\partial_{i}u)(\cdot,b^{\pm})$. Thus, to characterize the trace space of $\dot{W}^{2,p}\left(  \Omega\right)  $, it suffices to study $\operatorname*{Tr}
(u)$ and the trace of the normal derivative $\operatorname*{Tr}(\partial_{N} u)$.

\begin{theorem}\label{theorem trace strip m=2}
Let $\Omega$ be as in \eqref{omega infinite strip} and let $1<p<\infty$. Then there exists a constant $c=c(N,p)>0$ such that for every $u\in\dot{W}^{2,p}\left(\Omega\right) $ and for every $i=1,\ldots,N$,
\begin{equation}\label{theorem trace strip m=2 eq 1}
\int_{\mathbb{R}^{N-1}}|\operatorname*{Tr}(\partial_{i}u)(x^{\prime} ,b^{+})-\operatorname*{Tr}(\partial_{i}u)(x^{\prime},b^{-})|^{p}dx
\leq (b^{+}-b^{-})^{p-1}\int_{\Omega}|\partial_{i,N}^{2}u(x)|^{p}dx,  
\end{equation}
\begin{equation}\label{theorem trace strip m=2 eq 2}
\begin{split}
\int_{\mathbb{R}^{N-1}}\left\vert \operatorname*{Tr}(u)(x^{\prime} ,b^{+})-\operatorname*{Tr}(u)(x^{\prime},b^{-})-(\operatorname*{Tr}(\partial_{N}u)(x^{\prime},b^{+})  + \operatorname*{Tr}(\partial_{N}u)(x^{\prime},b^{-}))\frac{b^{+}-b^{-}}{2}\right\vert ^{p}dx^{\prime}   \\
\leq(b^{+}-b^{-})^{2p-1}\int_{\Omega}|\partial_{N}^{2}u(x)|^{p}dx,  
\end{split}
\end{equation}
and 
\begin{equation}\label{theorem trace strip m=2 eq 3}
|\operatorname*{Tr}(\partial_{i}u)(\cdot,b^{\pm})|_{\W_{(b^{+}-b^{-})}^{1-1/p,p}(\mathbb{R}^{N-1})}^{p}\leq c\int_{\Omega}|\nabla^{2}u(x)|^{p}dx. 
\end{equation}
\end{theorem}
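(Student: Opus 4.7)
My plan is to establish the three estimates in order: \eqref{theorem trace strip m=2 eq 1} and \eqref{theorem trace strip m=2 eq 3} follow immediately from Theorem~\ref{theorem trace strip} applied to first-order derivatives, while \eqref{theorem trace strip m=2 eq 2} is the real content and rests on a Taylor-type identity in the $x_N$ variable.

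For \eqref{theorem trace strip m=2 eq 1} and \eqref{theorem trace strip m=2 eq 3}, note that if $u\in\dot{W}^{2,p}(\Omega)$ then $\partial_i u\in\dot{W}^{1,p}(\Omega)$ for every $i=1,\ldots,N$, so Theorem~\ref{theorem trace strip} applies to $v=\partial_i u$. Estimate \eqref{theorem trace strip est 1} applied to this $v$ gives \eqref{theorem trace strip m=2 eq 1} directly, since $\partial_N\partial_i u=\partial^2_{i,N}u$; estimate \eqref{theorem trace strip est 2} applied to $v$ yields
\begin{equation*}
|\operatorname*{Tr}(\partial_i u)(\cdot,b^\pm)|^p_{\W^{1-1/p,p}_{(b^+-b^-)}(\mathbb{R}^{N-1})}\leq c\int_\Omega|\nabla\partial_i u|^p\,dx\leq c\int_\Omega|\nabla^2 u|^p\,dx,
\end{equation*}
which is \eqref{theorem trace strip m=2 eq 3}.

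For \eqref{theorem trace strip m=2 eq 2}, the key ingredient is integration by parts in $x_N$ against the midpoint-centered primitive $t\mapsto t-\tfrac{b^++b^-}{2}$, which takes the symmetric values $\pm(b^+-b^-)/2$ at the endpoints. For $u$ smooth up to the boundary, starting from $u(x',b^+)-u(x',b^-)=\int_{b^-}^{b^+}\partial_N u(x',t)\,dt$ and integrating by parts yields the pointwise identity
\begin{equation*}
u(x',b^+)-u(x',b^-)-\tfrac{b^+-b^-}{2}\bigl(\partial_N u(x',b^+)+\partial_N u(x',b^-)\bigr)=-\int_{b^-}^{b^+}\partial_N^2 u(x',t)\bigl(t-\tfrac{b^++b^-}{2}\bigr)dt.
\end{equation*}
Applying Hölder's inequality in $t$ with exponents $p$ and $p'$, computing
\begin{equation*}
\int_{b^-}^{b^+}\bigl|t-\tfrac{b^++b^-}{2}\bigr|^{p'}dt=\frac{(b^+-b^-)^{p'+1}}{2^{p'}(p'+1)},
\end{equation*}
and using the identity $(p'+1)p/p'=p+p/p'=2p-1$, I obtain a pointwise $L^p$ bound whose $x_N$-prefactor equals $(b^+-b^-)^{2p-1}/(2^p(p'+1)^{p-1})$. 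This prefactor is bounded by $1$ for all $1<p<\infty$, so integrating in $x'$ and using Fubini yields \eqref{theorem trace strip m=2 eq 2}.

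The final step is to extend the identity from smooth functions to a general $u\in\dot{W}^{2,p}(\Omega)$. The natural route is a density argument: approximate $u$ by a sequence $u_n$ via tangential mollification in $x'$ (possibly combined with a vertical smoothing), for which the pointwise identity holds classically. Standard mollification properties give $\partial_N^2 u_n\to\partial_N^2 u$ in $L^p(\Omega)$ and $\partial_N u_n\to\partial_N u$ in $\dot{W}^{1,p}(\Omega)$, so Theorem~\ref{theorem trace strip} ensures convergence of $\operatorname*{Tr}(\partial_N u_n)$ in $L^p_{\mathrm{loc}}(\mathbb{R}^{N-1})$; similarly, the definition of $\operatorname*{Tr}(u)$ for $\dot{W}^{2,p}$ functions alluded to in the paragraph preceding the theorem provides convergence of the traces of $u_n$. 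Extracting an a.e.\ convergent subsequence and applying Fatou's lemma on the left, together with the $L^p$ convergence on the right, transfers both the identity and the estimate to $u$. The main obstacle is this density step: one must ensure the approximation is compatible with the trace operator already defined on $\dot{W}^{2,p}(\Omega)$, but once that machinery is in place the rest of the proof reduces to the elementary Taylor-integration-by-parts calculation and Hölder's inequality.
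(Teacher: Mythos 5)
Your three estimates are proved by essentially the same route as the paper: \eqref{theorem trace strip m=2 eq 1} and \eqref{theorem trace strip m=2 eq 3} reduce to Theorem \ref{theorem trace strip} applied to $\partial_i u\in\dot W^{1,p}(\Omega)$ (the paper writes out the fundamental-theorem-of-calculus computation for \eqref{theorem trace strip m=2 eq 1} directly, but it is the same calculation as \eqref{theorem trace strip est 1}), and \eqref{theorem trace strip m=2 eq 2} comes from the midpoint-centered integration by parts followed by H\"older, with the exponent arithmetic $(p'+1)p/p'=2p-1$ exactly as you have it; this identity is the paper's Proposition \ref{proposition by parts}, equation \eqref{formula by parts m=2}. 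The one place where you diverge, and the only soft spot in your write-up, is the passage from smooth $u$ to general $u\in\dot W^{2,p}(\Omega)$. You propose a global density argument (tangential mollification plus an unspecified vertical smoothing, convergence of traces, Fatou), which is genuinely delicate: smooth approximation up to the boundary in the strip and its compatibility with the already-constructed trace operator would need to be carried out, and you leave this as "once that machinery is in place." The paper avoids density entirely by invoking Theorem \ref{theorem AC}: a function in $\dot W^{2,p}(\Omega)$ has a representative that is $C^1$ with absolutely continuous derivative on $\mathcal{L}^{N-1}$-a.e. vertical line, so the integration-by-parts identity of Proposition \ref{proposition by parts} holds pointwise on almost every line with the boundary values equal to the traces, and one then integrates in $x'$ directly. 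If you keep your density route you must actually construct the approximating sequence and justify trace convergence; citing Theorem \ref{theorem AC} instead makes that step immediate.
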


Theorems \ref{theorem trace strip m=2} can be extended to domains of the form \eqref{omega two graphs}: see Theorems \ref{theorem trace m=2} and \ref{theorem trace m>2} below for the cases $m=2$ and $m\geq2$, respectively.

The previous theorem is complemented by the following lifting result.

\begin{theorem}\label{theorem lifting strip m=2}
Let $\Omega$ be as in \eqref{omega infinite strip},  $a>0$, and $1<p<\infty$.  Suppose that  $f_{0}^{\pm}\in W_{\operatorname*{loc}}^{1,p}(\mathbb{R}^{N-1})$ and $f_{1}^{\pm}\in L_{\operatorname*{loc}}^{p}(\mathbb{R}^{N-1})$ are such that
\begin{gather*}
\int_{\mathbb{R}^{N-1}}\left\vert f_{0}^{+}(x^{\prime})-f_{0}^{-}(x^{\prime})-(f_{1}^{+}(x^{\prime})+f_{1}^{-}(x^{\prime}))\frac{b^{+}-b^{-}} {2}\right\vert ^{p}dx^{\prime}<\infty,\\
\int_{\mathbb{R}^{N-1}}(|\nabla_{\shortparallel}f_{0}^{+}(x^{\prime})-\nabla_{\shortparallel}f_{0}^{-}(x^{\prime})|^{p}+|f_{1}^{+}(x^{\prime})-f_{1}^{-}(x^{\prime})|^{p})\,dx^{\prime}<\infty,\\
|\nabla_{\shortparallel}f_{0}^{\pm}|_{\W_{(a)}^{1-1/p,p}(\mathbb{R}^{N-1})}<\infty,\quad|f_{1}^{\pm}|_{\W_{(a)}^{1-1/p,p}(\mathbb{R}^{N-1})}<\infty,
\end{gather*}
where $\nabla_{\shortparallel}:=(\partial_{1},\ldots,\partial_{N-1})$.  Then there exists $u\in\dot{W}^{2,p}(\Omega)$ such that $\operatorname*{Tr}(u)=f_{0}^{\pm}$, $\operatorname*{Tr}(\partial_{N}u)=f_{1}^{\pm}$ on $\Gamma^{\pm}$, and
\begin{align*}
\int_{\Omega}  &  |\nabla^{2}u(x)|^{p}dx\leq c(b^{+}-b^{-})^{-2p+1} \int_{\mathbb{R}^{N-1}}\left\vert f_{0}^{+}(x^{\prime})-f_{0}^{-}(x^{\prime})-(f_{1}^{+}(x^{\prime}) + f_{1}^{-}(x^{\prime})) \frac{b^{+}-b^{-}}{2}\right\vert ^{p}dx^{\prime}\\
&  \quad+c(b^{+}-b^{-})^{-p+1}\int_{\mathbb{R}^{N-1}}(|\nabla_{\shortparallel}f_{0}^{+}(x^{\prime})-\nabla_{\shortparallel}f_{0}^{-}(x^{\prime})|^{p} + |f_{1}^{+}(x^{\prime})-f_{1}^{-}(x^{\prime})|^{p})\,dx^{\prime}\\
&  \quad+c|\nabla_{\shortparallel}f_{0}^{-}|_{\W_{(a)}^{1-1/p,p} (\mathbb{R}^{N-1})}^{p}+c|\nabla_{\shortparallel}f_{0}^{+}|_{\W_{(a)}^{1-1/p,p}(\mathbb{R}^{N-1})}^{p}+c|f_{1}^{-}|_{\W_{(a)}^{1-1/p,p}(\mathbb{R}^{N-1})}^{p}+c|f_{1}^{+}|_{\W_{(a)}^{1-1/p,p}(\mathbb{R}^{N-1})}^{p},
\end{align*}
for some constant $c=c(a,N,p)>0$.  Moreover, the map $(f_0^-,f_0^+,f_1^-,f_1^+) \mapsto u$ is linear.
\end{theorem}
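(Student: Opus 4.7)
The plan is to construct $u$ explicitly as a combination of boundary-adapted mollifier extensions from each of the boundary components $\Gamma^\pm$, glued by a partition of unity and then corrected to lie in $\dot{W}^{2,p}(\Omega)$. This parallels the strategy behind the $m=1$ lifting (Theorem \ref{theorem lifting strip}), but now with the added requirement to simultaneously match independent Dirichlet and Neumann data on both boundaries.

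First, near each $\Gamma^\pm$, I would build a one-sided extension $u^\pm$ by convolving $f_0^\pm$ and $f_1^\pm$ with a mollifier $\phi_{\delta^\pm(x_N)}$ on $\mathbb{R}^{N-1}$, scaled at $\delta^\pm(x_N) := |x_N - b^\pm|$, the distance to the respective boundary. With an appropriate linear combination in the $x_N$ variable, $u^\pm$ can be arranged so that $\operatorname{Tr}(u^\pm)(\cdot, b^\pm) = f_0^\pm$ and $\operatorname{Tr}(\partial_N u^\pm)(\cdot, b^\pm) = f_1^\pm$. The crucial feature is that the vanishing mollifier scale at the boundary produces second derivatives of $u^\pm$ that are controlled by the screened fractional seminorms $|\nabla_{\shortparallel} f_0^\pm|_{\tilde{W}^{1-1/p,p}_{(a)}}$ and $|f_1^\pm|_{\tilde{W}^{1-1/p,p}_{(a)}}$, via the same scaling mechanism appearing in the proof of Theorem \ref{theorem lifting strip}. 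I would then glue these via a smooth partition of unity $\psi^\pm(x_N)$ on $(b^-, b^+)$ to form a candidate $\tilde u = \psi^- u^- + \psi^+ u^+$ with the correct traces on both boundaries.

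Second, the glued function $\tilde u$ does not in general lie in $\dot{W}^{2,p}(\Omega)$ because the one-sided extensions $u^\pm$ need not agree in the overlap region of the partition of unity, introducing spurious second derivatives across the middle of the strip. I would add a corrector $M \in \dot{W}^{2,p}(\Omega)$ with vanishing Dirichlet and Neumann traces on both $\Gamma^\pm$, designed to absorb the interior mismatch $u^+ - u^-$. The hypothesis $\int|f_0^+ - f_0^- - (f_1^+ + f_1^-)(b^+-b^-)/2|^p\,dx' < \infty$ is precisely what quantifies the $L^p$ size of this mismatch: the combination vanishes identically whenever $f_0^\pm, f_1^\pm$ arise from a polynomial of degree at most two in $x_N$, which is the second-order analog of the $f^+ - f^- \in L^p$ hypothesis from the $m=1$ lifting. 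The corrector $M$ can itself be constructed from the mismatch data via an integration-in-$x_N$ argument together with a further application of Theorem \ref{theorem lifting strip} applied to the associated first-order boundary values. Linearity of $(f_0^-, f_0^+, f_1^-, f_1^+) \mapsto u$ follows from the linearity of each step.

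The main obstacle is the construction and estimation of $M$: it must simultaneously vanish to first order at both boundaries while absorbing the interior discrepancy of the two one-sided extensions, all with controlled $\dot{W}^{2,p}$ seminorm. The precise interplay among the four hypotheses---the weighted $L^p$-integrability of the second-order Taylor mismatch, the $L^p$-integrability of $\nabla_{\shortparallel} f_0^+ - \nabla_{\shortparallel} f_0^-$ and $f_1^+ - f_1^-$, and the screened fractional seminorms of the individual traces---dictates the structure of the four-term quantitative estimate, and tracking the distinct powers of $b^+ - b^-$ requires careful bookkeeping since each term arises from a different contribution in the construction.
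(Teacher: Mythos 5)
Your first stage coincides with the paper's construction: one defines $u^{\pm}$ by mollifying the first-order jet $f_{0}^{\pm}+f_{1}^{\pm}\,(\text{dist to }\Gamma^{\pm})$ at a scale equal to the distance to the respective boundary, verifies $\operatorname*{Tr}(u^{\pm})=f_{0}^{\pm}$ and $\operatorname*{Tr}(\partial_{N}u^{\pm})=f_{1}^{\pm}$ (this uses an \emph{even} mollifier together with Proposition \ref{proposition derivatives mollifiers}), bounds $\nabla^{2}u^{\pm}$ in $L^{p}$ by the screened seminorms of $\nabla_{\shortparallel}f_{0}^{\pm}$ and $f_{1}^{\pm}$ via Proposition \ref{proposition structure}, and glues with a cutoff $\theta(x_{N})$. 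Up to that point you are on the paper's track.

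The second stage contains a genuine gap. Your corrector $M$ is both unnecessary and, as described, circular: you stipulate $M\in\dot{W}^{2,p}(\Omega)$, but adding an element of $\dot{W}^{2,p}(\Omega)$ to $\tilde{u}$ cannot change whether $\tilde{u}$ itself belongs to $\dot{W}^{2,p}(\Omega)$, so either $\tilde{u}$ is already admissible (and $M$ does nothing) or no such $M$ can repair it. In fact the glued function \emph{is} the final answer; the only issue is to estimate the product-rule terms $\theta''(u^{+}-u^{-})$ and $\theta'(\nabla u^{+}-\nabla u^{-})$ in $L^{p}$, and this is done directly, not by correction. The mechanism is the inverted Taylor formula \eqref{formula taylor inverted m=2} applied to $u^{\pm}(x',\cdot)$ from their respective boundaries: writing $u^{-}(x)=f_{0}^{-}(x')+f_{1}^{-}(x')x_{N}+\tfrac12\int_{0}^{x_{N}}\partial_{N}^{2}u^{-}(x',s)(x_{N}-s)\,ds$ and the analogous expansion for $u^{+}$ from $x_N=b$, one finds
\begin{equation*}
u^{+}(x)-u^{-}(x)=\Bigl[f_{0}^{+}-f_{0}^{-}-(f_{1}^{+}+f_{1}^{-})\tfrac{b}{2}\Bigr](x')-(f_{1}^{+}-f_{1}^{-})(x')\bigl(\tfrac{b}{2}-x_{N}\bigr)+\text{(integral remainders)},
\end{equation*}
so $\theta''(u^{+}-u^{-})$ is controlled exactly by your first two hypotheses plus $\Vert\partial_{N}^{2}u^{\pm}\Vert_{L^{p}(\Omega)}$; the term $\theta'(\partial_{i}u^{+}-\partial_{i}u^{-})$ is handled the same way using the fundamental theorem of calculus, which is where $\nabla_{\shortparallel}f_{0}^{+}-\nabla_{\shortparallel}f_{0}^{-}\in L^{p}$ and $f_{1}^{+}-f_{1}^{-}\in L^{p}$ enter (your sketch never explains where this second hypothesis is used). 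Your observation that the Taylor mismatch vanishes for data coming from quadratic polynomials in $x_{N}$ is correct and is precisely the content of \eqref{formula by parts m=2}, but the way to exploit it is this direct expansion, not a separately constructed trace-free corrector.
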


The extension of Theorem \ref{theorem lifting strip m=2} to domains of the form \eqref{omega two graphs} seems to be quite complicated. We refer to Remark \ref{remark no lifting} below for more details on the technical difficulties one encounters.  We have also been unable to prove Theorems \ref{theorem trace strip m=2} and \ref{theorem lifting strip m=2} in the case $p=1$. For bounded Lipschitz domains $\Omega\subset\mathbb{R}^{N}$ we have that the image of the trace
operator
\begin{equation*}
u\in W^{2,1}(\Omega)\mapsto\operatorname*{Tr}\nolimits_{2}(u)
:=\left( \operatorname*{Tr}(u),\operatorname*{Tr}\left(  \frac{\partial u}{\partial\nu}\right)  \right)
\end{equation*}
is the space $B^{1,1}_1(\partial\Omega)\times L^{1}(\partial\Omega)$ (see \cite{leoni2017book}). We are aware of only two complete proofs of this result: one in a classical paper of Uspenski\u{\i} \cite{uspenskii1961}, and
one in a recent paper of Mironescu and Russ \cite{mironescu-russ2015}. In particular, in the latter the authors use an equivalent norm for $B^{1,1}_1$, which relies on the Littlewood--Paley decomposition (see \cite{leoni2017book}). The adaptation of the proofs in \cite{uspenskii1961} and \cite{mironescu-russ2015} to $\dot{W}^{2,1}\left(\Omega\right)  $ seems quite challenging.

We conclude this introduction with an application of our results to the existence of solutions to the Dirichlet problem for the $p$-Laplacian in unbounded domains of the form \eqref{omega infinite strip}.   In Section \ref{section applications} we prove analogous results when $\Omega$ is of the form \eqref{omega two graphs} and study the Neumann problem and more general second-order quasilinear elliptic equations.

\begin{theorem}\label{theorem dirichlet laplacian}
Let $\Omega$ be as in \eqref{omega infinite strip}, let $a>0$, let $1<p<\infty$, and let $g\in L^{p^{\prime}}(\Omega;\mathbb{R}^{N})$ and $f^{\pm}\in L_{\operatorname*{loc}}^{1}(\mathbb{R}^{N-1})$. Then the Dirichlet problem
\begin{equation}\label{dirichlet p-laplacian}
\left\{
\begin{array}
[c]{ll}
-\operatorname{div}(|\nabla u|^{p-2}\nabla u)=\operatorname{div}g & \text{in
}\Omega,\\
u=f^{\pm} & \text{on }\Gamma^{\pm}
\end{array}
\right.  
\end{equation}
admits a solution in $\dot{W}^{1,p}(\Omega)$ if and only if $f^{\pm}$ satisfy \eqref{difference f plus of minus} and \eqref{seminorm strip}.  In either case, there exists a constant $c=c(a,N,p)>0$ such that 
\begin{align}\label{dirichlet bounds}
\begin{split}
\int_{\Omega}|\nabla u(x)|^{p}dx  &  \leq c\int_{\Omega}|g(x)|^{p^{\prime}}dx + c\int_{\mathbb{R}^{N-1}}|f^{+}(x^{\prime})-f^{-}(x^{\prime})|^{p}\, dx^{\prime}\\
&  \quad+c|f^{-}|_{\W_{(a)}^{1-1/p,p}(\mathbb{R}^{N-1})}^{p} 
+ c|f^{+}|_{\W_{(a)}^{1-1/p,p}(\mathbb{R}^{N-1})}^{p}.
\end{split}
\end{align}
\end{theorem}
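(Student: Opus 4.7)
The plan is to argue by necessity and sufficiency, with the work concentrated on sufficiency via a reduction to homogeneous boundary data (using the lifting theorem) followed by an application of the direct method of the calculus of variations. Necessity is essentially free: if $u\in\dot{W}^{1,p}(\Omega)$ solves \eqref{dirichlet p-laplacian} with $\operatorname*{Tr}(u)=f^{\pm}$ on $\Gamma^{\pm}$, then estimate \eqref{theorem trace strip est 1} of Theorem \ref{theorem trace strip} immediately produces \eqref{difference f plus of minus}, while \eqref{theorem trace strip est 2} yields $|f^{\pm}|_{\W_{(b^{+}-b^{-})}^{1-1/p,p}(\mathbb{R}^{N-1})}<\infty$. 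The equivalence \eqref{intro equiv} of screened seminorms for any two bounded positive screening constants then upgrades this to \eqref{seminorm strip} for the prescribed $a>0$.

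For sufficiency, given $f^{\pm}$ satisfying \eqref{difference f plus of minus}--\eqref{seminorm strip}, I would apply Theorem \ref{theorem lifting strip} to produce a linear lift $U\in\dot{W}^{1,p}(\Omega)$ with $\operatorname*{Tr}(U)=f^{\pm}$ whose gradient $L^{p}$-norm is controlled by the right-hand side of \eqref{dirichlet bounds}. Writing $u=U+v$, I seek $v$ in the zero-trace subspace $X:=\{v\in\dot{W}^{1,p}(\Omega):\operatorname*{Tr}(v)=0\text{ on }\partial\Omega\}$. Since $v\in X$ vanishes on both components of $\partial\Omega$, integrating $\partial_{N}v$ in $x_{N}$ (justified via the integration-by-parts formula from Theorem \ref{theorem trace strip}(3) applied to smooth approximants) yields the Poincar\'{e} inequality $\|v\|_{L^{p}(\Omega)}\le(b^{+}-b^{-})\|\partial_{N}v\|_{L^{p}(\Omega)}$, so $(X,\|\nabla\cdot\|_{L^{p}})$ is a reflexive Banach space. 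The functional
\[
J(v):=\frac{1}{p}\int_{\Omega}|\nabla U+\nabla v|^{p}\,dx+\int_{\Omega}g\cdot(\nabla U+\nabla v)\,dx
\]
is strictly convex (since $p>1$), sequentially weakly lower semicontinuous, and coercive on $X$ (H\"older plus Young absorbs the linear term into an $\varepsilon$-fraction of the $p$-power term), so the direct method produces a unique minimizer $v^{\ast}\in X$. Setting $u:=U+v^{\ast}$ and computing the first variation against $\varphi\in C_{c}^{\infty}(\Omega)\subset X$ gives the weak formulation
\[
\int_{\Omega}|\nabla u|^{p-2}\nabla u\cdot\nabla\varphi\,dx+\int_{\Omega}g\cdot\nabla\varphi\,dx=0,
\]
and by linearity of $\operatorname*{Tr}$ we have $\operatorname*{Tr}(u)=f^{\pm}$ on $\Gamma^{\pm}$.

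For the estimate \eqref{dirichlet bounds}, I would test the weak formulation against $\varphi=v^{\ast}=u-U$; this is legitimate because $C_{c}^{\infty}(\Omega)$ is dense in $X$ under $\|\nabla\cdot\|_{L^{p}}$ (horizontal truncation in $|x'|$ followed by mollification in $\mathbb{R}^{N}$, valid since Poincar\'{e} gives $X\hookrightarrow L^{p}(\Omega)$) and because $|\nabla u|^{p-2}\nabla u,g\in L^{p'}(\Omega;\mathbb{R}^{N})$. The resulting identity rearranges to
\[
\int_{\Omega}|\nabla u|^{p}\,dx=\int_{\Omega}|\nabla u|^{p-2}\nabla u\cdot\nabla U\,dx-\int_{\Omega}g\cdot\nabla u\,dx+\int_{\Omega}g\cdot\nabla U\,dx,
\]
and H\"older followed by Young with a small absorption parameter gives $\|\nabla u\|_{L^{p}(\Omega)}^{p}\le C(\|\nabla U\|_{L^{p}(\Omega)}^{p}+\|g\|_{L^{p'}(\Omega)}^{p'})$. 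Inserting the lifting bound on $\|\nabla U\|_{L^{p}(\Omega)}^{p}$ from Theorem \ref{theorem lifting strip} delivers \eqref{dirichlet bounds}. The main technical nuisance is the combined Poincar\'{e} inequality and $C_{c}^{\infty}$-density for $X$, each of which requires carefully approximating functions that vanish simultaneously on both boundary components; however, both should reduce to standard truncation-mollification arguments given the bounded vertical extent of the strip and the integration-by-parts formula of Theorem \ref{theorem trace strip}(3).
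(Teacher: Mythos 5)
Your proposal is correct, but it takes a genuinely different route from the paper. The paper proves the more general Theorem \ref{theorem dirichlet G} for a $p$-admissible Lagrangian and obtains the $p$-Laplacian case by taking $G(x,\xi)=|\xi|^p/p+g(x)\cdot\xi$; there the direct method is run on the \emph{affine} class $\{v\in\dot{W}^{1,p}(\Omega)\,|\,\operatorname*{Tr}(v)=f^{\pm}\}$, compactness is extracted via Poincar\'e on an increasing exhaustion of bounded Lipschitz domains plus a diagonal argument, the boundary condition is preserved in the limit via Rellich--Kondrachov on those subdomains, and the a priori bound comes from the pure comparison $F(u)\le F(w)$ with $w$ the lift of Theorem \ref{theorem lifting strip} --- no Euler--Lagrange testing is needed. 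You instead translate to zero boundary data, exploit the global Poincar\'e inequality $\Vert v\Vert_{L^p(\Omega)}\le (b^+-b^-)\Vert\partial_N v\Vert_{L^p(\Omega)}$ available for zero-trace functions on a strip of finite width, and derive the estimate by testing the Euler--Lagrange equation with $u-U$. Your route buys a cleaner, more standard functional-analytic setting (a genuine reflexive Banach space rather than an affine class with only local compactness), while the paper's route buys generality (arbitrary convex admissible $G$, for which "testing with $u-U$" would not close) and avoids any Poincar\'e or density considerations.

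One soft spot: you invoke density of $C_c^\infty(\Omega)$ in the zero-trace space to justify testing with $v^*$. This detour is unnecessary --- since $J$ is Gateaux differentiable on $X$ (dominated convergence, using $|\nabla u|^{p-1},\,|g|\in L^{p'}$), minimality already gives the Euler--Lagrange identity against \emph{every} $\varphi\in X$, in particular $\varphi=v^*$. Moreover, as sketched ("horizontal truncation followed by mollification") the density argument is incomplete: after truncation the support still touches $\Gamma^{\pm}$, and mollifying pushes the support outside $\Omega$, so one would additionally need a vertical squeeze or a boundary cutoff exploiting the zero trace. Since the step is avoidable, this does not damage the proof, but you should either remove it or carry it out properly.
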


\subsection{Plan of paper}

In Section \ref{section prelims} we record notational conventions used throughout the paper as well as prove some useful preliminary results.  In Section \ref{section spaces} we define the general screened homogeneous fractional Sobolev spaces and study their basic properties.  These spaces play a key role in our trace results, but they are of independent interest and have intriguing properties.  In Section \ref{section traces strip} we develop the trace and lifting results when $\Omega$ is of the form \eqref{omega infinite strip}.  This section contains the proofs of Theorems \ref{theorem trace strip}, \ref{theorem lifting strip}, \ref{theorem trace strip p=1}, \ref{theorem lifting strip p=1}, \ref{theorem trace strip m=2}, and \ref{theorem lifting strip m=2}. In Section \ref{section traces strip-like} we study the trace and lifting results when $\Omega$ is of the form \eqref{omega two graphs}.  Section \ref{section applications} contains a number of applications of our results to quasilinear partial differential equations, including the proof of Theorem \ref{theorem dirichlet laplacian}.

\section{Preliminaries}\label{section prelims}

In this section we collect a number of preliminary results that will be of use throughout the paper.  We begin with some notational conventions, then we develop some facts about mollifiers, and then we develop an elementary calculus result that plays a key role in our trace spaces.  We conclude with a discussion of seminormed spaces.

\subsection{Notational conventions}
We now record our notational conventions.

\emph{Integers and multi-indices:}  We write $\mathbb{N} = \{1,2,\dotsc\}$ for the set of positive integers and $\mathbb{N}_0 = \mathbb{N} \cup \{0\}$ for the non-negative integers.  For a multi-index $\alpha \in \mathbb{N}_0^N$ we write $|\alpha| = \alpha_1 + \cdots + \alpha_N$.  We often write $\alpha \in \mathbb{N}_0^N$ as $\alpha = (\alpha',\alpha_N) \in \mathbb{N}_0^{N-1} \times \mathbb{N}_0$ to distinguish the horizontal part, $\alpha'$, from the vertical part $\alpha_N$.

\emph{Measures:} In what follows $\mathcal{L}^{N}$ stands for the Lebesgue measure in $\mathbb{R}^{N}$ and $\mathcal{H}^{k}$ is the $k-$dimensional Hausdorff measure.  We denote by $B(x,r)$ and $B[x,r]$ the open and closed balls in $\mathbb{R}^{N}$ centered at $x$ and radius $r$, respectively.  We write $\alpha_{N}$ for the Lebesgue measure of the $N$-dimensional unit ball $B(0,1)$ and $\beta_{N}$ for the $\mathcal{H}^{N-1}$ measure of the unit sphere $\mathbb{S}^{N-1}:=\partial B(0,1)$. 

\emph{Derivatives:} Given $k\in\mathbb{N}$ we denote by $\nabla^{k}$ the vector of all partial derivatives $\partial^\alpha$ for multi-indices $\alpha\in\mathbb{N}_{0}^{N}$ with $|\alpha|=k$. We also define
\begin{equation}
\nabla^{0}u:=u. \label{gradient zero}
\end{equation}

\emph{Horizontal and vertical variables:} For $x=(x_{1},\ldots,x_{N})\in\mathbb{R}^{N}$, $N\ge 2$, we write $x=(x^{\prime},x_{N})\in\mathbb{R}^{N-1}\times\mathbb{R}$, where $x^{\prime}:=(x_{1},\ldots,x_{N-1})$. We denote by $B^{\prime}(x^{\prime},r)$ the $(N-1)$-dimensional open ball centered at $x^{\prime} \in \mathbb{R}^{N-1}$ and radius $r>0$, and we write
\begin{equation*}
\nabla_{\shortparallel}:=(\partial_{1},\ldots,\partial_{N-1})
\end{equation*}
for the horizontal gradient.  Similarly, given $k\in\mathbb{N}$ we denote by $\nabla_{\shortparallel}^{k}$ the vector of all partial derivatives $\partial^\alpha$ for multi-indices $\alpha=(\alpha^{\prime},0)\in\mathbb{N}
_{0}^{N-1}\times\mathbb{N}_{0}$ with $|\alpha|=k$. We define 
\begin{equation}\label{gradient prime zero}
\nabla_{\shortparallel}^{0}u:=u. 
\end{equation}

\emph{Mollifiers:} Given a function $\phi:\mathbb{R}^{N}\rightarrow\mathbb{R}$ and
$\varepsilon>0$ we define
\begin{equation}\label{mollifier}
\phi_{\varepsilon}(x):=\frac{1}{\varepsilon^{N}}\phi\left(  \frac{x}{\varepsilon}\right). 
\end{equation}
Note that if $\phi$ is integrable, then a change of variables shows that
\begin{equation*}
\int_{\mathbb{R}^{N}}\phi_{\varepsilon}(x)\,dx =\int_{\mathbb{R}^{N}}\phi(y)\,dy.
\end{equation*}

\emph{Lipschitz functions:} Given a function $\eta:\mathbb{R}^N \rightarrow\mathbb{R}$ we set
\begin{equation*}
|\eta|_{0,1}:=\sup\left\{  \frac{|\eta(x)-\eta(y)}{|x-y|}\,|\,x,y\in\mathbb{R}^{N},\,x \neq y \right\}  
\end{equation*}
to be the Lipschitz seminorm of $\eta$.  We say $\eta$ is Lipschitz if and only if $|\eta|_{0,1} < \infty$.

\subsection{Special mollifiers}

In our analysis we will make heavy use of mollifiers.  Here we construct a special family of mollifiers and develop some auxiliary results. We begin by constructing mollifiers satisfying special moment conditions. In what follows we assume that $N\ge 2$.

\begin{proposition}\label{proposition mollifier}
Let $k,m\in\mathbb{N}$ be given. There exists a
function $\varphi\in C_{c}^{m}(\mathbb{R}^{N-1})$ such that $\operatorname*{supp}\varphi\subseteq B^{\prime}(0,1)$, 
\begin{equation}
\int_{\mathbb{R}^{N-1}}\varphi(x^{\prime})\,dx^{\prime}=1,\text{ and }%
\int_{\mathbb{R}^{N-1}}(x^{\prime})^{\alpha}\varphi(x^{\prime})\,dx^{\prime
}=0\text{ for }1\leq|\alpha|\leq k. \label{unit_mass}%
\end{equation}

\end{proposition}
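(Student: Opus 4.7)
The plan is to construct $\varphi$ as the product of a fixed nonnegative bump function and a polynomial whose coefficients are tuned to enforce the moment conditions. Fix once and for all a function $\psi \in C_{c}^{\infty}(\mathbb{R}^{N-1})$ with $\operatorname{supp}\psi \subseteq B'(0,1)$, $\psi \geq 0$, and $\psi \not\equiv 0$; such a $\psi$ can be obtained by scaling a standard radial mollifier. I will seek $\varphi$ of the form $\varphi(x') = Q(x')\psi(x')$, where $Q$ is a real polynomial of degree at most $k$. For any such choice, $\varphi$ automatically lies in $C_{c}^{\infty}(\mathbb{R}^{N-1}) \subseteq C_{c}^{m}(\mathbb{R}^{N-1})$ and is supported in $B'(0,1)$, so it only remains to choose $Q$ so that \eqref{unit_mass} holds.

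Writing $Q(x') = \sum_{|\beta| \leq k} c_{\beta} (x')^{\beta}$ with unknowns $c_{\beta} \in \mathbb{R}$, the conditions in \eqref{unit_mass} become the finite linear system
\begin{equation*}
\sum_{|\beta| \leq k} M_{\alpha \beta}\, c_{\beta} = \delta_{\alpha,0}, \qquad |\alpha| \leq k,
\end{equation*}
where
\begin{equation*}
M_{\alpha \beta} := \int_{\mathbb{R}^{N-1}} (x')^{\alpha+\beta} \psi(x')\, dx'.
\end{equation*}
Thus the proof reduces to showing that $M$ is invertible.

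The key observation is that $M$ is exactly the Gram matrix of the monomial basis $\{(x')^{\alpha} : |\alpha| \leq k\}$ with respect to the symmetric bilinear form
\begin{equation*}
\langle f, g \rangle := \int_{\mathbb{R}^{N-1}} f(x')\, g(x')\, \psi(x')\, dx'
\end{equation*}
on the finite-dimensional space of real polynomials of degree at most $k$. Since $\psi \geq 0$ and $\psi$ is continuous and not identically zero, there is a nonempty open set $U \subseteq B'(0,1)$ on which $\psi > 0$. If $\langle f, f \rangle = 0$ for some polynomial $f$, then $f$ must vanish on $U$, which forces $f \equiv 0$ by the identity principle for polynomials. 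Hence $\langle \cdot, \cdot \rangle$ is an inner product and $M$ is positive definite, in particular invertible. Solving the linear system then yields the desired $Q$, and the resulting $\varphi = Q \psi$ satisfies all the stated properties.

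I do not anticipate any real obstacle here; the sole step requiring care is the positive-definiteness argument, and it follows cleanly from nonnegativity of $\psi$ together with the identity principle for polynomials.
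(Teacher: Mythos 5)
Your proof is correct, and it takes a genuinely different route from the paper's. The paper makes a radial ansatz $\varphi(x')=\phi(|x'|)$ with $\phi(r)=(1-r^2)^{m+1}\psi(r^2)$, uses spherical coordinates to observe that all odd moments vanish automatically and to reduce the remaining even-moment conditions to a one-dimensional weighted moment problem on $[0,1]$, which it then solves by an orthogonal-projection argument in a finite-dimensional inner-product space of polynomials. You instead fix a single nonnegative bump $\psi$ and seek $\varphi=Q\psi$ with $Q$ a polynomial of degree at most $k$, reducing everything to the invertibility of the Gram matrix of the monomials with respect to $\langle f,g\rangle=\int fg\psi$; positive definiteness follows because a polynomial vanishing on the open set $\{\psi>0\}$ is identically zero. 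Both arguments ultimately rest on a positive-definiteness claim for a weighted $L^2$ pairing on a finite-dimensional polynomial space, but yours avoids the spherical-coordinate reduction and the delicate boundary-vanishing ansatz entirely, and it delivers $\varphi\in C_c^\infty$ rather than merely $C_c^m$. What the paper's construction buys in exchange is that its $\varphi$ is radial (hence even), though that extra symmetry is not needed where the proposition is invoked, since the first-order moment condition already forces $\int z'\varphi(z')\,dz'=0$.
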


\begin{proof}  
Assume that $\varphi:\mathbb{R}^{N-1}\rightarrow\mathbb{R}$ is given by
\begin{equation*}
\varphi(x)=%
\begin{cases}
\phi(|x^{\prime}|) & \text{for }|x^{\prime}|\leq1\\
0 & \text{for }|x^{\prime}|>1,
\end{cases}
\end{equation*}
for a function $\phi\in C^{m}([0,1])$ to be chosen. Then, using spherical
coordinates, we may compute
\begin{align*}
\int_{B^{\prime}(0,1)}  &  (x^{\prime})^{\alpha}\varphi(x^{\prime
})\,dx^{\prime}=\int_{0}^{1}r^{N-2+|\alpha|}\int_{\partial B^{\prime}%
(0,1)}(z^{\prime})^{\alpha}\varphi(rz^{\prime})\,d\mathcal{H}^{N-2}(z^{\prime
})dr\\
&  =\left(  \int_{\partial B^{\prime}(0,1)}(z^{\prime})^{\alpha}%
d\mathcal{H}^{N-2}(z^{\prime})\right)  \int_{0}^{1}r^{N-2+|\alpha|}%
\phi(r)\,dr=:Q_{\alpha}\int_{0}^{1}r^{N-2+|\alpha|}\phi(r)\,dr,
\end{align*}
where here we note that if $N=2$ then $\mathcal{H}^{N-2} = \mathcal{H}^{0}$ denotes counting measure and $\partial B'(0,1) = \{-1,1\}$. Note that $Q_{\alpha}=0$ except when $\alpha=2\beta$, so we may reduce to
studying the case when $|\alpha|$ is even. Without loss of generality we can
assume that $k=2l$.

Now we make the ansatz that $\phi$ is given by
\begin{equation*}
\phi(r)=(1-r^{2})^{m+1}\psi(r^{2})
\end{equation*}
for $\psi\in C^{m}([0,1])$. This forces $\phi^{(j)}(1)=0$ for $0\leq j\leq m$,
and so our function $\varphi$ has all derivatives up to order $m$ vanish at
$\partial B^{\prime}(0,1)$, which guarantees that $\varphi\in C_{c}%
^{m}(\mathbb{R}^{N-1})$.

It remains to enforce the moment conditions. In terms of $\phi$, the first of
these is
\begin{equation*}
\int_{0}^{1}r^{N-2}\phi(r)\,dr=\frac{1}{\alpha_{N-1}},
\end{equation*}
and the second is
\begin{equation*}
0=\int_{0}^{1}r^{N-2+2j}\phi(r)\,dr\text{ for }j=1,\dotsc,l.
\end{equation*}
Plugging in our ansatz for $\phi$ and making a change of variable $s = r^{2}$
then shows that our moment conditions for $\psi$ reduce to
\begin{equation*}
\frac{1}{\alpha_{N-1}}=\frac{1}{2}\int_{0}^{1}\psi(s)s^{(N-3)/2}(1-s)^{m+1}ds
\end{equation*}
and
\begin{equation*}
0=\frac{1}{2}\int_{0}^{1}\psi(s)s^{(N-3)/2+j}(1-s)^{m+1}ds\text{ for }1\leq
j\leq l.
\end{equation*}

Let $X=\operatorname*{span}(1,s,s^{2},\dotsc,s^{l})$ be the polynomials
(restricted to $[0,1]$) of degree at most $l$. Define $Y=\operatorname*{span}%
(s,s^{2},\dotsc,s^{l})$. We endow $X$ with the inner product
\begin{equation*}
\langle\zeta,\eta\rangle:=\frac{1}{2}\int_{0}^{1}\zeta(s)\eta(s)s^{(N-3)/2}%
(1-s)^{m+1}ds,
\end{equation*}
which is an inner-product since the weight function $s^{(N-3)/2}(1-s)^{m+1}$
is nonnegative and integrable (since $N \ge2$) on $[0,1]$ and vanishes only at
the endpoints.

We then rewrite our moment conditions as
\begin{equation*}
\langle1,\psi\rangle=\frac{1}{\alpha_{N-1}}\text{ and }\langle s^{j}%
,\psi\rangle=0\text{ for }j=1,\dotsc,l.
\end{equation*}
In order to enforce the latter condition we assume that $\psi\in Y^{\bot}$,
which is possible since $\dim(Y^{\bot})=1$. We can then find $\psi\in Y^{\bot
}$ satisfying $\langle1,\psi\rangle=1/\alpha_{N-1}$ if and only if $1^{\bot
}\neq0$, where $1^{\bot}\in Y^{\bot}$ is the orthogonal projection of $1$ onto
$Y^{\bot}$. Now, $1^{\bot}=0$ is equivalent to $1\in Y$, which is impossible
since if this were to hold then $Y=X$. Thus $1^{\bot}\neq0$, and we are
guaranteed the existence of $\psi\in X\subset C^{m}([0,1])$ satisfying all the
moment conditions.
\end{proof}

Next we identify a special structure for derivatives of mollifiers.

\begin{proposition}\label{proposition derivatives mollifiers}
Let $\varphi\in C_{c}^{m}(\mathbb{R}^{N-1})$. Then for every multi-index $\alpha\in\mathbb{N}_{0}^{N}$
with $1\leq|\alpha|\leq m$, there exists a function $\psi^{\alpha}\in
C_{c}^{m-|\alpha|}(\mathbb{R}^{N-1})$, with%
\begin{equation}\label{proposition derivatives mollifiers 1}
\int_{\mathbb{R}^{N-1}}\psi^{\alpha}(y^{\prime})\,dy^{\prime}=0,
\end{equation}
such that for every $x^{\prime},y^{\prime}\in\mathbb{R}^{N-1}$ and $x_{N}>0$,%
\begin{equation}
\frac{\partial^{\alpha}}{\partial x^{\alpha}}\left(  \frac{1}{x_{N}^{N-1}%
}\varphi\left(  \frac{x^{\prime}-y^{\prime}}{x_{N}}\right)  \right)  =\frac
{1}{x_{N}^{|\alpha|+N-1}}\psi^{\alpha}\left(  \frac{x^{\prime}-y^{\prime}%
}{x_{N}}\right)  . \label{psi i}%
\end{equation}
\end{proposition}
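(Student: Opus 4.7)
My plan is to prove the claim by induction on $|\alpha|$, using the observation that the scale-invariant family
\begin{equation*}
\mathcal{F}_k := \left\{ (x',x_N)\mapsto \tfrac{1}{x_N^{k+N-1}}\,\psi\bigl(\tfrac{x'-y'}{x_N}\bigr)\;:\;\psi\in C_c^{m-k}(\mathbb{R}^{N-1}),\ \textstyle\int\psi=0\right\}
\end{equation*}
is preserved when a partial derivative is applied, sending $\mathcal{F}_k$ into $\mathcal{F}_{k+1}$. Setting $z=(x'-y')/x_N$, a direct chain-rule computation gives, for any $\psi\in C^1$,
\begin{equation*}
\partial_{x_i'}\!\left[\tfrac{1}{x_N^{k+N-1}}\psi(z)\right] = \tfrac{1}{x_N^{k+N}}(\partial_i\psi)(z),\qquad i=1,\dots,N-1,
\end{equation*}
and
\begin{equation*}
\partial_{x_N}\!\left[\tfrac{1}{x_N^{k+N-1}}\psi(z)\right] = \tfrac{1}{x_N^{k+N}}\bigl(L_k\psi\bigr)(z),\qquad L_k\psi(z) := -(k{+}N{-}1)\psi(z) - z\cdot\nabla\psi(z),
\end{equation*}
where in the $x_N$ derivative one uses $\partial_{x_N} z_i = -(x_i'-y_i')/x_N^2 = -z_i/x_N$. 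This immediately yields formula \eqref{psi i} once I can define $\psi^\alpha$ inductively by applying $\partial_i$ or $L_k$ at each step (with $k=|\beta|$ when $\alpha$ is obtained from $\beta$ by adding one derivative).

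Next I verify that the zero-integral property \eqref{proposition derivatives mollifiers 1} is propagated. Integration by parts against a compactly supported function gives $\int_{\mathbb{R}^{N-1}} \partial_i \psi\,dz' = 0$ for horizontal derivatives, and, componentwise,
\begin{equation*}
\int_{\mathbb{R}^{N-1}} z_i\,\partial_i \psi(z')\,dz' = -\int_{\mathbb{R}^{N-1}}\psi(z')\,dz',
\end{equation*}
so summing yields $\int z\cdot\nabla\psi\,dz' = -(N-1)\int\psi\,dz'$. Therefore
\begin{equation*}
\int_{\mathbb{R}^{N-1}} L_k\psi\,dz' = -(k+N-1)\int \psi\,dz' + (N-1)\int\psi\,dz' = -k\int\psi\,dz',
\end{equation*}
which vanishes provided $\int\psi=0$ (interior inductive step) or $k=0$ (the base step, where $\psi=\varphi$ has $\int\varphi=1$ but $k=0$ gives the needed cancellation). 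The horizontal base case $|\alpha|=e_i$ is trivial since $\int\partial_i\varphi = 0$, and the vertical base case $|\alpha|=e_N$ works because $k=0$. Hence the first derivative of the mollifier always lands in $\mathcal{F}_1$, and subsequent derivatives propagate within $\mathcal{F}_k$ by the calculation above.

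Finally, compact support is preserved because $\operatorname{supp}(\partial_i\psi) \subseteq \operatorname{supp}\psi$ and $\operatorname{supp}(L_k\psi) \subseteq \operatorname{supp}\psi$ (the multiplication by $z$ does not enlarge the support). Smoothness drops by exactly one at each step: $\partial_i$ loses one derivative, and $L_k\psi$ involves $\nabla\psi$ so it is in $C^{m-|\alpha|}$ when $\psi\in C^{m-|\alpha|+1}$. Starting from $\varphi\in C_c^m(\mathbb{R}^{N-1})$ and iterating $|\alpha|$ times yields $\psi^\alpha\in C_c^{m-|\alpha|}(\mathbb{R}^{N-1})$ as required. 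There is no real obstacle in this argument; the only point requiring care is the accounting of the index $k$ in $L_k$ against the integration-by-parts identity for $z\cdot\nabla\psi$, which is what makes the zero-integral property close up cleanly under the inductive step.
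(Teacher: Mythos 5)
Your proof is correct and follows essentially the same route as the paper: induction on $|\alpha|$, with the vertical derivative producing the operator $\psi \mapsto -(k+N-1)\psi - z\cdot\nabla\psi$ and the identity $\int z\cdot\nabla\psi = -(N-1)\int\psi$ closing the zero-integral property via $\int L_k\psi = -k\int\psi$, which is exactly the paper's computation $\int\psi^\alpha = -|\beta|\int\psi^\beta$. Your handling of the base case (noting $k=0$ kills the integral automatically) is a slightly cleaner unification than the paper's differentiation-under-the-integral argument, but the substance is identical.
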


\begin{proof}
We proceed by finite induction on $|\alpha| \in \{1,\dotsc,m\}$.  Throughout the proof we will use the notation \eqref{mollifier} with $\varepsilon=x_{N}$. 

We begin with the base case, $|\alpha|=1$, which means that $\alpha=e_{i}$ for some $i=1,\ldots,N$. Then for $i=1,\ldots,N-1$,
\begin{align*}
\frac{\partial}{\partial x_{i}}\left(  \varphi_{x_{N}}\left(  x^{\prime
}-y^{\prime}\right)  \right)   &  =\frac{1}{x_{N}^{N}}\partial_{i}%
\varphi\left(  \frac{x^{\prime}-y^{\prime}}{x_{N}}\right)  =\frac{1}{x_{N}%
}\psi_{x_{N}}^{e_{i}}(x^{\prime}-y^{\prime})
\end{align*}
for $\psi^{e_{i}}(y^{\prime}):=\partial_{i}\varphi(y^{\prime})$, while for $i=N$ we have 
\begin{align*}
\frac{\partial}{\partial x_{N}}\left(  \varphi_{x_{N}}\left(  x^{\prime
}-y^{\prime}\right)  \right)   &  =\frac{1}{x_{N}^{N}}\left[  -(N-1)\varphi
\left(  \frac{x^{\prime}-y^{\prime}}{x_{N}}\right)  -\nabla_{\shortparallel
}\varphi\left(  \frac{x^{\prime}-y^{\prime}}{x_{N}}\right)  \cdot
\frac{x^{\prime}-y^{\prime}}{x_{N}}\right] \\
&  =\frac{1}{x_{N}}\psi_{x_{N}}^{e_{N}}(x^{\prime}-y^{\prime})
\end{align*}
for $\psi^{e_{N}}(y^{\prime}):=-(N-1)\varphi(y^{\prime})-\nabla_{\shortparallel
}\varphi\left(  y^{\prime}\right)  \cdot y^{\prime}$.  
The change of variables $z^{\prime}:=\frac{x^{\prime}-y^{\prime}}{x_{N}}$ reveals that
\begin{equation*}
\int_{\mathbb{R}^{N-1}}\varphi_{x_{N}}(x^{\prime}-y^{\prime})\,dy^{\prime
}=\int_{\mathbb{R}^{N-1}}\varphi(z^{\prime})\,dz^{\prime},
\end{equation*}
so for every $i=1,\ldots,N$ we have that
\begin{multline}\label{derivative zero}
\frac{1}{x_N}  \int_{\mathbb{R}^{N-1}} \psi^{e_i}(y') dy' 
=  \frac{1}{x_N}  \int_{\mathbb{R}^{N-1}} \psi_{x_N}^{e_i}(y') dy'  
=   \int_{\mathbb{R}^{N-1}}\frac{\partial}{\partial x_{i}}(\varphi_{x_{N}}\left(
x^{\prime}-y^{\prime}\right)  )\,dy^{\prime} \\
=\frac{\partial}{\partial x_{i}}\int_{\mathbb{R}^{N-1}}\varphi_{x_{N}}\left(  x^{\prime}-y^{\prime}\right)\,dy^{\prime}
=0. 
\end{multline}
Consequently, \eqref{proposition derivatives mollifiers 1} holds for $|\alpha|=1$, which completes the proof of the base case.

Next assume that the result is true for all $\beta\in\mathbb{N}_{0}^{N}$ with
$|\beta|=k$, $1\leq k\leq m-1$, and let $\alpha\in\mathbb{N}_{0}^{N}$ with
$|\alpha|=k+1$. Write $\alpha=\beta+e_{i}$, where $|\beta|=k$ and
$i\in\{1,\ldots,N\}$. Then
\begin{align*}
\frac{\partial^{\alpha}}{\partial x^{\alpha}}\left(  \varphi_{x_{N}}\left(
x^{\prime}-y^{\prime}\right)  \right)   &  =\frac{\partial}{\partial x_{i}%
}\frac{\partial^{\beta}}{\partial x^{\beta}}\left(  \varphi_{x_{N}}\left(
x^{\prime}-y^{\prime}\right)  \right) \\
&  =\frac{\partial}{\partial x_{i}}\left(  \frac{1}{x_{N}^{|\beta|+N-1}}%
\psi^{\beta}\left(  \frac{x^{\prime}-y^{\prime}}{x_{N}}\right)  \right)  ,
\end{align*}
where in the last equality we have used the induction hypothesis. We now
distinguish two cases. If $i\in\{1,\ldots,N-1\}$, then
\begin{equation*}
\frac{\partial^{\alpha}}{\partial x^{\alpha}}\left(  \varphi_{x_{N}}\left(
x^{\prime}-y^{\prime}\right)  \right)  =\frac{1}{x_{N}^{|\beta|+N}}%
\partial_{i}\psi^{\beta}\left(  \frac{x^{\prime}-y^{\prime}}{x_{N}}\right)
=\frac{1}{x_{N}^{|\alpha|+N-1}}\psi^{\alpha}\left(  \frac{x^{\prime}%
-y^{\prime}}{x_{N}}\right)  ,
\end{equation*}
where $\psi^{\alpha}(y^{\prime}):=\partial_{i}\psi^{\beta}(y^{\prime})$. Note
that in this case we trivially have the identity $\int_{\mathbb{R}^{N-1}}\psi^{\alpha}(y^{\prime})\,dy^{\prime}=0$. On the other hand, if $i=N$, then
\begin{align*}
\frac{\partial^{\alpha}}{\partial x^{\alpha}}\left(  \varphi_{x_{N}}\left(
x^{\prime}-y^{\prime}\right)  \right)   &  =\frac{1}{x_{N}^{|\beta|+N}}\left[
-(|\beta|+N-1)\psi^{\beta}\left(  \frac{x^{\prime}-y^{\prime}}{x_{N}}\right)
-\nabla_{\shortparallel}\psi^{\beta}\left(  \frac{x^{\prime}-y^{\prime}}%
{x_{N}}\right)  \cdot\frac{x^{\prime}-y^{\prime}}{x_{N}}\right] \\
&  =\frac{1}{x_{N}^{|\alpha|}}\psi_{x_{N}}^{\alpha}(x^{\prime}-y^{\prime}),
\end{align*}
where $\psi^{\alpha}(y^{\prime}):=-(|\beta|+N-1)\psi^{\beta}(y^{\prime
})-\nabla_{\shortparallel}\psi^{\beta}\left(  y^{\prime}\right)  \cdot
y^{\prime}$.  Writing 
\begin{equation*}
\nabla_{\shortparallel}\psi^{\beta}\left(  y^{\prime
}\right)  \cdot y^{\prime}=\operatorname{div}_{y^{\prime}}(y^{\prime}
\psi^{\beta})-(N-1)\psi^{\beta} 
\end{equation*}
and using the induction hypothesis then shows that 
\begin{equation*}
\int_{\mathbb{R}^{N-1}}\psi^{\alpha}(y^{\prime})\,dy^{\prime}=\int%
_{\mathbb{R}^{N-1}}-|\beta|\psi^{\beta}(y^{\prime})\,dy^{\prime}=0.
\end{equation*}
Thus, the result holds for $\alpha$, and so by finite induction this completes the proof.
\end{proof}

Finally, we prove an estimate for a special type of convolution.

\begin{proposition}\label{proposition structure}
Let $0<a<b$ and let $\phi\in L^{\infty}(\mathbb{R}^{N-1})$ be such that $\operatorname*{supp}\phi\subseteq B^{\prime}(0,ab^{-1})$ and
\begin{equation}\label{phi average zero}
\int_{\mathbb{R}^{N-1}}\phi(y^{\prime})\,dy^{\prime}=0.
\end{equation}
Let $1 < p < \infty$ and $f\in L_{\operatorname*{loc}}^{p}(\mathbb{R}^{N-1})$, and consider the function $v: \mathbb{R}^{N-1}\times(0,b) \to \mathbb{R}$ defined by
\begin{equation*}
v(x):=\frac{1}{x_{N}^{N}}\int_{\mathbb{R}^{N-1}}f(y^{\prime})\phi\left(
\frac{x^{\prime}-y^{\prime}}{x_{N}}\right)  \,dy^{\prime}.
\end{equation*}
Then there exists a constant $c=c(a,b,N,p,\Vert \phi \Vert_{L^\infty})>0$ such that
\begin{equation*}
\int_{\mathbb{R}^{N-1}\times(0,b)}|v(x)|^{p}dx\leq c\int_{\mathbb{R}^{N-1}%
}\int_{B^{\prime}(x^{\prime},a)}\frac{|f(y^{\prime})-f(x^{\prime})|^{p}%
}{|x^{\prime}-y^{\prime}|^{N+p-2}}\,dy^{\prime}dx^{\prime}.
\end{equation*}

\end{proposition}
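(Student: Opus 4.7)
The proof is a Hölder-and-Fubini argument whose whole engine is the mean-zero hypothesis \eqref{phi average zero}. Here is my proposed plan.

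First, I would exploit the zero mean condition to insert $f(x')$ inside the convolution without changing $v$. Since $\int_{\mathbb{R}^{N-1}} \phi(z') dz' = 0$, a change of variables gives
\begin{equation*}
v(x) = \frac{1}{x_N^N}\int_{\mathbb{R}^{N-1}} \bigl(f(y') - f(x')\bigr)\,\phi\!\left(\frac{x'-y'}{x_N}\right)dy'.
\end{equation*}
The support hypothesis $\operatorname{supp}\phi \subseteq B'(0,a/b)$ then restricts the integration to $|x'-y'|\le a x_N/b$, so in particular the domain is empty unless $x_N \ge b|x'-y'|/a$.

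Second, I would apply Hölder's inequality with exponents $p,p'$ to the spatial integral over $y'$, using the $L^\infty$ bound on $\phi$ and the fact that the effective domain is $B'(x',ax_N/b)$, whose $(N-1)$-volume is $c_N (ax_N/b)^{N-1}$. This yields
\begin{equation*}
|v(x)|^p \le c(a,b,N,p,\|\phi\|_{L^\infty})\, \frac{1}{x_N^{N+p-1}}\int_{B'(x',ax_N/b)}|f(y')-f(x')|^p\,dy',
\end{equation*}
after combining the factor $x_N^{-Np}$ with $x_N^{(N-1)(p-1)/1}$ from the measure of the ball raised to $1/p'$.

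Third, I would integrate this bound over $x=(x',x_N)\in\mathbb{R}^{N-1}\times(0,b)$ and invoke Fubini to swap the order of integration between $x_N$ and $y'$. For fixed $x',y'$, the constraints $0<x_N<b$ and $|x'-y'|\le ax_N/b$ force $b|x'-y'|/a \le x_N < b$, which is non-empty exactly when $|x'-y'|<a$. Hence after the swap we have $y'$ ranging over $B'(x',a)$ and $x_N$ over $[b|x'-y'|/a,\,b)$.

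Fourth, I would compute the inner integral in $x_N$. Since $N\ge 2$ and $p>1$ give $N+p-2\ge 1>0$, the antiderivative argument yields
\begin{equation*}
\int_{b|x'-y'|/a}^{b} \frac{dx_N}{x_N^{N+p-1}} \le \frac{1}{N+p-2}\left(\frac{a}{b}\right)^{N+p-2}\frac{1}{|x'-y'|^{N+p-2}},
\end{equation*}
which is precisely the weight appearing on the right-hand side. Plugging this back into the Fubini'd integral gives the desired estimate with a constant depending only on $a,b,N,p,\|\phi\|_{L^\infty}$.

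There is no real obstacle: the only mild subtlety is tracking that $N+p-2>0$ to ensure the $x_N$-integral has a finite primitive, and that the exponents from Hölder combine cleanly with the $x_N^{-N}$ prefactor. Everything else is routine bookkeeping.
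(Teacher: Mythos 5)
Your proposal is correct and follows essentially the same route as the paper's proof: insert $f(x')$ via the mean-zero condition, apply H\"older on the ball $B'(x',ax_N b^{-1})$ to get the pointwise bound $|v(x)|^p \le c\,x_N^{-(N+p-1)}\int_{B'(x',ax_Nb^{-1})}|f(y')-f(x')|^p\,dy'$, then swap the order of integration by Tonelli and evaluate the $x_N$-integral over $[ba^{-1}|x'-y'|,b)$ to produce the weight $|x'-y'|^{-(N+p-2)}$. No gaps.
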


\begin{proof}
In view of \eqref{phi average zero}, we can rewrite
\begin{equation*}
v(x)=\frac{1}{x_{N}^{N}}\int_{\mathbb{R}^{N-1}}(f(y^{\prime})-f(x^{\prime
}))\phi\left(  \frac{x^{\prime}-y^{\prime}}{x_{N}}\right)  \,dy^{\prime}.
\end{equation*}
Since $\operatorname*{supp}\phi\subseteq B^{\prime}(0,ab^{-1})$, we have
\begin{equation*}
|v(x)|\leq\frac{\Vert \phi \Vert_{L^\infty}}{x_{N}^{N}}\int_{B^{\prime}(x^{\prime},x_{N}ab^{-1}%
)}|f(y^{\prime})-f(x^{\prime})|\,dy^{\prime}.
\end{equation*}
Raising both sides to the power $p$ and using H\"{o}lder's inequality, we
obtain%
\begin{align*}
|v(x)|^{p}  &  \leq\frac{cx_{N}^{(N-1)(p-1)}}{x_{N}^{Np}}\int_{B^{\prime
}(x^{\prime},x_{N}ab^{-1})}|f(y^{\prime})-f(x^{\prime})|^{p}dy^{\prime}\\
&  =\frac{c}{x_{N}^{N+p-1}}\int_{B^{\prime}(x^{\prime},x_{N}ab^{-1}%
)}|f(y^{\prime})-f(x^{\prime})|^{p}dy^{\prime}.
\end{align*}
Integrating both sides over $\mathbb{R}^{N-1}\times(0,b)$ and using Tonelli's
theorem shows that
\begin{align*}
\int_{\mathbb{R}^{N-1}\times(0,b)}|v(x)|^{p}dx  &  \leq c\int_{\mathbb{R}%
^{N-1}}\int_{0}^{b}\frac{c}{x_{N}^{N+p-1}}\int_{B^{\prime}(x^{\prime}%
,x_{N}ab^{-1})}|f(y^{\prime})-f(x^{\prime})|^{p}dy^{\prime}dx_{N}dx^{\prime}\\
&  =c\int_{\mathbb{R}^{N-1}}\int_{B^{\prime}(x^{\prime},a)}|f(y^{\prime
})-f(x^{\prime})|^{p}\int_{ba^{-1}|x^{\prime}-y^{\prime}|}^{b}\frac{1}%
{x_{N}^{N+p-1}}dx_{N}dy^{\prime}dx^{\prime}\\
&  \leq c\int_{\mathbb{R}^{N-1}}\int_{B^{\prime}(x^{\prime},a)}\frac
{|f(y^{\prime})-f(x^{\prime})|^{p}}{|x^{\prime}-y^{\prime}|^{N+p-2}%
}\,dy^{\prime}dx^{\prime},
\end{align*}
which is the desired bound.
\end{proof}

\subsection{High-order integration by parts on intervals}

We now record an elementary calculus result that will play a key role in our trace analysis.

\begin{proposition}\label{proposition by parts}
Given a function $f\in C^{m}([0,b])$, for $b >0$ and $m\in\mathbb{N}$, we have that
\begin{equation}
\sum_{k=0}^{m-1}\frac{(-1)^{k}}{k!}(f^{(k)}(b)+(-1)^{k+1}f^{(k)}(0))\left(
\frac{b}{2}\right)  ^{k}=\frac{1}{(m-1)!}\int_{0}^{b}f^{(m)}(t)\left(
\frac{b}{2}-t\right)  ^{m-1}dt \label{formula by parts}%
\end{equation}
and%
\begin{equation}
f(t)=\sum_{k=0}^{m-1}\frac{(-1)^{k}}{k!}f^{(k)}(b)(b-t)^{k}+\frac{1}%
{(m-1)!}\int_{t}^{b}f^{(m)}(\tau)\left(  t-\tau\right)  ^{m-1}d\tau.
\label{formula taylor inverted}%
\end{equation}
In particular, for $m=2$,%
\begin{gather}
f(b)-f(0)-(f^{\prime}(b)+f^{\prime}(0))\frac{b}{2}=\int_{0}^{b}f^{\prime
\prime}(t)\left(  \frac{b}{2}-t\right)  \,dt,\label{formula by parts m=2}\\
f(t)=f(b)-f^{\prime}(b)(b-t)+\frac{1}{2}\int_{t}^{b}f^{\prime\prime}%
(\tau)\left(  t-\tau\right)  \,d\tau. \label{formula taylor inverted m=2}%
\end{gather}

\end{proposition}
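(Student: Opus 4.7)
Both identities are Taylor-type formulas on the interval $[0,b]$, and I would prove each by induction on $m$ with the inductive step executed by a single integration by parts in the remainder integral. I would handle \eqref{formula taylor inverted} first, since it is essentially the classical Taylor formula with integral remainder expanded about $t=b$, and then turn to \eqref{formula by parts}, which is the identity actually needed in the subsequent trace analysis.

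For \eqref{formula taylor inverted}, the base case $m=1$ is just the fundamental theorem of calculus applied to $f$ between $t$ and $b$. For the inductive step, I would take the remainder $\frac{1}{(m-1)!}\int_{t}^{b} f^{(m)}(\tau)(t-\tau)^{m-1}\,d\tau$ and integrate by parts with $u = (t-\tau)^{m-1}/(m-1)!$ and $dv = f^{(m)}(\tau)\,d\tau$. The boundary contribution at $\tau = t$ vanishes whenever $m \ge 2$ because $(t-\tau)^{m-1}$ vanishes there, while the contribution at $\tau = b$ generates precisely the top-index Taylor summand $\frac{(-1)^{m-1}}{(m-1)!} f^{(m-1)}(b)(b-t)^{m-1}$; the remaining integral is the remainder of order $m-1$, so the inductive hypothesis closes the step.

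For \eqref{formula by parts} the same strategy works on the symmetric interval. Writing $I_{m} := \frac{1}{(m-1)!}\int_{0}^{b} f^{(m)}(t)\bigl(\tfrac{b}{2}-t\bigr)^{m-1}\,dt$, integration by parts with $u = \bigl(\tfrac{b}{2}-t\bigr)^{m-1}/(m-1)!$ and $dv = f^{(m)}(t)\,dt$ gives $I_{m} = \text{(boundary term)} + I_{m-1}$, after recognizing the new integral as $I_{m-1}$ with $m$ replaced by $m-1$. Using $\bigl(\tfrac{b}{2} - b\bigr)^{m-1} = (-1)^{m-1}\bigl(\tfrac{b}{2}\bigr)^{m-1}$, the boundary term collapses to
\begin{equation*}
\frac{(-1)^{m-1}}{(m-1)!}\Bigl(\frac{b}{2}\Bigr)^{m-1}\bigl(f^{(m-1)}(b) + (-1)^{m} f^{(m-1)}(0)\bigr),
\end{equation*}
which is exactly the $k = m-1$ summand of the left-hand side of \eqref{formula by parts}. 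Induction on $m$, with base case $I_{1} = \int_{0}^{b} f'(t)\,dt = f(b) - f(0)$ matching the $k=0$ term, then yields the full identity. The specializations \eqref{formula by parts m=2} and \eqref{formula taylor inverted m=2} follow by setting $m=2$.

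The computation is entirely routine; the only real obstacle is careful bookkeeping of the alternating signs and factorials, and in particular confirming that the sign pattern $(-1)^{m-1}$ coming from evaluating $\bigl(\tfrac{b}{2}-b\bigr)^{m-1}$ correctly interlocks with the $(-1)^{k+1}$ sitting inside the parenthesis of each summand. I would verify this sign pattern in an explicit preliminary computation before running the induction so that the match with the LHS of \eqref{formula by parts} is transparent.
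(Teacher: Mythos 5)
Your treatment of \eqref{formula by parts} is correct and is essentially the paper's argument run in the opposite direction: the paper starts from $f(b)-f(0)=\int_0^b f'(t)\,dt$, writes $1=-(\tfrac{b}{2}-t)'$, and integrates by parts repeatedly to climb up to the order-$m$ remainder, whereas you start from $I_m$ and descend. Your boundary bookkeeping, including the identification of the boundary term with the $k=m-1$ summand and of $I_1=f(b)-f(0)$ with the $k=0$ term, checks out.

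The problem is with \eqref{formula taylor inverted}. There the paper takes a different route (the classical Taylor formula applied to $g(s):=f(b-s)$ followed by the substitution $\tau=b-r$), and your direct induction is a legitimate alternative --- but it does not close for the identity as printed. Writing $R_m:=\frac{1}{(m-1)!}\int_t^b f^{(m)}(\tau)(t-\tau)^{m-1}\,d\tau$ and $S_{m-1}:=\frac{(-1)^{m-1}}{(m-1)!}f^{(m-1)}(b)(b-t)^{m-1}$, your integration by parts yields $R_m=S_{m-1}+R_{m-1}$, whereas passing from the order-$(m-1)$ identity to the order-$m$ identity requires $R_{m-1}=S_{m-1}+R_m$: the boundary term enters with the wrong sign. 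The failure is already visible in your base case, which is not ``just the fundamental theorem of calculus'': FTC gives $f(t)=f(b)-\int_t^b f'(\tau)\,d\tau$, while \eqref{formula taylor inverted} with $m=1$ reads $f(t)=f(b)+\int_t^b f'(\tau)\,d\tau$. The resolution is that the printed identity is itself off by a sign; the correct statement is
\[
f(t)=\sum_{k=0}^{m-1}\frac{(-1)^{k}}{k!}f^{(k)}(b)(b-t)^{k}-\frac{1}{(m-1)!}\int_{t}^{b}f^{(m)}(\tau)\left(t-\tau\right)^{m-1}d\tau,
\]
as one checks on $f(t)=t^{2}$, $m=2$, $b=1$, and \eqref{formula taylor inverted m=2} additionally carries a spurious factor $\tfrac12$ (for $m=2$ one has $\frac{1}{(m-1)!}=1$), so it does not follow from \eqref{formula taylor inverted} by setting $m=2$ even formally. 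This is precisely the sign verification you explicitly deferred; carried out, your method would have exposed the error rather than confirmed the formula. Your proof of \eqref{formula by parts}, the identity the trace analysis actually uses, is complete.
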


\begin{proof}
To prove the first identity, we simply apply the fundamental theorem of calculus and then integrate by parts $m$-times:
\begin{align*}
f(b)-f(0)  &  =\int_{0}^{b}f^{\prime}(t)\,dt=-\int_{0}^{b}f^{\prime}(t)\left(
\frac{b}{2}-t\right)  ^{\prime}\,dt\\
&  =(f^{\prime}(b)+f^{\prime}(0))\frac{b}{2}+\int_{0}^{b}f^{\prime\prime
}(t)\left(  \frac{b}{2}-t\right)  \,dt\\
&  =(f^{\prime}(b)+f^{\prime}(0))\frac{b}{2}-(f^{\prime\prime}(b)-f^{\prime
\prime}(0))\left(  \frac{b}{2}\right)  ^{2}+\frac{1}{2}\int_{0}^{b}%
f^{\prime\prime\prime}(t)\left(  \frac{b}{2}-t\right)  ^{2}dt\\
&  =\sum_{k=1}^{m-1}\frac{(-1)^{k-1}}{k!}(f^{(k)}(b)+(-1)^{k+1}f^{(k)}%
(0))\left(  \frac{b}{2}\right)  ^{k}+\frac{1}{(m-1)!}\int_{0}^{b}%
f^{(m)}(t)\left(  \frac{b}{2}-t\right)  ^{m-1}dt.
\end{align*}
To obtain the second estimate, we apply Taylor's formula to the function
$g(s):=f(b-s)$ to obtain
\begin{equation*}
g(s)=\sum_{k=0}^{m-1}\frac{1}{k!}g^{(k)}(s)s^{k}+\frac{1}{(m-1)!}\int_{0}%
^{s}g^{(m)}(r)\left(  s-r\right)  ^{m-1}dr.
\end{equation*}
Taking $s=b-t$ gives
\begin{equation*}
f(t)=\sum_{k=0}^{m-1}\frac{(-1)^{k}}{k!}f^{(k)}(b)(b-t)^{k}+\frac{1}%
{(m-1)!}\int_{0}^{b-t}(-1)^{m}f^{(m)}(b-r)\left(  b-t-r\right)  ^{m-1}dr.
\end{equation*}
We then make the change of variables $b-r=\tau$ to conclude.
\end{proof}

\subsection{Seminormed spaces}\label{subsection seminormed spaces}

Here we recall a few useful facts about seminormed spaces (we refer to Taylor's book \cite{taylor_functional} for a more thorough discussion). Given a real vector space $X$, a \emph{seminorm} is a function $q:X\rightarrow
\lbrack0,\infty)$ with the properties that $q(x+y)\leq q(x)+q(y)$ for all $x,y\in X$ and $q(tx)=|t|q(x)$ for every $t\in\mathbb{R}$ and $x\in X$. Note that $q(0)=0$, but in general the subspace $K:=\{x\in X\,|\,q(x)=0\}$ may contain more than the zero vector. The family of \emph{balls} $\{x\in X\,|\,q(x)<r\}$, for $r>0$, is a local base for a topology that turns $X$ into a locally convex topological vector space.  Note that, following \cite{taylor_functional}, we do not require locally convex spaces to be Hausdorff.

A sequence $\{x_{n}\}_{n \in \mathbb{N}}$ in $X$ is a \emph{Cauchy sequence} if for every $\varepsilon>0$, there exists $n_{\varepsilon}\in\mathbb{N}$ such that 
\begin{equation*}
q(x_{n}-x_{m})<\varepsilon
\end{equation*}
for all $n,m\geq n_{\varepsilon}$. We say that $X$ is \emph{sequentially complete }if every Cauchy sequence converges in $X$, that is, if for every Cauchy sequence $\{x_{n}\}_{n \in \mathbb{N}}$ there exists $x\in X$ such that every $\varepsilon>0$, there exists $n_{\varepsilon}\in\mathbb{N}$ such that
\begin{equation*}
q(x_{n}-x)<\varepsilon
\end{equation*}
for all $n\geq n_{\varepsilon}$. Since $X$ is not Hausdorff in general, the limit $x$ will not be unique. Indeed, if $x_{n}\rightarrow x$ and $z\in K$, then $x_{n}\rightarrow x+z$ as well.

One can prove (see for instance Theorem 3.8-C in \cite{taylor_functional}) that a linear functional $T:X\rightarrow\mathbb{R}$ is
continuous if and only if there exists a constant $c>0$ such that
\begin{equation}\label{continuity linear functional}
|T(x)|\leq cq(x) \text{ for all } x \in X.
\end{equation}
We write 
\begin{equation*}
 X^\prime = \{ T : X \to \mathbb{R} \,|\,\, T \text{ is continuous}\}
\end{equation*}
for the linear space of continuous linear functionals.  For a linear map $T: X \to \mathbb{R}$ we define
\begin{equation}\label{seminorm T}
\Vert T\Vert_{X^{\prime}}:=\sup\{|T(x)|\,|\,\,q(x)\leq1\} \in [0,\infty]. 
\end{equation}
The following lemma records some essential properties of this map.

\begin{lemma}\label{T norm lemma}
The following hold.
\begin{enumerate}
 \item A linear map $T: X \to \mathbb{R}$ is continuous if and only if $\Vert T\Vert_{X^{\prime}}<\infty$.  Moreover, if $T \in X^\prime$, then 
\begin{equation}\label{T norm estimate}
 |T(x)| \le q(x) \Vert T \Vert_{X^\prime} \text{ for all }x \in X.
\end{equation}

 \item $\Vert \cdot \Vert_{X^{\prime}}$ defines a norm on $X^\prime$.
\end{enumerate}
\end{lemma}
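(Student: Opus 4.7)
The plan is to prove both parts by working directly from the definitions. For part (1), the forward direction is immediate: if $T$ is continuous then by \eqref{continuity linear functional} there exists $c>0$ with $|T(x)| \le c q(x)$ for all $x \in X$, so in particular $|T(x)| \le c$ whenever $q(x) \le 1$, giving $\|T\|_{X'} \le c < \infty$. The backward direction reduces to establishing the bound \eqref{T norm estimate}, from which continuity then follows via \eqref{continuity linear functional} with $c = \|T\|_{X'}$.

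To prove \eqref{T norm estimate} assuming $\|T\|_{X'} < \infty$, I would split into two cases based on whether $q(x)$ vanishes. If $q(x) > 0$, set $y = x/q(x)$, note that $q(y) = 1$ by homogeneity of the seminorm, so $|T(y)| \le \|T\|_{X'}$, and then use linearity of $T$ to rewrite this as $|T(x)| \le q(x) \|T\|_{X'}$. The mildly delicate case is $q(x) = 0$, where we need $T(x) = 0$; here I would observe that for every $t > 0$ we have $q(tx) = 0 \le 1$, so $|t||T(x)| = |T(tx)| \le \|T\|_{X'}$, and sending $t \to \infty$ forces $T(x) = 0$. The zero-seminorm case is the only step that requires any thought, and it reflects the fact that $X$ need not be Hausdorff so the subspace $K = \{q = 0\}$ may be nontrivial; every continuous linear functional must vanish on $K$.

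For part (2), by part (1) the map $\|\cdot\|_{X'}$ takes values in $[0,\infty)$ on $X'$, so it remains to verify the three norm axioms. Absolute homogeneity follows from $|(\lambda T)(x)| = |\lambda||T(x)|$ together with standard properties of the supremum. The triangle inequality follows from $|(T + S)(x)| \le |T(x)| + |S(x)|$ and a suprema inequality. For positive definiteness, if $\|T\|_{X'} = 0$, then applying \eqref{T norm estimate} from part (1) yields $|T(x)| \le q(x) \cdot 0 = 0$ for all $x \in X$, hence $T \equiv 0$. I do not anticipate any substantive obstacle; the argument is essentially a transcription of the familiar dual-norm proof from the normed setting, with the single adjustment needed to handle vectors of zero seminorm.
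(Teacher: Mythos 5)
Your proof is correct and follows essentially the same route as the paper's: the forward direction via \eqref{continuity linear functional}, the estimate \eqref{T norm estimate} proved by splitting into $q(x)>0$ (normalize to $q=1$ and use homogeneity) and $q(x)=0$ (scale by $t\to\infty$ to force $T(x)=0$), and positive definiteness of the dual norm deduced from \eqref{T norm estimate}. No gaps.
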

\begin{proof}
We begin with the proof of the first item.  The ``only if'' part is obvious thanks to \eqref{continuity linear functional}.  Suppose, then, that $\Vert T\Vert_{X^{\prime}} < \infty$.   By the positive homogeneity of $q$, for every $x\in X \backslash K$ we have that $y=x/q(x)$ satisfies $q(y)=1$, and so the linearity of $T$ implies that 
\begin{equation*}
|T(x)|  = q(x)|T(y)|  \le q(x) \Vert T \Vert_{X^\prime}.
\end{equation*} 
It remains to consider the case $x \in K$.  In this case $0 = q(s x) \le 1$ for every $s >0$, and hence $s |T(x)| = |T(sx)| \le \Vert T\Vert_{X^\prime}$ for every $s>0$.  Sending $s \to \infty$ then shows that $T(x) =0$, and thus that $|T(x)|  =0= q(x) \Vert T \Vert_{X^\prime}$.  We have now shown that \eqref{T norm estimate} holds, which in particular guarantees that $T$ is continuous.   This completes the proof of the first item.

To prove the second item, we first note that $X' \ni T \mapsto \Vert T \Vert_{X^\prime}$ clearly defines a seminorm, so it suffices to prove that $\Vert T \Vert_{X^\prime} =0$ implies that $T =0$.  If $\Vert T\Vert_{X^{\prime}}=0$, then \eqref{T norm estimate} implies that $T(x) =0$ for all $x \in X$, and so $T =0$.  This proves the second item.
\end{proof}

It is often convenient to use a seminormed space $X$ to produce a normed space; to do so, we take the quotient over the subspace $K$. More precisely, we define an equivalence relation $\sim$ on $X$ by setting, for $x,y \in X$,  $x \sim y$ if $x-y\in K$.  Then the quotient vector space $X/K$ is defined to be the set
\begin{equation*}
 X/K = \{ [x] \,|\,\, x \in X\},
\end{equation*}
where $[x]:=\{y\in X\,|\,\,x\sim y\}$, endowed with the vector space structure $\alpha [x] + \beta [y] := [\alpha x + \beta y]$.  The quotient space $X/K$ is a normed space when endowed with the norm
\begin{equation}\label{norm quotient}
\Vert\lbrack x]\Vert_{X/K}:=q(x).
\end{equation}
In view of \eqref{norm quotient}, it follows that a sequence $\{[x_{n}]\}_{n\in \mathbb{N}}$
in $X/K$ is a Cauchy sequence if and only if $\{x_{n}\}_{n \in \mathbb{N}}$ is a Cauchy
sequence in $X$. Thus, the completeness of $X/K$ is equivalent to the
sequential completeness of $X$.

Observe that if $T:X\rightarrow\mathbb{R}$ is continuous and if $x\sim y$, then $x=y+z$ for some $z\in K$.  The estimate \eqref{continuity linear functional} guarantees that $T(z)=0$, and so $T(x)=T(y)$. Thus, the functional $T_{1}:X/K\rightarrow
\mathbb{R}$ given by
\begin{equation*}
T_{1}([x]):=T(x)
\end{equation*}
is well-defined, linear and continuous, and by \eqref{seminorm T} and \eqref{norm quotient},
\begin{align*}
\Vert T_{1}\Vert_{(X/K)^{\prime}}  &  =\sup\{|T_{1}([x])|\,|\,\,\Vert\lbrack
x]\Vert_{X/K}\leq1\}\\
&  =\sup\{|T(x)|\,|\,\,q(x)\leq1\}=\Vert T\Vert_{X^{\prime}}<\infty.
\end{align*}
Conversely, if $T_{1}:X/K\rightarrow\mathbb{R}$ is linear and continuous, then%
\begin{equation*}
|T_{1}([x])|\leq\Vert T_{1}\Vert_{(X/K)^{\prime}}\Vert\lbrack x]\Vert
_{X/K}=\Vert T_{1}\Vert_{(X/K)^{\prime}}q(x)
\end{equation*}
for all $x\in X$. Thus, if we define the functional $T: X \to \mathbb{R}$ via $T(x):=T_{1}([x])$, then the previous inequality implies that
\begin{equation*}
\Vert T\Vert_{X^{\prime}}=\sup\{|T_{1}([x])|\,|\,\,q(x)\leq1\}\leq\Vert
T_{1}\Vert_{(X/K)^{\prime}},
\end{equation*}
and so Lemma \ref{T norm lemma} guarantees that $T$ is continuous.
This shows that
\begin{equation*}
\Vert T_{1}\Vert_{(X/K)^{\prime}}=\Vert T\Vert_{X^{\prime}},
\end{equation*}
that is, that there is an isometric isomorphism between the topological dual
$X^{\prime}$ of $X$ and the topological dual $(X/K)^{\prime}$ of $X/K$.

In particular, if $X/K$ is reflexive and $\{x_{n}\}_{n\in \mathbb{N}}$ is a bounded sequence in $X$, i.e. there exists $c>0$ such that  $q(x_{n})\leq c$ for all $n\in\mathbb{N}$, then \eqref{norm quotient} tells us that $\Vert\lbrack x_{n}]\Vert_{X/K}\leq c$ for all $n \in \mathbb{N}$. It follows by Kakutani's theorem (see, for instance, Theorem 3.17 of \cite{brezis_functional}) that there exists a subsequence $\{[x_{n_{k}}]\}_{k\in \mathbb{N}}$  and $[x]\in X/K$ such that $[x_{n_{k}}]\rightharpoonup
\lbrack x]$.  That is,
\begin{equation*}
T_{1}([x_{n_{k}}])\rightarrow T_{1}([x])
\end{equation*}
for every $T_{1}\in(X/K)^{\prime}$. In view of the previous discussion, this
is equivalent to saying that
\begin{equation*}
T(x_{n_{k}})\rightarrow T(x)
\end{equation*}
for every $T\in X^{\prime}$.  By slight abuse of notation, we will write this as $x_{n_{k}%
}\rightharpoonup x$.

\section{Screened homogeneous fractional Sobolev spaces}\label{section spaces}

In this section we introduce and study the properties of a class of screened homogeneous fractional Sobolev spaces.  These spaces play an essential role in the main trace theory results of the paper, but they are interesting in their own right.

\subsection{Homogeneous Sobolev spaces}

Before introducing the screened spaces we recall the usual homogeneous Sobolev spaces, starting with the spaces of integer order.  Given an open set $ U  \subseteq\mathbb{R}^{N}$, $m\in\mathbb{N}$, and $1\leq p\leq\infty$, the homogeneous Sobolev space $\dot{W}^{m,p}( U )$ is defined as the space of all real-valued functions $u\in L_{\operatorname*{loc}}^{1}( U )$ whose distributional derivatives of order $m$ belong to $L^{p}( U )$. The space $\dot{W}^{m,p}( U )$ is endowed with the seminorm
\begin{equation}\label{seminorm sobolev}
u\mapsto\Vert\nabla^{m}u\Vert_{L^{p}( U )}, 
\end{equation}
where we recall that $\nabla^{m}$ the vector of all partial derivatives $\partial^\alpha$ for multi-indices $\alpha \in\mathbb{N}_{0}^{N}$ with $|\alpha|=m$. In view of the discussion in Section \ref{subsection seminormed spaces}, the seminorm \eqref{seminorm sobolev} turns $\dot{W}^{m,p}( U )$ into a locally convex topological vector space.  Also, a linear functional $T:\dot{W}^{m,p}( U )\rightarrow\mathbb{R}$ is continuous if and only if there
exists a constant $c>0$ such that
\begin{equation}\label{continuity T homogeneous sobolev}
|T(u)|\leq c\Vert\nabla^{m}u\Vert_{L^{p}( U )}
\end{equation}
for all $u\in\dot{W}^{m,p}( U )$.  Moreover, $\Vert\nabla^{m}u\Vert_{L^{p}( U )}=0$ if and only if $u$ is a polynomial of degree less than or equal to $m-1$ in each connected component of $ U $.

Let $\mathcal{P}_{m-1}$ denote the set of functions on $ U $ that agree on each connected component of $ U $ with a polynomial from $\mathbb{R}^{N}$ into $\mathbb{R}$ of degree less than or equal to $m-1$.  Then 
the quotient space $\dot{W}^{m,p}( U )/\mathcal{P}_{m-1}$ is a Banach space with the norm
\begin{equation*}
\Vert\lbrack u]\Vert_{\dot{W}^{m,p}( U )/\mathcal{P}_{m-1}}:=\Vert
\nabla^{m}u\Vert_{L^{p}( U )}.
\end{equation*}
When $m=1$, the family $\mathcal{P}_{0}$ is just the family of functions that are constant in each connected component of $ U $,  and by slight abuse of notation we write $\dot{W}^{1,p}( U )/\mathbb{R}$ in place of $\dot{W}^{1,p}( U )/\mathcal{P}_{0}$.

The homogeneous spaces can be extended to non-integer order $0 < s < 1$ by use of the seminorm
\begin{equation}\label{homogeneous seminorm}
 |u|_{\dot{W}^{s,p}( U )} := \left(\int_ U  \int_ U  \frac{|u(y)-u(x)|^p}{|y-x|^{sp+N}} dy dx  \right)^{1/p}.
\end{equation}
For $1 \le p < \infty$ and $0 < s < 1$ the seminormed \emph{homogeneous fractional Sobolev space}  $\dot{W}^{s,p}( U )$ consists of those functions $u \in L^1_{\operatorname*{loc}}( U )$ for which $|u|_{\dot{W}^{s,p}( U )} < \infty$.  As above, we can use this space to generate a Banach space $\dot{W}^{s,p}( U ) / \mathbb{R}$ by quotienting out by the functions that are constant in each connected component.  The inhomogeneous fractional Sobolev space $W^{s,p}( U )$ is given by $W^{s,p}( U ) = L^p( U ) \cap \dot{W}^{s,p}( U )$ with norm $\Vert f \Vert_{W^{s,p}( U )} = \Vert f \Vert_{L^{p}( U )} + \vert f \vert_{W^{s,p}( U )}$, which makes the space Banach.  We refer to \cite{adams-fournier2003book}, \cite{bergh-lofstrom-1976book}, \cite{besov-ilin-nikolskii1979book}, \cite{besov-ilin-nikolskii1978book},  \cite{burenkov1998book}, \cite{dinezza-palatucci-valdinoci2012}, \cite{grisvard2011book},  \cite{leoni2017book}, \cite{mazya2011book}, \cite{necas2012book}, \cite{peetre1976book},  \cite{triebel1995book} for an exhaustive survey of what is known about these spaces.

In order to prove our trace results we will need to employ a version of the fundamental theorem of calculus.  We now record a result that will allow us to employ such a tool in the spaces $\dot{W}^{m,p}( U )$.

\begin{theorem}\label{theorem AC}
Let $ U  \subseteq\mathbb{R}^{N}$ be an open set, $m\in\mathbb{N}$, and $1 \leq p<\infty$. If $u \in \dot{W}^{m,p}( U )$, then $u$ has a representative $\overline{u}$ that is absolutely continuous together with all its derivatives of order up to $m-1$ on $\mathcal{L}^{N-1}$-a.e. line segments of $ U $ that are parallel to the coordinate axes and whose classical partial derivatives of order $m$ belong to $L^{p}( U )$.  Moreover the classical partial derivatives of order $m$ of $\overline{u}$ agree $\mathcal{L}^{N}$-a.e. with the weak derivatives of $u$.
\end{theorem}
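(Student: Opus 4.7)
The plan is to reduce Theorem \ref{theorem AC} to the classical Beppo Levi / ACL characterization of the inhomogeneous Sobolev space $W^{m,p}$ on cubes (as presented, for example, in the book of Leoni \cite{leoni2017book}). The bridge between the homogeneous and inhomogeneous settings is the local membership $u\in W^{m,p}_{\operatorname*{loc}}(U)$. Once this is in hand, the classical theorem applied to $u$ on each axis-aligned open cube compactly contained in $U$ produces local representatives with the required AC behavior, and these piece together into a single global representative $\overline{u}$ on $U$.

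For the bootstrap step, I fix a ball $B' = B(x_0, r)$ with $B(x_0, 2r)\subset U$ and consider the mollifications $u_{\varepsilon} := u \ast \rho_{\varepsilon}$ for $0 < \varepsilon < r$, where $\rho_\varepsilon$ is a standard mollifier. Each $u_{\varepsilon}$ is smooth on $B'$, and $\partial^{\alpha} u_{\varepsilon} = (\partial^{\alpha} u) \ast \rho_{\varepsilon}$ for $|\alpha|=m$, so $\nabla^{m} u \in L^{p}(U)$ gives a uniform $L^{p}(B')$ bound on $\nabla^{m} u_{\varepsilon}$, while $u \in L^{1}_{\operatorname*{loc}}(U)$ gives a uniform $L^{1}(B')$ bound on $u_{\varepsilon}$. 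The analytic heart of the argument is the polynomial-quotient Poincaré inequality on $B'$: combining the standard estimate $\|v - \Pi v\|_{W^{m,p}(B')} \le C\|\nabla^{m} v\|_{L^{p}(B')}$, where $\Pi v$ denotes a bounded projection onto polynomials of degree at most $m-1$, with norm equivalence on that finite-dimensional subspace, yields
\begin{equation*}
  \|v\|_{W^{m,p}(B')} \le C\bigl(\|v\|_{L^{1}(B')} + \|\nabla^{m} v\|_{L^{p}(B')}\bigr)
\end{equation*}
for all sufficiently smooth $v$. Applied to $v = u_{\varepsilon}$ this yields a uniform bound on $\|u_{\varepsilon}\|_{W^{m,p}(B')}$, and a subsequence converges weakly in $W^{m,p}(B')$. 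Since $u_{\varepsilon} \to u$ in $L^{1}(B')$, the weak limit must coincide with $u$, so $u \in W^{m,p}(B')$. As $B'$ was arbitrary, $u \in W^{m,p}_{\operatorname*{loc}}(U)$.

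For the final step, I write $U$ as a countable union of open axis-aligned cubes $Q_{j}$ with $\overline{Q}_{j}\subset U$. The classical ACL characterization applied to $u|_{Q_{j}} \in W^{m,p}(Q_{j})$ produces a representative $\overline{u}_{j}$ of $u$ on $Q_{j}$ that is absolutely continuous together with all its derivatives of order up to $m-1$ on $\mathcal{H}^{N-1}$-a.e. axis-parallel segment of $Q_{j}$, and whose classical order-$m$ partial derivatives agree $\mathcal{L}^{N}$-a.e. with the weak derivatives of $u$. Any two such representatives agree $\mathcal{L}^{N}$-a.e. on $Q_{i}\cap Q_{j}$; after modifying each on a common $\mathcal{L}^{N}$-null exceptional set (which, by Fubini, is $\mathcal{H}^{N-1}$-null on $\mathcal{H}^{N-1}$-a.e. axis-parallel segment) the local representatives patch into a single representative $\overline{u}$ on $U$. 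Because every axis-parallel segment of $U$ is covered by finitely many of the cubes $Q_{j}$, the AC property transfers to $\mathcal{H}^{N-1}$-a.e. such segment of $U$, and the equality of classical and weak order-$m$ derivatives is inherited from the cube-local statements.

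I expect the main obstacle to be the bootstrap step, since the definition of $\dot{W}^{m,p}(U)$ places no direct hypothesis on the weak derivatives of order strictly less than $m$, so one must manufacture their existence (at least locally) from the bare assumptions $u \in L^{1}_{\operatorname*{loc}}(U)$ and $\nabla^{m} u \in L^{p}(U)$. The polynomial-quotient Poincaré inequality on balls is the tool that effects this conversion cleanly, after which the application of the classical ACL theorem and the cube-by-cube gluing are routine consequences of the standard inhomogeneous theory.
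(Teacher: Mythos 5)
Your route is genuinely different from the paper's. The paper never passes through $W^{m,p}_{\operatorname*{loc}}(U)$: it mollifies $u$ directly, uses Fubini to select good axis-parallel lines on which $\nabla^{m}u_{\varepsilon_n}\to\nabla^{m}u$ in $L^{p}$ of the slice, and then passes to the limit in the Taylor expansion of $u_{\varepsilon_n}$ along each such line, producing the representative and its $C^{m-1}$/AC structure in one stroke. Your bootstrap to $W^{m,p}_{\operatorname*{loc}}$ is a sensible alternative, and it in fact makes explicit something the paper's own proof uses tacitly: the pointwise convergence of $\partial_i^l u_n$ at Lebesgue points of $\partial_i^l u$ presupposes that the intermediate weak derivatives exist as $L^1_{\operatorname*{loc}}$ functions, which is exactly what your Poincar\'e argument supplies.

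There are, however, two concrete gaps. First, the step ``a subsequence converges weakly in $W^{m,p}(B')$'' fails for $p=1$: $W^{m,1}(B')$ is not reflexive, and a bounded sequence may only converge weakly-$*$ to a measure. The case $p=1$ is within the theorem's scope and is needed downstream (Theorems \ref{theorem trace strip p=1} and \ref{theorem lifting strip p=1}). The fix is to avoid compactness entirely: since $\nabla^{m}u_{\varepsilon}=(\nabla^{m}u)\ast\rho_{\varepsilon}$ converges strongly in $L^{p}(B')$ and $u_{\varepsilon}\to u$ strongly in $L^{1}(B')$, your polynomial-quotient Poincar\'e inequality applied to the differences $u_{\varepsilon}-u_{\varepsilon'}$ shows the family is Cauchy in $W^{m,p}(B')$ for every $1\le p<\infty$, and the limit must be $u$. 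Second, the ``classical ACL characterization of $W^{m,p}$'' that you invoke --- a single representative that is $C^{m-1}$ along $\mathcal{L}^{N-1}$-a.e.\ axis-parallel line, with absolutely continuous derivative of order $m-1$ whose classical derivatives along the line coincide with the weak ones --- is, for $m\ge 2$, not actually stated in the reference you cite (Theorem 11.45 of \cite{leoni2017book} is the $m=1$ case; the paper explicitly describes its own proof as a \emph{modification} of that result). Applying the $m=1$ theorem to each $\partial^{\alpha}u$ with $|\alpha|\le m-1$ gives AC representatives of each derivative separately, but you must still show these are the classical line-derivatives of one and the same representative $\overline{u}$, which requires the integrate-up-along-lines (Taylor formula) argument that constitutes the bulk of the paper's proof. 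As written, the proposal therefore defers, rather than discharges, the main analytical content of the theorem; the gluing over cubes at the end is fine once that local statement is actually established.
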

\begin{proof}

The proof is a modification of Theorem 11.45 from the book \cite{leoni2017book}, and as such we will refer frequently to other results from the book.  Throughout the proof we use the following notational convention: given $x_{i}^{\prime}\in\mathbb{R}^{N-1}$ and $x_i \in \mathbb{R}$ we write $(x_i',x_i) \in \mathbb{R}^N$ for the vector obtained by placing $x_i$ in the $i^{th}$ component and the components of $x_i'$ in the remaining $N-1$ components.  Then for a set $E\subseteq\mathbb{R}^{N}$, we write
\begin{equation*}
E_{x_{i}^{\prime}}:=\{x_{i}\in\mathbb{R}:\,(x_{i}^{\prime},x_{i})\in E\}.
\end{equation*}
Moreover, if $v: U \rightarrow\mathbb{R}$ is Lebesgue integrable, with a slight abuse of notation, if $ U _{x_{i}^{\prime}}$ is empty, we set
\begin{equation*}
\int_{ U _{x_{i}^{\prime}}}v(x_{i}^{\prime},x_{i})\,dx_{i}:=0, 
\end{equation*}
so that by Fubini's theorem
\begin{equation}\label{s funny notation}
\int_{ U }v(x)\,dx = \int_{\mathbb{R}^{N-1}} \Bigl(\int_{ U _{x_{i}^{\prime}}}v(x_{i}^{\prime},x_{i}) \,dx_{i}\Bigr)dx_{i}^{\prime}.
\end{equation}

Assume now that $u\in\dot{W}^{m,p}( U )$. Consider a sequence of standard mollifiers $\{\varphi_{\varepsilon}\}_{\varepsilon>0}$ and for every $\varepsilon>0$ define $u_{\varepsilon}:=u\ast\varphi
_{\varepsilon}$ in $ U _{\varepsilon}:=\{x\in U :\,\operatorname*{dist}(x,\partial U )>\varepsilon\}$. By Lemma 11.25 in \cite{leoni2017book} we have that
\begin{equation*}
\lim_{\varepsilon\rightarrow0^{+}}\int_{ U _{\varepsilon}}\Vert\nabla
^{m}u_{\varepsilon}(x)-\nabla^{m}u(x)\Vert^{p}\,dx=0.
\end{equation*}
It follows by Fubini's theorem and \eqref{s funny notation} that for all
$i=1,\ldots,N$,
\begin{equation*}
\lim_{\varepsilon\rightarrow0^{+}}\int_{\mathbb{R}^{N-1}}\Bigl(\int_{( U _{\varepsilon})_{x_{i}^{\prime}}} \Vert\nabla^{m}u_{\varepsilon}(x_{i}^{\prime},x_{i})-\nabla^{m}u(x_{i}^{\prime},x_{i}) \Vert^{p} \,dx_{i}\Bigr)\,dx_{i}^{\prime}=0,
\end{equation*}
where $( U _{\varepsilon})_{x_{i}^{\prime}}:=\{x_{i}\in\mathbb{R} :\ (x_{i}^{\prime},x_{i}) \in  U _{\varepsilon}\}$, and so we may find a subsequence $\{\varepsilon_{n}\}_{n \in \mathbb{N}}$ such that for all $i=1,\ldots,N$ and for $\mathcal{L}^{N-1}$ a.e. $x_{i}^{\prime}\in\mathbb{R}^{N-1}$,
\begin{equation}\label{s good slice}
\lim_{n\rightarrow\infty}\int_{( U _{\varepsilon_{n}})_{x_{i}^{\prime}}} \Vert\nabla^{m}u_{\varepsilon_{n}}(x_{i}^{\prime},x_{i})-\nabla^{m} u(x_{i}^{\prime},x_{i})\Vert^{p}\,dx_{i}=0.
\end{equation}
We set $u_{n}:=u_{\varepsilon_{n}}$ and define the sets
\begin{align*}
E  & :=\{x\in U :\,\lim\limits_{n\rightarrow\infty}u_{n}(x)\text{ exists in }\mathbb{R}\},\\
E_{i,l}  & :=\{x\in U :\,\lim\limits_{n\rightarrow\infty}\partial_{i}^{l}u_{n}(x)\text{ exists in }\mathbb{R}\}
\end{align*}
for $i=1,\ldots,N$ and $l=1,\ldots,m-1$. We then define $\bar{u}, v_{i,l} :  U  \to \mathbb{R}$ via
\begin{align*}
\overline{u}(x)  & :=\left\{
\begin{array}
[c]{ll}%
\lim\limits_{n\rightarrow\infty}u_{n}(x) & \text{if }x\in E,\\
0 & \text{otherwise,}%
\end{array}
\right.  \\
v_{i,l}(x)  & :=\left\{
\begin{array}
[c]{ll}%
\lim\limits_{n\rightarrow\infty}\partial_{i}^{l}u_{n}(x) & \text{if }x\in
E_{i,l},\\
0 & \text{otherwise.}%
\end{array}
\right.
\end{align*}
By standard properties of mollifiers (see for instance Theorem C.16 in \cite{leoni2017book}) we have that $\{u_{n}\}_{n \in \mathbb{N}}$ converges pointwise to $u$ at every Lebesgue point of $u$, so the set $E$ contains every Lebesgue point of $u$. It follows from Corollary B.119 in \cite{leoni2017book} that $\mathcal{L}^{N}( U \setminus E)=0$, and so $\overline{u}$ is a representative of $u$.  Similarly, $\{\partial_{i}^{l} u_{n}\}_{n \in \mathbb{N}}$ converges pointwise to $\partial_{i}^{l}u$ at every Lebesgue
point of $\partial_{i}^{l}u$ and $\mathcal{L}^{N}( U \setminus E_{i,l})=0$ for every $n=1,\ldots,m-1$.

It remains to prove that $\overline{u}$ has the desired properties.  To this end we define
\begin{equation*}
F:=E\cap\bigcap_{i=1}^{N}\bigcap_{l=1}^{m-1}E_{i,l}.
\end{equation*}
By Fubini's theorem and \eqref{s funny notation} for every $i=1,\ldots,N$ we
have that%
\begin{equation*}
\int_{\mathbb{R}^{N-1}}\biggl(\int_{ U _{x_{i}^{\prime}}}\Vert\nabla
^{m}u(x_{i}^{\prime},x_{i})\Vert^{p}dx_{i}\biggr)\,dx_{i}^{\prime}<\infty
\end{equation*}
and
\begin{equation*}
\int_{\mathbb{R}^{N-1}}\mathcal{L}^{1}(\{x_{i}\in U _{x_{i}^{\prime} } :\,(x_{i}^{\prime},x_{i})\notin F\})\,dx_{i}^{\prime}=0,
\end{equation*}
and so we may find a set $N_{i}\subset\mathbb{R}^{N-1}$, with $\mathcal{L}^{N-1}(N_{i})=0$, such that for all $x_{i}^{\prime}\in\mathbb{R}^{N-1}\setminus N_{i}$ for which $ U _{x_{i}^{\prime}}$ is nonempty we have
that
\begin{equation*}
\int_{ U _{x_{i}^{\prime}}}\Vert\nabla^{m}u(x_{i}^{\prime},x_{i}) \Vert^{p}dx_{i}<\infty
\end{equation*}
and \eqref{s good slice} holds for all $i=1,\ldots,N$ and $(x_{i}^{\prime}, x_{i})\in F$ for $\mathcal{L}^{1}$ a.e. $x_{i}\in U _{x_{i}^{\prime}}$. 

Fix any such $x_{i}^{\prime}$ and let $I\subseteq U _{x_{i}^{\prime}}$ be a maximal interval. Fix $t_{0}\in I$ such that $(x_{i}^{\prime},t_{0})\in F$ and let $t\in I$. For all $n$ large, the interval of endpoints $t$ and $t_{0}$ is contained in $( U _{\varepsilon_{n}})_{x_{i}^{\prime}}$ and so, since $u_{n} \in C^{\infty}( U _{\varepsilon_{n}})$, by Taylor's formula,
\begin{equation*}
u_{n}(x_{i}^{\prime},t)   =u_{n}(x_{i}^{\prime},t_{0})
+\sum_{k=1}^{m-1} \frac{1}{k!}\partial_{i}^{k}u_{n}(x_{i}^{\prime},t_{0})(t-t_{0})^{k}
+\frac{1}{(m-1)!} \int_{t_{0}}^{t}(t-s)^{m-1}\partial_{i}^{m}u_{n}(x_{i}^{\prime},s)\,ds, 
\end{equation*}
and
\begin{align*}
\partial_{i}^{j}u_{n}(x_{i}^{\prime},t)   =\partial_{i}^{j}u_{n}(x_{i}^{\prime},t_{0})
&+\sum_{k=1}^{m-j-1}\frac{1}{k!}\partial_{i}^{k+j} u_{n}(x_{i}^{\prime},t_{0})(t-t_{0})^{k} \\
& +\frac{1}{(m-j-1)!}\int_{t_{0}}^{t}(t-s)^{m-j-1}\partial_{i}^{m}u_{n}(x_{i}^{\prime},s)\,ds,
\end{align*}
for all $j=1,\ldots,m-1$. Since $(x_{i}^{\prime},t_{0})\in F$, we have $u_{n}(x_{i}^{\prime},t_{0})\rightarrow\overline{u}(x_{i}^{\prime},t_{0})\in\mathbb{R}$ and $\partial_{i}^{k}u_{n}(x_{i}^{\prime},t_{0})\rightarrow v_{i,k}(x_{i}^{\prime},t_{0})\in\mathbb{R}$ for all $k=1,\ldots,m-1$. On the
other hand, by \eqref{s good slice},
\begin{equation*}
\lim_{n\rightarrow\infty}\int_{t_{0}}^{t}|(t-s)^{m-1}(\partial_{i}^{m} u_{n}(x_{i}^{\prime},s) - \partial_{i}^{m}u(x_{i}^{\prime},s))|\,ds=0.
\end{equation*}
Hence as $n\rightarrow\infty$,
\begin{equation*}
u_{n}(x_{i}^{\prime},t)   \rightarrow\overline{u}(x_{i}^{\prime},t_{0})
+\sum_{k=1}^{m-1}\frac{1}{k!}v_{i,k}(x_{i}^{\prime},t_{0})(t-t_{0})^{k}
+\frac{1}{(m-1)!}\int_{t_{0}}^{t}(t-s)^{m-1}\partial_{i}^{m}u(x_{i}^{\prime},s)\,ds, 
\end{equation*}
and
\begin{equation*}
\partial_{i}^{j}u_{n}(x_{i}^{\prime},t)   \rightarrow v_{i,j}(x_{i}^{\prime
},t_{0})+\sum_{k=1}^{m-j-1}\frac{1}{k!}v_{i,k+j}(x_{i}^{\prime},t_{0}%
)(t-t_{0})^{k}+\frac{1}{(m-j-1)!}\int_{t_{0}}^{t}(t-s)^{m-j-1}\partial_{i}%
^{m}u(x_{i}^{\prime},s)\,ds
\end{equation*}
for all $j=1,\ldots,m-1$. Note that by the definition of $E$ and $\overline{u}$, this implies, in particular, that $(x_{i}^{\prime},t)\in E\cap \bigcap_{l=1}^{m-1}E_{i,l}$ and
\begin{equation*}
\overline{u}(x_{i}^{\prime},t)   
=\overline{u}(x_{i}^{\prime},t_{0})+\sum_{k=1}^{m-1}\frac{1}{k!}v_{i,k}(x_{i}^{\prime},t_{0})(t-t_{0})^{k}
+\frac{1}{(m-1)!}\int_{t_{0}}^{t}(t-s)^{m-1}\partial_{i}^{m}u(x_{i}^{\prime},s)\,ds, 
\end{equation*}
and
\begin{equation*}
v_{i,j}(x_{i}^{\prime},t)  
=v_{i,j}(x_{i}^{\prime},t_{0})+\sum_{k=1}^{m-j-1}\frac{1}{k!}v_{i,k+j}(x_{i}^{\prime},t_{0})(t-t_{0})^{k}
+\frac{1}{(m-j-1)!}\int_{t_{0}}^{t}(t-s)^{m-j-1}\partial_{i}^{m}u(x_{i}^{\prime},s)\,ds
\end{equation*}
for \emph{all} $t\in I$ and for all $j=1,\ldots,m-1$. Hence, by Theorem 3.16 in \cite{leoni2017book}, the function $\overline{u}(x_{i}^{\prime},\cdot)$ is of class $C^{m-1}$ and its derivative of order $m-1$ is absolutely continuous in $I$ with $\partial_{i}^{j}\overline{u}(x_{i}^{\prime},t)=v_{i,j}(x_{i}^{\prime},t)$ for all $t\in I$ and $\partial_{i}^{m} \overline{u}(x_{i}^{\prime},t)=\partial_{i}^{m}u(x_{i}^{\prime},t)$ for
$\mathcal{L}^{1}$ a.e. $t\in I$.
\end{proof}

\subsection{Screened homogeneous fractional Sobolev spaces: definitions and basic properties}

In this subsection we define some new fractional Sobolev spaces and study their functional properties.  We are primarily interested in these spaces due to their role in our trace results, but they are of independent interest due to their intriguing properties.  As such, we have chosen to define them on general open sets in $\mathbb{R}^N$ rather than on the set appropriate for the trace theory, $\mathbb{R}^{N-1}$.

We begin by defining the spaces.  Let $U \subseteq \mathbb{R}^N$ be open.   We say that a function $\sigma : U \to (0,\infty]$ is a \emph{screening function} on $U$ if $\sigma$ is lower semi-continuous, in which case for each $x \in U$ we define the measurable set
\begin{equation*}
 H(x) = B(0,\sigma(x)) \cap (-x + U),
\end{equation*}
where $-x + U = \{z \in \mathbb{R}^N \;\vert\; z = -x +y \text{ for some }y \in U\}$.  Here we understand that $B(x,\infty) = \mathbb{R}^N$.  Given a screening function $\sigma : U \to (0,\infty]$, $1\leq p<\infty$, and $0<s<1$, we define the \emph{screened homogeneous fractional Sobolev space} $\W_{(\sigma)}^{s,p}(U)$ to be the space of all functions $f\in L_{\operatorname*{loc}}^{1}(U)$ such that
\begin{equation*}
|f|_{\W_{(\sigma)}^{s,p}(U)}:=\left(  \int_{U}\int_{H(x)}\frac{|f(x+h)-f(x)|^{p}}{|h|^{sp+N}}\,dh dx \right)^{1/p}<\infty.
\end{equation*}
The quantity $|\cdot|_{\W_{(\sigma)}^{s,p}(U)}$ is only a seminorm since any constant function will have seminorm zero. Note that we can rewrite 
\begin{equation*}
|f|_{\W_{(\sigma)}^{s,p}(U)} :=\left(  \int_{U}\int_{B(x,\sigma(x)) \cap U}\frac{|f(y)-f(x)|^{p}}{|y-x|^{sp+N}}\,dy dx \right)^{1/p}.
\end{equation*}
Comparing this to \eqref{homogeneous seminorm} makes evident three important features.  First, it justifies our choice of the moniker \emph{screening function}:  $\sigma$ screens the difference quotient from values of $y$ far away from $x$.  Second, it shows that we recover the standard fractional Sobolev space for any screening function satisfying $\sigma \ge \text{diam}(U)$ on $U$.  In particular,   $\W_{(\infty)}^{s,p}(U) = \dot{W}^{s,p}(U)$.  Third, this form of the seminorm establishes that $|f|_{\W_{(\sigma)}^{s,p}(U)} \le |f|_{\dot{W}^{s,p}(U)}$ and hence that 
\begin{equation*}
\dot{W}^{s,p}(U) \subseteq \W_{(\sigma)}^{s,p}(U).
\end{equation*}
We will show later in Theorems \ref{theorem strict inclusion} and \ref{theorem strict inclusion vanishing} that in general this inclusion may be strict.  

We now begin our study of the screened spaces by proving that $|f|_{\W_{(\sigma)}^{s,p}(U)}=0$ if and only if $f$ is constant in each connected component.

\begin{proposition}\label{proposition seminorm}
Suppose that $U \subseteq \mathbb{R}^N$ is open.  Let $\sigma:U\rightarrow(0,\infty]$ be a screening function, $1\leq p<\infty$, and $0<s<1$. Given $f\in L_{\operatorname*{loc}}^{p}(U)$, we have that $|f|_{\W_{(\sigma)}^{s,p}(U)}=0$ if and only if $f$ is equivalent to a constant in each connected component of $U$.
\end{proposition}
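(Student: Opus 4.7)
The reverse implication follows essentially from the definition: whenever $f$ is equivalent to the same constant on each component of $U$ that is encountered within the range of the difference quotient, the integrand $|f(x+h)-f(x)|^p$ vanishes a.e., so the seminorm is zero. (This is transparent in the case $U$ connected, which is the setting of most interest for the paper's trace theory.)

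The forward implication is the substantive direction. The plan is three steps. First, apply Tonelli's theorem: since the integrand is non-negative and the weight $|h|^{-(sp+N)}$ is strictly positive on $H(x)\setminus\{0\}$, there exists a set $A\subseteq U$ with $\mathcal{L}^N(U\setminus A)=0$ such that for each $x\in A$, $f(y)=f(x)$ for $\mathcal{L}^N$-a.e.\ $y\in B(x,\sigma(x))\cap U$. Second, use the lower semi-continuity of $\sigma$ to localize: fix $x_0\in U$ and $\delta\in(0,\sigma(x_0))$; there exists $r\in(0,\delta)$ with $B(x_0,r)\subset U$ and $\sigma(x)>\delta$ for all $x\in B(x_0,r)$. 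For any $x,x'\in B(x_0,r)\cap A$, the triangle inequality gives $B(x,\sigma(x))\cap U\supseteq B(x_0,\delta-r)\cap U$ and similarly for $x'$. These sets have a common subset of positive measure (since $x_0\in U$ is an interior point and $\delta-r>0$), on which $f$ is a.e.\ equal to both $f(x)$ and $f(x')$, forcing $f(x)=f(x')$. Hence $f$ is essentially constant on $B(x_0,r)$.

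The third step is a standard topological argument promoting local constancy to constancy on each connected component. For a component $V$ of $U$ and $c\in\mathbb{R}$, the set $\{x\in V:f=c\text{ a.e.\ on a neighborhood of }x\}$ is open; these sets (as $c$ varies) partition $V$, so by connectedness of $V$ exactly one is non-empty and must coincide with $V$. The only nontrivial step is the localization in the second paragraph, which marries the measure-theoretic consequence of Tonelli's theorem with the lower semi-continuity of $\sigma$; once the balls $B(x,\sigma(x))\cap U$ for nearby $x$ are shown to have large enough common overlap, the remainder of the argument is routine.
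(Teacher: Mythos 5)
Your forward direction is correct and rests on the same three ingredients as the paper's proof: Tonelli's theorem to produce a full-measure set $A$ on which $f$ is a.e.\ constant on each screening ball, lower semi-continuity of $\sigma$ to obtain a uniform lower bound on $\sigma$ near any fixed point, and connectedness. The organization differs slightly. The paper fixes one good point, defines the set $S$ of points near which $f$ is locally a.e.\ equal to that point's constant, and shows $S$ is open and relatively closed in $U$ (the closedness step is where the semi-continuity enters); you instead prove local essential constancy at \emph{every} point of $U$ by overlapping the screening balls of two nearby points of $A$ on the common set $B(x_0,\delta-r)\cap U$ of positive measure, and then finish with the partition-into-open-sets argument. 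Both are valid, and your version isolates the use of semi-continuity in a self-contained local lemma, which is marginally cleaner. One remark on the reverse direction: as literally stated for disconnected $U$ the implication ``constant on each component $\Rightarrow$ seminorm zero'' can fail, since $B(x,\sigma(x))\cap U$ may reach into a different component where $f$ takes a different constant value; your hedge about ``the same constant on each component encountered within the range of the difference quotient'' correctly identifies the hypothesis actually needed, whereas the paper's proof only verifies the case of a single global constant. That is an issue with the statement rather than with your argument.
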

\begin{proof}
Obviously, if $f$ equals a constant almost everywhere in $U$, then $|f|_{\W_{(\sigma)}^{s,p}(U)}=0$.  Suppose, then, that $|f|_{\W_{(\sigma)}^{s,p}(U)}=0$.  We will prove that $f$ is equivalent to a constant.  Assume initially that $U$ is connected.

Since $|f|_{\W_{(\sigma)}^{s,p}(U)}=0$, there exists a Lebesgue measurable set $E\subset U$ with $\mathcal{L}^{N}(E)=0$ such that if $x\in U\setminus E$,
then
\begin{equation*}
\int_{H(x)}\frac{|f(x+h)-f(x)|^{p}}{|h|^{sp+N}}\,dh=0.
\end{equation*}
In turn, $f(x+h)-f(x)=0$ for $\mathcal{L}^{N}$ a.e. $h\in H(x)$, which implies that $f$ is equivalent to a constant in the set $B(x,\sigma(x)) \cap  U$ whenever $x \in  U \setminus E$. 

Fix $x_0 \in  U \setminus E$ and let $c$ denote the constant that $f$ is equivalent to in the set $B(x_0,\sigma(x_0)) \cap  U$.  Define the set 
\begin{equation*}
 S = \{x \in  U \;\vert\; \text{there exists } \delta >0 \text{ s.t. } f = c \text{ a.e. in } B(x,\delta) \subseteq  U \}.
\end{equation*}
By construction we have that $x_0 \in S$.  We claim that, in fact, $S =  U$.  Once this is established, it follows that $f = c$ a.e. in $ U$, which completes the proof when $ U$ is connected.

Suppose that $x \in S$.  Then there exists $\delta >0$ such that $f =c$ a.e. in $B(x,\delta) \subseteq  U$.  If $y \in B(x,\delta/2)$, then $B(y,\delta/2) \subseteq B(x,\delta)$, and so $f = c$ a.e. in $B(y,\delta/2) \subset  U$.  Thus $B(x,\delta/2) \subseteq S$, and we conclude that $S$ is open.

Suppose now that $\{x_n\}_{n \in \mathbb{N}}$ is a sequence in $S$ such that $x_n \to x \in  U$ as $n \to \infty$.  Set $R \in (0,\infty)$ according to 
\begin{equation}\label{R semicont def}
R = 
\begin{cases}
1 &\text{if } \sigma(x) = \infty \\
\sigma(x)/2 & \text{if } \sigma(x) <\infty.
\end{cases}
\end{equation}
Note that $x \in \sigma^{-1}((R,\infty])$ by construction, but $\sigma$ is lower semi-continuous, so the set $\sigma^{-1}((R,\infty])$ is open.  Consequently, we may choose $0 < r < R$ such that $B(x,r) \subseteq \sigma^{-1}((R,\infty]) \subseteq  U$.  Since $E$ is a null set, there must exist $y \in B(x,r) \backslash E$.  Then, by construction, $|x-y| < r < R < \sigma(y)$, and hence $x \in B(y,\sigma(y)) \cap  U$.  From the above analysis, we know that $f$ is equivalent to a constant $b$ in $B(y,\sigma(y)) \cap  U$, and hence $f$ is equivalent to $b$ in $B(x,\delta)$ for some $\delta >0$.  However, for $n \in \mathbb{N}$ sufficiently large we have that $x_n \in B(x,\delta)$, and since $x_n \in S$ we conclude that $b=c$.  Hence $x \in S$, and we deduce that $S$ is relatively closed in $ U$.

We have now shown that $S \subseteq  U$ is nonempty, open, and relatively closed.  Since $ U$ is connected, we conclude that $S =  U$, which completes the proof of the claim.  Now, in the general case when $ U$ is not connected, we let $\{ U_\alpha\}_{\alpha \in A}$ denote the connected components of $ U$.  Since $|f|_{\W_{(\sigma)}^{s,p}( U)}=0$, we also know that $|f|_{\W_{(\sigma)}^{s,p}( U_\alpha)}=0$ for each $\alpha \in A$.  The above analysis then shows that $f$ is equivalent to a constant in each $ U_\alpha$, which completes the proof in the general case.

\end{proof}

We next turn our attention to proving a Poincar\'{e}-type inequality for the screened spaces.  We refer to  \cite{bourgain-brezis-mironescu2002} and \cite{ponce2004} for similar inequalities in the non-screened case.  We begin with a lemma.

\begin{lemma}\label{screening radius lemma}
Let $ U \subseteq \mathbb{R}^N$ be open and let $\sigma :  U \to (0,\infty]$ be a screening function.  For each $x \in  U$ there exists $0 < r_x < \sigma(x)$ such that $B(x,2r_x) \subseteq  U$ and  $2r_x < \sigma(y)$ for all $y \in B(x,r_x)$.
\end{lemma}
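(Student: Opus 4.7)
The plan is to use the lower semi-continuity of $\sigma$ together with the openness of $U$ to carve out a ball around $x$ on which $\sigma$ is uniformly bounded below, and which sits comfortably inside $U$. The whole argument is a neighborhood-choosing exercise; there is no real obstacle beyond handling the case $\sigma(x)=\infty$ cleanly, which I will do exactly as in the proof of Proposition \ref{proposition seminorm}.

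First I will fix a finite scale $R\in(0,\infty)$ that lies strictly below $\sigma(x)/2$. Concretely I set
\begin{equation*}
R :=
\begin{cases}
1 & \text{if } \sigma(x)=\infty,\\
\sigma(x)/4 & \text{if } \sigma(x)<\infty,
\end{cases}
\end{equation*}
so that in either case $0<2R<\sigma(x)$ and $R<\sigma(x)$. This is the trick already used in \eqref{R semicont def}, and it ensures that the argument goes through uniformly no matter whether $\sigma(x)$ is finite or infinite.

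Next I apply lower semi-continuity: since $\sigma(x)>2R$ and $\sigma^{-1}((2R,\infty])$ is open, there is $\delta_1>0$ with $B(x,\delta_1)\subseteq\sigma^{-1}((2R,\infty])$. Since $U$ is open, there is also $\delta_2>0$ with $B(x,\delta_2)\subseteq U$. I then define
\begin{equation*}
r_x := \tfrac{1}{2}\min\{R,\,\delta_1,\,\delta_2\}>0.
\end{equation*}
By construction $r_x<R<\sigma(x)$; also $2r_x\le\delta_2$, so $B(x,2r_x)\subseteq U$; and finally for every $y\in B(x,r_x)\subseteq B(x,\delta_1)$ one has $\sigma(y)>2R\ge 2r_x$. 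All three required properties are therefore verified, and the lemma is established. The only point requiring any care is the separation $0<2R<\sigma(x)$, which is what makes the strict inequality $2r_x<\sigma(y)$ (rather than $\le$) available on the whole ball $B(x,r_x)$.
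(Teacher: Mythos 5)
Your proof is correct and follows essentially the same route as the paper's: fix a finite threshold below $\sigma(x)$ (handling $\sigma(x)=\infty$ separately), use lower semi-continuity to get a ball on which $\sigma$ exceeds that threshold, and take $r_x$ to be half the minimum of the relevant radii. The only cosmetic differences are your choice of $\sigma(x)/4$ and the superlevel set $\sigma^{-1}((2R,\infty])$ in place of the paper's $\sigma(x)/2$ and $\sigma^{-1}((R,\infty])$, plus the explicit (and harmless, though redundant) inclusion of $\delta_2$, since $\sigma^{-1}((2R,\infty])\subseteq U$ already.
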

\begin{proof}
Let $R \in (0,\infty)$ be given by \eqref{R semicont def}.  Then $x \in \sigma^{-1}((R,\infty])$, and since $\sigma$ is lower semi-continuous we can pick $s >0$ such that $B(x,s) \subseteq \sigma^{-1}((R,\infty])$.  Set $r_x =\frac{1}{2} \min\{s,R\} \in (0,\sigma(x))$.  Clearly $B(x,2r_x) \subseteq  U$.  For $y \in B(x,r_x) \subseteq B(x,s) \subseteq \sigma^{-1}((R,\infty])$ we then have that $2r_x \le R < \sigma(y)$, as desired.
\end{proof}

Next we prove a Poincar\'{e} inequality for small balls. 

\begin{proposition}[Poincar\'{e} inequality on small balls]\label{proposition poincare}
Suppose that $ U \subseteq \mathbb{R}^N$ is open, $1\leq p<\infty$, and $0<s<1$.    Then there exists a constant $c = c(N,s,p) >0$ such that if $f\in L_{\operatorname*{loc}}^{1}( U)$, $B(x,r) \subseteq  U$, and $E \subseteq B(x,r)$ is a Lebesgue measurable set such that $\mathcal{L}^N(E) >0$, then 
\begin{equation*}
\int_{B(x,r)}|f(y)-f_E|^{p}dy 
\leq c\frac{r^{sp+N}}{\mathcal{L}^N(E)}\int_{B(x,r)}\int_{E}\frac{|f(y)-f(z)|^{p}}{|y-z|^{sp+N}}\,dzdy,
\end{equation*}
where
\begin{equation*}
f_E:=\frac{1}{\mathcal{L}^{N}(E)}\int_{E}f(z)\,dz.
\end{equation*}
In particular, given a screening function $\sigma: U \rightarrow(0,\infty]$ and $x \in  U$, if $0 < r \le r_x$ for $r_x$ given by Lemma \ref{screening radius lemma}, then
\begin{equation*}
\int_{B(x,r)}|f(y)-f_E|^{p}dy \leq 
c\frac{r^{sp+N}}{\mathcal{L}^N(E)}\int_{B(x,r)}\int_{H(y)}\frac{|f(y+h)-f(y)|^{p}}{|h|^{sp+N}}\,dhdy.
\end{equation*}

\end{proposition}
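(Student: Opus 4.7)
The plan is to derive both estimates from two elementary ingredients: Jensen's inequality applied to the inner average, and the fact that for any two points in $B(x,r)$ their distance is at most $2r$.

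First I would apply Jensen's inequality to write
\begin{equation*}
|f(y) - f_E|^p = \left| \frac{1}{\mathcal{L}^N(E)} \int_E (f(y) - f(z)) \, dz \right|^p \le \frac{1}{\mathcal{L}^N(E)} \int_E |f(y) - f(z)|^p \, dz,
\end{equation*}
and then integrate in $y$ over $B(x,r)$ to obtain a bound by the double integral $\frac{1}{\mathcal{L}^N(E)} \int_{B(x,r)} \int_E |f(y) - f(z)|^p \, dz \, dy$. Since $E \subseteq B(x,r)$, the triangle inequality gives $|y-z| \le 2r$ for every $y \in B(x,r)$ and $z \in E$. Multiplying and dividing the integrand by $|y-z|^{sp+N}$ and using this bound pays the price $(2r)^{sp+N}$, and produces the first inequality with $c = 2^{sp+N}$ (depending only on $N$, $s$, $p$).

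For the ``in particular'' part the task is to replace the inner integral over $E$ by an integral over the screened set $H(y)$. Here I would invoke Lemma \ref{screening radius lemma}: when $0 < r \le r_x$, every $y \in B(x,r)$ lies in $B(x,r_x)$, so $\sigma(y) > 2r_x \ge 2r$. For any $z \in E \subseteq B(x,r)$ the triangle inequality yields $|z-y| < 2r < \sigma(y)$, while $z \in B(x,2r_x) \subseteq U$, so $z - y \in B(0,\sigma(y)) \cap (-y + U) = H(y)$. Hence $E - y \subseteq H(y)$ for every such $y$, and the change of variables $h = z - y$ in the inner integral converts $\int_E \frac{|f(y)-f(z)|^p}{|y-z|^{sp+N}} dz$ into $\int_{E-y} \frac{|f(y+h)-f(y)|^p}{|h|^{sp+N}} dh$, which is then bounded by the corresponding integral over $H(y)$. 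Combining this with the first inequality delivers the second.

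I do not expect any serious obstacle. The only point requiring care is ensuring that the inclusion $E - y \subseteq H(y)$ holds uniformly for all base points $y \in B(x,r)$, which is precisely the reason Lemma \ref{screening radius lemma} was stated with the factor of $2$ built into the definition of $r_x$: it gives room both for $B(x,2r_x) \subseteq U$ (guaranteeing $z \in U$) and for $\sigma(y) > 2r_x$ (guaranteeing $|z-y| < \sigma(y)$).
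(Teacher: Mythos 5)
Your argument is correct and matches the paper's proof in all essentials: the paper extracts the factor $r^{sp+N}/\mathcal{L}^N(E)$ by splitting off the weight $|y-z|^{(sp+N)/p}$ and applying H\"older's inequality, which is just a slightly different packaging of your Jensen step combined with the bound $|y-z|\le 2r$, and your handling of the ``in particular'' part via the inclusion $E \subseteq B(y,2r)\cap U \subseteq B(y,\sigma(y))\cap U$ (equivalently $E-y\subseteq H(y)$) is exactly the paper's argument using Lemma \ref{screening radius lemma}. No gaps.
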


\begin{proof}
For every $y\in B(x,r)$ we have that
\begin{equation*}
f(y)-f_E    =\frac{1}{\mathcal{L}^{N}(E)}\int_{E}(f(y)-f(z))\,dz   =\frac{1}{\mathcal{L}^{N}(E)}\int_{E}|y-z|^{(sp+N)/p}\frac{f(y)-f(z)}{|y-z|^{(sp+N)/p}})\,dz.
\end{equation*}
Hence, by H\"{o}lder's inequality,
\begin{align*}
|f(y)-f_E|^{p}  &  \leq \frac{1 }{(\mathcal{L}^N(E))^p}
\left(\int_{E}|y-z|^{(sp+N)/(p-1)} dz\right)^{p-1} 
\int_{E}\frac{|f(y)-f(z)|^{p}}{|y-z|^{sp+N}}\,dz\\
&  \leq \frac{c r^{sp+N}}{\mathcal{L}^N(E)}\int_{E}\frac{|f(y)-f(z)|^{p}}{|y-z|^{sp+N}}\,dz.
\end{align*}
Integrating both sides in $y$ over $B(x,r)$ then gives the bound
\begin{equation*}
\int_{B(x,r)}|f(y)-f_E|^{p}dy
\leq \frac{c r^{sp+N}}{\mathcal{L}^N(E)} \int_{B(x,r)}\int_{E}\frac{|f(y)-f(z)|^{p}}{|y-z|^{sp+N}}\,dzdy,
\end{equation*}
which proves the first inequality.

To prove the second inequality we observe that if $0 < r \le r_x$, then Lemma \ref{screening radius lemma} implies that $2r \le 2 r_x < \sigma(y)$ for all $y \in B(x,r)$.  Then for each $y \in B(x,r)$ we have the inclusions $E \subseteq B(x,r) \subseteq B(y,2r) \cap  U \subseteq B(y,\sigma(y)) \cap  U$, and we may thus estimate
\begin{align*}
\frac{c r^{sp+N}}{\mathcal{L}^N(E)} \int_{B(x,r)}\int_{E}\frac{|f(y)-f(z)|^{p}}{|y-z|^{sp+N}}\,dzdy & \le   
\frac{c r^{sp+N}}{\mathcal{L}^N(E)} \int_{B(x,r)}\int_{B(y,\sigma(y)) \cap  U} \frac{|f(y)-f(z)|^{p}}{|y-z|^{sp+N}
}\,dzdy\\
&  \le \frac{c r^{sp+N}}{\mathcal{L}^N(E)} \int_{B(x,r)}\int_{H(y)} \frac{|f(y)-f(y+h)|^{p}}{|h|^{sp+N}
}\,dhdy.
\end{align*}
The second inequality then follows from this and the first inequality.
\end{proof}

With the small ball Poincar\'{e} inequality in hand, we can now prove a more general version.  Although this estimate is interesting in its own right, it will play a key role in proving completeness of $\W_{(\sigma)}^{s,p}( U)/ \mathbb{R}$.

\begin{theorem}[Poincar\'{e} inequality on connected unions of small balls]\label{full_poincare}
Suppose that $ U \subseteq \mathbb{R}^N$ is open and connected, $1\leq p<\infty$, and $0<s<1$. Let  $\sigma :  U \to (0,\infty]$ be a screening function. Further suppose that there exist $x_n \in  U$ and $r_n >0$ for $n=1,\dotsc,m$ such that $r_n = r_{x_n} >0$ is given by Lemma \ref{screening radius lemma} and 
\begin{equation*}
  U = \bigcup_{n=1}^m B(x_n,r_n).
\end{equation*}
Then for every Lebesgue measurable set $E \subseteq  U$ such that $\mathcal{L}^N(E) >0$ there exists a constant $c = c( U,E,p,s) >0$ such that 
\begin{equation}\label{full_poincare_0}
\Vert  f - f_E\Vert_{L^{p}( U)} \le c |f|_{\W_{(\sigma)}^{s,p}( U)}  
\end{equation}
for all $f \in L^1_{\operatorname*{loc}}( U)$, where 
\begin{equation*}
f_E = \frac{1}{\mathcal{L}^N(E)} \int_U f(x) dx. 
\end{equation*}
\end{theorem}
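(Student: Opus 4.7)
The plan is to deduce this estimate by chaining the small-ball Poincar\'e inequality in Proposition \ref{proposition poincare} across the finite cover, using the connectedness of $U$ in an essential way. First, applying Proposition \ref{proposition poincare} with the choice $E = B(x_n, r_n)$ for each $n = 1, \dotsc, m$ yields
\[
\int_{B(x_n, r_n)} |f(y) - f_{B(x_n, r_n)}|^p \, dy \le c\, r_n^{sp}\, |f|_{\W_{(\sigma)}^{s,p}(U)}^p,
\]
so each local average is controlled by the global seminorm.

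Next, I propagate these local estimates across the cover by comparing the averages $f_{B(x_i, r_i)}$. The key observation is that the graph $G$ on $\{1, \dotsc, m\}$ with edge relation $B(x_i, r_i) \cap B(x_j, r_j) \neq \emptyset$ is connected; otherwise the balls in distinct connected components of $G$ would partition $U$ into a disjoint union of nonempty open sets, contradicting the hypothesis that $U$ is connected. For a single edge $i \sim j$ the overlap $A_{ij} = B(x_i, r_i) \cap B(x_j, r_j)$ is a nonempty open set, and the identity
\[
f_{B(x_i, r_i)} - f_{B(x_j, r_j)} = \frac{1}{\mathcal{L}^N(A_{ij})} \int_{A_{ij}} \bigl( (f_{B(x_i, r_i)} - f(y)) + (f(y) - f_{B(x_j, r_j)}) \bigr)\, dy,
\]
combined with H\"older's inequality and the bounds from the first step, yields $|f_{B(x_i, r_i)} - f_{B(x_j, r_j)}|^p \le c\, |f|_{\W_{(\sigma)}^{s,p}(U)}^p$ with $c$ depending on $A_{ij}$. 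Iterating along a chain in $G$ of length at most $m$, I obtain the same estimate for every pair $(i, j)$.

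Finally, inserting $\pm f_{B(x_n, r_n)}$ and $\pm f_{B(x_1, r_1)}$ inside each integrand and applying the triangle inequality gives
\[
\|f - f_{B(x_1, r_1)}\|_{L^p(U)}^p \le \sum_{n = 1}^m \|f - f_{B(x_1, r_1)}\|_{L^p(B(x_n, r_n))}^p \le c\, |f|_{\W_{(\sigma)}^{s,p}(U)}^p,
\]
and then Jensen's inequality yields
\[
|f_E - f_{B(x_1, r_1)}|^p \le \frac{1}{\mathcal{L}^N(E)} \int_E |f - f_{B(x_1, r_1)}|^p\, dy \le \frac{1}{\mathcal{L}^N(E)}\, \|f - f_{B(x_1, r_1)}\|_{L^p(U)}^p,
\]
so one last triangle inequality produces \eqref{full_poincare_0}. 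The main obstacle is the second step: one must exploit the connectedness of $U$ through the overlap graph $G$ in order to link all the local Poincar\'e inequalities into a single global statement. Once this chaining is in place, the remaining manipulations are routine applications of H\"older's and Jensen's inequalities, and the constant $c$ depends on $U$ and $E$ only through the number $m$ of balls, the radii $r_n$, the overlap measures $\mathcal{L}^N(A_{ij})$, and $\mathcal{L}^N(E)$.
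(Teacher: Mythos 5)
Your proof is correct, and it follows the same overall strategy as the paper — control $f-f_{B(x_n,r_n)}$ on each ball via Proposition \ref{proposition poincare} (legitimately applied with $r=r_n=r_{x_n}$), then chain the local averages together and pass from $f_{B(x_1,r_1)}$ to $f_E$ by Jensen and the triangle inequality. The one genuinely different ingredient is how you produce the chain. The paper fixes $n$, takes a polygonal path in $U$ from $x_1$ to $x_n$, and by compactness covers it with a fresh finite family of balls $B_k^\ast=B(z_k,r_{z_k}/2)$ with consecutive nonempty overlaps, then chains averages along that auxiliary family. You instead observe that the overlap graph of the \emph{given} cover $\{B(x_n,r_n)\}_{n=1}^m$ must be connected — otherwise its components would split $U$ into disjoint nonempty open sets — and chain directly along paths in that graph. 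Your route is more elementary and arguably cleaner: it avoids introducing the auxiliary balls and the polygonal-path/compactness step entirely, and it makes transparent that the constant depends only on $m$, the radii, the overlap measures $\mathcal{L}^N(A_{ij})$, and $\mathcal{L}^N(E)$. The trade-off is minor: the paper's construction is the one that generalizes verbatim to the exhaustion argument of Lemma \ref{lemma exhaustion} used later, but for the theorem as stated both arguments are complete and your step-2 graph-connectivity claim is exactly right (a point $y$ in two balls from different components would force an edge between them).
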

\begin{proof}
First note that the triangle and H\"{o}lder inequalities provide the estimates
\begin{equation*}
\begin{split}
\Vert  f - f_E\Vert_{L^{p}( U)} & \le \Vert  f - f_{ U} \Vert_{L^{p}( U)} + |f_U - f_{E}| (\mathcal{L}^N( U))^{1/p} \\
&\le \Vert  f - f_{ U} \Vert_{L^{p}( U)} + \frac{(\mathcal{L}^{N}( U))^{1/p}}{\mathcal{L}^{N}(E)} \Vert f-f_{ U}\Vert_{L^{1}(E)} \\
&\le  \Vert  f - f_{ U}\Vert_{L^{p}( U)}  + \frac{(\mathcal{L}^N( U))^{1/p}}{(\mathcal{L}^N(E))^{1/p}} \Vert f- f_U \Vert_{L^p(E)}  \\
&\le \left(1 +  \frac{(\mathcal{L}^N( U))^{1/p}}{(\mathcal{L}^N(E))^{1/p}} \right) \Vert  f - f_{ U}\Vert_{L^{p}( U)}.
\end{split}
\end{equation*}
Thus, in order to prove the estimate \eqref{full_poincare_0}, it suffices to prove that
\begin{equation}\label{full_poincare_1}
\Vert  f - f_U \Vert_{L^{p}( U)} \le c |f|_{\W_{(\sigma)}^{s,p}( U)}.  
\end{equation}

For $n =1,\dotsc,m$ set $B_n = B(x_n,r_n/2) \subset B(x_n,r_n) \subseteq  U$.  The triangle and H\"{o}lder inequalities again allow us to estimate
\begin{equation*}
\begin{split}
\Vert  f - f_U\Vert_{L^{p}( U)} & \le \Vert  f - f_{B_1} \Vert_{L^{p}( U)} + |f_U - f_{B_1}| (\mathcal{L}^N( U))^{1/p} \\
&\le 2 \Vert  f - f_{B_1}\Vert_{L^{p}( U)} \le 2 \sum_{n=1}^m \Vert  f - f_{B_1}\Vert_{L^{p}(B(x_n,r_n))}.
\end{split}
\end{equation*}
From this we readily deduce that in order to prove \eqref{full_poincare_1} it suffices to prove that for $n=1,\dotsc,m$ there exists a constant $k_n = k_n( U,p,s) >0$ such that 
\begin{equation}\label{full_poincare_2}
\Vert  f - f_{B_1}\Vert_{L^{p}(B(x_n,r_n))} \le k_n |f|_{\W_{(\sigma)}^{s,p}( U)}. 
\end{equation}
Note that if $n=1$ then this inequality is true by virtue of Proposition \ref{proposition poincare}, so we may further reduce to proving \eqref{full_poincare_2} when $n \neq 1$.

Fix $2 \le n \le m$.  Since $ U$ is open and connected, it is polygonally connected, and so we can choose a polygonal path in $ U$ starting at the center of $B_1$ and ending at the center of $B_n$.  Since the path is connected and compact, we can choose open balls $B_k^\ast = B(z_k,s_k) \subseteq  U$ for $k=1,\dotsc,\ell_n$ such that $B_1^\ast = B_1$, $B_{\ell_n}^\ast = B_n$, $s_k = r_{z_k}/2$ for $r_{z_k} >0$ given by Lemma \ref{screening radius lemma},    and $V_k := B_k^\ast \cap B_{k+1}^\ast \neq \varnothing$ for each $1 \le k \le \ell_n -1$.  We then use the triangle inequality to estimate 
\begin{equation}\label{full_poincare_3}
\Vert  f - f_{B_1}\Vert_{L^{p}(B(x_n,r_n))} \le \Vert  f - f_{B_n}\Vert_{L^{p}(B(x_n,r_n))} + \sum_{k=1}^{\ell_n-1} |  f_{B_k^\ast} - f_{B_{k+1}^\ast } | (\mathcal{L}^N(B(x_n,r_n)))^{1/p}.
\end{equation}
For the first term on the right we can use Proposition \ref{proposition poincare} to bound 
\begin{equation}\label{full_poincare_4}
 \Vert  f - f_{B_n}\Vert_{L^{p}(B(x_n,r_n))} \le c r_n^{sp} |f|_{\W_{(\sigma)}^{s,p}( U)}.
\end{equation}
The remaining terms require more work.

Fix $1 \le k \le \ell_n-1$ and note that, by construction, the set $V_k = B_k^\ast \cap B_{k+1}^\ast$ is nonempty and thus has positive measure.  Then the triangle and H\"{o}lder inequalities once again allow us to estimate
\begin{equation*}
\begin{split}
|  f_{B_k^\ast} - f_{B_{k+1}^\ast } | &(\mathcal{L}^N(B(x_n,r_n)))^{1/p}  \le |  f_{B_k^\ast} - f_{V_k } |(\mathcal{L}^N(B(x_n,r_n)))^{1/p}  + |  f_{V_k} - f_{B_{k+1}^\ast } | (\mathcal{L}^N(B(x_n,r_n)))^{1/p}\\
&\le \Vert f - f_{V_k} \Vert_{L^p(B_k^\ast)} \frac{(\mathcal{L}^N(B(x_n,r_n)))^{1/p}}{  (\mathcal{L}^N(B_k^\ast))^{1/p}} + \Vert f - f_{V_k} \Vert_{L^p(B_{k+1}^\ast)} \frac{(\mathcal{L}^N(B(x_n,r_n)))^{1/p}}{(\mathcal{L}^N(B_{k+1}^\ast))^{1/p}}. 
\end{split}
\end{equation*}
The radii of the balls $\{B_k^\ast\}$ are such that we can apply Proposition \ref{proposition poincare}.  Doing so and chaining the estimate together with the last inequality then provides us with the bound
\begin{equation}\label{full_poincare_5}
\sum_{k=1}^{\ell_n-1} |  f_{B_k^\ast} - f_{B_{k+1}^\ast } | (\mathcal{L}^N(B(x_n,r_n)))^{1/p} \le c_n |f|_{\W_{(\sigma)}^{s,p}( U)}
\end{equation}
for a constant $c_n = c_n( U,p,s) >0$.  Combining \eqref{full_poincare_3}, \eqref{full_poincare_4}, and \eqref{full_poincare_5} then proves that \eqref{full_poincare_2} holds for all $2 \le n \le m$, which in turn completes the proof of \eqref{full_poincare_1}.
 
\end{proof}

Our next result is a technical lemma that will allow us to effectively use Theorem \ref{full_poincare}.

\begin{lemma}\label{lemma exhaustion}
Let $ U \subseteq \mathbb{R}^N$ be open and connected and $\sigma :  U \to (0,\infty]$ be a screening function.  Then there exists $\{V_n\}_{n =1}^\infty$ with the following properties.
\begin{enumerate}
 \item For each $n \in \mathbb{N}$ we have that $V_n \subset  U$ is nonempty, open, and connected.
 \item For each $n \in \mathbb{N}$ there exists $\ell_n \in \mathbb{N}$, $x_{1,n}, \dotsc x_{\ell_n,n} \in  U$, and $r_{1,n}, \dotsc, r_{\ell_n,n} >0$ such that $r_{k,n} = r_{x_{k,n}}$ is given by Lemma \ref{screening radius lemma} and
\begin{equation*}
 V_n = \bigcup_{k=1}^{\ell_n} B(x_{k,n}, r_{k,n}).
\end{equation*}
 \item For each $n \in \mathbb{N}$ we have that $\bar{V}_n \subset V_{n+1}$.
 \item We have that 
\begin{equation*}
  U = \bigcup_{n=1}^\infty V_n.
\end{equation*}
\end{enumerate}
\end{lemma}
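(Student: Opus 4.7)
My plan is to first build a ``soft'' exhaustion of $U$ by open connected precompact subsets $\{W_n\}$ satisfying $\overline{W_n} \subset W_{n+1}$ and $\bigcup_n W_n = U$, then approximate each $\overline{W_n}$ from outside by a connected finite union of balls of the form prescribed by Lemma \ref{screening radius lemma} that fits inside $W_{n+1}$.

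For the soft exhaustion, I would fix a base point $x_\ast \in U$, set $U_n := \{x \in U : \mathrm{dist}(x, \partial U) > 1/n\} \cap B(0, n)$ (with the convention $\mathrm{dist}(\cdot, \varnothing) = +\infty$), and take $W_n$ to be the connected component of $U_n$ containing $x_\ast$. Since $\mathrm{dist}(\cdot, \partial U)$ is $1$-Lipschitz, $\overline{U_n} \subset U_{n+1}$; as $\overline{W_n}$ is a connected subset of $U_{n+1}$ containing $x_\ast$, the maximality of $W_{n+1}$ forces $\overline{W_n} \subset W_{n+1}$. That $\bigcup_n W_n = U$ follows from polygonal connectedness of $U$: every $y \in U$ can be joined to $x_\ast$ by a polygonal path, whose compact image lies in some $U_n$ and then in $W_n$. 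Discarding empty terms and relabeling, I may assume each $W_n$ is nonempty.

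Next I would build $V_n$ for each $n$. For every $x \in \overline{W_n}$, Lemma \ref{screening radius lemma} furnishes a radius $r_x > 0$, and since $W_{n+1}$ is open I can shrink $r_x$ so that additionally $\overline{B(x, 2 r_x)} \subset W_{n+1}$; the conclusions of Lemma \ref{screening radius lemma} survive this shrinking because the conditions $r_x < \sigma(x)$, $B(x, 2 r_x) \subset U$, and $2 r_x < \sigma(y)$ for $y \in B(x, r_x)$ only become easier as $r_x$ decreases. By compactness, finitely many half-radius balls $B(x_k, r_{x_k}/2)$, $k = 1, \ldots, m$, cover $\overline{W_n}$, and the full balls $B(x_k, r_{x_k})$ all lie in $W_{n+1}$. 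To achieve connectedness, for each $k \geq 2$ I would fix a polygonal path $\gamma_k$ in the polygonally connected open set $W_{n+1}$ from $x_1$ to $x_k$ and cover its compact image by finitely many additional admissible half-radius balls $B(y_{j}^{k}, r_{y_{j}^{k}}/2)$, $j = 1, \ldots, \ell_k$, whose full counterparts also lie in $W_{n+1}$. Setting
\begin{equation*}
V_n := \bigcup_{k=1}^m B(x_k, r_{x_k}) \cup \bigcup_{k=2}^m \bigcup_{j=1}^{\ell_k} B(y_{j}^{k}, r_{y_{j}^{k}})
\end{equation*}
yields an open set that is a finite union of balls of the required admissible form, with $\overline{W_n} \subset V_n \subset W_{n+1}$. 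Connectedness follows because $\gamma_k([0,1]) \cup \{x_1, x_k\}$ is a connected subset of $V_n$, forcing $B(x_k, r_{x_k})$ to share a connected component with $B(x_1, r_{x_1})$ for every $k$.

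The four listed properties then follow easily: (1) and (2) are built in; (3) is the chain $\overline{V_n} \subset \overline{W_{n+1}} \subset V_{n+1}$, where the second inclusion is precisely the containment from the construction at stage $n+1$; and (4) follows from $V_n \supset W_n$ together with $\bigcup_n W_n = U$. The main obstacle I anticipate is coordinating the rigid admissibility constraint of Lemma \ref{screening radius lemma} with the simultaneous demands of connectedness and the tight sandwich $\overline{W_n} \subset V_n \subset W_{n+1}$; the buffer built into the soft exhaustion, together with the freedom to shrink each $r_x$ without losing admissibility, is exactly what reconciles these competing requirements.
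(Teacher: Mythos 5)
Your proof is correct, but it takes a genuinely different route from the paper's. The paper first applies Lindel\"{o}f's theorem to the cover $\{B(x,r_x)\}_{x \in U}$ to extract a fixed countable subcover $\{B_k\}$, and then builds the $V_n$ inductively \emph{inside} that subcover: a compactness-plus-polygonal-connectedness claim shows that any finite union of the $B_k$ can be enlarged, within the subcover, to a connected finite union containing its closure, and the exhaustion property (4) is forced by always enlarging from the index set $\{1,\dotsc,\max(J_n)\}$ so that every $B_k$ eventually appears. You instead manufacture a standard compact exhaustion $\{W_n\}$ of $U$ first (via the distance to the boundary and a base point), and at each stage cover $\overline{W_n}$ and the connecting polygonal paths by freshly chosen admissible balls squeezed into $W_{n+1}$; nesting comes from the sandwich $\overline{W_n} \subset V_n \subset W_{n+1}$ together with $\overline{W_{n+1}} \subset V_{n+1}$, and exhaustion from $\bigcup_n W_n = U$. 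Your approach trades the Lindel\"{o}f and index-bookkeeping machinery for the (routine) construction of the soft exhaustion, and it leans on the observation --- which you correctly justify --- that the radii of Lemma \ref{screening radius lemma} may be shrunk without losing admissibility, since that lemma is an existence statement whose conclusions are monotone in $r_x$; this is consistent with how those radii are used downstream in Proposition \ref{proposition poincare} and Theorem \ref{full_poincare}. Both arguments ultimately rest on the same two ingredients, compactness of the relevant closures and polygonal connectedness of open connected sets, so neither is substantially shorter; yours is perhaps easier to visualize, while the paper's avoids any reference to $\partial U$ or to a compact exhaustion.
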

\begin{proof}
For each $x \in  U$ let $r_x >0$ be as given by Lemma \ref{screening radius lemma}, which in particular means that $B[x,r_x] \subset B(x,2r_x) \subseteq  U$.  Then $\{B(x,r_x)\}_{x \in  U}$ is an open cover of $ U$, and so by Lindel\"{o}f's theorem  we can choose a countably infinite subcover $\{B_k\}_{k=1}^\infty$, where $B_k = B(x_k,r_{k})$.

We now claim that if $\varnothing \neq I \subset \mathbb{N}$ is a finite set and $V = \bigcup_{k \in I} B_k$, then there exists a finite set $J \subset \mathbb{N}$ such that $I \subset J$ and the set $W = \bigcup_{k \in J} B_k$ is connected and satisfies $\bar{V} \subset W$.  To prove the claim first note that $\bar{V} \subset  U$ is compact, so we can choose a finite set $I \subset K \subset \mathbb{N}$ such that $\bar{V} \subset \bigcup_{k \in K} B_k$.  Note in particular that $K$ must contain at least two elements since $I$ is nonempty, and we may then write $K = \{k_1,\dotsc,k_m\}$ for $m \ge 2$.  Since $ U$ is open and connected, for $j=2,\dotsc, m$ there exists a polygonal path between the centers of $B_{k_1}$ and $B_{k_j}$.  This path is connected and compact, so we can choose a finite set $J_j \subset \mathbb{N}$ such that $k_1,k_j \in J_j$ and the set
\begin{equation*}
W_j =  \bigcup_{k \in J_j} B_k 
\end{equation*}
is connected.  Set $J = \bigcup_{j=2}^m J_j \subset \mathbb{N}$ and note that $I \subset K \subseteq J$.  We have that $B_{k_1} \subseteq W_j$ for $j=2,\dotsc,m$, so the set 
\begin{equation*}
 W = \bigcup_{j=2}^m W_j = \bigcup_{k \in J} B_k
\end{equation*}
is connected.  By construction we have that $\bar{V} \subset W$, so the claim is proved.

Now we define the sequence $\{V_n\}_{n=1}^\infty$ inductively, starting with $V_1 = B_1$.   Suppose now that $V_n = \bigcup_{k \in J_n} B_k$ is given for some $\varnothing \neq J_n \subset \mathbb{N}$, and that $V_n$ is connected.  We set $I_{n+1} = \{1,\dotsc,\max(J_n)\} \subset \mathbb{N}$ and use the above claim to produce $J_{n+1} \subset \mathbb{N}$ from $I_{n+1}$.  Set $V_{n+1} = \bigcup_{k\in J_{n+1}} B_k$.  The claim guarantees that  $I_{n+1} \subset J_{n+1}$, that 
\begin{equation}
 V_n = \bigcup_{k \in J_n} B_k \subseteq \bigcup_{k \in I_{n+1}} B_k \subset \overline{\bigcup_{k \in I_{n+1}} B_k } \subset \bigcup_{k \in J_{n+1}} B_k = V_{n+1},
\end{equation}
and that $V_{n+1}$ is connected.  This inductively defines the sequence $\{V_n\}_{n=1}^\infty$, and it is clear that the sequence satisfies all of the stated properties by construction.

\end{proof}

In view of Proposition \ref{proposition seminorm}, to turn $\W_{(\sigma)}^{s,p}( U)$ into a normed space we consider the following equivalence relation: given $f,g\in L_{\operatorname*{loc}}^{1}( U)$, we say that $f\sim g$ if $f-g$ is equivalent to a constant in each connected component of $ U$.    In the next proposition we show that the resulting quotient space $\W_{(\sigma)}^{s,p}( U)/\mathbb{R}$ is a Banach space.

\begin{theorem}\label{theorem screened complete}
Suppose that $ U \subseteq \mathbb{R}^N$ is open, $1\leq p<\infty$, and $0<s<1$. Let  $\sigma :  U \to (0,\infty]$ be a screening function.  Then the space $\W_{(\sigma)}^{s,p}( U)/\mathbb{R}$ is a Banach space with the norm
\begin{equation*}
\Vert\lbrack f]\Vert_{\W_{(\sigma)}^{s,p}( U)/\mathbb{R}} 
:=|f|_{\W_{(\sigma)}^{s,p}( U)}.
\end{equation*}
Moreover, if $1<p<\infty$, then $\W_{(\sigma)}^{s,p}( U)/\mathbb{R}$ is reflexive.
\end{theorem}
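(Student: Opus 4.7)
The norm property on the quotient is immediate: Proposition \ref{proposition seminorm} identifies the kernel of the seminorm with the functions constant on each connected component, which is precisely the subspace factored out in the quotient, and the general construction from Section \ref{subsection seminormed spaces} then produces a norm. The substantive content of the theorem is sequential completeness; reflexivity will then follow from a standard embedding into a reflexive Lebesgue space. The principal obstacle in proving completeness is that the seminorm does not directly control $L^p$ norms on all of $U$, so I will combine the Poincar\'e-type estimate of Theorem \ref{full_poincare} (which applies only to sets covered by finitely many of the ``good'' balls from Lemma \ref{screening radius lemma}) with the exhaustion produced in Lemma \ref{lemma exhaustion} to obtain local $L^p$ control.

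For completeness, by treating each connected component separately I reduce to the case in which $U$ is connected. Given a Cauchy sequence $\{[f_n]\}_{n \in \mathbb{N}}$ in $\W_{(\sigma)}^{s,p}(U)/\mathbb{R}$, I apply Lemma \ref{lemma exhaustion} to obtain an exhaustion $\{V_k\}_{k=1}^\infty$ of $U$ by nested open connected sets, each a finite union of balls of the type provided by Lemma \ref{screening radius lemma}. Fixing a Lebesgue measurable set $E \subset V_1$ with $\mathcal{L}^N(E) > 0$, I choose representatives $f_n$ normalized so that $(f_n)_E = 0$. Then Theorem \ref{full_poincare} applied on each $V_k$ yields
\begin{equation*}
\|f_n - f_m\|_{L^p(V_k)} \le c_k \, |f_n - f_m|_{\W_{(\sigma)}^{s,p}(U)},
\end{equation*}
so $\{f_n\}$ is Cauchy in $L^p(V_k)$ for every $k$. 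A diagonal extraction yields $f \in L^p_{\mathrm{loc}}(U)$ and a subsequence (still denoted $f_n$) converging to $f$ almost everywhere on $U$. Fatou's lemma then gives
\begin{equation*}
|f - f_n|_{\W_{(\sigma)}^{s,p}(U)}^p \le \liminf_{m \to \infty} |f_m - f_n|_{\W_{(\sigma)}^{s,p}(U)}^p,
\end{equation*}
and the right side vanishes as $n \to \infty$ by the Cauchy property. This shows simultaneously that $f \in \W_{(\sigma)}^{s,p}(U)$ and that $[f_n] \to [f]$ in the quotient norm.

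For reflexivity when $1 < p < \infty$, I will construct an isometric embedding of $\W_{(\sigma)}^{s,p}(U)/\mathbb{R}$ into a reflexive Lebesgue space. The set $A := \{(x,h) \in U \times \mathbb{R}^N : h \in H(x)\}$ is Lebesgue measurable thanks to the lower semi-continuity of $\sigma$ and the openness of $U$, and I equip it with the measure $d\mu(x,h) := |h|^{-sp-N}\, dh\, dx$. The map defined by $T[f](x,h) := f(x+h) - f(x)$ is then a well-defined linear isometry $T : \W_{(\sigma)}^{s,p}(U)/\mathbb{R} \to L^p(A,\mu)$, since $\|T[f]\|_{L^p(A,\mu)}^p = |f|_{\W_{(\sigma)}^{s,p}(U)}^p$ by construction. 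Because the domain is complete by the preceding step, $T$ has closed range, and since $L^p(A,\mu)$ is reflexive for $1 < p < \infty$, the closed subspace $T(\W_{(\sigma)}^{s,p}(U)/\mathbb{R})$ is reflexive, and hence so is $\W_{(\sigma)}^{s,p}(U)/\mathbb{R}$ itself.
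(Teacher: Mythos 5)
Your proof is correct, and its overall architecture matches the paper's: both use Lemma \ref{lemma exhaustion} together with the Poincar\'e inequality of Theorem \ref{full_poincare} to turn the Cauchy property in the seminorm into Cauchyness in $L^p(V_k)$ for suitably normalized representatives, and both prove reflexivity by the same isometric embedding $f \mapsto f(x+h)-f(x)$ into $L^p$ of the set $\{(x,h): h \in H(x)\}$ with the weight $|h|^{-sp-N}$. Where you genuinely diverge is in identifying the limit. The paper first uses completeness of $L^p(\Gamma;\mu)$ to produce an abstract limit $v$ of the difference quotients, and then spends a full step on nested subsequence extractions to verify $v(x,h)=f(x+h)-f(x)$ pointwise. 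You instead extract a single a.e.-convergent subsequence from the $L^p_{\mathrm{loc}}$ convergence and apply Fatou's lemma to $|f_m-f_n|^p_{\W_{(\sigma)}^{s,p}(U)}$, which simultaneously shows $f-f_n\in\W_{(\sigma)}^{s,p}(U)$ and gives the convergence $[f_n]\to[f]$. This is shorter and cleaner; the only point you leave tacit is that a.e. convergence of $f_m$ on $U$ implies a.e. convergence of the integrand on $\{(x,h):h\in H(x)\}$ with respect to $\mu$, which follows from Fubini since the exceptional set $\{(x,h): x\in D \text{ or } x+h\in D\}$ is $\mathcal{L}^{2N}$-null and $\mu$ is absolutely continuous with respect to $\mathcal{L}^{2N}$ there.

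One small caution about your opening reduction: for disconnected $U$ the seminorm is \emph{not} the sum of the seminorms over the components, because $B(x,\sigma(x))\cap U$ may meet components other than the one containing $x$, so "treat each component separately" is not literally a reduction to the connected case. This costs you nothing, however: your actual argument (normalize on a fixed positive-measure set in each component, obtain a.e. convergence on all of $U$, then apply Fatou to the full double integral over $U$) runs verbatim on a disconnected $U$, exactly as the paper's Step 1 handles all components $U_\alpha$ simultaneously. I would simply drop the reduction sentence and state the argument globally.
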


\begin{proof}
In view of Proposition \ref{proposition seminorm}, the homogeneity of $|\cdot|_{\W_{(\sigma)}^{s,p}( U)}$, and Minkowski's inequality we have that $\Vert\cdot\Vert_{\W_{(\sigma)}^{s,p}( U)/\mathbb{R}}$ is a norm.  According to the discussion in Section \ref{subsection seminormed spaces}, in order to prove that $\W_{(\sigma)}^{s,p}( U)/\mathbb{R}$ is a Banach space, it is enough to show that $\W_{(\sigma)}^{s,p}( U)$ is sequentially complete.  Suppose then that $\{f_{n}\}_{n=1}^\infty$ is a Cauchy sequence  in $\W_{(\sigma)}^{s,p}( U)$.  

Throughout the rest of the proof, which we divide into several steps, we will employ the notation 
\begin{equation*}
 (f)_E = \frac{1}{\mathcal{L}^N(E)} \int_E f(x) dx
\end{equation*}
whenever $E \subseteq  U$ is a measurable set of positive measure.

\textbf{Step 1 -- Consequences of Poincar\'e's inequality:}  Let $\{ U_\alpha\}_{\alpha \in A}$ denote the connected components of $ U$ and note that $A$ is countable.  For each $\alpha \in A$ let $\{V_k^\alpha\}_{k=1}^\infty$ be the sequence of open subsets of $ U_\alpha$ given by Lemma \ref{lemma exhaustion}.  The lemma allows us to apply Theorem \ref{full_poincare} on each set $V_k^\alpha$ with $E = V_1^\alpha \subseteq V_k^\alpha$ in order to see that 
\begin{equation*}
 \Vert (f_n - (f_n)_{V_1^\alpha}) - (f_m - (f_m)_{V_1^\alpha} )  \Vert_{L^p(V_k^\alpha)} \le c_{k,\alpha}  |f_n - f_m|_{\W_{(\sigma)}^{s,p}(V_k^\alpha)} \le c_{k,\alpha}  |f_n - f_m|_{\W_{(\sigma)}^{s,p}( U)}.
\end{equation*}
Consequently, $\{f_n - (f_n)_{V_1^\alpha}\}_{n=1}^\infty$ is Cauchy in $L^p(V_k^\alpha)$ and hence convergent to some $g_k^\alpha \in L^p(V_k^\alpha)$.  We now aim to determine how $g_j^\alpha$ and $g_k^\alpha$ are related when $j \neq k$.

Let $\alpha \in A$ and $1 \le j < k < \infty$.  Lemma \ref{lemma exhaustion} shows that $V_j^\alpha \subset V_k^\alpha$ and so both $g_j^\alpha$ and $g_k^\alpha$ are defined on $V_j^\alpha$.  On $V_j^\alpha$ we have that 
\begin{equation*}
0 =  (f_n - (f_n)_{V_1^\alpha}) - (f_n - (f_n)_{V_1^\alpha}) \to g_j^\alpha - g_k^\alpha \text{ in }L^p(V_j^\alpha) \text{ as } n \to \infty, 
\end{equation*}
and hence 
\begin{equation}\label{complete_2}
g_j^\alpha = g_k^\alpha \text{ almost everywhere in } V_j^\alpha.  
\end{equation}

Lemma \ref{lemma exhaustion} guarantees that $ U_\alpha =  \bigcup_{k=1}^\infty V_k^\alpha$, and so once we have the functions $\{g_k^\alpha\}_{k=1}^\infty$ in hand, we may define the function $f :  U \to \mathbb{R}$ via 
\begin{equation}\label{complete_3}
 f(x) = g_k^\alpha(x) \text{ whenever } x \in V_k^\alpha \text{ for some } \alpha \in A \text{ and }k \in \mathbb{N}.
\end{equation}
This is well-defined by virtue of \eqref{complete_2}.  It's clear that $f$ is measurable, and since any compact subset of $ U$ is contained in $\bigcup_{\alpha \in B} V_k^\alpha$ for $k$ large enough and $B \subseteq A$ some finite set, we actually have that $f \in L^p_{\operatorname*{loc}}( U)$.

\textbf{Step 2  -- Passing to the limit:}  We define the set  
\begin{equation*}
 \Gamma = \{ (x,h) \in  U \times \mathbb{R}^N \;\vert\; h \in H(x) \} \subseteq \mathbb{R}^{2N}.
\end{equation*}
The function $\Phi :  U \times \mathbb{R}^N \to \mathbb{R} \times \mathbb{R}^{N}$ given by $\Phi(x,h) = (\sigma(x)-|h|,x+h)$ is clearly measurable, and $\Gamma = \Phi^{-1}((0,\infty) \times  U)$, so $\Gamma$ is $\mathcal{L}^{2N}$-measurable.

Then we define the measure $\mu:\mathcal{B}(\Gamma)\rightarrow [0,\infty]$ via
\begin{equation}\label{b measure}
\mu(E):=\int_{ U}\int_{H(x)}\chi_{E}(x,h)\,\frac{dh}{|h|^{sp+N}}dx 
\end{equation}
and consider the space $L^{p}(\Gamma;\mu)$.  Given $F \in\W_{(\sigma)}^{s,p}( U)$, we define $v_F \in L^p(\Gamma;\mu)$ via $v_{F}(x,h)=f(x+h)-f(x)$ and note that
\begin{equation}\label{b isomorphism}
|F|_{\W_{(\sigma)}^{s,p}( U)} = \Vert v_{F}\Vert_{L^{p}(\Gamma;\mu)}. 
\end{equation}
Consequently, for $n,m \in \mathbb{N}$ we have the identity
\begin{equation*}
|f_{n}-f_{m}|_{\W_{(\sigma)}^{s,p}( U) }
=\Vert v_{f_{n}}-v_{f_{m}}\Vert_{L^{p}(\Gamma;\mu)}, 
\end{equation*}
and so $\{v_{f_{n}}\}$ is a Cauchy sequence in $L^{p}(\Gamma;\mu)$.  Hence, $v_{f_{n}}\rightarrow v$ in $L^{p}(\Gamma;\mu)$ as $n \to \infty$.

\textbf{Step 3  -- Identifying the limit:}  We will now show that $v = v_f$ for the function $f:  U \to \mathbb{R}$ defined by \eqref{complete_3}.  Once this is established we may use the equalities
\begin{equation*}
 |f-f_{n}|_{\W_{(\sigma)}^{s,p}( U) }=
 \Vert v_{f}-v_{f_{n}}\Vert_{L^{p}(\Gamma;\mu)} =  \Vert v-v_{f_{n}}\Vert_{L^{p}(\Gamma;\mu)} 
\end{equation*}
to conclude that $f_n \to f$ in $\W_{(\sigma)}^{s,p}( U)$ as $n \to \infty$, thereby proving the sequential completeness of $\W_{(\sigma)}^{s,p}( U)$.

We know from Step 1 that for each $k \in \mathbb{N}$ and $\alpha \in A$ we have that $f_n - (f_n)_{V_1^\alpha} \to g_k^\alpha$ in $L^p(V_k^\alpha)$ as $n \to \infty$.  As such, we may iteratively extract subsequences and then choose a diagonal subsequence to produce a single subsequence $\{f_{n_m}\}_{m=1}^\infty$  and a Lebesgue measurable set $D \subset  U$ with $\mathcal{L}^N(D) =0$ such that for all $k \in \mathbb{N}$ and $\alpha \in A$ we have that 
\begin{equation}\label{complete_4}
f_{n_m} - (f_{n_m})_{V_1^\alpha} \to g_k^\alpha \text{ pointwise on } V_k^\alpha \backslash D \text{ as }m \to \infty. 
\end{equation}
Next we use Fubini's theorem to find a Lebesgue measurable set $E \subset  U$ with $\mathcal{L}^{N}(E)=0$ and a further subsequence, which up to relabeling we can still refer to as $\{f_{n_{m}}\}_{m=1}^\infty$, such that if $x\in U\setminus E$, then
\begin{equation}\label{complete_5}
\lim_{m\rightarrow\infty}\int_{H(x)}\frac{|f_{n_{m}}(x+h)-f_{n_{k}}(x)-v(x,h)|^{p}}{|h|^{sp+N}}\,dh=0.
\end{equation}

Fix $x \in  U \backslash (D \cup E)$.  According to \eqref{complete_5}, we can pick a measurable set $G_x \subset H(x)$ with $\mathcal{L}^N(G_x) =0$ and a further subsequence $\{f_{n_{m_\ell}}\}_{\ell=1}^\infty$ (both the set and the subsequence depend on the point $x$) such that if $h \in H(x) \backslash G_x$, then
\begin{equation}\label{complete_6}
f_{n_{m_\ell}}(x+h) - f_{n_{m_\ell}}(x) - v(x,h) \to 0 \text{ as } \ell \to \infty.
\end{equation}

Now let $D_x = -x +D$ and note that for $h \in H(x) \backslash D_x$ we have that $x+h \in [B(x,\sigma(x)) \cap  U ]\backslash D \subseteq  U \backslash D$.  By the translation invariance of Lebesgue measure we have that  $\mathcal{L}^N(D_x) = \mathcal{L}^N(D) =0$.  In particular, this means that $\mathcal{L}^N(D_x \cup G_x) = 0$ as well.  

Fix $h \in H(x) \backslash (D_x \cup G_x)$ and pick $j,k \in \mathbb{N}$ and $\alpha \in A$ such that $x \in V_k^\alpha$ and $x+h \in V_j^\alpha$.  We may then write 
\begin{equation*}
f_{n_{m_\ell}}(x+h) - f_{n_{m_\ell}}(x) = [f_{n_{m_\ell}}(x+h) - (f_{n_{m_\ell}})_{V_1^\alpha}  ] - [f_{n_{m_\ell}}(x) - (f_{n_{m_\ell}})_{V_1^\alpha}].
\end{equation*}
According to \eqref{complete_4} and \eqref{complete_6} we may then send $\ell \to \infty$ and use the definition of $f$ from \eqref{complete_3} in order to deduce that
\begin{equation*}
v(x,h)   = g_j^\alpha(x+h) - g_k^\alpha(x)  =  f(x+h) - f(x).
\end{equation*}
Thus $v = v_f$ for $\mathcal{L}^{2N}$ a.e. $(x,h) \in \Gamma$, which completes the proof of sequential completeness.

\textbf{Step 4 -- Reflexivity:}  Now assume that $1<p<\infty$.  Since the space $L^{p}(\Gamma;\mu)$ is reflexive, and the mapping $\W_{(\sigma)}^{s,p} ( U)/\mathbb{R} \ni [f]\mapsto v_{f} \in L^p(\Gamma;\mu)$ is an isometric isomorphism by \eqref{norm quotient} and \eqref{b isomorphism}, we can identify $\W_{(\sigma)}^{s,p} ( U)/\mathbb{R}$ with a closed subspace of $L^{p}(\Gamma;\mu)$.  It then suffices to observe that closed subspaces of reflexive spaces are reflexive.
\end{proof}

Now we show that the screened spaces possess the same interpolation properties as the usual Sobolev spaces.

\begin{proposition}\label{proposition screened interpolation}
Suppose that $ U \subseteq \mathbb{R}^N$ is open, $1\leq p<\infty$,  $0< s_1 < s_2 <1$, and $\sigma :  U \to (0,\infty]$ is a screening function.  If $f \in \W_{(\sigma)}^{s_1,p}( U) \cap \W_{(\sigma)}^{s_2,p}( U)$ and $s = \theta s_1 + (1-\theta) s_2$ for some $\theta \in (0,1)$, then $f \in \W_{(\sigma)}^{s,p}( U)$ and 
\begin{equation}\label{proposition screened interpolation bound}
 |f|_{\W_{(\sigma)}^{s,p}( U)} \le \left( |f|_{\W_{(\sigma)}^{s_1,p}( U)} \right)^\theta \left( |f|_{\W_{(\sigma)}^{s_1,p}( U)} \right)^{1-\theta}.
\end{equation}
\end{proposition}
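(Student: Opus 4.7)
The strategy is a direct application of Hölder's inequality to the measure $\mu$ introduced in the proof of Theorem \ref{theorem screened complete} (or, equivalently, to the product measure on $U \times H(x)$ appearing in the definition of the seminorm). The only observation needed is that the weight $|h|^{sp+N}$ can be split multiplicatively using $s = \theta s_1 + (1-\theta) s_2$ together with the trivial identity $N = \theta N + (1-\theta) N$, yielding
\begin{equation*}
|h|^{sp+N} = |h|^{\theta(s_1 p + N) + (1-\theta)(s_2 p + N)}.
\end{equation*}

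First, I would rewrite the integrand of $|f|_{\W_{(\sigma)}^{s,p}(U)}^{p}$ as a product of the integrands appearing in the $s_1$ and $s_2$ seminorms raised to powers $\theta$ and $1-\theta$ respectively, namely
\begin{equation*}
\frac{|f(x+h)-f(x)|^{p}}{|h|^{sp+N}} = \left( \frac{|f(x+h)-f(x)|^{p}}{|h|^{s_{1}p+N}} \right)^{\theta} \left( \frac{|f(x+h)-f(x)|^{p}}{|h|^{s_{2}p+N}} \right)^{1-\theta}.
\end{equation*}
Then I would apply Hölder's inequality with conjugate exponents $1/\theta$ and $1/(1-\theta)$ to the double integral against $dh\,dx$ over the set $\Gamma = \{(x,h) : x \in U,\ h \in H(x)\}$. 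This immediately yields
\begin{equation*}
|f|_{\W_{(\sigma)}^{s,p}(U)}^{p} \leq |f|_{\W_{(\sigma)}^{s_{1},p}(U)}^{p\theta}\,|f|_{\W_{(\sigma)}^{s_{2},p}(U)}^{p(1-\theta)},
\end{equation*}
and extracting the $p$-th root gives the stated estimate \eqref{proposition screened interpolation bound}. In particular, the bound shows that $f \in \W_{(\sigma)}^{s,p}(U)$ whenever $f$ belongs to both endpoint spaces, which establishes the inclusion part of the claim.

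There is no genuine obstacle here: the screening function $\sigma$ plays no active role in the argument, since the integration region $H(x)$ is fixed independently of $s$, and the only analytic ingredient is the elementary splitting of the kernel combined with Hölder's inequality. The mild typo in the statement (both factors on the right read $|f|_{\W_{(\sigma)}^{s_{1},p}(U)}$) should be corrected to have the second factor be $|f|_{\W_{(\sigma)}^{s_{2},p}(U)}$, which the computation above confirms is the intended bound.
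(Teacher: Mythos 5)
Your proof is correct and follows essentially the same route as the paper: split the kernel via $|h|^{sp+N} = |h|^{\theta(s_1p+N)}|h|^{(1-\theta)(s_2p+N)}$ and apply H\"{o}lder with exponents $1/\theta$ and $1/(1-\theta)$ (the paper applies H\"{o}lder twice, first in $h$ for fixed $x$ and then in $x$, whereas you apply it once to the joint measure on $\Gamma$ --- an immaterial difference since the integrand is nonnegative and Tonelli's theorem applies). You are also right that the second factor in \eqref{proposition screened interpolation bound} is a typo and should read $|f|_{\W_{(\sigma)}^{s_2,p}(U)}$.
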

\begin{proof}
For each $x \in  U$ we use H\"{o}lder's inequality to bound 
\begin{align*}
 \int_{H(x)} \frac{|f(x+h)-f(x)|^p}{|h|^{sp+N}} dh  &= \int_{H(x)} \left( \frac{|f(x+h)-f(x)|^p}{|h|^{N+s_1 p}}\right)^{\theta} \left( \frac{|f(x+h)-f(x)|^p}{|h|^{N+s_2 p}}\right)^{1-\theta}  dh \\
& \le \left(\int_{H(x)}   \frac{|f(x+h)-f(x)|^p}{|h|^{N+s_1 p} dh }\right)^{\theta} \left(\int_{H(x)}  \frac{|f(x+h)-f(x)|^p}{|h|^{N+s_2 p}}dh \right)^{1-\theta}.   
\end{align*}
Then \eqref{proposition screened interpolation bound} follows by integrating over $x \in  U$ and  applying H\"older's inequality again.
\end{proof}

Next, we show that spaces defined by different screening functions nest when the screening functions are ordered.

\begin{proposition}\label{proposition screened nesting}
Suppose that $ U \subseteq \mathbb{R}^N$ is open, $1\leq p<\infty$, and  $0< s <1$.  Suppose that $\sigma_1,\sigma_2 :  U \to (0,\infty]$ are two screening functions such that $\sigma_1 \le \sigma_2$ on $ U$.  Then for each $f \in L^1_{\operatorname*{loc}}( U)$ we have that 
\begin{equation}\label{equivs 1}
|f|_{\W_{(\sigma_1)}^{s,p}( U) } \le  |f|_{\W_{(\sigma_2)}^{s,p}( U) }.
\end{equation}
In particular, we have the subspace inclusion $\W_{(\sigma_2)}^{s,p}( U) \subseteq \W_{(\sigma_1)}^{s,p}( U)$.
\end{proposition}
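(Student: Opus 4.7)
The proof will be essentially immediate from the definitions, and the main (minor) obstacle is just unpacking the notation and confirming that the pointwise monotonicity of the screening functions transfers to monotonicity of the sets $H(x)$.

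The plan is to denote by $H_1(x) = B(0,\sigma_1(x)) \cap (-x+U)$ and $H_2(x) = B(0,\sigma_2(x)) \cap (-x+U)$ the sets appearing in the definition of the seminorms associated to $\sigma_1$ and $\sigma_2$, respectively. Since $\sigma_1(x) \le \sigma_2(x)$ for every $x \in U$, we have $B(0,\sigma_1(x)) \subseteq B(0,\sigma_2(x))$ (with the convention that $B(0,\infty) = \mathbb{R}^N$ handled trivially), and intersecting both with $-x+U$ gives the inclusion $H_1(x) \subseteq H_2(x)$.

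Next, for any $f \in L^1_{\operatorname*{loc}}(U)$ the integrand $(x,h) \mapsto |f(x+h)-f(x)|^p / |h|^{sp+N}$ is nonnegative and measurable on the set $\Gamma$ defined in the proof of Theorem \ref{theorem screened complete}. Thus for each $x \in U$ the monotonicity of the Lebesgue integral over nested measurable sets yields
\begin{equation*}
\int_{H_1(x)} \frac{|f(x+h)-f(x)|^p}{|h|^{sp+N}}\,dh \le \int_{H_2(x)} \frac{|f(x+h)-f(x)|^p}{|h|^{sp+N}}\,dh.
\end{equation*}
Integrating this inequality over $x \in U$ and taking the $p$-th root produces \eqref{equivs 1}.

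The subspace inclusion is then a direct consequence: if $f \in \W_{(\sigma_2)}^{s,p}(U)$, then $|f|_{\W_{(\sigma_2)}^{s,p}(U)} < \infty$, and \eqref{equivs 1} gives $|f|_{\W_{(\sigma_1)}^{s,p}(U)} < \infty$, so $f \in \W_{(\sigma_1)}^{s,p}(U)$. No serious obstacle arises; the only technical point worth noting is that the sets $H_i(x)$ are measurable (which was verified in the course of proving Theorem \ref{theorem screened complete}), so Tonelli's theorem applies and the integrals make sense.
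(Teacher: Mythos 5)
Your proof is correct and takes essentially the same route as the paper: both observe that $\sigma_1 \le \sigma_2$ forces $H_1(x) \subseteq H_2(x)$, so the inner integrals are monotone and the seminorm inequality follows by integrating over $x \in U$. The paper states this more tersely as an "obvious inequality," while you spell out the measurability and nesting details, but the argument is identical.
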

\begin{proof}
If $\sigma_1 \le \sigma_2$ on $ U$, then we have the obvious inequality
\begin{equation*}
 \int_{ U} \int_{H_{\sigma_1}(x)} \frac{|f(x+h)-f(x)|^p }{|h|^{sp+N}}dh dx \le  \int_{ U} \int_{H_{\sigma_2}(x)} \frac{|f(x+h)-f(x)|^p }{|h|^{sp+N}}dh dx 
\end{equation*}
for all $f \in L^1_{\operatorname*{loc}}( U)$, where $H_{\sigma_i(x)} = (-x +  U) \cap B(0,\sigma_i(x))$.  The result follows immediately from this.
\end{proof}

Inclusion in $L^p( U)$ is not a requirement for inclusion in $\W_{(\sigma)}^{s,p}( U)$.  Our next result examines what happens when we require both inclusions in the case when the screening function is bounded below.

\begin{proposition}\label{proposition screened Lp}
Suppose that $ U \subseteq \mathbb{R}^N$ is open, $1\leq p<\infty$, and  $0< s <1$.  Suppose that $\sigma :  U \to (0,\infty]$ is a screening function such that $0 < \sigma_- = \inf_{ U} \sigma$.   Then there exists a constant $c = c(N,s,p, \sigma_-)>0$ such that for each $f \in L^1_{\operatorname*{loc}}( U)$ we have that 
\begin{equation}\label{proposition screened Lp equivs}
 \Vert f \Vert_{L^p( U)}  +  |f|_{\W_{(\sigma)}^{s,p}( U) } \le \Vert f \Vert_{L^p( U)}  +  |f|_{\dot{W}^{s,p}( U) } \le c \left( \Vert f \Vert_{L^p( U)}  +  |f|_{\W_{(\sigma)}^{s,p}( U) } \right).
\end{equation} 
In particular, we have the algebraic and topological equalities
\begin{equation*}
 W^{s,p}( U) = \dot{W}^{s,p}( U) \cap L^p( U) = \W_{(\sigma)}^{s,p}( U) \cap L^p( U).
\end{equation*}
\end{proposition}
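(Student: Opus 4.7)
The plan is to establish the two inequalities in \eqref{proposition screened Lp equivs} separately, with the first being a direct consequence of earlier results and the second requiring a splitting argument. Once both are established, the equality of $W^{s,p}(U)$, $\dot{W}^{s,p}(U) \cap L^p(U)$, and $\W_{(\sigma)}^{s,p}(U) \cap L^p(U)$ (both as sets and as topological spaces) follows immediately from the equivalence of the associated norms.

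The first inequality is immediate: since $\sigma \le \infty$ trivially on $U$, Proposition \ref{proposition screened nesting} gives $|f|_{\W_{(\sigma)}^{s,p}(U)} \le |f|_{\W_{(\infty)}^{s,p}(U)} = |f|_{\dot{W}^{s,p}(U)}$.

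For the reverse inequality, I would split the iterated integral defining $|f|_{\dot{W}^{s,p}(U)}^p$ according to whether $|y-x| < \sigma_-$ or $|y-x| \ge \sigma_-$. On the near-diagonal set, since $\sigma(x) \ge \sigma_-$ for all $x \in U$, we have the inclusion $B(x,\sigma_-) \cap U \subseteq B(x,\sigma(x)) \cap U$, so
\begin{equation*}
\int_{U} \int_{U \cap B(x,\sigma_-)} \frac{|f(y)-f(x)|^p}{|y-x|^{sp+N}}\, dy\, dx \le |f|_{\W_{(\sigma)}^{s,p}(U)}^p.
\end{equation*}
On the far set, I would use the elementary inequality $|f(y)-f(x)|^p \le 2^{p-1}(|f(y)|^p + |f(x)|^p)$, apply Tonelli's theorem, and compute the tail integral in polar coordinates:
\begin{equation*}
\int_{\{|z| \ge \sigma_-\}} \frac{dz}{|z|^{sp+N}} = \beta_N \int_{\sigma_-}^\infty r^{-sp-1}\, dr = \frac{\beta_N}{sp\, \sigma_-^{sp}}.
\end{equation*}
This gives the bound
\begin{equation*}
\int_U \int_{U \cap \{|y-x| \ge \sigma_-\}} \frac{|f(y)-f(x)|^p}{|y-x|^{sp+N}}\, dy\, dx \le \frac{2^p \beta_N}{sp\, \sigma_-^{sp}} \|f\|_{L^p(U)}^p.
\end{equation*}

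Summing the two contributions yields $|f|_{\dot{W}^{s,p}(U)}^p \le |f|_{\W_{(\sigma)}^{s,p}(U)}^p + C \|f\|_{L^p(U)}^p$ with $C$ depending only on $N,s,p,\sigma_-$. Taking $p$-th roots and using the subadditivity $(a+b)^{1/p} \le a^{1/p} + b^{1/p}$, valid for $a,b \ge 0$ and $p \ge 1$, yields \eqref{proposition screened Lp equivs}. There is no real obstacle here; the main computation is just the elementary polar integral for the tail, and the whole argument is essentially a one-page routine verification.
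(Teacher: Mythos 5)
Your proposal is correct and follows essentially the same route as the paper: the first inequality via Proposition \ref{proposition screened nesting}, and the second by splitting the double integral into a near-diagonal part absorbed by the screened seminorm and a far part controlled by $\Vert f\Vert_{L^p(U)}$ through the elementary bound $|f(y)-f(x)|^p \le 2^{p-1}(|f(y)|^p+|f(x)|^p)$ and the polar-coordinate tail integral $\beta_N/(sp\,\sigma_-^{sp})$. The only cosmetic difference is that you split at the constant $\sigma_-$ while the paper splits at $\sigma(x)$ and then relaxes to $\sigma_-$; the estimates are identical.
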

\begin{proof}
The first bound in \eqref{proposition screened Lp equivs} follows immediately from Proposition \ref{proposition screened nesting}, so it suffices to only prove the second.  To this end we first write  
\begin{align*}
|f|_{\dot{W}^{s,p}( U) }^p & = \int_U \int_{B(x,\sigma(x)) \cap  U} \frac{|f(y)-f(x)|^p}{|y-x|^{sp+N}}dy dx  + \int_U \int_{ U \backslash B(x,\sigma(x))} \frac{|f(y)-f(x)|^p}{|y-x|^{sp+N}}dy dx \\
& = |f|_{\W_{(\sigma)}^{s,p}( U) }^p  
+ \int_U \int_{ U \backslash B(x,\sigma(x))} \frac{|f(y)-f(x)|^p}{|y-x|^{sp+N}}dy dx.
\end{align*}
Next we estimate 
\begin{align*}
 & \int_U \int_{ U \backslash B(x,\sigma(x))} \frac{|f(y)-f(x)|^p}{|y-x|^{sp+N}}dy dx \\
 & \le 2^{p-1}\int_U \int_{ U \backslash B(x,\sigma(x))} \frac{|f(y)|^p}{|y-x|^{sp+N}}dy dx + 2^{p-1} \int_U \int_{ U \backslash B(x,\sigma(x))} \frac{|f(x)|^p}{|y-x|^{sp+N}}dy dx =: 2^{p-1}(I + II).
\end{align*}
We handle the first term with Tonelli's theorem, a change of variables, and spherical coordinates:
\begin{align*}
 I & = \int_{ U} \int_{ U} \chi_{\{ |y-x| \ge \sigma(x) \}}(x) \frac{|f(y)|^p}{|y-x|^{sp+N}}dx dy 
 \le \int_{ U} \int_{ U} \chi_{\{ |y-x| \ge \sigma_- \}}(x) \frac{|f(y)|^p}{|y-x|^{sp+N}}dx dy \\
 & \le \int_{ U}   \int_{B(0,\sigma_-)^c}  \frac{|f(y)|^p}{|h|^{sp+N}} dh  dy
 = \Vert f \Vert_{L^p( U)}^p  \beta_N \int_{\sigma_-}^\infty \frac{dr}{r^{1+sp}} = \frac{\beta_N }{sp \sigma_-^{sp}} \Vert f \Vert_{L^p( U)}^p.
\end{align*}
We may similarly bound the second term:  
\begin{equation*}
II \le \int_U \int_{B(0,\sigma_-)^c} \frac{|f(x)|^p}{|h|^{sp+N}} dh dx = \frac{\beta_N }{sp \sigma_-^{sp}} \Vert f \Vert_{L^p( U)}^p.
\end{equation*}
The second bound in  \eqref{proposition screened Lp equivs} then follows directly from these estimates.

\end{proof}

Our next result establishes that the screened homogeneous spaces are strictly bigger than the standard homogeneous spaces when the screening function is unity and the set $ U$ contains a ray in a set of directions of positive $\mathcal{H}^{N-1}$ measure.

\begin{theorem}\label{theorem strict inclusion}
Let $1 \le p < \infty$. Let $\Gamma \subseteq \mathbb{S}^{N-1}$ be such that $\mathcal{H}^{N-1}(\Gamma) >0$, $\rho \ge 0$, and define the infinite cone-like set
\begin{equation*}
K_{\Gamma,\rho} = \{x \in \mathbb{R}^N  \;\vert\; \rho < |x|  \text{ and } x/|x| \in \Gamma \}. 
\end{equation*}
Suppose that $ U \subseteq \mathbb{R}^N$ is open and that $K_{\Gamma,\rho} \subseteq  U$.  Then there exists a function 
\begin{equation*}
u \in \bigcap_{0 < s < 1} \W_{(1)}^{s,p}( U) \backslash  \bigcup_{0 < s < 1} \dot{W}^{s,p}( U). 
\end{equation*}
In particular, if $0 < s < 1$, then 
\begin{equation*}
\dot{W}^{s,p}( U) \subsetneqq \W_{(1)}^{s,p}( U).
\end{equation*}
\end{theorem}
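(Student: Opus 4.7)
The plan is to follow the model of the $1$-D Heaviside example from the introduction. The key simplifying observation is that whenever $u$ is Lipschitz with $\nabla u \in L^p(U)$, the Lipschitz bound $|u(x+h)-u(x)|^p \le \|\nabla u\|_\infty^{p-1}|h|^{p-1}\int_0^1 |\nabla u(x+th)|\,dt$ combined with Fubini gives
\begin{equation*}
|u|_{\W_{(1)}^{s,p}(U)}^p \le \|\nabla u\|_{L^\infty}^{p-1}\,\|\nabla u\|_{L^p(U)}\int_{B(0,1)}|h|^{p(1-s)-N}\,dh,
\end{equation*}
which is finite for every $s\in(0,1)$ because $p(1-s)-N>-N$. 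Thus the task reduces to choosing $u$ with $\nabla u\in L^p(U)$ for which $|u|_{\dot W^{s,p}(U)}=\infty$ for every $s\in(0,1)$.

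Using $\mathcal{H}^{N-1}(\Gamma)>0$, I would pick a density point $\omega_0\in\Gamma$ and, after rotation, assume $\omega_0=e_N$. I then choose nested spherical caps $\Gamma_1 \subsetneq \Gamma_0 \subset \mathbb{S}^{N-1}$ centered at $\omega_0$ with $\mathcal{H}^{N-1}(\Gamma\cap\Gamma_1)>0$, a smooth Heaviside $\phi\in C^\infty(\mathbb{R})$ with $\phi \equiv 0$ on $(-\infty,\rho+1]$, $\phi \equiv 1$ on $[\rho+2,\infty)$, and $\phi' \in C_c^\infty((\rho+1,\rho+2))$, and a smooth angular cutoff $\eta\in C^\infty(\mathbb{R}^N\setminus\{0\})$ depending only on $x/|x|$ that is $1$ on the cone over $\Gamma_1$ and $0$ off the cone over $\Gamma_0$. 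I set $u(x):=\phi(x_N)\eta(x)$. By the narrowness of $\Gamma_0$ together with the compact support of $\phi'$ in the radial direction, the support of $\nabla u$ has bounded $\mathcal{L}^N$-measure and $|\nabla u|$ is bounded, so $\nabla u\in L^p(U)$ and the preceding display puts $u\in\bigcap_{0<s<1}\W_{(1)}^{s,p}(U)$.

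For the failure of homogeneous membership I identify two infinite sub-cones $A,B\subseteq K_{\Gamma,\rho}$ with $u\equiv 0$ on $A$ and $u\equiv 1$ on $B$: take $B:=K_{\Gamma\cap\Gamma_1,\rho+2}$, and take $A:=K_{\Gamma\setminus\Gamma_0,\rho}$ when $\mathcal{H}^{N-1}(\Gamma\setminus\Gamma_0)>0$, or $A:=K_{\Gamma,\rho}\cap\{x_N\le\rho+1\}$ in the remaining case (adjusting $\omega_0$ and using a direction transverse to the concentration of $\Gamma$ if necessary so that $A$ remains unbounded). This yields the lower bound
\begin{equation*}
|u|_{\dot W^{s,p}(U)}^p \ge \iint_{A\times B}\frac{dy\,dx}{|y-x|^{sp+N}}.
\end{equation*}
Parametrizing $x=r\omega, y=t\omega'$ in spherical coordinates, using $|r\omega-t\omega'|\le r+t$, and making the substitution $r=R\tau$, $t=R(1-\tau)$, this double integral is bounded below by a positive angular constant times $\int_{2\rho}^\infty R^{N-1-sp}\,dR$, which diverges at infinity whenever $sp\le N$, covering the entire range $s\in(0,1)$ when $p\le N$.

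The main obstacle is the complementary regime $sp>N$, which can occur only when $p>N$ and $s\in(N/p,1)$. There the one-step construction above produces only a convergent integral. I would handle this by enhancing the construction to a superposition of dyadic-scale Heaviside--angular functions with alternating signs, of the form $\tilde u=\sum_{k\ge 0}(-1)^k u_k$, where each $u_k$ is a rescaling of the previous construction concentrated on the annulus $\{|x|\in(2^k,2^{k+1})\}$. For $p>N$ the scaling gives $\|\nabla u_k\|_{L^p(U)}^p\sim 2^{k(N-p)}$, which is summable in $k$, so $\nabla\tilde u\in L^p(U)$ and membership in every $\W_{(1)}^{s,p}(U)$ is retained; meanwhile the alternating-sign cross-scale interactions, which involve pairs $(x,y)$ at widely separated dyadic scales, contribute divergent sums to the homogeneous seminorm for every $s\in(0,1)$, completing the argument.
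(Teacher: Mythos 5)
Your overall strategy---produce one function whose screened seminorms are all finite because its gradient is controlled, while its homogeneous seminorms all diverge due to long-range behavior---is the paper's strategy too, but both halves of your construction have genuine gaps. The first is in the claim that $\nabla u$ for $u=\phi(x_N)\eta(x)$ has support of bounded measure: $\nabla u$ contains the term $\phi(x_N)\nabla\eta(x)$, supported on the \emph{unbounded} cone $\{x_N>\rho+1,\ x/|x|\in\Gamma_0\setminus\Gamma_1\}$, where $|\nabla\eta(x)|\asymp 1/|x|$ by zero-homogeneity of $\eta$. Take $U=\mathbb{R}^N$ and $\Gamma=\mathbb{S}^{N-1}$, which the theorem permits. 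Choosing a spherical cap $V$ on which the tangential gradient of $\eta$ is bounded below and restricting $h$ to a cone of directions aligned with $\nabla u(x)$, one gets $\int_{B(0,1)}|u(x+h)-u(x)|^p|h|^{-sp-N}\,dh\geq c_s|x|^{-p}$ for all $x$ in the cone over $V$ with $|x|$ large, so $|u|_{\W_{(1)}^{s,p}(U)}^p\gtrsim\int^\infty r^{N-1-p}\,dr=\infty$ whenever $1\le p\le N$. So for $p\le N$ your base function is not even in the screened space (and in particular $\nabla u\notin L^p$, which also invalidates the displayed Lipschitz bound, whose right side should moreover involve $\|\nabla u\|_{L^1}$, not $\|\nabla u\|_{L^p}$). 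This cannot be patched by cutting off: any transition region separating two infinite cones on which $u\equiv 0$ and $u\equiv 1$ is necessarily unbounded. The paper sidesteps this by never letting the transition finish: it takes the radial function $u(x)=f(|x|)$ with $f'(t)=(2+t)^{-N/p}(\log(2+t))^{-2/p}$, so $|\nabla u|^p\asymp|x|^{-N}(\log|x|)^{-2}$ is just barely integrable, and finiteness of every screened seminorm follows from the monotonicity of $f'$.

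The second gap is that the dyadic alternating sum $\tilde u=\sum_k(-1)^ku_k$ does not rescue the regime $p>N$, $N/p<s<1$. Each $u_k$ is bounded by $1$ and transitions over length scale $2^k$, so the contribution to $|\tilde u|_{\dot W^{s,p}(U)}^p$ from pairs $(x,y)$ in annuli $j\le k$ is at most of order $\mathcal{L}^N(A_j)\,\mathcal{L}^N(A_k)\,2^{-k(sp+N)}\approx 2^{jN}2^{kN}2^{-k(sp+N)}$; summing over $j\le k$ gives $\sum_k 2^{k(N-sp)}$, which \emph{converges} precisely when $sp>N$. The within-annulus far-field contributions scale the same way, and the near-diagonal part is controlled by $\|\nabla\tilde u\|_{L^p}$. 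So bounded, unit-amplitude oscillation across dyadic scales produces divergence only for $sp\le N$---exactly the range you could already handle---and your claim that the cross-scale interactions diverge for every $s\in(0,1)$ is false. What the paper exploits in the regime $sp>N$ is growth of the dyadic increments: for its radial profile, $|f(|y|)-f(|x|)|\gtrsim|x|^{1-N/p-\varepsilon}$ when $|y|\ge 3|x|$, and this amplitude growth is precisely what beats the kernel decay, yielding $\int^\infty R^{p(1-s-\varepsilon)-1}\,dR=\infty$ for every $s<1$. Your construction, being uniformly bounded with unit-size oscillations, cannot supply this mechanism, so the statement remains unproved for $sp>N$.
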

\begin{proof}
 
Define $f \in C^\infty([0,\infty))$ via 
\begin{equation*}
 f(r) = \int_0^r \frac{dt}{(2+t)^{N/p} (\log (2+t))^{2/p}},
\end{equation*}
and note that $f$ is Lipschitz and satisfies
\begin{equation*}
 |f|_{0,1} =  \frac{1}{2^{N/p} (\log 2)^{2/p}}.
\end{equation*}
We then let $u \in C^{0,1}( U)$ be given by $u(x) = f(|x|)$.  Let $0 < s < 1$.  We will prove that $u \in \W_{(1)}^{s,p}( U) \backslash \dot{W}^{s,p}( U)$, from which the conclusions readily follow.

We begin by proving that $u \in \W_{(1)}^{s,p}( U)$.  Write
\begin{align*}
|u|_{\W_{(1)}^{s,p}( U)}^p & = \int_{B(0,1)\cap  U} \int_{B(0,1)} \frac{ |f(|x+h|) - f(|x|)|^p }{|h|^{sp+N}} dh dx  \\
& \quad +   \int_{B(0,1)^c \cap  U} \int_{B(0,1)} \frac{ |f(|x+h|) - f(|x|)|^p }{|h|^{sp+N}}  dh dx  =:  I + II.
\end{align*}

To estimate the term $I$ we use the fact that $f$ is Lipschitz together with Tonelli's theorem and a change to spherical coordinates: 
\begin{equation*}
I  \le \int_{B(0,1)} \int_{B(0,1)} \frac{|f|_{0,1}^p |h|^p  }{|h|^{sp+N}} dh dx  
 = \alpha_N \beta_N |f|_{0,1}^p \int_0^1 \frac{r^{N-1}}{r^{N-(1-s)p}}  dr    < \infty.
\end{equation*}
To handle the term $II$ we observe that $f'$ is positive and decreasing on $[0,\infty)$, so for $a,b \in [0,\infty)$ we have that
\begin{equation*}
 |f(a) - f(b)| \le f'( \min\{a,b\}  ) |a-b|.
\end{equation*}
On the other hand, for $|x| \ge 1$ and $|h| <1$ we have that 
\begin{equation*}
 |x|-1 = \min\{|x|,|x|-1\} \le \min\{|x|,|x+h| \},
\end{equation*}
so 
\begin{equation*}
 f'(\min\{|x|,|x+h| \}) \le f'(|x|-1) = \frac{1}{(1+|x|)^{N/p} (\log (1+|x|))^{2/p}}.
\end{equation*}
Combining these and using Tonelli's theorem and spherical coordinates then shows that 
\begin{align*}
II & \le \left(\int_{B(0,1)^c}  \frac{dx}{(1+|x|)^{N} (\log (1+|x|))^{2}} \right) \left(\int_{B(0,1)}  \frac{|h|^p }{|h|^{sp+N}} dh\right) \\
&= \beta_N^2 \left(\int_1^\infty \frac{r^{N-1}}{(1+r)^{N} (\log (1+r))^{2}} dr \right) \left( \int_0^1  \frac{r^{N-1}}{r^{N-(1-s)p}}  dr  \right) \\
&\le \beta_N^2 \left(\int_2^\infty \frac{1}{r (\log (r))^{2}} dr \right) \left( \int_0^1  \frac{1}{r^{1-(1-s)p}}  dr  \right) < \infty.
\end{align*}
Assembling the above estimates, we see that 
\begin{equation*}
 |u|_{\W_{(1)}^{s,p}( U)}^p = I + II < \infty,
\end{equation*}
and thus $u \in \W_{(1)}^{s,p}( U)$.

We now turn to the proof that $u \notin \dot{W}^{s,p}( U)$.   Let $0 < \varepsilon < 1-s$ be such that $N/p + \varepsilon \neq 1$ and choose $R_\varepsilon  >\max\{2,\rho\}$ such that if $t \ge R_\varepsilon$ then $(\log(2+t))^{2/p} \le (2+t)^{\varepsilon}$.  Then for $|x| \ge R_\varepsilon$ and $|h| \ge 3|x|$ we may estimate 
\begin{align*}
|f(|x+h|)-f(|x|)|^p &= \left( \int_{|x|}^{|x+h|} \frac{dt}{(2+t)^{N/p} (\log (2+t))^{2/p}}  \right)^p \\
&\ge  \left(  \int_{|x|}^{2|x|} \frac{dt}{(2+t)^{N/p+\varepsilon} }  \right)^p 
\ge  \left(  \int_{|x|}^{2|x|} \frac{dt}{(2t)^{N/p+\varepsilon} }  \right)^p \\
& = \frac{1}{2^{N+p\varepsilon}}   |x|^{p-N-p\varepsilon} \left( \frac{2^{1-N/p-\varepsilon} -1}{1-N/p -\varepsilon}\right)^p =: c(N,p,\varepsilon) |x|^{p-N-p\varepsilon},
\end{align*}
where $c(N,p,\varepsilon) >0$.  Hence 
\begin{align*}
  |u|_{\dot{W}^{s,p}( U)}^p
& \ge \int_{B(0,R_\varepsilon)^c \cap K_{\Gamma,\rho}}   \int_{B(0,3|x|)^c \cap K_{\Gamma,\rho}} \frac{|f(|x+h|)-f(|x|)|^p}{|h|^{sp+N}}dh dx \\  
& \ge c(N,p,\varepsilon) \int_{B(0,R_\varepsilon)^c \cap K_{\Gamma,\rho}} \int_{B(0,3|x|)^c \cap K_{\Gamma,\rho}} \frac{|x|^{p-N-p\varepsilon}}{|h|^{sp+N}}dh dx \\
& = c(N,p,\varepsilon) (\mathcal{H}^{N-1}(\Gamma))^2 \int_{R_\varepsilon}^\infty r^{N-1} r^{p-N-p\varepsilon} \int_{3r}^\infty \frac{t^{N-1}}{t^{sp+N}}dt dr \\
& = \frac{c(N,p,\varepsilon)}{sp 3^{sp}} (\mathcal{H}^{N-1}(\Gamma))^2 \int_{R_\varepsilon}^\infty  r^{p(1-s-\varepsilon)-1}  dr = \infty
\end{align*}
since $p(1-s - \varepsilon) >0$, and we conclude that $u \notin \dot{W}^{s,p}( U)$.

\end{proof}

\begin{remark}
The function $u$ constructed in Theorem \ref{theorem strict inclusion} clearly satisfies $\lim\limits_{|x| \to \infty} u(x) = \infty$ when $p < N$ and when $2 \le N =p$.   In these cases we deduce that for $ U \subseteq \mathbb{R}^N$ as in the theorem,  
\begin{equation*}
u \in \W_{(1)}^{s,p}( U) \backslash \bigcup_{1 \le q \le \infty} L^q( U). 
\end{equation*}
\end{remark}

Theorem \ref{theorem strict inclusion} dealt with unbounded sets and a unit screening function.  Next we consider the case in which the screening function vanishes on part of the boundary of a bounded cylinder-like set.  We show that again the screened space is strictly bigger than the standard homogeneous fractional Sobolev space.

\begin{theorem}\label{theorem strict inclusion vanishing}
Let $N \ge 2$, $1 \le p< \infty$, $0 < s < 1$, and 
\begin{equation*}
\frac{1}{1-s} \left(2- \frac{1}{p} \right) < r < \infty.
\end{equation*}
Let $a, b \in \mathbb{R}$ with $a < b$ and $\varnothing \neq V \subset \mathbb{R}^{N-1}$ be bounded and open.  Define $ U = V \times (a,b) \subset \mathbb{R}^N$, and let $\sigma:  U \to (0,1/2)$ be given by 
\begin{equation*}
 \sigma(x) = \frac{1}{2} \left( \frac{x_N - a}{b-a}\right)^r.
\end{equation*}
Then 
\begin{equation*}
\dot{W}^{s,p}( U) \subsetneqq  \W_{(\sigma)}^{s,p}( U). 
\end{equation*}
\end{theorem}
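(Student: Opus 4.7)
The plan is to exhibit a function $u \in \W_{(\sigma)}^{s,p}(U) \setminus \dot{W}^{s,p}(U)$; combined with the automatic inclusion $\dot{W}^{s,p}(U) \subseteq \W_{(\sigma)}^{s,p}(U)$ (which follows from Proposition \ref{proposition screened nesting} applied with $\sigma_2 \equiv \infty$), this gives the strict inclusion. I would take $u$ to depend only on $x_N$, with a power-law singularity at the bottom face $\{x_N = a\}$, cut off in the upper half of $U$. Concretely, choose $\alpha$ from the interval $[1, r(1-s) - 1 + 1/p)$, which is nonempty because the hypothesis $r(1-s) > 2 - 1/p$ is precisely the statement that $r(1-s) - 1 + 1/p > 1$. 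Fix $\phi \in C^\infty((0,\infty))$ with $\phi(t) = t^{-\alpha}$ for $t \le (b-a)/4$, $\phi(t) = 0$ for $t \ge (b-a)/2$, and a smooth interpolation in between, and set $u(x) := \phi(x_N - a)$. Since the singularity of $\phi$ lies on $\partial U$, we have $u \in C^\infty(U) \subset L^1_{\operatorname*{loc}}(U)$.

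The first step will be to show $u \in \W_{(\sigma)}^{s,p}(U)$. Set $\delta := x_N - a$. The condition $r > 1$ (implied by the hypothesis) lets me choose $\delta_0 > 0$ small enough that $\sigma(x) = \frac{1}{2}(\delta/(b-a))^r \le \delta/2$ for every $\delta \in (0,\delta_0]$, so the ball $B(x, \sigma(x))$ avoids the singular face and $\phi$ is $C^\infty$ there with $|\phi'| \le c \delta^{-\alpha - 1}$. Combining the mean value theorem with the elementary computation $\int_{B(0,\sigma(x))} |h|^{p - sp - N} dh = c \sigma(x)^{p(1-s)} \le c \delta^{r p(1-s)}$ will bound the local screened integrand by $c \delta^{r p(1-s) - (\alpha + 1)p}$, which is integrable in $\delta$ near $0$ precisely because of the choice $\alpha < r(1-s) - 1 + 1/p$. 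The complementary region $\delta \ge \delta_0$ is handled directly: there $\phi$ is Lipschitz and $U$ is bounded, so the same Lipschitz-type estimate gives a finite contribution.

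The second step will be to show $u \notin \dot{W}^{s,p}(U)$. I will bound the double integral defining $|u|_{\dot{W}^{s,p}(U)}^p$ from below by restricting to $(x, y) \in E \times F$, where $E := V \times (a, a+\delta_0)$ and $F := U \cap \{y_N > (a+b)/2\}$. On $E$, $u(x) = (x_N - a)^{-\alpha}$; on $F$, $u(y) = 0$; and $|y-x|$ is bounded above and below by positive constants depending only on $a$, $b$, $V$, and $\delta_0$. The restricted integral then factors to yield a positive multiple of $\int_0^{\delta_0} \delta^{-\alpha p} d\delta$, which diverges because $\alpha p \ge p \ge 1$. The crux of the argument, and the main technical point to verify, is the tight comparison $\sigma(x) \sim \delta^r$ near the bottom face: this is what allows the small screening balls to absorb the $\delta^{-(\alpha+1)}$ blow-up of $\phi'$, while the unscreened fractional seminorm still picks up the substantial long-range contribution between the singular bottom and the vanishing upper region.
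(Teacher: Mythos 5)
Your overall strategy is the same as the paper's: exhibit a function depending only on $x_N$ with a power singularity at the bottom face $\{x_N=a\}$ (the paper uses $1/(x_N-a)$, you use a cut-off $(x_N-a)^{-\alpha}$ with a tunable exponent), show it lies in the screened space because the screening radius $\sigma(x)\sim (x_N-a)^r$ collapses fast enough near the singular face, and show it fails to lie in $\dot{W}^{s,p}(U)$ via the long-range interaction with the region where the function vanishes. Your Step 2 is correct (indeed simpler than the paper's, which needs a delicate cone restriction because $1/x_N$ does not vanish anywhere), and your exponent bookkeeping $\alpha<r(1-s)-1+1/p$ correctly uses the hypothesis on $r$.

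However, Step 1 has a genuine gap in the treatment of the region $\{x_N-a\ge \delta_0\}$. The inequality $\sigma(x)\le (x_N-a)/2$, which you need so that $B(x,\sigma(x))$ stays away from the singular face, is equivalent to $x_N-a\le (b-a)^{r/(r-1)}$; it therefore holds for \emph{all} $x\in U$ only when $b-a\ge 1$. When $b-a<1$ there is a nonempty upper region where it fails, and when $b-a<1/2$ the situation is fatal: for $x_N$ close to $b$ one has $\sigma(x)$ close to $1/2>b-a$, so $B(x,\sigma(x))\cap U$ contains points $y$ with $y_N-a$ arbitrarily small. For such pairs $|y-x|$ is bounded above and below by positive constants while $|u(y)-u(x)|^p\gtrsim (y_N-a)^{-\alpha p}$ with $\alpha p\ge 1$, so the inner integral over the screening ball is $+\infty$ on a set of $x$ of positive measure and your $u$ is \emph{not} in $\W_{(\sigma)}^{s,p}(U)$. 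The phrase ``there $\phi$ is Lipschitz'' is the precise point of failure: $\phi$ is Lipschitz on $[\delta_0,\infty)$, but the values $y_N-a$ sampled by the screening ball of such an $x$ are not confined to $[\delta_0,\infty)$. To close the gap you must either assume $b-a\ge 1$ (in which case $\sigma(x)\le (x_N-a)/2$ everywhere and your ``small $\delta$'' computation already covers all of $U$), or handle thin cylinders by a separate argument. For what it is worth, the paper's own proof of this theorem uses the same inequality $x_N+h_N>x_N/2$ for $h_N>-\sigma(x)$ and carries the same implicit restriction, so the issue is not unique to your write-up—but as stated, your proof of membership in the screened space does not go through for small $b-a$.
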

\begin{proof}
Thanks to translation invariance we may assume, without loss of generality, that $0 \in V$ and $a =0$, in which case $\sigma(x) = \frac{1}{2}(x_N/b)^r$.  We  choose $0 < R_1 < R_2 < \infty$ such that $B'(0,R_1) \subseteq V \subseteq B'(0,R_2)$.  Define the function $u: B'(0,R_2) \times (0,b) \to (0,\infty)$ via $u(x) = 1/x_N$.  We will prove that $u \in \W_{(\sigma)}^{s,p}(B'(0,R_2) \times (0,b))$ but that $u \notin \dot{W}^{s,p}(B'(0,R_1) \times (0,b))$, from which the result clearly follows.  

We begin with the proof that $u \in \W_{(\sigma)}^{s,p}(B'(0,R_2) \times (0,b))$.
We can estimate
\begin{align*}
|u|_{\W_{(\sigma)}^{s,p}(B'(0,R_2) \times (0,b))}^p & = \int_{B'(0,R_2) \times (0,b)} \int_{H(x)} \frac{|h_N|^p}{x_N^p |x_N+h_N|^p |h|^{sp+N}} dh dx \\ 
& \le \int_0^{b} \int_{B'(0,R_2)}   \int_{B(0,\sigma(x))} \frac{|h_N|^p}{x_N^p |x_N+h_N|^p |h|^{sp+N}} dh  dx' dx_N. 
\end{align*}
Note that $r>1$, so  for $x_N \in (0,b)$ and $-\frac{1}{2} (x_N/b)^r = - \sigma(x) < h_N$ we have that 
\begin{equation*}
0 < \frac{1}{2} x_N < x_N \left(1 - \frac{1}{2} \left(\frac{x_N}{b}\right)^{r-1}\right) = x_N - \frac{1}{2} \left(\frac{x_N}{b}\right)^r  < x_N +h_N,
\end{equation*}
which means that
\begin{equation*}
 \frac{1}{|x_N+h_N|^p} < \frac{2^p}{x_N^p}.
\end{equation*}
Then we use this, Tonelli's theorem, and spherical coordinates to bound
\begin{align*}
&  \int_0^{b} \int_{B'(0,R_2)} \int_{B(0,\sigma(x))} \frac{|h_N|^p}{x_N^p |x_N+h_N|^p |h|^{sp+N}} dh  dx' dx_N \\
& \le  \int_0^{b} \int_{B'(0,R_2)}  \frac{2^p}{x_N^{2p}}  \int_{B(0,\sigma(x))} |h|^{-N + (1-s)p} dh  dx' dx_N \\
& =  \frac{2^{sp} \beta_N \alpha_{N-1} R_2^{N-1} }{(1-s)p b^{(1-s)pr} }     \int_0^{b} \frac{x_N^{(1-s)rp}}{x_N^{2p}}     dx_N \\
&  =  \frac{2^{sp} \beta_N \alpha_{N-1} R_2^{N-1} }{(1-s)p b^{(1-s)pr} }     \int_0^{b} x_N^{(1-s)rp -2p}   dx_N < \infty
\end{align*}
since 
\begin{equation*}
 (1-s) rp - 2p > -1.
\end{equation*}
Thus $u \in \W_{(\sigma)}^{s,p}(B'(0,R_2) \times (0,b))$.

We now turn to the proof that $u \notin \dot{W}^{s,p}(B'(0,R_1) \times (0,b))$.  Note that $h \in -x + B'(0,R_1) \times (0,b)$ if and only if $-x_N < h_N < b-x_N$ and $|x' + h'| < R_1$.  Then 
\begin{align*}
& |u|_{\dot{W}^{s,p}(B'(0,R_1) \times (0,b))}^p 
 = \int_{B'(0,R_1) \times (0,b)}  \int_{-x_N}^{b-x_N} \int_{B'(-x',R_1)} \frac{|h_N|^p}{x_N^p |x_N+h_N|^p (|h'|^2 + h_N^2)^{(sp+N)/2}}dh' dh_N dx  \\
& \ge \int_0^{b/2} \int_{B'(0,R_1/2)}  \int_{-x_N}^{-x_N/2} \int_{B'(-x',R_1)} \frac{|h_N|^p}{x_N^p |x_N+h_N|^p (|h'|^2 + h_N^2)^{(sp+N)/2}}dh' dh_N dx' dx_N. 
\end{align*}
Now note that if $0 < x_N < b/2$, $|x'| < R_1/2$, $-x_N < h_N < -x_N/2$, and 
\begin{equation*}
 \frac{R_1}{4b} x_N < |h'| < \frac{R_1}{2b} x_N,
\end{equation*}
then we have that 
\begin{equation*}
 |x'+h'| \le |x'| + |h'| < \frac{R_1}{2} + \frac{R_1}{2b} b =R_1
\end{equation*}
and
\begin{equation*}
 \frac{R_1}{4b} |h_N| = \frac{R_1}{4b}(-h_N) < \frac{R_1}{4b} x_N < |h'| < \frac{R_1}{2b} x_N < \frac{R_1}{2b} (-h_N)  =  \frac{R_1}{b} |h_N|,
\end{equation*}
which in particular imply that there exists a constant $c = c(R_1,b,p) >0$ such that 
\begin{equation*}
 \frac{1}{|x_N + h_N|^p  (|h'|^2 + h_N^2)^{(sp+N)/2}} > \frac{c}{x_N^p |h'|^{sp+N}}.
\end{equation*}
Thus, if we write $A(x_N) = B'(0,(2b)^{-1}R_1 x_N) \backslash B'[0,(4b)^{-1}R_1 x_N]$, then we may use Tonelli's theorem and spherical coordinates to see that
\begin{align*}
 |u|_{\dot{W}^{s,p}(B'(0,R_1) \times (0,b))}^p 
& \ge c \int_0^{b/2} \frac{1}{x_N^{2p}}\int_{B'(0,R_1/2)}  \int_{-x_N}^{-x_N/2} \int_{A(x_N)} \frac{|h_N|^p}{|h'|^{sp+N}}dh' dh_N dx' dx_N \\
& = \frac{c \beta_N \alpha_{N-1} R_1^{N-1}}{2^{N-1}} \int_0^{b/2} \frac{1}{x_N^{2p}} \left( \int_{x_N/2}^{x_N} h_N^p dh_N \right) \left( \int_{(4b)^{-1}R_1 x_N}^{(2b)^{-1}R_1 x_N} \frac{r^{N-2}}{r^{sp+N}}dr  \right)    dx_N \\
& = C\int_0^{b/2} \frac{1}{x_N^{2p}}  x_N^{p+1} x_N^{-1 - sp} dx_N  = C \int_0^{b/2} \frac{1}{x_N^{(1+s)p}}  dx_N  = \infty,
\end{align*}
where $C=C(R_1,b,p,s,N)>0$. Hence $u \notin \dot{W}^{s,p}(B'(0,R_1) \times (0,b))$.

\end{proof}

\begin{remark}
We have phrased Theorem \ref{theorem strict inclusion vanishing} in terms of cylindrical sets in dimension $N \ge 2$, but the argument used in the proof may be readily adapted to prove the one-dimensional result
\begin{equation*}
\dot{W}^{s,p}((a,b)) \subsetneqq  \W_{(\sigma)}^{s,p}((a,b)) 
\end{equation*}
when $a,b \in \mathbb{R}$ with $a< b$ and 
\begin{equation}
 \sigma(x) = \frac{1}{2} \left( \frac{x-a}{b-a} \right)^r
\end{equation}
for $r >1$ as given in the theorem.
\end{remark}

\subsection{Screened homogeneous fractional Sobolev spaces: further properties in $\mathbb{R}^N$}

In this subsection we restrict our attention to the special case $ U = \mathbb{R}^N$ and prove a number of interesting results about the screened spaces.  One particular advantage of this case is that we have $H(x) = B(0,\sigma(x))$ for all $x \in \mathbb{R}^N$ whenever $\sigma : \mathbb{R}^N \to (0,\infty]$ is a screening function.

Our first result shows that any two screening functions that are bounded above and below give rise to the same screened spaces.  This should be compared to Proposition \ref{proposition screened nesting}.

\begin{theorem}\label{theorem screened equivalence}
Suppose that $\sigma_1,\sigma_2 : \mathbb{R}^N \to (0,\infty]$ are two screening functions such that 
\begin{equation}\label{theorem screened equivalence assumps}
 0 < \inf_{\mathbb{R}^N} \sigma_i \le \sup_{\mathbb{R}^N} \sigma_i < \infty \text{ for }i =1,2.
\end{equation}
Then for $0 < s < 1$ and $1 \le p < \infty$ there exist constants $c_i(s,p,N,\sigma_1,\sigma_2) >0$ for $i=0,1$ such that 
\begin{equation}\label{theorem screened equivalence bound}
 c_0   |f |_{\W_{(\sigma_2)}^{s,p}(\mathbb{R}^N)} \le |f |_{\W_{(\sigma_1)}^{s,p}(\mathbb{R}^N)} \le c_1 |f |_{\W_{(\sigma_2)}^{s,p}(\mathbb{R}^N)}
\end{equation}
for all $f \in L^1_{\operatorname*{loc}}(\mathbb{R}^N)$, and consequently we have the algebraic and topological equality 
\begin{equation*}
\W_{(\sigma_1)}^{s,p}(\mathbb{R}^N) = \W_{(\sigma_2)}^{s,p}(\mathbb{R}^N). 
\end{equation*}

\end{theorem}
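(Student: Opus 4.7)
The plan is to reduce to the case of constant screening functions using the monotonicity of Proposition \ref{proposition screened nesting}, and then to bound a constant screened seminorm of radius $b$ by a constant screened seminorm of a smaller radius $a$ via a telescoping chain plus an affine rescaling. First I would use the symmetry of \eqref{theorem screened equivalence bound} in $(\sigma_1,\sigma_2)$ to reduce to proving just one of the two inequalities, and then pick constants $a,b$ with $0<a \le \inf_{\mathbb{R}^N}\sigma_i \le \sup_{\mathbb{R}^N}\sigma_i \le b<\infty$ for $i=1,2$, which is possible by hypothesis \eqref{theorem screened equivalence assumps}. Constants are lower semi-continuous and hence valid screening functions, so Proposition \ref{proposition screened nesting} applied twice yields
\[
|f|_{\W_{(a)}^{s,p}(\mathbb{R}^N)} \le |f|_{\W_{(\sigma_i)}^{s,p}(\mathbb{R}^N)} \le |f|_{\W_{(b)}^{s,p}(\mathbb{R}^N)},
\]
reducing the theorem to producing a constant $c=c(N,s,p,a,b)>0$ with
\[
|f|_{\W_{(b)}^{s,p}(\mathbb{R}^N)} \le c\, |f|_{\W_{(a)}^{s,p}(\mathbb{R}^N)}.
\]

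To prove this comparison I would split the defining integral for $|f|_{\W_{(b)}^{s,p}(\mathbb{R}^N)}^p$ into the ``short-range'' part on $\{|y-x|<a\}$, which is exactly $|f|_{\W_{(a)}^{s,p}(\mathbb{R}^N)}^p$, and the ``annular'' part on $\{a\le |y-x|\le b\}$. For the annular part I fix an integer $k_0 \ge 2b/a$ (so that $b/k_0 \le a/2$), introduce the chain of intermediate points $z_j := x + (j/k_0)(y-x)$ for $j=0,\dotsc,k_0$, and use the telescoping identity
\[
f(y)-f(x) = \sum_{j=0}^{k_0-1}\bigl[f(z_{j+1})-f(z_j)\bigr]
\]
together with the elementary bound $|\alpha_1+\cdots+\alpha_{k_0}|^p \le k_0^{p-1}\sum_j|\alpha_j|^p$. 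For each of the resulting $k_0$ summands I would apply the affine substitution $(x,y)\mapsto (u,v):=(z_j,z_{j+1})$ on $\mathbb{R}^{2N}$; this is block-diagonal with $2\times 2$ determinant $1/k_0$, so $dx\,dy = k_0^N\,du\,dv$, while $|y-x|=k_0|v-u|$. Under this map the annulus $\{a\le|y-x|\le b\}$ maps into $\{a/k_0 \le |v-u|\le b/k_0\}\subseteq B(0,a)$, so each transformed summand is controlled by $|f|_{\W_{(a)}^{s,p}(\mathbb{R}^N)}^p$ once the explicit powers of $k_0$ are collected.

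The one step where I would be most careful is the exponent bookkeeping: the telescoping contributes $k_0\cdot k_0^{p-1}$, the Jacobian contributes $k_0^N$, and the radial rescaling contributes $k_0^{-(sp+N)}$, which combine into a finite net factor of order $k_0^{(1-s)p}$ in front of $|f|_{\W_{(a)}^{s,p}(\mathbb{R}^N)}^p$. Adding the short-range contribution then produces the estimate $|f|_{\W_{(b)}^{s,p}(\mathbb{R}^N)}^p \le \bigl(1 + k_0^{(1-s)p}\bigr)|f|_{\W_{(a)}^{s,p}(\mathbb{R}^N)}^p$; the two-sided bound \eqref{theorem screened equivalence bound} then follows by symmetry, and the stated algebraic and topological equality $\W_{(\sigma_1)}^{s,p}(\mathbb{R}^N)=\W_{(\sigma_2)}^{s,p}(\mathbb{R}^N)$ is immediate from the equivalence of seminorms together with Theorem \ref{theorem screened complete}.
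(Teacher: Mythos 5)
Your proposal is correct, and the exponent bookkeeping checks out: the telescoping factor $k_0^{p}$, the Jacobian $k_0^{N}$ (the block matrix sending $(x,y)$ to $(z_j,z_{j+1})$ has determinant $k_0^{-N}$), and the rescaling $k_0^{-(sp+N)}$ of $|y-x|^{sp+N}=k_0^{sp+N}|v-u|^{sp+N}$ combine to $k_0^{(1-s)p}$, and the condition $k_0\ge 2b/a$ does place the transformed annulus inside $B(0,a)$. The underlying mechanism is the same as the paper's — control long-range differences by chaining through intermediate points and rescaling — but the execution differs. The paper works one dyadic doubling at a time: it proves $|f|_{\W_{(2r)}^{s,p}}^p\le(1+2^{(1-s)p})|f|_{\W_{(r)}^{s,p}}^p$ using only the one-variable substitution $h\mapsto 2h$, a two-term telescoping, and the shift $x\mapsto x+h$, and then iterates $k\approx\log_2(b/a)$ times before sandwiching general $\sigma_i$ between constants. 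You instead sandwich first and then handle the pair of constants in a single pass, telescoping through $k_0\approx 2b/a$ intermediate points and performing a genuine affine change of variables on $\mathbb{R}^{2N}$. Your route avoids the iteration and yields the clean explicit constant $1+k_0^{(1-s)p}$ in one step, at the cost of a slightly heavier change of variables; the paper's route keeps each step elementary (no $2N$-dimensional Jacobian) at the cost of compounding the constant as $c^k$. Quantitatively the two constants are comparable, both of order $(b/a)^{(1-s)p}$. One cosmetic remark: the completeness result you cite at the end is not needed for the algebraic and topological equality, which follows from the two-sided seminorm bound alone.
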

\begin{proof}

We divide the proof into steps.

\textbf{Step 1 -- Doubled constants:}  We now prove the result when $\sigma_1 = r$ and $\sigma_2 = 2r$ for some constant $r \in (0,\infty)$.  Let $f \in L^1_{\operatorname*{loc}}(\mathbb{R}^N)$.  Then we may estimate
\begin{align*}
 \int_{\mathbb{R}^N} \int_{B(0,2r)\backslash B(0,r)} \frac{|f(x+h)-f(x)|^p }{|h|^{sp+N}}dh dx 
&= \frac{1}{2^{sp}}  \int_{\mathbb{R}^N} \int_{B(0,r)\backslash B(0,r/2)} \frac{|f(x+2h)-f(x)|^p }{|h|^{sp+N}}dh dx \\
& \le \frac{2^{p-1}}{2^{sp}}  \int_{\mathbb{R}^N} \int_{B(0,r)\backslash B(0,r/2)} \frac{|f(x+2h)-f(x+h)|^p }{|h|^{sp+N}}dh dx  \\
& \quad +  \frac{2^{p-1}}{2^{sp}}  \int_{\mathbb{R}^N} \int_{B(0,r)\backslash B(0,r/2)} \frac{|f(x+h)-f(x)|^p }{|h|^{sp+N}}dh dx \\
& \le  2^{p(1-s)}  \int_{\mathbb{R}^N} \int_{B(0,r)} \frac{|f(x+h)-f(x)|^p }{|h|^{sp+N}}dh dx,
\end{align*}
where in the first equality we have made the change of variables $h \mapsto 2h$ and in the last inequality we have changed $x \mapsto x+h$ in the first integral.  Thus we may decompose $B(0,2r) = B(0,r) \cup [B(0,2r) \backslash B(0,r)]$ in order to see that
\begin{equation*}
|f |_{\W_{(2r)}^{s,p}(\mathbb{R}^N)}^p \le (1+  2^{p(1-s)}) \int_{\mathbb{R}^N} \int_{B(0,r)} \frac{|f(x+h)-f(x)|^p }{|h|^{sp+N}}dh dx  =(1+  2^{p(1-s)}) |f |_{\W_{(r)}^{s,p}(\mathbb{R}^N)}^p.
\end{equation*}
This and \eqref{equivs 1} then prove \eqref{theorem screened equivalence bound} in this special case.

\textbf{Step 2 -- Pairs of constants:} We now prove the result when $\sigma_1 = r_1$ and $\sigma_2 = r_2$ for some constants $0 < r_1 < r_2 < \infty$.  Choose $k \in \mathbb{N}$ such that $r_2 < 2^k r_1$.  The analysis from  Step 1 provides us with a constant $c= c(s,p)>0$ such that $|f |_{\W_{(2r)}^{s,p}(\mathbb{R}^N)} \le c |f |_{\W_{(r)}^{s,p}(\mathbb{R}^N)}$ whenever $f \in L^1_{\operatorname*{loc}}(\mathbb{R}^n)$ and $r >0$.  Applying this iteratively then shows that 
\begin{equation*}
|f |_{\W_{(2^k r)}^{s,p}(\mathbb{R}^N)} \le c^k |f |_{\W_{(r)}^{s,p}(\mathbb{R}^N)}.
\end{equation*}
From this and \eqref{equivs 1} we then deduce that 
\begin{equation*}
 |f |_{\W_{(r_1)}^{s,p}(\mathbb{R}^N)} \le  |f |_{\W_{(r_2)}^{s,p}(\mathbb{R}^N)} \le  |f |_{\W_{(2^k r_1)}^{s,p}(\mathbb{R}^N)} \le c^k  |f |_{\W_{(r_1)}^{s,p}(\mathbb{R}^N)},
\end{equation*}
which is  \eqref{theorem screened equivalence bound} in this special case.

\textbf{Step 3 -- The general case:}  Now consider general $\sigma_1,\sigma_2$ satisfying \eqref{theorem screened equivalence assumps}.  Write 
\begin{equation*} 
 0 <  \sigma_{i,-} :=  \inf_{\mathbb{R}^N} \sigma_i \le \sup_{\mathbb{R}^N} \sigma_i  =: \sigma_{i,+}   < \infty \text{ for }i =1,2.
\end{equation*}
Then from Step 2 we can find constants $c_2,c_3 >0$ such that 
\begin{equation*}
 |f |_{\W_{(\sigma_{1,+})}^{s,p}(\mathbb{R}^N)} \le c_2  |f |_{\W_{(\sigma_{2,-})}^{s,p}(\mathbb{R}^N)}
\text{ and }
 |f |_{\W_{(\sigma_{2,+})}^{s,p}(\mathbb{R}^N)} \le c_3  |f |_{\W_{(\sigma_{1,-})}^{s,p}(\mathbb{R}^N)}
\end{equation*}
for all $f \in L^1_{\operatorname*{loc}}(\mathbb{R}^N)$.  Using this and \eqref{equivs 1} then shows that 
\begin{align*}
  |f |_{\W_{(\sigma_{1})}^{s,p}(\mathbb{R}^N)} \le  |f |_{\W_{(\sigma_{1,+})}^{s,p}(\mathbb{R}^N)} &\le c_2  |f |_{\W_{(\sigma_{2,-})}^{s,p}(\mathbb{R}^N)} \le c_2  |f |_{\W_{(\sigma_{2})}^{s,p}(\mathbb{R}^N)} \le c_2  |f |_{\W_{(\sigma_{2,+})}^{s,p}(\mathbb{R}^N)}  \\
&\le c_2 c_3 |f |_{\W_{(\sigma_{1,-})}^{s,p}(\mathbb{R}^N)} \le c_2 c_3  |f |_{\W_{(\sigma_{1})}^{s,p}(\mathbb{R}^N)},
\end{align*}
which proves  \eqref{theorem screened equivalence bound}.

\end{proof}

We can combine Theorems \ref{theorem strict inclusion} and \ref{theorem screened equivalence} to deduce the following interesting corollary.

\begin{corollary}
Let $1 \le p < \infty$, $0 < s < 1$, and suppose that $\sigma : \mathbb{R}^N \to (0,\infty)$ is a screening function that is bounded above.  Then 
\begin{equation*}
 \dot{W}^{s,p}(\mathbb{R}^N) \subsetneqq \W_{(\sigma)}^{s,p}(\mathbb{R}^N).
\end{equation*}
\end{corollary}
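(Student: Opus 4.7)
The plan is to combine Theorem \ref{theorem strict inclusion}, Theorem \ref{theorem screened equivalence}, and Proposition \ref{proposition screened nesting} in a straightforward way. The inclusion $\dot{W}^{s,p}(\mathbb{R}^N) \subseteq \W_{(\sigma)}^{s,p}(\mathbb{R}^N)$ is immediate from the definition (equivalently, from Proposition \ref{proposition screened nesting} with $\sigma_2 \equiv \infty$), so the task is to produce a witness $u \in \W_{(\sigma)}^{s,p}(\mathbb{R}^N) \setminus \dot{W}^{s,p}(\mathbb{R}^N)$.

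First, I would set $M := \sup_{\mathbb{R}^N} \sigma$, which by hypothesis lies in $(0,\infty)$. Then I would apply Theorem \ref{theorem strict inclusion} with $U = \mathbb{R}^N$, $\Gamma = \mathbb{S}^{N-1}$, and $\rho = 0$; since the resulting set $K_{\Gamma,0} = \mathbb{R}^N \setminus \{0\}$ is trivially contained in $\mathbb{R}^N$, the theorem delivers a function $u \in \W_{(1)}^{s,p}(\mathbb{R}^N) \setminus \dot{W}^{s,p}(\mathbb{R}^N)$.

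Next, since both constant screening functions $1$ and $M$ are bounded above and below in $(0,\infty)$, Theorem \ref{theorem screened equivalence} (applied with $\sigma_1 \equiv 1$ and $\sigma_2 \equiv M$) gives the algebraic and topological equality $\W_{(1)}^{s,p}(\mathbb{R}^N) = \W_{(M)}^{s,p}(\mathbb{R}^N)$, and therefore $u \in \W_{(M)}^{s,p}(\mathbb{R}^N)$. Finally, because $\sigma(x) \le M$ for every $x \in \mathbb{R}^N$, Proposition \ref{proposition screened nesting} (applied with $\sigma_1 = \sigma$ and $\sigma_2 \equiv M$) yields $\W_{(M)}^{s,p}(\mathbb{R}^N) \subseteq \W_{(\sigma)}^{s,p}(\mathbb{R}^N)$, so in particular $u \in \W_{(\sigma)}^{s,p}(\mathbb{R}^N)$. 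Combined with $u \notin \dot{W}^{s,p}(\mathbb{R}^N)$, this establishes the strict inclusion.

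There is no real obstacle to overcome here: the difficulty of constructing the separating function was already handled in Theorem \ref{theorem strict inclusion}, and the key conceptual point is simply that the hypothesis is compatible with the monotonicity in Proposition \ref{proposition screened nesting}, namely that a pointwise smaller screening function produces a \emph{larger} space. Thus requiring only an upper bound on $\sigma$ (with no lower bound) is enough, since we lift the counterexample up from $\W_{(M)}^{s,p}$ rather than from $\W_{(\sigma)}^{s,p}$ itself.
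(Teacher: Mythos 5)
Your proof is correct and follows essentially the same route as the paper, which likewise chains $\dot{W}^{s,p}(\mathbb{R}^N) \subsetneqq \W_{(1)}^{s,p}(\mathbb{R}^N) = \W_{(\sigma_+)}^{s,p}(\mathbb{R}^N) \subseteq \W_{(\sigma)}^{s,p}(\mathbb{R}^N)$ using Theorem \ref{theorem strict inclusion}, Theorem \ref{theorem screened equivalence}, and Proposition \ref{proposition screened nesting}. Your closing remark about why only an upper bound on $\sigma$ is needed is a correct reading of the monotonicity.
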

\begin{proof}
Write $\sigma_+ = \sup_{\mathbb{R}^N} \sigma  \in (0, \infty)$.  Then Theorems \ref{theorem strict inclusion} and \ref{theorem screened equivalence} may be combined with Proposition \ref{proposition screened nesting} to see that 
\begin{equation*}
 \dot{W}^{s,p}(\mathbb{R}^N) \subsetneqq \W_{(1)}^{s,p}(\mathbb{R}^N) = \W_{(\sigma_+)}^{s,p}(\mathbb{R}^N) \subseteq \W_{(\sigma)}^{s,p}(\mathbb{R}^N).
\end{equation*}

\end{proof}

We now turn our attention to the issue of the density of smooth functions in the screened spaces.

\begin{theorem}\label{theorem smooth density}
Let $1 \le p < \infty$ and $0 < s < 1$.  Suppose that $\sigma : \mathbb{R}^N \to (0,\infty)$ is a screening function such that $0 < \inf_{\mathbb{R}^N} \sigma \le \sup_{\mathbb{R}^N} \sigma < \infty$.  Then $C^\infty(\mathbb{R}^N) \cap \W_{(\sigma)}^{s,p}(\mathbb{R}^N)$ is dense in $\W_{(\sigma)}^{s,p}(\mathbb{R}^N)$ and  $C^\infty(\mathbb{R}^N) \cap \W_{(\sigma)}^{s,p}(\mathbb{R}^N) / \mathbb{R}$ is dense in $\W_{(\sigma)}^{s,p}(\mathbb{R}^N)/ \mathbb{R}$.
\end{theorem}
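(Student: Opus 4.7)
The plan is to establish density by mollification. Fix $f \in \W_{(\sigma)}^{s,p}(\mathbb{R}^N) \subset L^1_{\operatorname*{loc}}(\mathbb{R}^N)$ and let $\varphi \in C_c^\infty(\mathbb{R}^N)$ be a nonnegative standard mollifier supported in $B(0,1)$ with unit integral, so that $f_\varepsilon := f \ast \varphi_\varepsilon \in C^\infty(\mathbb{R}^N)$ for each $\varepsilon>0$. The hypothesis that $\sigma$ is bounded above and below lets me invoke Theorem \ref{theorem screened equivalence} to replace $\sigma$ by the constant screening function $r := \inf_{\mathbb{R}^N}\sigma \in (0,\infty)$; it therefore suffices to show $|f_\varepsilon - f|_{\W_{(r)}^{s,p}(\mathbb{R}^N)} \to 0$ as $\varepsilon \to 0^+$, which combined with the seminorm equivalence will give both the convergence and the membership $f_\varepsilon \in \W_{(\sigma)}^{s,p}(\mathbb{R}^N)$.

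The key algebraic step is to set $v(x,h) := f(x+h) - f(x)$ and observe that, by a translation-of-variables and properties of convolution,
\begin{equation*}
(f_\varepsilon - f)(x+h) - (f_\varepsilon - f)(x) = \int_{\mathbb{R}^N} [v(x-y,h) - v(x,h)] \,\varphi_\varepsilon(y)\, dy.
\end{equation*}
Applying Jensen's inequality with respect to the probability measure $\varphi_\varepsilon(y)\,dy$ and the convex function $|\cdot|^p$, dividing by $|h|^{sp+N}$, integrating in $x$ and $h \in B(0,r)$, and applying Fubini then gives the master bound
\begin{equation*}
|f_\varepsilon - f|_{\W_{(r)}^{s,p}(\mathbb{R}^N)}^p \le \int_{\mathbb{R}^N} \varphi_\varepsilon(y)\, \Psi(y) \, dy, \qquad \Psi(y) := \int_{\mathbb{R}^N}\int_{B(0,r)} \frac{|v(x-y,h)-v(x,h)|^p}{|h|^{sp+N}} \,dh\,dx.
\end{equation*}

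It remains to prove $\Psi(y)\to 0$ as $|y|\to 0$, for then the fact that $\varphi_\varepsilon$ has unit mass supported in $B(0,\varepsilon)$ yields $\int \varphi_\varepsilon \Psi \le \sup_{|y|\le\varepsilon}\Psi(y) \to 0$ as $\varepsilon \to 0^+$. Fubini applied to the finite quantity $|f|_{\W_{(r)}^{s,p}(\mathbb{R}^N)}^p$ guarantees that $v(\cdot,h) \in L^p(\mathbb{R}^N)$ for $\mathcal{L}^N$-a.e.\ $h \in B(0,r)$, and for any such $h$ the standard continuity of translation in $L^p(\mathbb{R}^N)$ produces $\int_{\mathbb{R}^N}|v(x-y,h)-v(x,h)|^p\,dx \to 0$ as $|y|\to 0$. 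Combined with the dominating function $h\mapsto 2^p |h|^{-sp-N}\int|v(x,h)|^p\,dx$, which is integrable over $B(0,r)$ with value $2^p|f|_{\W_{(r)}^{s,p}(\mathbb{R}^N)}^p$, the dominated convergence theorem yields $\Psi(y)\to 0$. The density statement in the quotient is then immediate from the isometry $\|[f_\varepsilon - f]\|_{\W_{(\sigma)}^{s,p}(\mathbb{R}^N)/\mathbb{R}} = |f_\varepsilon - f|_{\W_{(\sigma)}^{s,p}(\mathbb{R}^N)}$.

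\textbf{Main obstacle.} The substantive issue is justifying the slice-by-slice application of $L^p$-translation continuity: this relies on $v(\cdot,h)$ belonging to $L^p(\mathbb{R}^N)$ for a.e.\ $h$ in a \emph{fixed} set $B(0,r)$, so that Fubini cleanly produces good slices and the dominating function is independent of the base point. This is precisely what forces the reduction to a constant screening function at the outset; for a non-constant $\sigma$ the domain $H(x)$ varies with $x$ and the slice-by-slice reasoning no longer applies directly.
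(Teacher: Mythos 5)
Your proposal is correct and follows essentially the same route as the paper's proof: reduce to a constant screening function via Theorem \ref{theorem screened equivalence}, mollify, use Tonelli to see that the slices $f(\cdot+h)-f(\cdot)$ lie in $L^p(\mathbb{R}^N)$ for a.e.\ $h$ in the fixed ball, and conclude by dominated convergence in $h$ with the dominating function $2^p|h|^{-sp-N}\Vert f(\cdot+h)-f\Vert_{L^p}^p$. The only cosmetic difference is that you unpack the standard mollifier convergence into an explicit Jensen-plus-translation-continuity estimate for the quantity $\Psi(y)$, whereas the paper applies the convergence $(\Delta_h f)\ast\varphi_\varepsilon\to\Delta_h f$ in $L^p$ slicewise and then dominates in $h$.
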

\begin{proof}
In light of Theorem \ref{theorem screened equivalence}, it suffices to prove the result under the assumption that $\sigma =1$.  For $h \in B(0,1)$ write $\Delta_h g = g(\cdot +h) - g$ whenever $g \in L^1_{\operatorname*{loc}}(\mathbb{R}^N)$.  Since $\sigma =1$ we may use Tonelli's theorem to write 
\begin{equation*}
  |g|_{\W_{(1)}^{s,p}(\mathbb{R}^N)} = \left( \int_{B(0,1)} \frac{1}{|h|^{sp+N}} \Vert \Delta_h g \Vert_{L^p(\mathbb{R}^N)}^p dh \right)^{1/p}.
\end{equation*}

Let $\varphi \in C^\infty_c(\mathbb{R}^N)$ be such that $0 \le \varphi$ and $\int_{\mathbb{R}^N} \varphi \,dx=1$.  For $\varepsilon >0$ write $\varphi_\varepsilon(x) = \varepsilon^{-N} \varphi(x/\varepsilon)$.  Fix $f \in \W_{(\sigma)}^{s,p}(\mathbb{R}^N)$.  For $\varepsilon >0$ let $f_\varepsilon = f \ast \varphi_\varepsilon \in C^\infty(\mathbb{R}^N)$ be the usual mollification of $f$.   Clearly $(\Delta_h f) \ast \varphi_\varepsilon = \Delta_h (f \ast \varphi_\varepsilon) = \Delta_h f_\varepsilon$.  Thus the usual properties of mollifiers show that 
\begin{equation}\label{smooth density 1}
 \Vert \Delta_h f_\varepsilon  \Vert_{L^p(\mathbb{R}^N)} \le  \Vert \Delta_h f \Vert_{L^p(\mathbb{R}^N)}  \text{ and } \lim_{\varepsilon \to 0}  \Vert \Delta_h f_\varepsilon - \Delta_h f \Vert_{L^p(\mathbb{R}^N)} =  0.
\end{equation}

For $0 < \varepsilon$ set $q_\varepsilon,q : B(0,1) \backslash \{0\} \to [0,\infty)$ via 
\begin{equation*}
q_\varepsilon(h) = \frac{1}{|h|^{sp+N}}  \Vert \Delta_h f_\varepsilon - \Delta_h f \Vert_{L^p(\mathbb{R}^N)}^p \text{ and } q(h) = \frac{1}{|h|^{sp+N}}  \Vert \Delta_h f \Vert_{L^p(\mathbb{R}^N)}^p.
\end{equation*}
Then \eqref{smooth density 1} shows that if $h \in B(0,1) \backslash \{0\}$ then  $q_\varepsilon(h) \le 2^p q(h)$ and $q_\varepsilon(h) \to 0$ as $\varepsilon \to 0$.  Since 
\begin{equation*}
\int_{B(0,1)} q(h) dh =   |f|_{\W_{(1)}^{s,p}(\mathbb{R}^N)}^p < \infty
\end{equation*}
we can then apply the dominated convergence theorem to see that 
\begin{equation*}
 0 = \lim_{\varepsilon \to 0} \int_{B(0,1)} q_\varepsilon(h) dx = \lim_{\varepsilon \to 0} |f_\varepsilon - f|_{\W_{(1)}^{s,p}(\mathbb{R}^N)}^p. 
\end{equation*}
The stated density results then follow directly from this.

\end{proof}

When the screening function is constant and $p=2$ we can use Fourier analysis techniques to arrive at another characterization of the seminorm on $\tilde{W}_{(\sigma)}^{s,2}(\mathbb{R}^N)$.  The proof is essentially the same of the one given by Strichartz (see Theorem 2.2 in \cite{strichartz2016}) in the case $N=2$ and $s =1/2$. We present it here for the convenience of the reader.

\begin{proposition}\label{proposition fourier}
Let $0 < s < 1$ and $\sigma : \mathbb{R}^N \to (0,\infty)$ be a screening function such that $0 < \inf_{\mathbb{R}^N} \sigma \le \sup_{\mathbb{R}^N} \sigma < \infty$.  Then there exists a constant $c=c(N,s,\sigma)>0$ such that if $f: \mathbb{R}^N \to \mathbb{R}$ is Schwartz class, then
\begin{equation*}
c^{-1}\int_{\mathbb{R}^{N}}\min\{|\xi|^{2s},|\xi|^{2}\}|\hat {f}(\xi)|^{2}d\xi   \leq 
|f|_{\W_{(\sigma)}^{s,2}(\mathbb{R}^N)}^2
  \leq c\int_{\mathbb{R}^{N}}\min\{|\xi|^{2s},|\xi|^{2}\}|\hat{f}(\xi)|^{2}d\xi,
\end{equation*}
where $\hat{f}$ is the Fourier transform of $f$.
\end{proposition}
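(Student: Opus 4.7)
The plan is to reduce to a constant screening function by invoking Theorem \ref{theorem screened equivalence}, then use Plancherel's theorem to convert the screened seminorm to a Fourier multiplier, and finally estimate the resulting multiplier symbol.

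First, by Theorem \ref{theorem screened equivalence}, since $\sigma$ is bounded above and below by positive constants, the seminorm $|f|_{\W_{(\sigma)}^{s,2}(\mathbb{R}^N)}$ is equivalent to $|f|_{\W_{(b)}^{s,2}(\mathbb{R}^N)}$ for any constant $b \in (0,\infty)$, say $b = 1$. So the task reduces to proving that
\begin{equation*}
|f|_{\W_{(1)}^{s,2}(\mathbb{R}^N)}^2 \asymp \int_{\mathbb{R}^N} \min\{|\xi|^{2s},|\xi|^2\}\,|\hat f(\xi)|^2\,d\xi.
\end{equation*}
For this I would use Tonelli's theorem to write $|f|_{\W_{(1)}^{s,2}(\mathbb{R}^N)}^2 = \int_{B(0,1)} |h|^{-2s-N} \Vert \tau_h f - f\Vert_{L^2(\mathbb{R}^N)}^2\,dh$, where $\tau_h f(x) = f(x+h)$. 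Since the Fourier transform of $\tau_h f - f$ at $\xi$ is $(e^{2\pi i h \cdot \xi}-1)\hat f(\xi)$, Plancherel's theorem yields
\begin{equation*}
|f|_{\W_{(1)}^{s,2}(\mathbb{R}^N)}^2 = \int_{\mathbb{R}^N} m(\xi)\,|\hat f(\xi)|^2\,d\xi, \quad\text{where}\quad m(\xi) := 4 \int_{B(0,1)} \frac{\sin^2(\pi h \cdot \xi)}{|h|^{2s+N}}\,dh,
\end{equation*}
using the identity $|e^{i\theta}-1|^2 = 4\sin^2(\theta/2)$.

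The main task is then to prove that $m(\xi) \asymp \min\{|\xi|^{2s},|\xi|^2\}$ with constants depending only on $N$ and $s$. I would split into two regimes. For $|\xi| \le 1$ (low-frequency regime), I would use the two-sided pointwise bound $c|\theta|^2 \le \sin^2\theta \le \theta^2$ valid for $|\theta| \le \pi$ (which is guaranteed since $|h| \le 1$ forces $|\pi h \cdot \xi| \le \pi$), and then evaluate $\int_{B(0,1)} |h\cdot\xi|^2 |h|^{-2s-N}\,dh$ in spherical coordinates: the angular integral gives $|\xi|^2$ times a universal constant, while the radial integral $\int_0^1 r^{1-2s}\,dr$ converges since $s<1$, producing $m(\xi) \asymp |\xi|^2$. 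For $|\xi| \ge 1$ (high-frequency regime), I would rescale via $h = y/|\xi|$ to get
\begin{equation*}
m(\xi) = 4|\xi|^{2s} \int_{B(0,|\xi|)} \frac{\sin^2(\pi y \cdot \xi/|\xi|)}{|y|^{2s+N}}\,dy,
\end{equation*}
and show the integral is bounded above and below by positive constants independent of $\xi/|\xi|$. The upper bound follows by extending to $\mathbb{R}^N$ and invoking rotation invariance. For the lower bound, I would use that the full-space integral $C_s := \int_{\mathbb{R}^N} \sin^2(\pi y \cdot e)|y|^{-2s-N}\,dy$ is a positive finite constant (finiteness at zero uses $s<1$ and at infinity uses $s>0$), and then bound the tail $\int_{|y|>|\xi|} \sin^2(\pi y \cdot e)|y|^{-2s-N}\,dy \le c|\xi|^{-2s}$, which is at most $C_s/2$ once $|\xi|$ exceeds some threshold $R_0$. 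For $1 \le |\xi| \le R_0$, the ratio $m(\xi)/|\xi|^{2s}$ is a continuous positive function on a compact set and hence bounded above and below.

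The main obstacle will be ensuring uniform (direction-independent) positive lower bounds in the high-frequency rescaled integral; the subtlety is that $\sin^2(\pi y \cdot e)$ vanishes on hyperplanes, but its integral against the radial weight $|y|^{-2s-N}$ is strictly positive and rotationally invariant, so this reduces to a clean one-shot estimate once the tail is controlled. Combining the two regimes gives $m(\xi) \asymp \min\{|\xi|^{2s},|\xi|^2\}$, and integrating against $|\hat f(\xi)|^2$ completes the proof.
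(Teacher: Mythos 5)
Your overall route is the same as the paper's: reduce to the constant screening function $\sigma=1$ via Theorem \ref{theorem screened equivalence}, apply Tonelli and Plancherel to express the seminorm as $\int_{\mathbb{R}^N} m(\xi)|\hat f(\xi)|^2\,d\xi$, and then show $m(\xi)\asymp\min\{|\xi|^{2s},|\xi|^2\}$ by treating low and high frequencies separately, with a scaling argument $h\mapsto y/|\xi|$ in the high-frequency regime. The high-frequency part of your argument is sound (though the paper gets the uniform lower bound more cheaply, by observing that for $\lambda\ge 1/2$ the rescaled domain $B(0,\lambda)$ contains $B(0,1/2)$, so the rescaled integral is bounded below by the fixed positive constant $\int_{B(0,1/2)}|e^{2\pi i\zeta_1}-1|^2|\zeta|^{-N-2s}\,d\zeta$; this avoids your tail estimate and the compactness step on $1\le|\xi|\le R_0$).

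There is, however, one step that fails as written. In the low-frequency regime you claim the two-sided bound $c|\theta|^2\le\sin^2\theta\le\theta^2$ for all $|\theta|\le\pi$, justified by $|h|\le 1$ and $|\xi|\le 1$ forcing $|\pi h\cdot\xi|\le\pi$. The lower bound is false: $\sin^2\theta$ vanishes at $\theta=\pm\pi$, so no constant $c>0$ works on the full range $|\theta|\le\pi$, and the set where $|\pi h\cdot\xi|$ is close to $\pi$ has positive measure when $|\xi|$ is close to $1$. The standard inequality $|\sin\theta|\ge\frac{2}{\pi}|\theta|$ only holds for $|\theta|\le\pi/2$. The repair is exactly the paper's device: shrink the low-frequency regime to $|\xi|\le 1/2$ (so that $|h\cdot\xi|\le 1/2$ and the bound $\pi|t|\le|e^{2\pi i t}-1|\le 2\pi|t|$ applies), and let the high-frequency scaling argument, which already works for all $|\xi|\ge 1/2$, cover the rest; alternatively, restrict the $h$-integration in the lower bound to $B(0,1/2)$. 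Either fix is one line, but as stated your low-frequency lower bound does not go through.
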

\begin{proof}

In light of Theorem \ref{theorem screened equivalence}, it suffices to prove the result under the assumption that $\sigma =1$.  By Tonelli's theorem, Plancherel's theorem, and standard properties of the Fourier transform, we may compute
\begin{align*}
|f|_{\W_{(\sigma)}^{s,2}(\mathbb{R}^N)}^2 & = \int_{B(0,1)}    \frac{1}{|h|^{N+2s}}\int_{\mathbb{R}^{N}}|f(x+h)-f(x)|^{2}dxdh \\
&  =\int_{B(0,1)}\frac{1}{|h|^{N+2s}}\int_{\mathbb{R}^{N}}|\hat{f}(\xi)|^{2}|e^{2\pi ih\cdot\xi}-1|^{2} d\xi dh\\
&  =\int_{\mathbb{R}^{N}}|\hat{f}(\xi)|^{2}\int_{B(0,1)}\frac{|e^{2\pi ih\cdot\xi}-1|^{2}}{|h|^{N+2s}}dhd\xi.
\end{align*}
Define $m: \mathbb{R}^N \to [0,\infty)$ via 
\begin{equation*}
m(\xi)=\int_{B(0,1)}\frac{|e^{2\pi ih\cdot \xi}-1|^{2}}{|h|^{N+2s}}dh.
\end{equation*}
Note that if $\lambda>0$ and $|\xi|=1$, then by the change of variables $\zeta=\lambda h$ we get $d\zeta=\lambda^{N}dh$ and
\begin{equation*}
m(\lambda\xi)=\lambda^{2s} \int_{B(0,\lambda )}\frac{|e^{2\pi i \zeta\cdot\xi} -1|^{2}}{|\zeta|^{N+2s}} d\zeta 
= \lambda^{2s} \int_{B(0,\lambda )}\frac{|e^{2\pi i\zeta_{1}}-1|^{2}}{|\zeta|^{N+2s}}d\zeta,
\end{equation*}
where the second equality follows by invariance under rotation. It follows that for all $\lambda\geq 1/2$,
\begin{equation*}
c_{1}:=\int_{B(0,1/2)}\frac{|e^{2\pi i\zeta_{1}}-1|^{2}}{|\zeta|^{N+2s}}d\zeta \leq 
\frac{m(\lambda\xi)}{\lambda^{2s} } \leq\int_{\mathbb{R}^{N}}\frac{|e^{2\pi i\zeta_{1}}-1|^{2}}{|\zeta|^{N+1}}d\zeta=:c_{2}<\infty,
\end{equation*}
which shows that
\begin{equation}\label{m1}
c_{1}|\xi|^{2s} \leq m(\xi)\leq c_{2}|\xi|^{2s} 
\end{equation}
for all $\xi\in\mathbb{R}^{N}$ with $|\xi|\geq 1/2$.

On the other hand, standard properties of the complex exponential show that
\begin{equation*}
\pi|t|\leq|e^{2\pi it}-1|\leq 2\pi|t|
\end{equation*}
for all $|t|\leq 1/2$, and so
\begin{equation}\label{m2}
c_{3}\leq\frac{m(\xi)}{|\xi|^{2}}\leq 4 c_{3} 
\end{equation}
for all $\xi\in\mathbb{R}^{N}\setminus\{0\}$ with $|\xi|\leq 1/2$, where, again by invariance under rotation,
\begin{equation*}
c_{3}:=\frac{\pi^{2}}{|\xi|^{2}}\int_{B(0,1)}\frac{(h\cdot\xi)^{2}}{|h|^{N+2s}}dh
=\pi^{2} \int_{B(0,1)}\frac{h_{1}^{2}}{|h|^{N+2s}}dh<\infty.
\end{equation*}
Since $m(\xi)>0$ for all $\xi\in\mathbb{R}^{N}\setminus\{0\}$ and $m$ is continuous, it follows from \eqref{m1} and \eqref{m2} that there is a constant $c=c(N,s)>0$ such that
\begin{equation*}
c^{-1}\min\{|\xi|^{2s},|\xi|^{2}\}\leq m(\xi)\leq c\min\{|\xi|^{2s},|\xi|^{2}\}
\end{equation*}
for all $\xi\in\mathbb{R}^{N}$.  The stated estimate follows directly from this.
\end{proof}

\begin{remark}
For $0 < s < 1$ the Fourier version of the seminorm on $\dot{W}^{s,2}(\mathbb{R}^N)$ is 
\begin{equation*}
 \int_{\mathbb{R}^N} |\xi|^{2s} |\hat{f}(\xi)|^2 d\xi \asymp |f|_{\dot{W}^{s,2}(\mathbb{R}^N) }^2.
\end{equation*}
Comparing this with Proposition \ref{proposition fourier} shows that in the screened spaces control is lost for low frequencies.
\end{remark}

\subsection{Trace spaces}
\label{subsection trace spaces}

We have seen in Theorems \ref{theorem trace strip} and \ref{theorem lifting strip} that when $\Omega$ has the form \eqref{omega infinite strip} the trace space of $\dot{W}^{1,p}(\Omega)$ is given by pairs of functions in $\W_{(\sigma)}^{s,p}(\mathbb{R}^{N-1})$ whose difference is in $L^{p}(\mathbb{R}^{N-1})$, where $\sigma \equiv a$ for some $0<a\leq b$. A similar result holds when $\Omega$ is of the form \ref{omega two graphs} (see Theorems \ref{theorem trace m=1} and \ref{theorem lifting m=1}) Thus, we need to study these types of spaces.

\begin{definition}\label{definition space X}
Given a screening function $\sigma:\mathbb{R}^{N-1}\rightarrow(0,\infty)$, $1\leq p<\infty$, $0<s<1$, we define the space $\dot{X}_{(\sigma)}^{s,p}(\mathbb{R}^{N-1})$ as the space of all pairs of functions $(f^{-},f^{+})$ such that $f^{\pm}\in\W_{(\sigma)}^{s,p}(\mathbb{R}^{N-1})$ and
\begin{equation*}
\int_{\mathbb{R}^{N-1}}\frac{|f^{+}(x^{\prime})-f^{-}(x^{\prime})|^{p}}{(\sigma(x^{\prime}))^{p-1}}\,dx^{\prime}<\infty.
\end{equation*}
We set
\begin{equation*}
|(f^{-},f^{+})|_{\dot{X}_{(\sigma)}^{s,p}(\mathbb{R}^{N-1})}
:=\left(\int_{\mathbb{R}^{N-1}}\frac{|f^{+}(x^{\prime})-f^{-}(x^{\prime})|^{p}}
{(\sigma(x^{\prime}))^{p-1}}\,dx^{\prime}\right)  ^{1/p}
+|f^{-}|_{\W_{(\sigma)}^{s,p}(\mathbb{R}^{N-1})}
+|f^{+}|_{\W_{(\sigma)}^{s,p}(\mathbb{R}^{N-1})}.
\end{equation*}
Note that we do not allow the screening function to take the value $+\infty$ in this definition in order to force $1/\sigma^{p-1} >0$.
\end{definition}

Thanks to Proposition \ref{proposition seminorm} we know that if $|(f^{-},f^{+})|_{\dot{X}_{(\sigma)}^{s,p}(\mathbb{R}^{N-1})}=0$, then there exist $c^{\pm}\in\mathbb{R}$ such that $f^{\pm}(x^{\prime})=c^{\pm}$ for
$\mathcal{L}^{N-1}$ a.e. $x^{\prime}\in\mathbb{R}^{N-1}$, but in view of the first term in $|\cdot|_{\dot{X}_{(\sigma)}^{s,p}(\mathbb{R}^{N-1})}$, it must then hold that $c^{+}=c^{-}$.  Thus, $(f^{-},f^{+})\sim(g^{-},g^{+})$ if and only if there exists $c\in\mathbb{R}$ such that $f^{\pm}(x^{\prime})-g^{\pm
}(x^{\prime})=c$ for $\mathcal{L}^{N-1}$ a.e. $x^{\prime}\in\mathbb{R}^{N-1}$.  With a slight abuse of notation we write the quotient space $\dot{X}_{(\sigma)}^{s,p}(\mathbb{R}^{N-1})/K$ (see Section
\ref{subsection seminormed spaces}) as $\dot{X}_{(\sigma)}^{s,p}(\mathbb{R}^{N-1})/\mathbb{R}$.  Our next result establishes that $\dot{X}_{(\sigma)}^{s,p}(\mathbb{R}^{N-1})/\mathbb{R}$ is complete.

\begin{theorem}\label{theorem space X refelxive}
Let $\sigma:\mathbb{R}^{N-1}\rightarrow (0,\infty)$ be a screening function, let $1\leq p<\infty$, and let $0<s<1$. Then the space $\dot{X}_{(\sigma)}^{s,p}(\mathbb{R}^{N-1})/\mathbb{R}$ is a Banach space
with the norm
\begin{equation*}
\Vert\lbrack(f^{-},f^{+})]\Vert_{\dot{X}_{(\sigma)}^{s,p}(\mathbb{R}^{N-1})/\mathbb{R}}
:=|(f^{-},f^{+})|_{\dot{X}_{(\sigma)}^{s,p}(\mathbb{R}^{N-1})}.
\end{equation*}
Moreover, if $1<p<\infty$, then $\dot{X}_{(\sigma)}^{s,p}(\mathbb{R}^{N-1})/\mathbb{R}$ is reflexive.
\end{theorem}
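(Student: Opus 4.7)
The plan is to reduce the theorem to the previous completeness and reflexivity results for the screened spaces $\W_{(\sigma)}^{s,p}(\mathbb{R}^{N-1})$ and for weighted $L^p$ spaces, by treating $\dot{X}_{(\sigma)}^{s,p}(\mathbb{R}^{N-1})$ as a subspace of a product of three such spaces. First I would verify that the kernel of $|\cdot|_{\dot{X}_{(\sigma)}^{s,p}(\mathbb{R}^{N-1})}$ is exactly $\{(c,c)\,|\,c\in\mathbb{R}\}$, as argued in the paragraph preceding the theorem, so that the quotient norm is well defined. Then, by the discussion in Section \ref{subsection seminormed spaces}, completeness of the quotient is equivalent to sequential completeness of the seminormed space $\dot{X}_{(\sigma)}^{s,p}(\mathbb{R}^{N-1})$, which is what I would verify directly.

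Given a Cauchy sequence $\{(f_n^-,f_n^+)\}_{n=1}^\infty$, each of the three seminorm terms is individually Cauchy. By Theorem \ref{theorem screened complete} there exist $h^\pm\in\W_{(\sigma)}^{s,p}(\mathbb{R}^{N-1})$ with $|f_n^\pm-h^\pm|_{\W_{(\sigma)}^{s,p}(\mathbb{R}^{N-1})}\to 0$, and by completeness of the weighted space there exists $g\in L^p(\mathbb{R}^{N-1},\sigma^{-(p-1)}dx')$ with $f_n^+-f_n^-\to g$ there. The main obstacle is that the seminorm convergences $f_n^\pm\to h^\pm$ are defined only up to additive constants, and one must arrange constants so that $h^+-h^-=g$ almost everywhere. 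To do this I would exploit the construction from the proof of Theorem \ref{theorem screened complete}: fixing a bounded set $V_1\subset\mathbb{R}^{N-1}$, one can pass to a subsequence for which $f_n^\pm-(f_n^\pm)_{V_1}\to h^\pm$ pointwise a.e.\ and, since $\sigma^{-(p-1)}>0$ pointwise, the weighted $L^p$ convergence yields $f_n^+-f_n^-\to g$ a.e.\ along a further subsequence. Subtracting, the constants $(f_n^+)_{V_1}-(f_n^-)_{V_1}$ are forced to converge to some $c\in\mathbb{R}$ with $g=h^+-h^-+c$ a.e. Taking $\tilde f^+:=h^++c$ and $\tilde f^-:=h^-$ then produces the limit $(f_n^-,f_n^+)\to(\tilde f^-,\tilde f^+)$ in $\dot{X}_{(\sigma)}^{s,p}(\mathbb{R}^{N-1})$, since adding a constant leaves $|\cdot|_{\W_{(\sigma)}^{s,p}(\mathbb{R}^{N-1})}$ unchanged.

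For reflexivity when $1<p<\infty$, I would mimic the last step of the proof of Theorem \ref{theorem screened complete}. Let $\Gamma\subseteq\mathbb{R}^{N-1}\times\mathbb{R}^{N-1}$ and the measure $\mu$ be as defined there (for $U=\mathbb{R}^{N-1}$), and let $\nu$ denote the weighted measure $\sigma^{-(p-1)}dx'$ on $\mathbb{R}^{N-1}$. Consider the product Banach space $E=L^p(\Gamma;\mu)\times L^p(\Gamma;\mu)\times L^p(\mathbb{R}^{N-1};\nu)$ equipped with the sum norm $\Vert(u,v,w)\Vert_E=\Vert u\Vert_{L^p(\mu)}+\Vert v\Vert_{L^p(\mu)}+\Vert w\Vert_{L^p(\nu)}$, which is equivalent to the standard product norm and hence makes $E$ reflexive. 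The map $[(f^-,f^+)]\mapsto(v_{f^-},v_{f^+},f^+-f^-)$, where $v_f(x,h):=f(x+h)-f(x)$, is well defined on the quotient (each coordinate is invariant under adding a common constant to $f^-$ and $f^+$) and is a linear isometry into $E$ by \eqref{b isomorphism} and the definition of the norm on $\dot{X}_{(\sigma)}^{s,p}(\mathbb{R}^{N-1})/\mathbb{R}$. By the completeness established above, the image is a closed subspace of $E$, and since closed subspaces of reflexive spaces are reflexive, the conclusion follows.
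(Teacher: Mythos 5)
Your proposal is correct and follows essentially the same route as the paper: reduce to sequential completeness, use Theorem \ref{theorem screened complete} and completeness of the weighted $L^p$ space for the three components, fix the additive-constant ambiguity by passing to pointwise a.e.\ convergence of $f_n^\pm-(f_n^\pm)_{V_1}$ via the Poincar\'e inequality on the exhausting sets $V_k$, and prove reflexivity via the isometric embedding $[(f^-,f^+)]\mapsto(v_{f^-},v_{f^+},f^+-f^-)$ into a product of reflexive spaces. The only cosmetic difference is that you shift $h^+$ by the limiting constant $c$ whereas the paper shifts $f^-$ by $C$; the argument is the same.
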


\begin{proof}
We divide the proof into steps.

\textbf{Step 1 -- Limits of Cauchy sequences:}  According to the discussion in Section \ref{subsection seminormed spaces}, in order to prove that $\dot{X}_{(\sigma)}^{s,p}(\mathbb{R}^{N-1})/\mathbb{R}$ is a Banach space, it is enough to show that $\dot{X}_{(\sigma)}^{s,p}(\mathbb{R}^{N-1})$ is sequentially complete.  Let $\{(f_{n}^{-},f_{n}^{+})\}_{n=1}^\infty$ be a Cauchy sequence in $\dot{X}_{(\sigma)}^{s,p}(\mathbb{R}^{N-1})$.  We then know that  $\{f_{n}^{-}\}_{n=1}^\infty$ and $\{f_{n}^{+}\}_{n=1}^\infty$ are Cauchy sequences in $\W_{(\sigma)}^{s,p}(\mathbb{R}^{N-1})$.  In addition, we know that  $\{f_{n}^{+}-f_{n}^{-}\}_{n=1}^\infty$ is a Cauchy sequence in $L^{1}(\mathbb{R}^{N-1};\nu)$, where $\nu:\mathcal{B}(\mathbb{R}^{N-1})\rightarrow\lbrack0,\infty]$ is the measure given by
\begin{equation*}
\nu(E):=\int_{E}\frac{1}{(\sigma(x^{\prime}))^{p-1}}\,dx^{\prime}.
\end{equation*}
Using Theorem \ref{theorem screened complete} and the completeness of $L^p(\mathbb{R}^{N-1};\nu)$, we deduce that there exist a pair $f^{\pm} \in \W_{(\sigma)}^{s,p}(\mathbb{R}^{N-1})$ and a function $g\in L^{1}(\mathbb{R}^{N-1};\nu)$ such that $f_{n}^{\pm}\rightarrow f^{\pm}$ in $\W_{(\sigma)}^{s,p}(\mathbb{R}^{N-1})$ and $f_{n}^{+}-f_{n}^{-}\rightarrow g$ in $L^{1}(\mathbb{R}^{N-1};\nu)$ as $n \to \infty$.

\textbf{Step 2 -- Identifying the limits:}  We will use the same notation for averages as in the proof of Theorem \ref{theorem screened complete}.  Let $\{V_k\}_{k=1}^\infty$ be the sequence of open sets given by Lemma \ref{lemma exhaustion} for $\Omega = \mathbb{R}^{N-1}$.  The Lemma allows us to apply Theorem \ref{full_poincare} on each set $V_k$ in order to see that 
\begin{equation*}
 \Vert (f_n^\pm - (f_n^\pm)_{V_1}) - (f^\pm - (f^\pm)_{V_1} )  \Vert_{L^p(V_k)} \le c_{k}  |f_n^\pm - f^\pm|_{\W_{(\sigma)}^{s,p}(V_k)} \le c_{k}  |f_n^\pm - f^\pm|_{\W_{(\sigma)}^{s,p}(\mathbb{R}^{N-1})}.
\end{equation*}
Consequently, for each $k \in \mathbb{N}$ we have that $f_n^\pm - (f_n^\pm)_{V_1} \to f^\pm - (f^\pm)_{V_1}$ in $L^p(V_k)$ as $n \to \infty$.

Up to the extraction of a subsequence, which we continue to denote by $\{f_n^\pm\}_{n=1}^\infty$ for the sake of brevity, we may assume that there exists a measurable set $D \subseteq \mathbb{R}^{N-1}$ with $\mathcal{L}^{N-1}(D) =0$ such that 
\begin{equation*}
f_n^\pm - (f_n^\pm)_{V_1} \to f^\pm - (f^\pm)_{V_1} \text{ and } f_n^+ - f_n^- \to g \text{ pointwise on } \mathbb{R}^{N-1}\backslash D \text{ as }n \to \infty.
\end{equation*}
Choosing any $x \in \mathbb{R}^{N-1} \backslash D$, we may then compute 
\begin{equation*}
\begin{split}
 (f_n^-)_{V_1} - (f_n^+)_{V_1} & = [f_n^+(x) - (f_n^+)_{V_1}  ] -  [f_n^-(x) - (f_n^-)_{V_1}  ] - [f_n^+(x) - f_n^-(x)]\\
&\to  [f^+(x) - (f^+)_{V_1}  ] -  [f^-(x) - (f^-)_{V_1}  ] - g(x) \text{ as }n \to \infty.
\end{split}
\end{equation*}
From this we deduce that there exists a constant $C \in \mathbb{R}$ such that 
\begin{equation*}
 f^+ - f^- - g = C \text{ almost everywhere in } \mathbb{R}^{N-1}.
\end{equation*}
Now, the fact that $f_{n}^{-}\rightarrow f^{-}$ in $\W_{(\sigma)}^{s,p}(\mathbb{R}^{N-1})$ also implies that $f_{n}^{-}\rightarrow f^{-}+C$ in $\W_{(\sigma)}^{s,p}(\mathbb{R}^{N-1})$.  This allows us to make the replacement $f^- \mapsto f^- +C$ in order to see that
\begin{equation*}
 g = \lim_{n \to \infty} f_n^+ - \lim_{n \to \infty} f_n^-  \in L^1(\mathbb{R}^{N-1};\nu),
\end{equation*}
from which we then deduce that  $(f^{-},f^{+})\in \dot{X}_{(\sigma)}^{s,p}(\mathbb{R}^{N-1})$ and $\dot{X}_{(\sigma)}^{s,p}(\mathbb{R}^{N-1})$ is sequentially complete.

\textbf{Step 3 -- Reflexivity:} Assume that $1<p<\infty$. Write 
\begin{equation*}
 \Gamma = \{ (x,h) \in \mathbb{R}^{N-1} \times \mathbb{R}^{N-1} \;\vert\; |h| < \sigma(x)\} \subseteq \mathbb{R}^{2(N-1)}.
\end{equation*}
Consider the mapping
\begin{equation*}
\dot{X}_{(\sigma)}^{s,p}(\mathbb{R}^{N-1})/\mathbb{R}  \ni \lbrack(f^{-},f^{+})]  \mapsto 
(v_{f^{-}},v_{f^{+}},f^{+}-f^{-}) \in 
L^{p}(\Gamma;\mu)\times L^{p}(\Gamma;\mu)\times L^{p}(\mathbb{R}^{N-1};\nu),
\end{equation*}
where $\mu$ is the measure defined in \eqref{b measure} and $v_{f^{\pm}}(x^{\prime},h^{\prime})=f^{\pm}(x^{\prime}+h^{\prime})-f^{\pm}(x^{\prime})$.  This map is an isometric embedding (see \eqref{b isomorphism}), and by the previous two steps we can identify $\dot{X}_{(\sigma)}^{s,p}(\mathbb{R}^{N-1})/\mathbb{R}$ with its image, which is a closed subspace of the Cartesian product of three reflexive spaces.  Since the product of reflexive spaces is reflexive and closed subspaces of reflexive spaces are reflexive, it follows that $\dot{X}_{(\sigma)}^{s,p}(\mathbb{R}^{N-1})/\mathbb{R}$ is reflexive.
\end{proof}

In view of the previous proposition and of the discussion in Section \ref{subsection seminormed spaces}, when $1<p<\infty$ a bounded sequence in $\dot{X}_{(\eta)}^{s,p}(\mathbb{R}^{N-1})$ admits a subsequence that converges weakly in $\dot{X}_{(\eta)}^{s,p}(\mathbb{R}^{N-1})$.  In the next theorem we study the relationship between this weak limit and the weak limit with respect to $L_{\operatorname*{loc}}^{p}(\mathbb{R}^{N-1})$ convergence.

\begin{theorem}
Let $\sigma:\mathbb{R}^{N-1}\rightarrow(0,\infty)$ be a screening function, $1<p<\infty$, and $0<s<1$.  Let $(f_{n}^{-},f_{n}^{+})\in\dot{X}_{(\sigma)}^{s,p}(\mathbb{R}^{N-1})$ for $n\in\mathbb{N}$, and let $(f^{-},f^{+})\in\dot{X}_{(\sigma)}^{s,p}(\mathbb{R}^{N-1})$ be such that $(f_{n}^{-},f_{n}^{+}) \rightharpoonup (f^{-},f^{+})$ in $\dot{X}_{(\sigma)}^{s,p}(\mathbb{R}^{N-1})$ and $f_{n}^{\pm}\rightharpoonup g^{\pm}$ in $L_{\operatorname*{loc}}^{p}(\mathbb{R}^{N-1})$ as $n \to \infty$. Then there exist $c^{\pm}\in\mathbb{R}$ such that $f^{\pm}(x^{\prime})-g^{\pm}(x^{\prime})=c^{\pm}$ for $\mathcal{L}^{N-1}$ a.e. $x^{\prime}\in\mathbb{R}^{N-1}$.  Moreover, if
\begin{equation}\label{b2 a}
\int_{\mathbb{R}^{N-1}}\frac{1}{(\sigma(x^{\prime}))^{p-1}}\,dx^{\prime}=\infty,
\end{equation}
then $(f^{-},f^{+})\sim(g^{-},g^{+})$.
\end{theorem}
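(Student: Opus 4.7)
For the first assertion, the plan is to construct continuous linear functionals on $\dot{X}_{(\sigma)}^{s,p}(\mathbb{R}^{N-1})/\mathbb{R}$ that extract the integrals of $f^\pm$ against mean-zero test functions. Given $\varphi \in C_c^\infty(\mathbb{R}^{N-1})$ with $\int \varphi \, dx' = 0$ and a ball $B \supset \operatorname{supp}\varphi$, I would set $T^\pm[(h^-, h^+)] := \int h^\pm \varphi \, dx'$. Because $\int \varphi = 0$, the functional descends to the quotient, and it can be rewritten as $T^\pm[(h^-, h^+)] = \int_B (h^\pm - (h^\pm)_B)\, \varphi \, dx'$. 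H\"{o}lder's inequality combined with Theorem \ref{full_poincare} applied on $B$---which is a finite union of small balls in the sense of Lemma \ref{screening radius lemma}, since lower semicontinuity of $\sigma$ yields a positive lower bound on $\overline{B}$---produces the desired continuity bound. The weak $\dot{X}$ convergence then gives $\int f_n^\pm \varphi \, dx' \to \int f^\pm \varphi \, dx'$, while the weak $L^p_{\operatorname*{loc}}$ convergence gives $\int f_n^\pm \varphi \, dx' \to \int g^\pm \varphi \, dx'$, so $\int (f^\pm - g^\pm) \varphi \, dx' = 0$ for every such $\varphi$. A mollification argument---using the mean-zero test function $\rho_\varepsilon(x_1 - \cdot) - \rho_\varepsilon(x_2 - \cdot)$ to show that $(f^\pm - g^\pm) \ast \rho_\varepsilon$ is constant for each small $\varepsilon$, and then passing to the limit at Lebesgue points---produces the constants $c^\pm$.

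For the second assertion, write $d\nu := dx'/\sigma^{p-1}$ and observe that the difference map $D[(h^-, h^+)] := h^+ - h^-$ is bounded linear from $\dot{X}_{(\sigma)}^{s,p}(\mathbb{R}^{N-1})/\mathbb{R}$ into $L^p(\mathbb{R}^{N-1}; \nu)$ directly from the definition of the $\dot{X}$-seminorm. Weak $\dot{X}$ convergence therefore yields $f_n^+ - f_n^- \rightharpoonup f^+ - f^-$ in $L^p(\nu)$, and the sequence is bounded in $L^p(\nu)$, say by some $M$. The main obstacle is to upgrade the weak $L^p_{\operatorname*{loc}}$ limit $g^+ - g^-$ to membership in the globally weighted space $L^p(\nu)$. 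To bridge this gap I would use the exhaustion $A_k := B'(0, k) \cap \{1/k \le \sigma \le k\}$, which are monotone increasing measurable sets (using the lower semicontinuity of $\sigma$) satisfying $\bigcup_k A_k = \mathbb{R}^{N-1}$ because $\sigma$ is everywhere finite and positive. On each $A_k$ the weight $\sigma$ is bounded above and below by positive constants, so $L^p(A_k, dx)$ and $L^p(A_k, d\nu)$ have equivalent norms and weak convergence in the two coincides. The hypothesis then delivers $f_n^+ - f_n^- \rightharpoonup g^+ - g^-$ in $L^p(A_k, d\nu)$, and weak lower semicontinuity of the norm gives $\|g^+ - g^-\|_{L^p(A_k, d\nu)} \le M$. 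Letting $k \to \infty$ and invoking the monotone convergence theorem then yields $\|g^+ - g^-\|_{L^p(\nu)} \le M$.

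With $g^+ - g^- \in L^p(\nu)$ in hand, the first part of the theorem gives $(f^+ - f^-) - (g^+ - g^-) = c^+ - c^-$ almost everywhere, so the constant $c^+ - c^-$ lies in $L^p(\nu)$ as well. The hypothesis $\int 1/\sigma^{p-1} \, dx' = \infty$ is precisely $\nu(\mathbb{R}^{N-1}) = \infty$, and the only constant in an $L^p$ space over an infinite-measure domain is zero. Hence $c^+ = c^-$, and $(f^-, f^+) \sim (g^-, g^+)$.
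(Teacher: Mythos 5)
Your proof is correct and follows essentially the same route as the paper: for the first claim, mean-zero test functionals made continuous on $\dot{X}_{(\sigma)}^{s,p}(\mathbb{R}^{N-1})/\mathbb{R}$ via H\"older plus the Poincar\'e inequality of Theorem \ref{full_poincare}, and for the second claim, weak lower semicontinuity on an exhaustion to get $g^{+}-g^{-}\in L^{p}(\nu)$, whence the constant $c^{+}-c^{-}$ lies in $L^{p}(\nu)$ and must vanish because $\nu(\mathbb{R}^{N-1})=\infty$. The only imprecision is that for $N-1\ge 2$ a ball $B$ is not literally a finite union of the small balls of Lemma \ref{screening radius lemma}; you should instead apply Theorem \ref{full_poincare} with $E=B$ on a connected finite union of such balls containing $\overline{B}$, which is exactly what the paper's Lemma \ref{lemma exhaustion} packages into the sets $V_k$.
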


\begin{proof}
Let $\{V_k\}_{k=1}^\infty$ be the sequence of open subsets of $\mathbb{R}^{N-1}$ given by Lemma \ref{lemma exhaustion}.  Fix $k \in \mathbb{N}$ and suppose that $\varphi^{\pm}\in C_{c}^{\infty}(\mathbb{R}^{N-1})$ are such that $\operatorname*{supp}\varphi^{\pm} \subset V_k$ and $\int_{\mathbb{R}^{N-1}}\varphi^{\pm}(x^{\prime})\,dx^{\prime}=0$.  Using these, we define the linear functional $T :\dot{X}_{(\sigma)}^{s,p}(\mathbb{R}^{N-1}) \to \mathbb{R}$ via 
\begin{equation}
T(h^{-},h^{+}):=\int_{\mathbb{R}^{N-1}}h^{-}(x^{\prime})\varphi^{-}(x^{\prime})\,dx^{\prime}
+\int_{\mathbb{R}^{N-1}}h^{+}(x^{\prime})\varphi^{+}(x^{\prime})\,dx^{\prime}. 
\end{equation}
By  H\"{o}lder's inequality, the Poincar\'{e} inequality of Theorem \ref{full_poincare} (which is applicable due to Lemma \ref{lemma exhaustion}), and the fact that $\int_{\mathbb{R}^{N-1}}\varphi^{\pm}(x^{\prime})\,dx^{\prime}=0$, we have the estimate
\begin{align*}
|T(h^{-},h^{+})|  & \leq \int_{\mathbb{R}^{N-1}} |h^{-}(x^{\prime})-h_{V_1}^{-}| |\varphi^{-}(x^{\prime})|\, dx^{\prime}
+ \int_{\mathbb{R}^{N-1}} |h^{+}(x^{\prime})-h_{V_1}^{+}| |\varphi^{+}(x^{\prime})| \,dx^{\prime}\\
&  \leq  \Vert \varphi^{-} \Vert_{L^{p^{\prime}}(\mathbb{R}^{N-1})}
   \Vert h^{-}-h_{V_1}^{-} \Vert_{L^{p}(V_k}
   + \Vert\varphi^{+} \Vert_{L^{p^{\prime}}(\mathbb{R}^{N-1})}
   \Vert h^{+}-h_{V_1}^{+}\Vert_{L^{p}(V_k)}\\
&  \leq c|h^{-}|_{\W_{(\sigma)}^{s,p}(\mathbb{R}^{N-1})}
+c|h^{+}|_{\W_{(\sigma)}^{s,p}(\mathbb{R}^{N-1})} 
\leq c|(h^{-},h^{+})|_{\dot{X}_{(\sigma)}^{s,p}(\mathbb{R}^{N-1})}.
\end{align*}
This shows that $T\in(\dot{X}_{(\eta)}^{s,p}(\mathbb{R}^{N-1}))^{\prime}$, and it follows immediately that 
\begin{equation}\label{b3}
T(f_{n}^{-},f_{n}^{+})\rightarrow T(f^{-},f^{+}) \text{ as }n \to \infty.
\end{equation}

On the other hand, the functionals $T^\pm : L^p(V_k) \to \mathbb{R}$ defined by 
\begin{equation*}
T^{\pm}(h^{\pm}):=\int_{V_k}h^{\pm}(x^{\prime})\varphi^{\pm}(x^{\prime})\,dx^{\prime}
\end{equation*}
are continuous. Hence, by hypothesis $T^{\pm}(f_{n}^{\pm})\rightarrow T^{\pm}(g^{\pm})$ as $n \to \infty$. Comparing with  \eqref{b3}, we arrive at the identity
\begin{equation*}
\int_{V_k}(f^{-}-g^{-})(x^{\prime})\varphi^{-}(x^{\prime
})\,dx^{\prime}
+\int_{V_k}(f^{+}-g^{+})(x^{\prime})\varphi
^{+}(x^{\prime})\,dx^{\prime}=0.
\end{equation*}
Given the arbitrariness of $\varphi^{\pm}$, we deduce that there exist $c_k^{\pm}\in\mathbb{R}$ such that $f^{\pm}(x^{\prime})-g^{\pm}(x^{\prime})=c_k^{\pm}$ for $\mathcal{L}^{N-1}$ a.e. $x^{\prime}\in V_k$.  However, Lemma \ref{lemma exhaustion} guarantees that $V_k \subset V_{k+1}$ for $k \in \mathbb{N}$ and that $\mathbb{R}^{N-1} = \bigcup_{k=1}^\infty V_K$, so we deduce the existence of a single pair of constant $c^\pm \in \mathbb{R}$ such that $c_k^\pm = c^\pm$ for all $k \in \mathbb{N}$.  Hence, $f^\pm(x') - g^\pm(x') = c^\pm$ for $\mathcal{L}^{N-1}$ a.e. $x^{\prime}\in\mathbb{R}^{N-1}$.  This proves the first assertion.

Now assume that \eqref{b2 a} holds. Then since $f_{n}^{\pm}\rightharpoonup g^{\pm}$ in $L_{\operatorname*{loc}}^{p}(\mathbb{R}^{N-1})$ as $n \to \infty$,  standard lower semicontinuity results imply that for each $k \in \mathbb{N}$
\begin{align*}
\int_{V_k}\frac{|g^{+}(x^{\prime})-g^{-}(x^{\prime})|^{p}}{(\sigma(x^{\prime}))^{p-1}}\,dx^{\prime} 
& \leq \liminf_{n\rightarrow\infty} 
\int_{V_k} \frac{|f_n^{+}(x^{\prime})-f_n^{-}(x^{\prime})|^{p}}{(\sigma(x^{\prime}))^{p-1}}\,dx^{\prime}<\infty \\
& \leq \liminf_{n\rightarrow\infty} 
\int_{\mathbb{R}^{N-1}} \frac{|f_n^{+}(x^{\prime})-f_n^{-}(x^{\prime})|^{p}}{(\sigma(x^{\prime}))^{p-1}}\,dx^{\prime} := M^p <\infty.
\end{align*}
Thus
\begin{align*}
|c^{+}-c^{-}|\Vert\sigma^{-(p-1)/p}\Vert_{L^{p}(V_k)}  &  \le
\Vert((f^{+}-g^{+})-(f^{-}-g^{-}))\sigma^{-(p-1)/p}\Vert_{L^{p}(V_k)}\\
&  \leq\Vert(f^{+}-f^{-})\sigma^{-(p-1)/p}\Vert_{L^{p}(V_k)}
+ \Vert(g^{+}-g^{-})\sigma^{-(p-1)/p}\Vert_{L^{p}(V_k)} \\
&  \leq\Vert(f^{+}-f^{-})\sigma^{-(p-1)/p}\Vert_{L^{p}(\mathbb{R}^{N-1})}
+ M <\infty,
\end{align*}
which implies, upon sending $k \to \infty$, that $c^{+}=c^{-}$.
\end{proof}

\section{Traces in the strip case}\label{section traces strip}

In this section we consider the special case in which $\Omega$ is a horizontal strip of the form \eqref{omega infinite strip}.

\subsection{The case $m=1$, $p>1$}

In this subsection we prove Theorems \ref{theorem trace strip} and  \ref{theorem lifting strip} and then derive some extra information about homogeneous Sobolev spaces.

\begin{proof}[Proof of Theorem \ref{theorem trace strip}]

For the sake of simplicity, we take $b^{-}=0$ and we set $b:=b^{+}$.

Due to the absolute continuity results of Theorem \ref{theorem AC}, we may apply the fundamental theorem of calculus to see that
\begin{equation*}
u(x^{\prime},b)-u(x^{\prime},0)=\int_{0}^{b}\partial_{N}u(x^{\prime},x_{N})\,dx_{N}
\end{equation*}
for $\mathcal{L}^{N-1}$ a.e. $x^{\prime}\in\mathbb{R}^{N-1}$. It then follows by H\"{o}lder's inequality that
\begin{equation*}
|u(x^{\prime},b)-u(x^{\prime},0)|^{p}\leq b^{p-1}\int_{0}^{b}|\partial
_{N}u(x^{\prime},x_{N})|^{p}dx_{N}.
\end{equation*}
We then integrate in $x^{\prime}$ over $\mathbb{R}^{N-1}$ and using Tonelli's theorem to obtain the estimate
\begin{equation*}
\int_{\mathbb{R}^{N-1}}|u(x^{\prime},b)-u(x^{\prime},0)|^{p}dx^{\prime}\leq
b^{p-1}\int_{\Omega}|\partial_{N}u(x)|^{p}dx.
\end{equation*}
This proves \eqref{theorem trace strip est 1}.

We will prove \eqref{theorem trace strip est 2} only for $\Gamma^{-}$, as a similar argument establishes the corresponding bound on $\Gamma^+$.  We again use the fundamental theorem of calculus (due to Theorem \ref{theorem AC}) to see that for $\mathcal{L}^{N-1}$ a.e. $x^{\prime}\in\mathbb{R}^{N-1}$, $h^{\prime}\in\mathbb{R}^{N-1}$ with $|h^{\prime}|<b$, and $0<x_{N}<b$ we
have that
\begin{align*}
u(x^{\prime}+h^{\prime},0)-u(x^{\prime},0)  &  =u(x^{\prime}+h^{\prime}%
,x_{N})-u(x^{\prime},x_{N})-\int_{0}^{x_{N}}\partial_{N}u(x^{\prime}%
+h^{\prime},s)\,ds\\
&  \quad-\int_{0}^{x_{N}}\partial_{N}u(x^{\prime},s)\,ds=\int_{0}^{1}%
\nabla_{\shortparallel}u(x^{\prime}+sh^{\prime},x_{N})\cdot h^{\prime}ds\\
&  \quad-\int_{0}^{x_{N}}\partial_{N}u(x^{\prime}+h^{\prime},s)\,ds-\int%
_{0}^{x_{N}}\partial_{N}u(x^{\prime},s)\,ds.
\end{align*}
From this we may estimate
\begin{align*}
|u(x^{\prime}+h^{\prime},0)-u(x^{\prime},0)|  &  \leq|h^{\prime}|\int_{0}%
^{1}|\nabla_{\shortparallel}u(x^{\prime}+sh^{\prime},x_{N})|ds\\
&  \quad+\int_{0}^{x_{N}}|\partial_{N}u(x^{\prime}+h^{\prime},s)|\,ds+\int%
_{0}^{x_{N}}|\partial_{N}u(x^{\prime},s)|\,ds.
\end{align*}
We now take the $L^{p}$ norm in $x^{\prime}$ over $\mathbb{R}^{N-1}$ on both sides of this inequality and apply Minkowksi's inequality for integrals (see, for instance, Corollary B.83 in \cite{leoni2017book});  using the change of variables $y^{\prime}=x^{\prime}+sh^{\prime}$ and $y^{\prime}=x^{\prime}+h^{\prime}$ in the first integral and second integral on the resulting right-hand-side, we then obtain the bound
\begin{equation*}
\Vert u(\cdot+h^{\prime},0)-u(\cdot,0)\Vert_{L^{p}(\mathbb{R}^{N-1})}%
\leq|h^{\prime}|\Vert\nabla_{\shortparallel}u(\cdot,x_{N})\Vert_{L^{p}%
(\mathbb{R}^{N-1})}+2\int_{0}^{x_{N}}\Vert\partial_{N}u(\cdot,s)\Vert
_{L^{p}(\mathbb{R}^{N-1})}ds.
\end{equation*}
Next, we average in $x_{N}$ over the interval $[0,|h^{\prime}|]$ to see that
\begin{equation}
\Vert u(\cdot+h^{\prime},0)-u(\cdot,0)\Vert_{L^{p}(\mathbb{R}^{N-1})}\leq
3\int_{0}^{|h^{\prime}|}\Vert\nabla u(\cdot,x_{N})\Vert_{L^{p}(\mathbb{R}^{N-1})}dx_{N}. \label{1}
\end{equation}
In turn, this implies that
\begin{equation*}
\frac{\Vert u(\cdot+h^{\prime},0)-u(\cdot,0)\Vert_{L^{p}(\mathbb{R}^{N-1})}^{p}}{|h^{\prime}|^{p+N-2}}
\leq\frac{3^{p}}{|h^{\prime}|^{p+N-2}} \left(\int_{0}^{|h^{\prime}|}\Vert\nabla u(\cdot,x_{N})\Vert_{L^{p}(\mathbb{R}^{N-1})}dx_{N}\right)^{p}.
\end{equation*}
We then integrate in $h^{\prime}$ over $B^{\prime}(0,b)$ and use spherical coordinates to estimate
\begin{align*}
\int_{B^{\prime}(0,b)}  &  \frac{\Vert u(\cdot+h^{\prime},0)-u(\cdot
,0)\Vert_{L^{p}(\mathbb{R}^{N-1})}^{p}}{|h^{\prime}|^{p+N-2}}dh^{\prime}\\
&  \leq\int_{B^{\prime}(0,b)}\frac{3^{p}}{|h^{\prime}|^{p+N-2}}\left(
\int_{0}^{|h^{\prime}|}\Vert\nabla u(\cdot,x_{N})\Vert_{L^{p}(\mathbb{R}%
^{N-1})}dx_{N}\right)  ^{p}dh^{\prime}\\
&  =3^{p}\beta_{N-1}\int_{0}^{b}\frac{1}{r^{p}}\left(  \int_{0}^{r}\Vert\nabla
u(\cdot,x_{N})\Vert_{L^{p}(\mathbb{R}^{N-1})}dx_{N}\right)  ^{p}dr.
\end{align*}
Applying Hardy's inequality to the right-hand side (see, for instance, Theorem C.41 in \cite{leoni2017book}) and using Tonelli's theorem then yields the bound
\begin{equation*}
\int_{B^{\prime}(0,b)}\frac{\Vert u(\cdot+h^{\prime},0)-u(\cdot,0)
\Vert_{L^{p}(\mathbb{R}^{N-1})}^{p}}{|h^{\prime}|^{p+N-2}}dh^{\prime}
\leq \frac{3^{p}\beta_{N-1}p^{p}}{(p-1)^{p}}\int_{\Omega}|\nabla u(x)|^{p}dx,
\end{equation*}
which completes the proof of \eqref{theorem trace strip est 2} and thus of the first two items of the theorem.

It remains to prove the integration-by-parts formula of the third item.  For this it suffices to note that if $u \in \dot{W}^{1,p}(\Omega)$ and $\psi \in C^1_c(\mathbb{R}^N)$, then $u \in W^{1,p}(U)$ for any open set $U \subseteq \Omega$ with Lipschitz boundary such that $\operatorname{supp}(\psi) \cap \Omega \subset U$.  Thus the third item follows immediately from the usual trace theory in $W^{1,p}(U)$.
\end{proof}

Next we prove Theorem \ref{theorem lifting strip}.

\begin{proof}[Proof of Theorem \ref{theorem lifting strip}]
For the sake of simplicity, we again take $b^{-}=0$ and we set $b:=b^{+}$. Without loss of generality we may assume that $0<a<b$. Let $\varphi\in C_{c}^{\infty}(\mathbb{R}^{N-1})$ be a nonnegative function such that $\int_{\mathbb{R}^{N-1}}\varphi(y^{\prime})\,dy^{\prime}=1$ and $\operatorname*{supp}\varphi\subseteq B^{\prime}(0,ab^{-1})$. Define $u^-: \Omega \to \mathbb{R}$ via
\begin{equation*}
u^{-}(x) =(\varphi_{x_{N}}\ast f^{-})(x^{\prime})
=\int_{\mathbb{R}^{N-1}} f^{-}(y^{\prime})\varphi_{x_{N}}\left(  x^{\prime}-y^{\prime}\right)\,dy^{\prime},
\end{equation*}
where $\varphi_{x_{N}}$ is given in \eqref{mollifier}. Then for $i=1,\ldots,N$ we have that
\begin{equation*}
\partial_{i}u^{-}(x)=\int_{\mathbb{R}^{N-1}}f^{-}(y^{\prime})\frac{\partial}{\partial x_{i}}(\varphi_{x_{N}}\left(  x^{\prime}-y^{\prime}\right))\,dy^{\prime}.
\end{equation*}
In view of Proposition \ref{proposition derivatives mollifiers}, we are in a position to apply Proposition \ref{proposition structure} to conclude that
\begin{equation*}
\int_{\Omega}|\partial_{i}u^{-}(x)|^{p}dx
\leq c\int_{\mathbb{R}^{N-1}} \int_{B^{\prime}(x^{\prime},a)} \frac{|f^{-}(y^{\prime})-f^{-}(x^{\prime})|^{p}}{|x^{\prime}-y^{\prime}|^{N+p-2}}\,dy^{\prime}dx^{\prime}.
\end{equation*}
This shows that $u^{-}\in\dot{W}^{1,p}(\Omega)$. Moreover, since by standard properties of mollifiers $\varphi_{x_{N}}\ast f^{-}\rightarrow f^{-}$ in $L_{\operatorname*{loc}}^{p}(\mathbb{R}^{N-1})$ as $x_{N}\rightarrow0^{+}$, we have that $\operatorname*{Tr}(u^{-})=f^{-}$ on $\Gamma^{-}$.  Similarly, if we define $u^+: \Omega \to \mathbb{R}$ via 
\begin{equation*}
u^{+}(x)=(\varphi_{b-x_{N}}\ast f^{+})(x^{\prime}),
\end{equation*}
then a similar argument shows that $u^{+}\in\dot{W}^{1,p}(\Omega)$ with $\operatorname*{Tr}(u^{+})=f^{+}$ on $\Gamma^{+}$.

Now let $\theta\in C^{\infty}([0,b])$ be such that $\theta=1$ in a neighborhood of $0$ and $\theta=0$ in a neighborhood of $b$, and define $u : \Omega \to \mathbb{R}$ via
\begin{equation*}
u(x)=\theta(x_{N})u^{-}(x)+(1-\theta(x_{N}))u^{+}(x).
\end{equation*}
It is clear by construction that $\operatorname*{Tr}(u)=f^{+}$ on $\Gamma^+$ and $\operatorname*{Tr}(u)=f^{-}$ on $\Gamma^-$ and that the map $(f^+,f^-) \mapsto u$ is linear.  For $i=1,\ldots,N-1$ we compute
\begin{align*}
\partial_{i}u(x)  &  =\theta(x_{N})\partial_{i}u^{-}(x)+(1-\theta
(x_{N}))\partial_{i}u^{+}(x),\\
\partial_{N}u(x)  &  = \theta^{\prime}(x_{N})(u^{-}(x)-u^{+}(x))+\theta
(x_{N})\partial_{N}u^{-}(x)+(1-\theta(x_{N}))\partial_{N}u^{+}(x).
\end{align*}
Since $\nabla u^{\pm}\in L^{p}(\Omega)$ and $\theta$ is bounded, in order to prove that $\nabla u \in L^p(\Omega)$ it remains only to show that $\theta^{\prime}(u^{-}-u^{+}) \in L^{p}(\Omega)$.

By the fundamental theorem of calculus (which can be applied since $u^{-}$ and $u^{+}$ are absolutely continuous on $\mathcal{L}^{N-1}$ a.e. line parallel to $e_{N}$),
\begin{align*}
u^{-}(x)  &  =f^{-}(x^{\prime})+\int_{0}^{x_{N}}\partial_{N}u^{-}(x^{\prime
},s)\,ds,\\
u^{+}(x)  &  =f^{+}(x^{\prime})-\int_{x_{N}}^{b}\partial_{N}u^{+}(x^{\prime
},s)\,ds.
\end{align*}
Hence,
\begin{equation*}
|u^{+}(x)-u^{-}(x)|   \leq|f^{+}(x^{\prime})-f^{-}(x^{\prime})|+\int_{0}^{b}|\partial_{N}u^{-}(x^{\prime},s)|\,ds + \int_{0}^{b}|\partial_{N}u^{+}(x^{\prime},s)|\,ds.
\end{equation*}
We may then apply H\"{o}lder's inequality to see that
\begin{equation*}
|u^{+}(x)   -u^{-}(x)|^{p}\leq2^{p-1}|f^{+}(x^{\prime})-f^{-}(x^{\prime
})|^{p}
  +2^{p-1}b^{p-1}\int_{0}^{b}(|\partial_{N}u^{-}(x^{\prime},s)|^{p} + |\partial_{N}u^{+}(x^{\prime},s)|^{p})\,ds.
\end{equation*}
Since $|\theta^{\prime}(x_{N})|\leq cb^{-1}$, it follows that
\begin{equation*}
|\theta^{\prime}(x_{N})   (u^{+}(x)-u^{-}(x))|^{p}\leq cb^{-p} |f^{+}(x^{\prime})-f^{-}(x^{\prime})|^{p}
+cb^{-1}\int_{0}^{b}(|\partial_{N}u^{-}(x^{\prime},s)|^{p} + |\partial_{N}u^{+}(x^{\prime},s)|^{p})\,ds.
\end{equation*}
Integrating both sides over $\Omega$ and using Tonelli's theorem then shows that
\begin{align*}
\int_{\Omega}|\theta^{\prime}(x_{N})    (u^{+}(x)-u^{-}(x))|^{p}dx & \leq
cb^{-p+1}\int_{\mathbb{R}^{N-1}}|f^{+}(x^{\prime})-f^{-}(x^{\prime})|^{p}dx^{\prime} \\
&  +c\int_{\Omega}(|\partial_{N}u^{-}(x)|^{p}+|\partial_{N}u^{+}(x)|^{p})\,dx,
\end{align*}
which completes the proof.
\end{proof}

Our next result establishes the existence of functions in $\dot{W}^{1,p}(\Omega)$ that cannot be extended to $\dot{W}^{1,p}(\mathbb{R}^N)$.

\begin{proposition}\label{proposition no extension}
Let $\Omega$ be as in \eqref{omega infinite strip}. Then there exists a function $u\in\dot{W}^{1,p}(\Omega)$ such that $u$ cannot be extended to a function in $\dot{W}^{1,p}(\mathbb{R}^{N})$.  In particular, there does not exist a bounded extension operator $E: \dot{W}^{1,p}(\Omega) \to \dot{W}^{1,p}(\mathbb{R}^{N})$
\end{proposition}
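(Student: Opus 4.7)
The plan is to use the screened-space machinery of Section \ref{section spaces} to construct a function $u \in \dot{W}^{1,p}(\Omega)$ whose boundary trace lies in the screened space $\W_{(a)}^{1-1/p,p}(\mathbb{R}^{N-1})$, with $a := b^+ - b^-$, but \emph{not} in the classical $\dot{W}^{1-1/p,p}(\mathbb{R}^{N-1})$; the contradiction will then come from the fact that any extension $\tilde u \in \dot{W}^{1,p}(\mathbb{R}^N)$ is forced to have its trace on the relevant hyperplanes in the classical space. First I will invoke Theorem \ref{theorem strict inclusion} in the ambient set $\mathbb{R}^{N-1}$---which contains the cone-like set $K_{\mathbb{S}^{N-2},0} = \mathbb{R}^{N-1}\setminus\{0\}$ with $\mathcal{H}^{N-2}(\mathbb{S}^{N-2})>0$---to produce
\[
f \in \W_{(1)}^{1-1/p,p}(\mathbb{R}^{N-1}) \setminus \dot{W}^{1-1/p,p}(\mathbb{R}^{N-1}).
\]
Theorem \ref{theorem screened equivalence} then promotes this to $f \in \W_{(a)}^{1-1/p,p}(\mathbb{R}^{N-1})$. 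Feeding the pair $f^- = f^+ = f$ into Theorem \ref{theorem lifting strip}---condition \eqref{difference f plus of minus} is trivial since $f^+ - f^- \equiv 0$---produces $u \in \dot{W}^{1,p}(\Omega)$ with $\operatorname*{Tr}(u) = f$ on both $\Gamma^\pm$.

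Next I will argue by contradiction. Suppose some $\tilde u \in \dot{W}^{1,p}(\mathbb{R}^N)$ extends $u$. For each $R > 0$, I will apply Theorem \ref{theorem trace strip} to the restriction of $\tilde u$ to the auxiliary strip $\Omega_R := \mathbb{R}^{N-1} \times (b^- - R, b^-)$, whose width and hence screening constant in \eqref{theorem trace strip est 2} both equal $R$. The crucial point is that the constant in that estimate depends only on $N$ and $p$, \emph{not} on the strip width. Granting for the moment that the trace of $\tilde u|_{\Omega_R}$ on $\{x_N = b^-\}$ is again $f$ (see below), this gives
\[
|f|_{\W_{(R)}^{1-1/p,p}(\mathbb{R}^{N-1})}^p \le c(N,p) \int_{\Omega_R} |\nabla \tilde u(x)|^p \, dx \le c(N,p)\,\|\nabla \tilde u\|_{L^p(\mathbb{R}^N)}^p.
\]
The right-hand side is a finite quantity independent of $R$, whereas the left-hand side converges to $|f|_{\dot{W}^{1-1/p,p}(\mathbb{R}^{N-1})}^p$ as $R \to \infty$ by monotone convergence. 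This forces $f \in \dot{W}^{1-1/p,p}(\mathbb{R}^{N-1})$, contrary to the construction. The ``in particular'' assertion is immediate, since a bounded extension operator would in particular produce an extension of the $u$ just built.

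The main technical obstacle is identifying the trace of $\tilde u|_{\Omega_R}$ on $\{x_N = b^-\}$ with the trace $f$ of $\tilde u|_\Omega$ on the same hyperplane. I will handle this via the integration-by-parts formula of Theorem \ref{theorem trace strip}(3): for any $\psi \in C_c^1(\mathbb{R}^N)$ supported in $\mathbb{R}^{N-1} \times (b^- - R, b^+)$ and vanishing near $\Gamma^+$ and near $\{x_N = b^- - R\}$, adding the IBP identities for $\tilde u|_\Omega$ and $\tilde u|_{\Omega_R}$ tested against $\psi$ makes the two interior integrals recombine into a single integral over $\Omega \cup \Omega_R$, while the two boundary contributions on $\{x_N = b^-\}$ carry opposite outward normals. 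Since $\tilde u \in \dot{W}^{1,p}(\mathbb{R}^N)$ satisfies the ordinary distributional IBP on all of $\mathbb{R}^N$ (with no jump across $\{x_N = b^-\}$), the combined boundary term must vanish, forcing the two candidate traces to induce the same bounded functional on test functions on $\{x_N = b^-\}$ and hence to coincide almost everywhere. With this identification secured, the limiting argument of the previous paragraph goes through and completes the proof.
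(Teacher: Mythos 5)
Your proposal is correct, and the first half (producing $f \in \W_{(1)}^{1-1/p,p}(\mathbb{R}^{N-1}) \setminus \dot{W}^{1-1/p,p}(\mathbb{R}^{N-1})$ via Theorem \ref{theorem strict inclusion}, passing to screening constant $a$ via Theorem \ref{theorem screened equivalence}, and lifting $f^\pm = f$ via Theorem \ref{theorem lifting strip}) is exactly what the paper does. Where you diverge is in how the contradiction is extracted from a hypothetical extension $\tilde u \in \dot{W}^{1,p}(\mathbb{R}^N)$. The paper simply invokes the known trace theory for homogeneous Sobolev spaces on $\mathbb{R}^N$ (Theorem 18.57 and Remark 18.60 of \cite{leoni2017book}) to conclude that the trace of $\tilde u$ on the hyperplane $\{x_N = b^-\}$ must lie in $\dot{W}^{1-1/p,p}(\mathbb{R}^{N-1})$, which contradicts the choice of $f$. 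You instead derive this internally: applying the paper's own Theorem \ref{theorem trace strip} to the auxiliary strips $\Omega_R = \mathbb{R}^{N-1}\times(b^- - R, b^-)$ and exploiting the fact that the constant in \eqref{theorem trace strip est 2} is $c(N,p)$, independent of the strip width, you obtain a bound on $|f|_{\W_{(R)}^{1-1/p,p}(\mathbb{R}^{N-1})}^p$ that is uniform in $R$, and monotone convergence as $R\to\infty$ then forces $f \in \dot{W}^{1-1/p,p}(\mathbb{R}^{N-1})$. This is a nice self-contained alternative that avoids the external citation; the price is the extra step of identifying the trace of $\tilde u|_{\Omega_R}$ from below with the trace of $u$ from above on $\{x_N = b^-\}$, which the paper glosses over (traces are local, so the two one-sided $W^{1,p}$ traces of the globally Sobolev function $\tilde u$ coincide) but which your integration-by-parts argument handles correctly: testing with $i=N$ against $\psi$ supported away from the other boundary components, the two boundary terms carry opposite normals and their sum must vanish because $\tilde u$ satisfies the jump-free distributional identity on $\mathbb{R}^N$. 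Both routes are valid; yours is more elementary relative to the tools developed in the paper, while the paper's is shorter given the cited background.
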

\begin{proof}

Write $f \in \W_{(1)}^{1-1/p,p}(\mathbb{R}^{N-1}) \backslash \dot{W}^{1-1/p,p}(\mathbb{R}^{N-1})$ for the function constructed in Theorem \ref{theorem strict inclusion}.  According to Theorem \ref{theorem lifting strip} we can choose $u \in \dot{W}^{1,p}(\Omega)$ such that $\operatorname*{Tr} (u) = f$ on $\Gamma^+$ and on $\Gamma^-$ (i.e. we use $f^+ = f^- = f$ in the theorem).

Suppose now, by way of contradiction, that there exists $v \in \dot{W}^{1,p}(\mathbb{R}^N)$ such that $v =u$ a.e. on $\Omega$.  We may then use the trace theory for homogeneous Sobolev spaces on $\mathbb{R}^N$ (see Theorem 18.57 and Remark 18.60 in \cite{leoni2017book}) to deduce that the trace of $v$ onto $\Gamma^-$ belongs to $\dot{W}^{1-1/p,p}(\mathbb{R}^{N-1})$.  However, this trace must agree with $f$, and so we arrive at a contradiction.  Thus, there is no such $v$.

\end{proof}

\subsection{The case $m=1$, $p=1$}

In this subsection we turn our attention to the proofs of Theorems \ref{theorem trace strip p=1} and \ref{theorem lifting strip p=1}.

\begin{proof}[Proof of Theorem \ref{theorem trace strip p=1}]

As in the proof of Theorem \ref{theorem trace strip}, we set $b^{-}=0$ and  $b:=b^{+}$.  The inequality \eqref{theorem trace strip p=1 eq 1} follows as in the proof of Theorem \ref{theorem trace strip}. On the other hand, from \eqref{1} we get
\begin{align*}
\int_{\mathbb{R}^{N-1}}|u(x^{\prime}+h^{\prime},0)-u(x^{\prime}%
,0)|\,dx^{\prime}  &  \leq3\int_{0}^{|h^{\prime}|}\int_{\mathbb{R}^{N-1}%
}|\nabla u(x)|\,dx^{\prime}dx_{N}\\
&  \leq3\int_{0}^{\varepsilon}\int_{\mathbb{R}^{N-1}}|\nabla u(x)|\,dx^{\prime
}dx_{N}.
\end{align*}
Hence,
\begin{equation*}
\sup_{|h^{\prime}|\leq\varepsilon}\int_{\mathbb{R}^{N-1}}|u(x^{\prime
}+h^{\prime},0)-u(x^{\prime},0)|\,dx^{\prime}\leq3\int_{0}^{\varepsilon}%
\int_{\mathbb{R}^{N-1}}|\nabla u(x)|\,dx^{\prime}dx_{N}\rightarrow0
\end{equation*}
as $\varepsilon\rightarrow0^{+}$ by the Lebesgue dominated convergence
theorem, which can be applied since by Fubini's theorem,
\begin{equation*}
\int_{0}^{b}\left(  \int_{\mathbb{R}^{N-1}}|\nabla u(x^{\prime},x_{N}%
)|\,dx^{\prime}\right)  dx_{N}=\int_{\Omega}|\nabla u(x)|\,dx<\infty.
\end{equation*}
This proves the first two items of the theorem, and the third follows as in the proof of Theorem \ref{theorem trace strip}.
\end{proof}

We next prove the corresponding lifting result.

\begin{proof}[Proof of Theorem \ref{theorem lifting strip p=1}]

Once again we set $b^{-}=0$ and  $b:=b^{+}$ for simplicity. Let $\varepsilon_{0}>0$ be such that
\begin{equation*}
\Vert f^{\pm}\Vert_{1}:=\sup_{|h^{\prime}|\leq\varepsilon_{0}}\int%
_{\mathbb{R}^{N-1}}|f^{\pm}(x^{\prime}+h^{\prime})-f^{\pm}(x^{\prime
})|\,dx^{\prime}<\infty.
\end{equation*}
We construct a decreasing sequence $\varepsilon_{n}\rightarrow0^{+}$ such that
\begin{equation}
\sup_{|h^{\prime}|\leq\varepsilon_{n}}\int_{\mathbb{R}^{N-1}}|f^{-}(x^{\prime
}+h^{\prime})-f^{-}(x^{\prime})|\,dx^{\prime}\leq\frac{1}{2^{n}}\Vert
f^{-}\Vert_{1} \label{tr 1}%
\end{equation}
and define $f_{n}^{-}:=\varphi_{\varepsilon_{n}}\ast f^{-}$, where $\varphi\in C_{c}^{\infty}(\mathbb{R}^{N-`1})$ is a standard mollifier. 

We claim that $\partial_i f_{n}^{-} \in L^{1}(\mathbb{R}^{N-1})$.  Indeed, since $\int_{\mathbb{R}^{N-1}}\partial_i\varphi \left(  z^{\prime}\right)  \,dz^{\prime}=0$, we may compute
\begin{align*}
 \partial_i f_{n}^{-}(x^{\prime})  &  = \frac{1}{\varepsilon_{n}^{N}} \int_{\mathbb{R}^{N-1}}f^{-}(y^{\prime}) \partial_i \varphi \left(  \frac{x^{\prime}-y^{\prime} }{\varepsilon}\right)  \,dy^{\prime} \\
& = \frac{1}{\varepsilon_{n}}\int_{\mathbb{R}^{N-1}}f^{-}(x^{\prime} - \varepsilon_{n} z^{\prime}) \partial_i \varphi \left(z^{\prime}\right)  \,dz^{\prime}\\
&  =\frac{1}{\varepsilon_{n}}\int_{\mathbb{R}^{N-1}}[f^{-}(x^{\prime} - \varepsilon_{n} z^{\prime}) - f^{-}(x^{\prime})] \partial_i \varphi\left(  z^{\prime}\right)  \,dz^{\prime},
\end{align*}
and so Tonelli's theorem allows us to estimate
\begin{align*}
\int_{\mathbb{R}^{N-1}}\left\vert  \partial_i f_{n}^{-}(x^{\prime}) \right\vert \,dx^{\prime}  
&  \leq\frac{1}{\varepsilon_{n}} \int_{B^{\prime}(0,1)} \left\vert \partial_i \varphi \left(z^{\prime}\right)  \right\vert \int_{\mathbb{R}^{N-1}}|f^{-}(x^{\prime}-\varepsilon_{n} z^{\prime}) - f^{-}(x^{\prime})|\,dx^{\prime}dz^{\prime}\\
&  \leq\frac{c}{\varepsilon_{n}}\sup_{|h^{\prime}|\leq\varepsilon_{n}}\int_{\mathbb{R}^{N-1}}|f^{-}(x^{\prime}+h^{\prime})-f^{-}(x^{\prime})|\,dx^{\prime}<\infty.
\end{align*}

Next we construct a strictly decreasing sequence $\left\{  t_{n}\right\}_{n}$ in $\left(  0,1\right)$ such that $t_{n}\rightarrow0$ and
\begin{equation}\label{tr 2}
\left\vert t_{n+1}-t_{n}\right\vert \leq\frac{1}{2^{n}}\frac{\Vert f^{-}%
\Vert_{1}}{\left\Vert \nabla_{x^{\prime}}f_{n+1}^{-}\right\Vert _{L^{1}%
}+\left\Vert \nabla_{x^{\prime}}f_{n}^{-}\right\Vert _{L^{1}}+1},
\end{equation}
and we then set
\begin{equation*}
t_{0}:=\sum_{n=1}^{\infty}t_{n}-t_{n+1}. 
\end{equation*}
Using this sequence, we define the function $v^- : \mathbb{R}^{N-1} \times (0, t_0) \to \mathbb{R}$ via 
\begin{equation*}
v^{-}\left(  x\right) = \dfrac{t_{n}-x_{N}}{t_{n}-t_{n+1}}f_{n+1}^{-}\left(  x^{\prime}\right)
+\dfrac{x_{N}-t_{n+1}}{t_{n}-t_{n+1}}f_{n}^{-}\left(  x^{\prime}\right)  
\text{ if }t_{n+1}\leq x_{N}\leq t_{n}.
\end{equation*}
Then for $t_{n+1}<x_{N}<t_{n}$ we can bound
\begin{equation}\label{tr gagliardo estimates}
\begin{split}
\left\vert \partial_{i}v^{-}\left(  x\right)  \right\vert  &  \leq\left\vert
\partial_{i}f_{n+1}^{-}\left(  x^{\prime}\right)  \right\vert +\left\vert
\partial_{i}f_{n}^{-}\left(  x^{\prime}\right)  \right\vert \text{ for all
}i=1,\ldots,N-1,\\
\left\vert \partial_{N}v^{-}\left(  x\right)  \right\vert  &  \leq\left\vert
\dfrac{f_{n+1}^{-}\left(  x^{\prime}\right)  -f_{n}^{-}\left(  x^{\prime
}\right)  }{t_{n}-t_{n+1}}\right\vert.  
\end{split}
\end{equation}
Hence, for $i=1,\ldots,N-1$, we may use \eqref{tr 2} and \eqref{tr gagliardo estimates} to estimate
\begin{align*}
\int_{\mathbb{R}^{N-1}\times(0,t_{0})}\left\vert \partial_{i} v^{-}\right\vert
\,dx  &  =\sum_{n=1}^{\infty}\int_{t_{n+1}}^{t_{n}}\int_{\mathbb{R}^{N-1}%
}\left\vert \partial_{i} v^{-}\right\vert \,dx^{\prime}\,dx_{N}\\
&  \leq\sum_{n=1}^{\infty}\left\vert t_{n+1}-t_{n}\right\vert \int%
_{\mathbb{R}^{N-1}}\left(  \left\vert \partial_{i}f_{n+1}^{-}\left(
x^{\prime}\right)  \right\vert +\left\vert \partial_{i}f_{n}^{-}\left(
x^{\prime}\right)  \right\vert \right)  \,dx^{\prime}\leq\Vert f^{-}\Vert_{1}\sum_{n=1}^{\infty}\frac{1}{2^{n}}.
\end{align*}
On the other hand, from \eqref{tr gagliardo estimates} we know that
\begin{align*}
\int_{\mathbb{R}^{N-1}\times(0,t_{0})}\left\vert \partial_{N} v^{-} \right\vert
\,dx  &  =\sum_{n=1}^{\infty}\int_{t_{n+1}}^{t_{n}}\int_{\mathbb{R}^{N-1}}\left\vert \partial_{N}v^{-}\right\vert \,dx^{\prime}\,dx_{N}\\
&  \leq\sum_{n=1}^{\infty}\int_{\mathbb{R}^{N-1}}\left\vert f_{n+1}^{-}\left(
x^{\prime}\right)  -f_{n}^{-}\left(  x^{\prime}\right)  \right\vert
\,dx^{\prime}.
\end{align*}
To estimate this final term, we use the identity
\begin{equation*}
f_{n+1}^{-}\left(  x^{\prime}\right)  -f_{n}^{-}\left(  x^{\prime}\right)
=\int_{\mathbb{R}^{N-1}}[f^{-}(x^{\prime}-\varepsilon_{n+1}z^{\prime}) - f^{-}(x^{\prime}-\varepsilon_{n}z^{\prime})]\varphi\left(  z^{\prime}\right)  \,dz^{\prime}
\end{equation*}
in conjunction with Tonelli's theorem and \eqref{tr 1} to see that
\begin{align*}
\int_{\mathbb{R}^{N-1}}\left\vert f_{n+1}^{-}\left(  x^{\prime}\right)
-f_{n}^{-}\left(  x^{\prime}\right)  \right\vert \,dx^{\prime}  
&  \leq \int_{B^{\prime}(0,1)}\varphi\left(  z^{\prime}\right)  \int_{\mathbb{R}^{N-1}}|f^{-}(x^{\prime}-\varepsilon_{n+1}z^{\prime})-f^{-}(x^{\prime} - \varepsilon_{n}z^{\prime})|\,dx^{\prime}dz^{\prime} \\
&  \leq\int_{B^{\prime}(0,1)}\varphi\left(  z^{\prime}\right)  \int_{\mathbb{R}^{N-1}}|f^{-}(x^{\prime}-\varepsilon_{n+1}z^{\prime})-f(x^{\prime})|\,dx^{\prime}dz^{\prime} \\
&  \quad+\int_{B^{\prime}(0,1)}\varphi\left(  z^{\prime}\right) \int_{\mathbb{R}^{N-1}}|f^{-}(x^{\prime}-\varepsilon_{n}z^{\prime})-f(x^{\prime})|\,dx^{\prime}dz^{\prime}\leq \frac{2}{2^{n}}\Vert f^{-}
\Vert_{1}.
\end{align*}
Combining these, we deduce that
\begin{equation*}
 \int_{\mathbb{R}^{N-1}\times(0,t_{0})}\left\vert \partial_{N} v^{-} \right\vert
\,dx \leq  2 \Vert f^{-} \Vert_{1} \sum_{n=1}^\infty \frac{1}{2^n}.
\end{equation*}
Hence $v^- \in \dot{W}^{1,1}(\mathbb{R}^{N-1} \times (0,t_0))$.

The function $v^-$ is not defined in all of $\Omega$, but by scaling we can arrive at a function $u^- : \Omega \to \mathbb{R}$.  Indeed, we set $u^{-}(x) = v^{-}(x^{\prime},x_{N}t_{0}/b)$ for $x\in\Omega$.  Clearly  $u^{-}\in\dot
{W}^{1,1}\left(  \Omega\right)$ and the trace of $u^-$ on $\Gamma^-$ is $f^-$.  Similarly, we can construct a function $u^{+}\in\dot{W}^{1,1}\left(\Omega\right)$ the trace of which is $f^{+}$ on $\Gamma^+$.  With $u^-$ and $u^+$ in hand, we can now proceed as in the second part of the proof of Theorem \ref{theorem lifting strip} to construct the desired function $u$.
\end{proof}

\subsection{The case $m=2$, $p>1$}

In this subsection we study the traces of functions $\dot{W}^{2,p}\left(\Omega\right)$ when $p >1$.

\begin{proof}[Proof of Theorem \ref{theorem trace strip m=2}]

As in the proof of Theorem \ref{theorem trace strip}, we take $b^{-}=0$, set $b:=b^{+}$.  Applying Theorem \ref{theorem AC} and Proposition \ref{proposition by parts} (see in particular \eqref{formula by parts m=2}) to $u(x^{\prime},\cdot)$ for $\mathcal{L}^{N-1}$ a.e. $x' \in \mathbb{R}^{N-1}$,  we find that
\begin{equation*}
u(x^{\prime},b)-u(x^{\prime},0)-(\partial_{N}u(x^{\prime},b)+\partial_{N}u(x^{\prime},0))\frac{b}{2}
=\int_{0}^{b}\partial_{N}^{2}u(x^{\prime},x_{N})\left(  b-\frac{x_{N}}{2}\right)  \,dx_{N}.
\end{equation*}
Hence, by H\"{o}lder's inequality,
\begin{equation*}
\left\vert u(x^{\prime},b)-u(x^{\prime},0)-(\partial_{N}u(x^{\prime},b)+\partial_{N}u(x^{\prime},0))\frac{b}{2}\right\vert ^{p}\leq b^{2p-1} \int_{0}^{b}|\partial_{N}^{2}u(x^{\prime},x_{N})|^{p}dx_{N}.
\end{equation*}
Integrating both sides with respect to $x^{\prime} \in \mathbb{R}^{N-1}$ and using Tonelli's theorem then shows that
\begin{equation*}
\int_{\mathbb{R}^{N-1}}\left\vert u(x^{\prime},b)-u(x^{\prime},0)-(\partial_{N} u(x^{\prime},b)+\partial_{N}u(x^{\prime},0))\frac{b}{2}\right\vert^{p}dx^{\prime}\leq b^{2p-1}\int_{\Omega}|\partial_{N}^{2}u(x)|^{p}dx,
\end{equation*}
which proves \eqref{theorem trace strip m=2 eq 2}.

On the other hand, for $i=1,\ldots,N$,
\begin{equation*}
\partial_{i}u(x^{\prime},b)-\partial_{i}u(x^{\prime},0)=\int_{0}^{b} \partial_{i,N}^{2}u(x^{\prime},x_{N})\,dx_{N},
\end{equation*}
and so we may apply H\"{o}lder's inequality to see that
\begin{equation*}
|\partial_{i}u(x^{\prime},b)-\partial_{i}u(x^{\prime},0)|^{p}
\leq b^{p-1} \int_{0}^{b}|\partial_{i,N}^{2}u(x^{\prime},x_{N})|^{p}dx_{N},
\end{equation*}
which upon integration in $x^{\prime} \in \mathbb{R}^{N-1}$ yields the bound
\begin{equation*}
\int_{\mathbb{R}^{N-1}}|\partial_{i}u(x^{\prime},b)-\partial_{i} u(x^{\prime},0)|^{p}dx\leq b^{p-1}\int_{\Omega}|\partial_{i,N}^{2}u(x)|^{p}dx.
\end{equation*}
This is \eqref{theorem trace strip m=2 eq 1}.

Finally, since $\partial_{i}u\in\dot{W}^{1,p}\left(  \Omega\right)$, the inequality \eqref{theorem trace strip m=2 eq 3} follows by applying Theorem \ref{theorem trace strip} to $\partial_{i}u$.
\end{proof}

Next we prove the corresponding lifting result.

\begin{proof}[Proof of Theorem \ref{theorem lifting strip m=2}]
For simplicity we again take $b^{-}=0$ and set $b:=b^{+}$.  Without loss of generality we may assume that $0<a<b$.  Let $\varphi\in C_{c}^{\infty}(\mathbb{R}^{N-1})$ be a nonnegative even function such that $\int_{\mathbb{R}^{N-1}}\varphi(y^{\prime})\,dy^{\prime}=1$ and $\operatorname*{supp}\varphi\subseteq B^{\prime}(0,ab^{-1})$.  Define $u^- : \Omega \to \mathbb{R}$ via 
\begin{align*}
u^{-}(x)  &  =\int_{\mathbb{R}^{N-1}}[f_{0}^{-}(y^{\prime})+f_{1}%
^{-}(y^{\prime})x_{N}]\varphi_{x_{N}}\left(  x^{\prime}-y^{\prime}\right)
\,dy^{\prime}\\
&  =\int_{\mathbb{R}^{N-1}}[f_{0}^{-}(x^{\prime}-x_{N}z^{\prime})+f_{1}%
^{-}(x^{\prime}-x_{N}z^{\prime})x_{N}]\varphi\left(  z^{\prime}\right)
\,dz^{\prime},
\end{align*}
where the second equality follows by the change of variables $z^{\prime} =\frac{x^{\prime}-y^{\prime}}{x_{N}}$.  Standard properties of mollifiers guarantee that for $i =0,1$ we have that $\varphi_{x_{N}}\ast f_{i}^{-}\rightarrow f_{i}^{-}$ in $L_{\operatorname*{loc}}^{p}(\mathbb{R}^{N-1})$ as $x_{N}\rightarrow0^{+}$, so we deduce that $\operatorname*{Tr}(u^{-})=f_{0}^{-}$ on $\Gamma^{-}$.

For $i=1,\ldots,N-1$ we compute 
\begin{equation}\label{first derivative}
\begin{split}
\partial_{i}u^{-}(x)  &  =\int_{\mathbb{R}^{N-1}}\partial_{i}f_{0}^{-}(x^{\prime}-x_{N}z^{\prime})\varphi\left(  z^{\prime}\right)\,dz^{\prime} 
+\int_{\mathbb{R}^{N-1}}f_{1}^{-}(y^{\prime})x_{N}\frac{\partial }{\partial x_{i}}(\varphi_{x_{N}}\left(  x^{\prime}-y^{\prime}\right))\,dy^{\prime} \\
&  =\int_{\mathbb{R}^{N-1}}\partial_{i}f_{0}^{-}(y^{\prime})\varphi_{x_{N}}\left(  x^{\prime}-y^{\prime}\right)  \,dy^{\prime} 
+\int_{\mathbb{R}^{N-1}}f_{1}^{-}(y^{\prime})x_{N}\frac{\partial}{\partial x_{i}}(\varphi_{x_{N}}\left(  x^{\prime}-y^{\prime}\right))\,dy^{\prime},
\end{split}
\end{equation}
while for $i=N$ 
\begin{equation}\label{first derivative N}
\begin{split}
\partial_{N}u^{-}(x)  &  =\int_{\mathbb{R}^{N-1}}-(\nabla_{\shortparallel }f_{0}^{-}(x^{\prime}-x_{N}z^{\prime})\cdot z^{\prime})\varphi\left( z^{\prime}\right)  \,dz^{\prime} 
+\int_{\mathbb{R}^{N-1}}f_{1}^{-}(y^{\prime})\frac{\partial}{\partial x_{N}}(x_{N}\varphi_{x_{N}}\left(  x^{\prime}-y^{\prime}\right) )\,dy^{\prime} \\
&  =\int_{\mathbb{R}^{N-1}}-\nabla_{\shortparallel}f_{0}^{-}(y^{\prime}) \cdot\frac{x^{\prime}-y^{\prime}}{x_{N}}\varphi_{x_{N}}\left(  x^{\prime} - y^{\prime}\right)  \,dy^{\prime} \\
& \quad +\int_{\mathbb{R}^{N-1}}f_{1}^{-}(y^{\prime})\left[  \varphi_{x_{N} }\left(  x^{\prime}-y^{\prime}\right)  +x_{N}\frac{\partial}{\partial x_{N} }(\varphi_{x_{N}}\left(  x^{\prime}-y^{\prime}\right)  )\right] \,\,dy^{\prime} 
\end{split}
\end{equation}
In turn, for $j=1,\ldots,N-1$ we may compute
\begin{equation}\label{second derivative mixed}
\partial_{i,j}^{2}u^{-}(x)    = \int_{\mathbb{R}^{N-1}}\partial_{i}f_{0}^{-}(y^{\prime})\frac{\partial}{\partial x_{j}}(\varphi_{x_{N}} \left(x^{\prime}-y^{\prime}\right)  )\,dy^{\prime} 
+\int_{\mathbb{R}^{N-1}}f_{1}^{-}(y^{\prime})x_{N}\frac{\partial^{2}}{\partial x_{j}\partial x_{i}}(\varphi_{x_{N}}\left(  x^{\prime}-y^{\prime }\right)  )\,dy^{\prime}, 
\end{equation}
while for $j=N$
\begin{equation}\label{second derivative semi-mixed}
\begin{split}
\partial_{i,N}^{2}  &  u^{-}(x)=\int_{\mathbb{R}^{N-1}}\partial_{i}f_{0}
^{-}(y^{\prime})\frac{\partial}{\partial x_{N}}(\varphi_{x_{N}}\left( x^{\prime}-y^{\prime}\right)  )\,dy^{\prime} \\
& +\int_{\mathbb{R}^{N-1}}f_{1}^{-}(y^{\prime})\left[ \frac{\partial}{\partial x_{i}}(\varphi_{x_{N}}\left( x^{\prime} - y^{\prime}\right) ) + x_{N} \frac{\partial^{2}}{\partial x_{i}\partial x_{N}}(\varphi_{x_{N} } \left(  x^{\prime} - y^{\prime}\right)  )\right]  \,dy^{\prime}. 
\end{split} 
\end{equation}
Finally, when $i=j=N$ we have that
\begin{equation}\label{second derivative pure}
\begin{split}
\partial_{N}^{2}  &  u^{-}(x)=\int_{\mathbb{R}^{N-1}}-\nabla_{\shortparallel} f_{0}^{-}(y^{\prime}) \cdot\frac{\partial}{\partial x_{N}}\left( \frac{x^{\prime}-y^{\prime}}{x_{N}}\varphi_{x_{N}}\left(  x^{\prime} - y^{\prime} \right)  \right)  \,dy^{\prime} \\
& +\int_{\mathbb{R}^{N-1}}f_{1}^{-}(y^{\prime})\left[  2\frac{\partial}{\partial x_{N}}(\varphi_{x_{N}} \left(  x^{\prime}-y^{\prime}\right) )+x_{N}\frac{\partial^{2}}{\partial x_{N}^{2}}(x_{N}\varphi_{x_{N}} \left(x^{\prime}-y^{\prime}\right)  )\right]  \,\,dy^{\prime}. 
\end{split}
\end{equation}

Owing to Proposition \ref{proposition derivatives mollifiers}, we are in a position to apply Proposition \ref{proposition structure} to conclude that for $i=1,\ldots,N-1$ and $j=1,\ldots,N$ we have the bounds
\begin{align*}
\int_{\Omega}|\partial_{i,j}^{2}u^{-}(x)|^{p}dx  &  \leq c\int_{\mathbb{R}^{N-1}}\int_{B^{\prime}(x^{\prime},a)}\frac{|\partial_{i}f_{0}^{-}(y^{\prime})-\partial_{i}f_{0}^{-}(x^{\prime})|^{p}}{|x^{\prime}-y^{\prime}|^{N+p-2}}\,dy^{\prime}dx^{\prime}\\
&  \quad+c\int_{\mathbb{R}^{N-1}}\int_{B^{\prime}(x^{\prime},a)}\frac{|f_{1}^{-}(y^{\prime})-f_{1}^{-}(x^{\prime})|^{p}}{|x^{\prime}-y^{\prime}|^{N+p-2}}\,dy^{\prime}dx^{\prime}
\end{align*}
while for two $N$ derivatives we have that 
\begin{align*}
\int_{\Omega}|\partial_{N}^{2}u^{-}(x)|^{p}dx  
&  \leq c\int_{\mathbb{R}^{N-1}}\int_{B^{\prime}(x^{\prime},a)}\frac{|\nabla_{\shortparallel}f_{0}^{-}(y^{\prime})-\nabla_{\shortparallel}f_{0}^{-}(x^{\prime})|^{p}}{|x^{\prime}-y^{\prime}|^{N+p-2}}\,dy^{\prime}dx^{\prime}\\
&  \quad+c\int_{\mathbb{R}^{N-1}}\int_{B^{\prime}(x^{\prime},a)}\frac{|f_{1}^{-}(y^{\prime})-f_{1}^{-}(x^{\prime}) |^{p}}{|x^{\prime}-y^{\prime}|^{N+p-2}}\,dy^{\prime}dx^{\prime},
\end{align*}
which shows that $u^{-}\in\dot{W}^{2,p}(\Omega)$.

Next we write 
\begin{align*}
\partial_{N}u^{-}(x)  &  =\int_{\mathbb{R}^{N-1}}-\nabla_{\shortparallel} f_{0}^{-}(y^{\prime}) \cdot \frac{x^{\prime}-y^{\prime}}{x_{N}}\varphi_{x_{N} }\left(  x^{\prime}-y^{\prime}\right)  \,dy^{\prime} \\
&  \quad+\int_{\mathbb{R}^{N-1}}f_{1}^{-}(y^{\prime})\left[  \varphi_{x_{N} }\left(  x^{\prime}-y^{\prime}\right)  +\frac{1}{x_{N}^{N-1}}\psi_{N} \left( \frac{x^{\prime}-y^{\prime}}{x_{N}}\right)  \right]  \,dy^{\prime}, 
\end{align*}
where $\psi_{N}$ is given in \eqref{psi i}.  Standard properties of mollifiers imply that as $x_{N}\rightarrow0^{+}$ we have that
\begin{equation*}
\partial_{N}u^{-}(x)\rightarrow-\nabla_{\shortparallel}f_{0}^{-}(x^{\prime
})\cdot\int_{\mathbb{R}^{N-1}}z^{\prime}\varphi\left(  z^{\prime}\right)
\,dz^{\prime}+f_{1}^{-}(x^{\prime})\left[  1+\int_{\mathbb{R}^{N-1}}\psi
_{N}(z^{\prime})\,dz^{\prime}\right]  =f_{1}^{-}(x^{\prime}),
\end{equation*}
where in the second equality we used the fact that $\varphi$ is even to see that $\int_{\mathbb{R}^{N-1}}z^{\prime}\varphi \left(z^{\prime}\right)  \,dz^{\prime}=0$ and Proposition \ref{proposition derivatives mollifiers} to see that $\int_{\mathbb{R}^{N-1}}\psi_{N}(z^{\prime})\,dz^{\prime}=0$.  Hence, $\operatorname*{Tr}(\partial_{N}u^{-})=f_{1}^{-}$\ on $\Gamma^{-}$.

Similarly, if we define $u^+ : \Omega \to \mathbb{R}$ via 
\begin{equation*}
u^{+}(x) =\int_{\mathbb{R}^{N-1}}[f_{0}^{+}(y^{\prime})+f_{1}^{+}(y^{\prime}) (b-x_{N})] \varphi_{b-x_{N}}\left(  x^{\prime}-y^{\prime}\right) \,dy^{\prime},
\end{equation*}
then we find that $u^{+}\in\dot{W}^{2,p}(\Omega)$ with estimates of the same form as above (with $f_i^+$ replacing $f_i^-$). Moreover,  $\operatorname*{Tr}(u^{+})=f_{0}^{+}$ and $\operatorname*{Tr}(\partial_{N}u^{+})=f_{1}^{+}$ on
$\Gamma^{+}$.

Let $\theta\in C^{\infty}([0,b])$ be such that $\theta=1$ in a neighborhood of $0$ and $\theta=0$ in a neighborhood of $b$ and define $u: \Omega \to \mathbb{R}$ via
\begin{equation*}
u(x) = \theta(x_{N})u^{-}(x)+(1-\theta(x_{N}))u^{+}(x).
\end{equation*}
Then for $i,j=1,\ldots,N-1$, 
\begin{align*}
\partial_{i,j}^{2}u(x)  &  =\theta(x_{N})\partial_{i,j}^{2}u^{-}%
(x)+(1-\theta(x_{N}))\partial_{i,j}^{2}u^{+}(x),\\
\partial_{i,N}^{2}u(x)  &  =\theta^{\prime}(x_{N})(\partial_{i}u^{-}%
(x)-\partial_{i}u^{+}(x))+\theta(x_{N})\partial_{i,N}^{2}u^{-}(x)+(1-\theta
(x_{N}))\partial_{i,N}^{2}u^{+}(x),\\
\partial_{N}^{2}u(x)  &  =\theta^{\prime\prime}(x_{N})(u^{-}(x)-u^{+}%
(x))+2\theta^{\prime}(x_{N})(\partial_{N}u^{-}(x)-\partial_{N}u^{+}(x))\\
&  \quad+\theta(x_{N})\partial_{N}^{2}u^{-}(x)+(1-\theta(x_{N}))\partial
_{N}^{2}u^{+}(x).
\end{align*}
From these calculations and the above bounds for $u^{\pm}\in\dot{W}^{2,p}(\Omega)$, we see that in order to prove the desired estimate for $u$ it suffices to estimate $\theta^{\prime} (\nabla u^{-}-\nabla u^{+})$ and $\theta^{\prime\prime}(u^{-}-u^{+})$. 

By Taylor's formula and \eqref{formula taylor inverted m=2} applied to $u^{-}(x^{\prime},\cdot)$ and $u^{+}(x^{\prime},\cdot)$, respectively, we have that
\begin{align*}
u^{-}(x)  &  =f_{0}^{-}(x^{\prime})+f_{1}^{-}(x^{\prime})x_{N}+\frac{1}{2} \int_{0}^{x_{N}}\partial_{N}^{2}u^{-}(x^{\prime},s)(x_{N}-s)\,ds,\\
u^{+}(x)  &  =f_{0}^{+}(x^{\prime})-f_{1}^{+}(x^{\prime})(b-x_{N})+\frac{1}{2} \int_{x_{N}}^{b}\partial_{N}^{2}u^{+}(x^{\prime},s)(x_{N}-s)\,ds,
\end{align*}
where we used the fact that $\operatorname*{Tr}(u^{\pm})=f_{0}^{\pm}$ and $\operatorname*{Tr}(\partial_{N}u^{\pm})=f_{1}^{\pm}$\ on $\Gamma^{\pm}$.  Hence,
\begin{align*}
u^{+}(x)-u^{-}(x)  &  =f_{0}^{+}(x^{\prime})-f_{0}^{-}(x^{\prime}) - (f_{1}^{+}(x^{\prime}) + f_{1}^{-}(x^{\prime}))\frac{b}{2} - (f_{1}^{+}(x^{\prime})-f_{1}^{-}(x^{\prime}))(\frac{b}{2}-x_{N})\\
&  \quad + \frac{1}{2}\int_{x_{N}}^{b}\partial_{N}^{2}u^{+}(x^{\prime},s)(x_{N}-s)\,ds-\frac{1}{2}\int_{0}^{x_{N}}\partial_{N}^{2}u^{-}(x^{\prime},s)(x_{N}-s)\,ds.
\end{align*}
From H\"{o}lder's inequality and the fact that $|\theta^{\prime\prime}(x_{N})|\leq cb^{-2}$ we see that
\begin{align*}
|\theta^{\prime\prime}(x_{N})(u^{+}(x)-u^{-}(x))|^{p}  &  \leq cb^{-2p} \left\vert f_{0}^{+}(x^{\prime})-f_{0}^{-}(x^{\prime})-(f_{1}^{+}(x^{\prime}) + f_{1}^{-}(x^{\prime})) \frac{b}{2}\right\vert ^{p}\\
&  \quad+cb^{-p}|f_{1}^{+}(x^{\prime})-f_{1}^{-}(x^{\prime})|^{p}\\
&  \quad+cb^{-1}\int_{x_{N}}^{b}|\partial_{N}^{2}u^{+}(x^{\prime},s)|^{p}ds+cb^{-1}\int_{0}^{x_{N}}|\partial_{N}^{2}u^{-}(x^{\prime},s)|^{p}ds.
\end{align*}
Integrating over $x \in \Omega$ and using Tonelli's theorem then shows that
\begin{align*}
\int_{\Omega}|\theta^{\prime\prime}(x_{N})(u^{+}(x)-u^{-}(x))|^{p}dx  
&  \leq cb^{-2p+1}\int_{\mathbb{R}^{N-1}}\left\vert f_{0}^{+}(x^{\prime})-f_{0}^{-}(x^{\prime})-(f_{1}^{+}(x^{\prime})+f_{1}^{-}(x^{\prime}))\frac{b}{2}\right\vert ^{p}dx^{\prime}\\
&  \quad+cb^{-p+1}\int_{\mathbb{R}^{N-1}}|f_{1}^{+}(x^{\prime})-f_{1}^{-}(x^{\prime})|^{p}dx^{\prime}\\
&  \quad+c\int_{\Omega}|\partial_{N}^{2}u^{+}(x)|^{p}dx+c\int_{\Omega}|\partial_{N}^{2}u^{-}(x)|^{p}dx.
\end{align*}
This is the desired estimate for $\theta^{\prime\prime}(u^{-}-u^{+})$.

Finally, we derive the estimate for $\theta^{\prime} (\nabla u^{-}-\nabla u^{+})$.  By the fundamental theorem of calculus for every $i=1,\ldots,N-1$ we have that
\begin{align*}
\partial_{i}u^{-}(x)  &  =\partial_{i}f_{0}^{-}(x^{\prime})+\int_{0}^{x_{N} }\partial_{N,i}^{2}u^{-}(x^{\prime},s)\,ds, \\
\partial_{i}u^{+}(x)  &  =\partial_{i}f_{0}^{+}(x^{\prime})-\int_{x_{N}}%
^{b}\partial_{N,i}^{2}u^{+}(x^{\prime},s)\,ds.
\end{align*}
Reasoning as above then shows that
\begin{align*}
\int_{\Omega}|\theta^{\prime}(x_{N})    (\partial_{i}u^{+}(x)-\partial_{i} u^{-} (x))|^{p}dx 
& \leq cb^{-p+1} \int_{\mathbb{R}^{N-1}}|\partial_{i} f_{0}^{+}(x^{\prime})-\partial_{i}f_{0}^{-}(x^{\prime})|^{p}dx^{\prime} \\
& \quad + c\int_{\Omega}(|\partial_{N,i}^{2}u^{-}(x)|^{p}+|\partial_{N,i}^{2} u^{+}(x)|^{p})\,dx.
\end{align*}
The term $2\theta^{\prime}(\partial_{N}u^{-}-\partial_{N}u^{+})$ can be estimated in a similar way, which completes the proof.
\end{proof}

\subsection{The case $m\geq2$, $p>1$}\label{subsection m>2 strip}

Finally, in this subsection  we treat the case in which $m\geq2$ and $p>1$. Given a function $u\in\dot{W}^{m,p}\left(  \Omega\right)  $, we have that $\nabla^{k}u\in\dot{W}^{m-k,p}\left(  \Omega\right)  $ for every $k=1,\ldots,m-1$ and thus by Theorem \ref{theorem trace strip} there exists $\operatorname*{Tr}(\nabla^{k}u)$.  By a density argument, it can be shown that $\operatorname*{Tr}(\nabla^{k}u)(\cdot,b^{\pm})\in W_{\operatorname*{loc}}^{m-k-1,p}(\mathbb{R}^{N-1})$ for every $k=1,\ldots,m-2$ and that for every $j=1,\ldots,m-k-1$, $(\nabla_{\shortparallel})^{j}\operatorname*{Tr}(\nabla^{k}u)(\cdot,b^{\pm})=\operatorname*{Tr}((\nabla_{\shortparallel})^{j}(\nabla^{k}u))(\cdot,b^{\pm})$, where we recall that $(\nabla_{\shortparallel})^{j}$ denotes the vector of all multi-indices $\alpha =(\alpha^{\prime},0)\in\mathbb{N}_{0}^{N-1}\times\mathbb{N}_{0}$ with $|\alpha|=j$. Thus, to characterize the trace space of $\dot{W}^{m,p}\left( \Omega\right)$, it suffices to study $\operatorname*{Tr}(u)$ and the trace of the normal derivatives $\operatorname*{Tr}(\partial_{N}^{k}u)$ for $k=1,\ldots,m-1$.

In what follows we use the notation established in \eqref{gradient zero} and \eqref{gradient prime zero}.

\begin{theorem}\label{theorem trace strip m>2}
Let $\Omega$ be as in \eqref{omega infinite strip},  $m\in\mathbb{N}$ with $m\geq2$, and $1<p<\infty$.  There exists a constant $c=c(m,N,p)>0$ such that for every $u\in\dot{W}^{m,p}\left(  \Omega\right)$ the following estimates hold:  for every $i,l\in\mathbb{N}_{0}$ with $0\leq i+l\leq m-1$,
\begin{equation}\label{theorem trace strip m>2 eq 1}
\begin{split}
\int_{\mathbb{R}^{N-1}}\left\vert \sum_{k=0}^{m-l-i-1}\frac{(-1)^{k}}{k!}(\nabla_{\shortparallel}^{i}\operatorname*{Tr}(\partial_{N}^{k+l} u)(x^{\prime},b^{+})+(-1)^{k+1}\nabla_{\shortparallel}^{i}\operatorname*{Tr}(\partial_{N}^{k+l}u)(x^{\prime},b^{-}))\left(  \frac{b^{+}-b^{-}}{2}\right)^{k}\right\vert ^{p}dx^{\prime}\\
\leq (b^{+}-b^{-})^{(m-i-l)p-1}\int_{\Omega}|\nabla^{m}u(x)|^{p}dx,
\end{split}
\end{equation}
and 
\begin{equation}\label{theorem trace strip m>2 eq 2}
\int_{\mathbb{R}^{N-1}}\int_{B^{\prime}(0,b^{+}-b^{-})}\frac{|\operatorname*{Tr}(\nabla^{m-1}u)(x^{\prime} + h^{\prime},b^{\pm})-\operatorname*{Tr}(\nabla^{m-1}u)(x^{\prime},b^{\pm})|^{p}}{|h^{\prime} |^{p+N-2}}\,dh^{\prime}dx^{\prime}
\leq c\int_{\Omega}|\nabla^{m}u(x)|^{p}dx. 
\end{equation}
\end{theorem}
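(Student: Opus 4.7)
The plan is to derive \eqref{theorem trace strip m>2 eq 1} from the iterated integration-by-parts identity of Proposition \ref{proposition by parts} applied along vertical lines to appropriate mixed derivatives of $u$, and to derive \eqref{theorem trace strip m>2 eq 2} by applying estimate \eqref{theorem trace strip est 2} of Theorem \ref{theorem trace strip} componentwise to $\nabla^{m-1} u$. As in the proof of Theorem \ref{theorem trace strip}, I will normalize to $b^- = 0$ and $b^+ = b$. The crucial preparatory step is to invoke Theorem \ref{theorem AC} applied to $\nabla_\shortparallel^i u \in \dot{W}^{m-i,p}(\Omega)$ for $0 \le i \le m-1$, which yields, for $\mathcal{L}^{N-1}$-a.e.\ $x' \in \mathbb{R}^{N-1}$, a representative such that $\nabla_\shortparallel^i u(x', \cdot) \in C^{m-i-1}([0,b])$ with absolutely continuous top derivative, and with classical $x_N$-derivatives up to order $m-i$ agreeing a.e.\ with the corresponding weak derivatives. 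Consequently, for every $i, l \ge 0$ with $i + l \le m-1$, the function $f(t) := \nabla_\shortparallel^i \partial_N^l u(x', t)$ lies in the regularity class required by Proposition \ref{proposition by parts} with $m$ replaced by $m-i-l$, and $f^{(k)}(t) = \nabla_\shortparallel^i \partial_N^{k+l} u(x', t)$ for $0 \le k \le m-i-l$.

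With this in place, \eqref{theorem trace strip m>2 eq 1} will follow from applying formula \eqref{formula by parts} with $m \mapsto m-i-l$ to $f$: the resulting left-hand side coincides with the quantity inside the absolute value in \eqref{theorem trace strip m>2 eq 1} once the endpoint values of the good representative are identified with $\nabla_\shortparallel^i \operatorname*{Tr}(\partial_N^{k+l} u)(\cdot, b^\pm)$ via the construction of $\operatorname*{Tr}$ in Theorem \ref{theorem trace strip}, while the right-hand integrand $f^{(m-i-l)}(t) = \nabla_\shortparallel^i \partial_N^{m-i} u(x', t)$ is one component of $\nabla^m u$. Raising the identity to the $p$-th power and applying H\"{o}lder's inequality on $(0,b)$ produces the prefactor
\begin{equation*}
\frac{1}{((m-i-l-1)!)^p}\left( \int_0^b \left|\tfrac{b}{2} - t\right|^{(m-i-l-1)p'} dt \right)^{p/p'} \asymp b^{(m-i-l)p - 1},
\end{equation*}
after which integrating in $x' \in \mathbb{R}^{N-1}$ via Tonelli's theorem delivers the bound, with the H\"{o}lder constant absorbed as in the $m=2$ case of Theorem \ref{theorem trace strip m=2}.

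For \eqref{theorem trace strip m>2 eq 2}, I will observe that every $\partial^\alpha u$ with $|\alpha| = m-1$ lies in $\dot{W}^{1,p}(\Omega)$, so estimate \eqref{theorem trace strip est 2} of Theorem \ref{theorem trace strip} applied to each such $\partial^\alpha u$ yields
\begin{equation*}
|\operatorname*{Tr}(\partial^\alpha u)(\cdot, b^\pm)|_{\W_{(b^+-b^-)}^{1-1/p,p}(\mathbb{R}^{N-1})}^p \le c \int_\Omega |\nabla \partial^\alpha u(x)|^p dx \le c \int_\Omega |\nabla^m u(x)|^p dx,
\end{equation*}
and summing over the finitely many multi-indices $\alpha$ with $|\alpha| = m-1$, together with the recognition that the screened seminorm $|\cdot|_{\W_{(b^+-b^-)}^{1-1/p,p}(\mathbb{R}^{N-1})}^p$ is exactly the difference-quotient integral appearing on the left of \eqref{theorem trace strip m>2 eq 2}, completes the argument. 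The main steps requiring attention are the bookkeeping of derivative indices in the H\"{o}lder estimate (ensuring the integrand contains exactly $m$ derivatives, split as $i$ parallel and $m-i$ normal) and the rigorous identification of boundary traces with endpoint values of the good representative along vertical lines; neither step is a genuine obstacle given the machinery already developed in the preceding sections.
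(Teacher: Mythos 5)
Your proposal is correct and follows essentially the same route as the paper: apply Theorem \ref{theorem AC} and the identity \eqref{formula by parts} of Proposition \ref{proposition by parts} (with $m$ replaced by $m-i-l$) to $\nabla_{\shortparallel}^{i}\partial_{N}^{l}u(x',\cdot)$ along a.e.\ vertical line, use H\"{o}lder and Tonelli for the first estimate, and apply \eqref{theorem trace strip est 2} componentwise to $\nabla^{m-1}u\in\dot{W}^{1,p}(\Omega)$ for the second. The only cosmetic difference is that your H\"{o}lder constant should be stated as a one-sided bound $\leq (b^{+}-b^{-})^{(m-i-l)p-1}$ rather than $\asymp$, which is what the sharp form of \eqref{theorem trace strip m>2 eq 1} requires.
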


\begin{proof}
As in the proof of Theorem \ref{theorem trace strip} we take $b^{-}=0$, set $b:=b^{+}$.  Let $l, i \in \mathbb{N}_{0}$ with $0\leq l+i\leq m-1$. By Theorem \ref{theorem AC} and Proposition \ref{proposition by parts} (in particular \eqref{formula by parts m=2}) applied to $\nabla_{\shortparallel}^{i} \partial_{N}^{l} u(x^{\prime},\cdot)$ for $\mathcal{L}^{N-1}$ a.e. $x' \in \mathbb{R}^{N-1}$, it follows that
\begin{align*}
\sum_{k=0}^{m-l-i-1}  &  \frac{(-1)^{k}}{k!}(\nabla_{\shortparallel}^{i}\partial_{N}^{k+l}u(x^{\prime},b)+(-1)^{k+1}\nabla_{\shortparallel}^{i}\partial_{N}^{k+l}u(x^{\prime},0))\left(  \frac{b}{2}\right)^{k}   \\
&  =\frac{1}{(m-l-i-1)!}\int_{0}^{b}\nabla_{\shortparallel}^{i}\partial_{N}^{m-i}u(x^{\prime},t)\left(  b-\frac{t}{2}\right)^{m-l-i-1}dt,
\end{align*}
and so H\"{o}lder's inequality implies that
\begin{align*}
&  \left\vert \sum_{k=0}^{m-l-i-1}\frac{(-1)^{k}}{k!}(\nabla_{\shortparallel}^{i}\partial_{N}^{k+l}u(x^{\prime},b)+(-1)^{k+1}\nabla_{\shortparallel}^{i}\partial_{N}^{k+l}u(x^{\prime},0))\left(  \frac{b}{2}\right)  ^{k}\right\vert ^{p} \\
&  \quad\quad\leq b^{(m-l-i)p-1}\int_{0}^{b}|\nabla^{m}u(x^{\prime},x_{N})|^{p}dx_{N}.
\end{align*}
Integrating both sides with respect to $x^{\prime}\in \mathbb{R}^{N-1}$ and using Tonelli's theorem then shows that
\begin{align*}
\int_{\mathbb{R}^{N-1}}  &  \left\vert \sum_{k=0}^{m-l-i-1}\frac{(-1)^{k}}{k!}(\nabla_{\shortparallel}^{i}\partial_{N}^{k+l}u(x^{\prime},b)+(-1)^{k+1}\nabla_{\shortparallel}^{i} \partial_{N}^{k+l}u(x^{\prime},0))\left(  \frac{b}{2}\right)  ^{k}\right\vert
^{p}dx^{\prime}\\
&  \leq b^{(m-l-i)p-1}\int_{\Omega}|\nabla^{m}u(x)|^{p}dx,
\end{align*}
which is \eqref{theorem trace strip m>2 eq 1}.  On the other hand, since $\nabla^{m}u\in\dot{W}^{1,p}\left(  \Omega;\mathbb{R}^{M_{m}}\right) $, where $M_{m}$ is the number of multi-indices of length $m$, inequality \eqref{theorem trace strip m>2 eq 2} follows by applying Theorem \ref{theorem trace strip} to each component of $\nabla^{m}u$.
\end{proof}

Next we prove the corresponding lifting result. The proof is significantly more involved than the one for inhomogeneous Sobolev spaces (see, for example, \cite{burenkov1998book}, \cite{mazya-mitrea-shaposhnikova2010}, or \cite{necas2012book}) because we only have control of the screened homogeneous fractional seminorm of the traces of the derivatives of order $m-1$ and we have no control of the seminorm of the traces of the derivatives of order less than $m-1$.  Thus, we are forced to employ several integrations by parts, which makes the proof quite technical.

Before stating the result, we establish some notation.  Given $f_{k}^{\pm}\in W_{\operatorname*{loc}}^{m-1-k,p}(\mathbb{R}^{N-1})$ for  $k=0,\ldots,m-2$, and $f_{m-1}^{\pm}\in L_{\operatorname*{loc}}^{,p}(\mathbb{R}^{N-1})$ we define
\begin{equation*}
Q_{i,m,n}(x^{\prime}):=\sum_{k=0}^{m-i-n-1}\frac{(-1)^{k}}{k!}(\nabla
_{\shortparallel}^{i}f_{k+n}^{\pm}(x^{\prime})+(-1)^{k+1}\nabla
_{\shortparallel}^{i}f_{k+n}^{\pm}(x^{\prime}))\left(  \frac{b^{+}-b^{-}}%
{2}\right)  ^{k}.
\end{equation*}

\begin{theorem}\label{theorem lifting strip m>2}
Let $a>0$, $m\in\mathbb{N}$ with $m\geq2$, and $1<p<\infty$.  Suppose that  $f_{k}^{\pm}\in W_{\operatorname*{loc}}^{m-1-k,p}(\mathbb{R}^{N-1})$ for $k=0,\ldots,m-2$ and $f_{m-1}^{\pm}\in L_{\operatorname*{loc}}^{,p}(\mathbb{R}^{N-1})$ satisfy
\begin{equation}\label{compatibility conditions} 
\int_{\mathbb{R}^{N-1}}\left\vert Q_{i,m,n}(x^{\prime})\right\vert^{p}dx^{\prime}<\infty
\end{equation}
for all $n,i\in\mathbb{N}_{0}$ with $0\leq n+i\leq m-1$ and
\begin{equation*}
\int_{\mathbb{R}^{N-1}}\int_{B^{\prime}(0,a)}\frac{|\nabla_{\shortparallel}^{m-k-1}f_{k}^{\pm}(x^{\prime}+h^{\prime})-\nabla_{\shortparallel}^{m-k-1}f_{k}^{\pm}(x^{\prime})|^{p}}{|h^{\prime}|^{p+N-2}}\,dh^{\prime
}dx^{\prime}<\infty
\end{equation*}
for all $k=0,\ldots,m-1$.   Then there exists $u\in\dot{W}^{m,p}(\Omega)$ such that $\operatorname*{Tr}(u)=f_{0}^{\pm}$ and $\operatorname*{Tr}(\partial_{N}^{k}u)=f_{k}^{\pm}$ on $\Gamma^{\pm}$ for $k=1,\ldots,m-1$, and
\begin{align*}
\int_{\Omega}  &  |\nabla^{m}u(x)|^{p}dx\leq c\sum_{n=0}^{m-1} \sum_{i=0}^{m-1-n}(b^{+}-b^{-})^{(n+i-m)p+1}\int_{\mathbb{R}^{N-1}}\left\vert Q_{i,m,n}(x^{\prime})\right\vert^{p}dx^{\prime}\\
&  +c\sum_{k=0}^{m-1}\int_{\mathbb{R}^{N-1}}\int_{B^{\prime}(0,a)} \frac{|\nabla_{\shortparallel}^{m-k-1}f_{k}^{-}(x^{\prime}+h^{\prime}) - \nabla_{\shortparallel}^{m-k-1}f_{k}^{-}(x^{\prime})|^{p}}{|h^{\prime}|^{p+N-2}}\,dh^{\prime}dx^{\prime}\\
& + c\sum_{k=0}^{m-1}\int_{\mathbb{R}^{N-1}}\int_{B^{\prime}(0,a)} \frac{|\nabla_{\shortparallel}^{m-k-1}f_{k}^{+}(x^{\prime}+h^{\prime} )-\nabla_{\shortparallel}^{m-k-1}f_{k}^{+}(x^{\prime})|^{p}}{|h^{\prime}|^{p+N-2}}\,dh^{\prime}dx^{\prime}
\end{align*}
for some constant $c=c(a,m,N,p)>0$. Moreover, the map $(f_0^-,\dotsc,f_{m-1}^-,f_0^+,\dotsc,f_{m-1}^+) \mapsto u$ is linear.
\end{theorem}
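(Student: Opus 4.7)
The strategy is to generalize the construction in the proof of Theorem \ref{theorem lifting strip m=2}. Taking $b^- = 0$ and $b := b^+$ for simplicity, and assuming without loss of generality that $0 < a < b$, let $\varphi \in C_c^\infty(\mathbb{R}^{N-1})$ be a mollifier supplied by Proposition \ref{proposition mollifier} with $\operatorname{supp}\varphi \subseteq B'(0, a/b)$, $\int \varphi(y')\,dy' = 1$, and vanishing moments $\int (y')^\beta \varphi(y')\,dy' = 0$ for $1 \leq |\beta| \leq m-1$. Define the one-sided Taylor-type ansätze
\[
u^-(x) := \sum_{k=0}^{m-1} \frac{x_N^k}{k!}\,(f_k^- \ast \varphi_{x_N})(x'), \qquad u^+(x) := \sum_{k=0}^{m-1} \frac{(x_N - b)^k}{k!}\,(f_k^+ \ast \varphi_{b-x_N})(x'),
\]
where $\ast$ denotes convolution in the $x'$ variable, and glue them via $u(x) := \theta(x_N)\,u^-(x) + (1 - \theta(x_N))\,u^+(x)$, where $\theta \in C^\infty([0,b])$ equals $1$ near $0$ and $0$ near $b$. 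Linearity of the map $(f_0^\pm,\ldots,f_{m-1}^\pm) \mapsto u$ is immediate from the linearity of all the steps.

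The trace identifications $\operatorname{Tr}(\partial_N^j u^\pm) = f_j^\pm$ on $\Gamma^\pm$ for $0 \leq j \leq m-1$ follow by Leibniz-expanding $\partial_N^j u^-$, rewriting the vertical derivatives of $\varphi_{x_N}$ in the scale-invariant form of Proposition \ref{proposition derivatives mollifiers}, and passing to the limit $x_N \to 0^+$; Taylor-expanding $f_k^-(x' - x_N z')$ in $z'$ and invoking the vanishing-moments hypothesis on $\varphi$ kill all off-diagonal contributions and leave only the diagonal term $f_j^-$. For the $L^p$ estimate on $\nabla^m u^-$, fix $|\alpha| = m$ and expand $\partial^\alpha u^-$ via Leibniz so that each summand is a multiple of $x_N^{k-l}$ times $\partial_N^{\alpha_N - l} \partial_{x'}^{\alpha'}(f_k^- \ast \varphi_{x_N})$. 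Transfer horizontal derivatives onto $f_k^-$ through convolution symmetry (up to the allowed $m-1-k$ orders), rewrite the remaining $\alpha_N - l \geq 1$ vertical derivatives of $\varphi_{x_N}$ as mean-zero scale-invariant kernels $x_N^{-|\gamma|-N+1}\psi^\gamma((x'-y')/x_N)$ via Proposition \ref{proposition derivatives mollifiers}, and perform further integration by parts in $y'$ to absorb the polynomial factors $(x'-y')/x_N$ produced by the vertical differentiation, using the identity $\operatorname{div}_{y'}(y'\psi) = (N-1)\psi + y' \cdot \nabla\psi$ already deployed in the $m=2$ case. This shifts the remaining horizontal derivatives onto $f_k^-$ to yield $\nabla_{\shortparallel}^{m-1-k} f_k^-$, and the final expression fits the template of Proposition \ref{proposition structure} applied to $\nabla_{\shortparallel}^{m-1-k} f_k^-$, producing the desired bound in terms of the screened seminorm of $\nabla_{\shortparallel}^{m-1-k} f_k^-$.

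The principal obstacle is the cutoff step. Leibniz applied to $\partial^\alpha u$ with $|\alpha|=m$ produces cross terms $\theta^{(s)}(x_N)\,\partial_N^{\alpha_N - s}\partial_{x'}^{\alpha'}(u^+ - u^-)(x)$ for $1 \leq s \leq \alpha_N$. Setting $n := \alpha_N - s$, $i := |\alpha'|$, and $m' := m - n - i = s$, apply Taylor's formula \eqref{formula taylor inverted} with parameter $m'$ to $\partial_N^n \partial_{x'}^{\alpha'} u^-(x',\cdot)$ centered at $0$ and to $\partial_N^n \partial_{x'}^{\alpha'} u^+(x',\cdot)$ centered at $b$. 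Using the trace identifications $\partial_N^{n+k}\partial_{x'}^{\alpha'} u^\pm$ evaluated at the appropriate endpoint equals $\partial_{x'}^{\alpha'} f_{n+k}^\pm$, and expanding $(x_N-b)^k = ((x_N-b/2) - b/2)^k$ and $x_N^k = ((x_N-b/2)+b/2)^k$ binomially before regrouping in powers of $(x_N - b/2)$, one obtains the decomposition
\[
\partial_N^n \partial_{x'}^{\alpha'}(u^+ - u^-)(x) = \sum_{j=0}^{m'-1}\frac{(x_N - b/2)^j}{j!}\,\partial_{x'}^{\alpha'} Q_{i,m,n+j}(x') + R^+_\alpha(x) - R^-_\alpha(x),
\]
where the polynomial coefficients collapse to exactly the components of $Q_{i,m,n+j}$ by virtue of the definition of these quantities, and $R^\pm_\alpha$ are Taylor remainders involving $\partial_N^{m-i}\partial_{x'}^{\alpha'} u^\pm$, i.e. components of $\nabla^m u^\pm$ already controlled by the previous step. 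Since $|x_N - b/2| \leq b/2$ and $|\theta^{(s)}(x_N)| \leq c\,b^{-s}$, raising to the $p$-th power, integrating over $\Omega$ (which contributes a factor of $b$ from the support of $\theta^{(s)}$), and applying H\"older's inequality to the remainder integrals yields the weight $b^{(n+j+i-m)p+1}$ in front of $\int |Q_{i,m,n+j}|^p$. After relabeling $n+j \mapsto n$, this produces the full family of terms $b^{(n+i-m)p+1}\int|Q_{i,m,n}|^p$ with $0 \leq n+i \leq m-1$ appearing in the conclusion.
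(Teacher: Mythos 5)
Your proposal matches the paper's proof essentially step for step: the same mollifier ansatz $u^{\pm}$ built from Proposition \ref{proposition mollifier}, the same use of Propositions \ref{proposition derivatives mollifiers} and \ref{proposition structure} (after transferring the admissible horizontal derivatives onto $f_k^{\pm}$ and integrating by parts to keep the kernels mean-zero) to identify the traces and control $\nabla^m u^{\pm}$, and the same treatment of the cutoff cross terms by Taylor expansion in $x_N$ recentered at $b/2$ so that the polynomial coefficients collapse to the quantities $Q_{i,m,n}$ with the weights $b^{(n+i-m)p+1}$. The argument and the resulting bounds agree with the paper's.
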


\begin{proof}

For simplicity we again take $b^{-}=0$ and set $b:=b^{+}$. Without loss of generality, we may assume that $0<a<b$.  Proposition \ref{proposition mollifier} provides us with a function $\varphi\in C_{c}^{m}(\mathbb{R}^{N-1})$ such that $\operatorname*{supp}\varphi\subseteq B^{\prime}(0,ab^{-1})$ and
\begin{equation*}
\int_{\mathbb{R}^{N-1}}\varphi(y^{\prime})\,dy^{\prime}=1 \text{ and } \int_{\mathbb{R}^{N-1}}(y^{\prime})^{\alpha}\varphi(y^{\prime})\,dy^{\prime}=0
\end{equation*}
for all multi-indices $\alpha\in\mathbb{N}_{0}^{N-1}$ with $1\leq|\alpha|\leq m$.  We define $u^- : \Omega \to \mathbb{R}$ via 
\begin{align*}
u^{-}(x):  &  =\sum_{k=0}^{m-1}\frac{x_{N}^{k}}{k!}\int_{\mathbb{R}^{N-1}} f_{k}^{-}(y^{\prime})\varphi_{x_{N}}\left(  x^{\prime}-y^{\prime}\right) \,dy^{\prime}\\
&  =\sum_{k=0}^{m-1}\frac{x_{N}^{k}}{k!}\int_{\mathbb{R}^{N-1}}f_{k}^{-}(x^{\prime} x_{N}z^{\prime})\varphi\left(  z^{\prime}\right) \,dz^{\prime}.
\end{align*}
We divide the remainder of the proof into several steps.

\textbf{Step 1 -- Computing traces:} We claim that $\operatorname*{Tr}(u^{-})=f_{0}^{-}$ on
$\Gamma^{-}$ and that $\operatorname*{Tr}(\partial_{N}^{l}u^{-})=f_{l}^{-}$ on $\Gamma^{-}$ for $1\leq l\leq m-1$. By standard properties of mollifiers we have that $\varphi_{x_{N}}\ast f_{i}^{-}\rightarrow f_{i}^{-}$ in $L_{\operatorname*{loc}}^{p}(\mathbb{R}^{N-1})$ as $x_{N}\rightarrow0^{+}$ for all $i=0,\ldots,m-1$. Hence, $\operatorname*{Tr}(u^{-})=f_{0}^{-}$ on $\Gamma^{-}$. Next we compute $\partial_{N}^{l}u^{-}$ for $1\leq l\leq m$. We
have%
\begin{equation}\label{partial N l}
\begin{split}
\partial_{N}^{l}u^{-}(x)  &  =\sum_{k=0}^{m-1}\frac{1}{k!}\sum_{i=0}^{l} \binom{l}{i}\partial_{N}^{l-i}\left(  x_{N}^{k}\right)  \partial_{N}^{i}\left(  \int_{\mathbb{R}^{N-1}}f_{k}^{-}(x^{\prime}-x_{N}z^{\prime})\varphi\left(  z^{\prime}\right)  \,dz^{\prime}\right)   \\
&  =\sum_{k=0}^{m-1}\sum_{i=\max\{0,l-k\}}^{l}c_{i,k,l}x_{N}^{k-l+i} \partial_{N}^{i}\left(  \int_{\mathbb{R}^{N-1}}f_{k}^{-}(x^{\prime} -x_{N}z^{\prime})\varphi\left(  z^{\prime}\right)  \,dz^{\prime}\right),
\end{split}
\end{equation}
where $c_{i,k,l}:=\frac{1}{k!}\binom{l}{i}\frac{k!}{(k-l+i)!}$. Note that
\begin{equation}\label{c0}
c_{0,l,l}=1. 
\end{equation}
To compute $\partial_{N}^{i}\left(  \int_{\mathbb{R}^{N-1}}f_{k}^{-} (x^{\prime}-x_{N}z^{\prime})\varphi\left(  z^{\prime}\right)  \,dz^{\prime} \right)$ we distinguish two cases.

\emph{Case 1:} If $1\leq i\leq m-1-k$, then we use the fact that $f_{k}^{-} \in W_{\operatorname*{loc}}^{m-1-k,p}(\mathbb{R}^{N-1})$ to see that
\begin{align}\label{partial N i}
\begin{split}
\partial_{N}^{i}  &  \int_{\mathbb{R}^{N-1}}f_{k}^{-}(x^{\prime} - x_{N}z^{\prime})\varphi\left(  z^{\prime}\right)  \,dz^{\prime} 
= \int_{\mathbb{R}^{N-1}}\frac{\partial^{i}}{\partial x_{N}^{i}}\left( f_{k}^{-}(x^{\prime}-x_{N}z^{\prime})\right)  \varphi\left(  z^{\prime }\right)  \,dz^{\prime}    \\
&  =\sum_{|\alpha|=i}c_{\alpha}\int_{\mathbb{R}^{N-1}}\partial^{\alpha} f_{k}(x^{\prime}-x_{N}z^{\prime})(z^{\prime})^{\alpha}\varphi\left( z^{\prime}\right)  \,dz^{\prime}=\sum_{|\alpha|=i}\int_{\mathbb{R}^{N-1} }\partial^{\alpha}f_{k}(y^{\prime})\psi_{x_{N}}^{\alpha}(x^{\prime}-y^{\prime
})\,dy^{\prime}, 
\end{split}
\end{align}
where $\psi^{\alpha}(y^{\prime}):=c_{\alpha}(y^{\prime})^{\alpha} \varphi\left(  y^{\prime}\right)  $ satisfies
\begin{equation}\label{av zero}
\int_{\mathbb{R}^{N-1}}\psi^{\alpha}(y^{\prime})\,dy^{\prime}=0.
\end{equation}
Observe that since $l-k\leq i\leq m-1-k$, this first case only happens if $l\leq m-1$.

\emph{Case 2:} If $i>m-1-k$, then we write
\begin{align*}
\partial_{N}^{i}  &  \int_{\mathbb{R}^{N-1}}f_{k}^{-}(x^{\prime}%
-x_{N}z^{\prime})\varphi\left(  z^{\prime}\right)  \,dz^{\prime}=\partial
_{N}^{i-(m-1-k)}\int_{\mathbb{R}^{N-1}}\frac{\partial^{m-1-k}}{\partial
x_{N}^{m-1-k}}\left(  f_{k}^{-}(x^{\prime}-x_{N}z^{\prime})\right)
\varphi\left(  z^{\prime}\right)  \,dz^{\prime}\\
&  =\partial_{N}^{i-(m-1-k)}\sum_{|\beta|=m-1-k}c_{\beta}\int_{\mathbb{R}%
^{N-1}}\partial^{\beta}f_{k}(x^{\prime}-x_{N}z^{\prime})(z^{\prime})^{\beta
}\varphi\left(  z^{\prime}\right)  \,dz^{\prime}\\
&  =\partial_{N}^{i-(m-1-k)}\sum_{|\beta|=m-1-k}c_{\beta}\int_{\mathbb{R}%
^{N-1}}\partial^{\beta}f_{k}(y^{\prime})\left(  \frac{x^{\prime}-y^{\prime}%
}{x_{N}}\right)  ^{\beta}\varphi_{x_{N}}\left(  x^{\prime}-y^{\prime}\right)
\,dy^{\prime}\\
&  =\sum_{|\beta|=m-1-k}c_{\beta}\int_{\mathbb{R}^{N-1}}\partial^{\beta}%
f_{k}(y^{\prime})\frac{\partial^{i-(m-1-k)}}{\partial x_{N}^{i-(m-1-k)}%
}\left[  \left(  \frac{x^{\prime}-y^{\prime}}{x_{N}}\right)  ^{\beta}%
\varphi_{x_{N}}\left(  x^{\prime}-y^{\prime}\right)  \right]  \,dy^{\prime}\\
&  =\sum_{|\beta|=m-1-k}\frac{1}{x_{N}^{i-(m-1-k)}}\int_{\mathbb{R}^{N-1}%
}\partial^{\beta}f_{k}(y^{\prime})\psi_{x_{N}}^{\beta,i}(x^{\prime}-y^{\prime
})\,dy^{\prime},
\end{align*}
where the last equality follows from Proposition \ref{proposition derivatives mollifiers}, and where
\begin{equation}\label{av zero 1}
\int_{\mathbb{R}^{N-1}}\psi^{\beta,i}(y^{\prime})\,dy^{\prime}=0.
\end{equation}

By combining the two cases we can write
\begin{align*}
\partial_{N}^{l}    u^{-}(x) = &\sum_{k=0}^{m-1}\sum_{i=\max\{0,l-k\}}^{m-1-k}c_{i,k,l}x_{N}^{k-l+i}\sum_{|\alpha|=i}\int_{\mathbb{R}^{N-1}} \partial^{\alpha}f_{k}(y^{\prime})\psi_{x_{N}}^{\alpha}(x^{\prime}-y^{\prime })\,dy^{\prime} \\
&  +x_{N}^{m-l-1}\sum_{k=0}^{m-1}\sum_{|\beta|=m-1-k}\int_{\mathbb{R}^{N-1}} \partial^{\beta}f_{k}(y^{\prime})\psi_{x_{N}}^{\beta,k,l}(x^{\prime} - y^{\prime})\,dy^{\prime},
\end{align*}
where $\psi^{0}:=\varphi$ and
\begin{equation*}
\psi^{\beta,k,l}(y^{\prime}):=\sum_{i=\max\{0,m-k\}}^{l}c_{i,k,l}\psi^{\beta,i}(y^{\prime}).
\end{equation*}
Assume now that $1\leq l\leq m-1$. Then by \eqref{av zero} and \eqref{av zero 1}, $\psi_{x_{N}}^{\alpha}\ast\partial^{\alpha}f_{k}\rightarrow0$ if $\alpha\neq 0$ and $\psi_{x_{N}}^{\beta,k,l}\ast \partial^{\beta}f_{k}\rightarrow0$ in $L_{\operatorname*{loc}}^{p}(\mathbb{R}^{N-1})$ as $x_{N}\rightarrow0^{+}$. On the other hand, if $\alpha=0$, then $x_{N}^{k-l}\rightarrow0$ if $k>l$ as $x_{N}\rightarrow0^{+}$. Hence, the only term in the first term that does not converge to zero is $k=l$ and $i=0$. In view of \eqref{c0} and of the fact that $\psi^{0}:=\varphi$, we have that $\operatorname*{Tr}(\partial_{N}^{l}u^{-})=f_{l}^{-}$ on $\Gamma^{-}$.

\textbf{Step 2 -- Derivative estimates:}  We claim that $u^{-}\in\dot{W}^{m,p}(\Omega)$. We begin by estimating $\partial_{N}^{m}u^{-}$. Since Case 1 in Step 1 does not happen when $l=m$, we can write
\begin{equation*}
\partial_{N}^{m}u^{-}(x)=\sum_{k=0}^{m-1}\sum_{|\beta|=m-1-k} \int_{\mathbb{R}^{N-1}}\partial^{\beta}f_{k}(y^{\prime})\frac{1}{x_{N}^{N}} \psi^{\beta,k,l} \left(  \frac{x^{\prime}-y^{\prime}}{x_{N}}\right) \,dy^{\prime}.
\end{equation*}
Due to \eqref{av zero 1}, we are in a position to apply Proposition \ref{proposition structure} to conclude that $\partial_{N}^{m}u^{-}\in L^{p}(\Omega)$, with
\begin{equation*}
\int_{\Omega}|\partial_{N}^{m}u^{-}(x)|^{p}dx
\leq c\int_{\mathbb{R}^{N-1}}  \int_{B^{\prime}(0,a)}\frac{|\nabla_{\shortparallel}^{m-k-1}f_{k}^{-}(x^{\prime}+h^{\prime})-\nabla_{\shortparallel}^{m-k-1}f_{k}^{-} (x^{\prime})|^{p}}{|h^{\prime}|^{p+N-2}}\,dh^{\prime}dx^{\prime}.
\end{equation*}
On the other hand, if $0\leq l<m$ in Step 1, then for every multi-index $\gamma=(\gamma^{\prime},0)$ with $|\gamma^{\prime}|=m-l$ we have
\begin{align*}
\partial^{\gamma}\partial_{N}^{l}  &  u^{-}(x)=\sum_{k=0}^{m-1} \sum_{i=\max\{0,l-k\}}^{m-1-k}c_{i,k,l}x_{N}^{k-l+i}\sum_{|\alpha|=i} \int_{\mathbb{R}^{N-1}}\partial^{\alpha}f_{k}(y^{\prime})\frac{\partial^{\gamma}}{\partial x^{\gamma}} \left(  \psi_{x_{N}}^{\alpha}(x^{\prime
}-y^{\prime})\right)  \,dy^{\prime}\\
&  +x_{N}^{m-l-1}\sum_{k=0}^{m-1}\sum_{|\beta|=m-1-k}\int_{\mathbb{R}^{N-1}}\partial^{\beta}f_{k}(y^{\prime})\frac{\partial^{\gamma}}{\partial x^{\gamma}} \left(  \psi_{x_{N}}^{\beta,k,l}(x^{\prime}-y^{\prime})\right) \,dy^{\prime}.
\end{align*}
In the first term for every $i<m-k-1$ we write $\gamma=s_{i}+t_{i}$, where $|s_{i}|=m-k-1-i$ and $|t_{i}|=k-l+i+1$, and integrate by parts $m-k-1-i$ times to see that
\begin{align*}
\partial^{\gamma}\partial_{N}^{l}  &  u^{-}(x)=\sum_{k=0}^{m-1} \sum_{i=\max\{0,l-k\}}^{m-1-k}c_{i,k,l}x_{N}^{k-l+i}\sum_{|\alpha|=i} \int_{\mathbb{R}^{N-1}}\partial^{\alpha+s_{i}}f_{k}(y^{\prime}) \frac{\partial^{t_{i}}}{\partial x^{t_{i}}} \left(  \psi_{x_{N}}^{\alpha}(x^{\prime
}-y^{\prime})\right)  \,dy^{\prime}\\
&  +x_{N}^{m-l-1}\sum_{k=0}^{m-1}\sum_{|\beta|=m-1-k}\int_{\mathbb{R}^{N-1} }\partial^{\beta}f_{k}(y^{\prime})\frac{\partial^{\gamma}}{\partial x^{\gamma}}\left(  \psi_{x_{N}}^{\beta,k,l}(x^{\prime}-y^{\prime})\right) \,dy^{\prime}.
\end{align*}
According to Proposition \ref{proposition derivatives mollifiers}, we may write
\begin{align*}
\partial^{\gamma}\partial_{N}^{l}  &  u^{-}(x)=\sum_{k=0}^{m-1} \sum_{i=\max\{0,l-k\}}^{m-1-k}c_{i,k,l}\sum_{|\alpha|=i}\int_{\mathbb{R}^{N-1}} \partial^{\alpha+s_{i}}f_{k}(y^{\prime})\frac{1}{x_{N}^{N}}\psi^{\alpha,t_{i}}\left(  \frac{x^{\prime}-y^{\prime}}{x_{N}}\right)  \,dy^{\prime}\\
&  +\sum_{k=0}^{m-1}\sum_{|\beta|=m-1-k}\int_{\mathbb{R}^{N-1}}\partial^{\alpha} f_{k}(y^{\prime})\frac{1}{x_{N}^{N}}\psi^{\beta,\gamma} \left( \frac{x^{\prime}-y^{\prime}}{x_{N}}\right)  \,dy^{\prime},
\end{align*}
where $\int_{\mathbb{R}^{N-1}}\psi^{\alpha,t_{i}}(y^{\prime})\,dy^{\prime
}=\int_{\mathbb{R}^{N-1}}\psi^{\beta,\gamma}(y^{\prime})\,dy^{\prime}=0$.
Hence, Proposition \ref{proposition structure} tells us that $\partial^{\gamma} \partial_{N}^{l}u^{-}\in L^{p}(\Omega)$ with
\begin{equation*}
\int_{\Omega}|\partial^{\gamma}\partial_{N}^{l}u^{-}(x)|^{p}dx
\leq c\int_{\mathbb{R}^{N-1}}\int_{B^{\prime}(0,a)}\frac{|\nabla_{\shortparallel}^{m-k-1} f_{k}^{-}(x^{\prime}+h^{\prime})-\nabla_{\shortparallel}^{m-k-1} f_{k}^{-}(x^{\prime})|^{p}}{|h^{\prime}|^{p+N-2}}\,dh^{\prime}dx^{\prime}.
\end{equation*}
Thus $u^- \in \dot{W}^{m,p}(\Omega)$, as claimed.

\textbf{Step 3 -- Defining $u^+$ and $u$:} Reasoning as in Steps 1 and 2, we can show that the function $u^+: \Omega \to \mathbb{R}$ defined by
\begin{equation*}
u^{+}(x):=\sum_{k=0}^{m-1}\frac{(b-x_{N})^{k}}{k!}\int_{\mathbb{R}^{N-1}} f_{k}^{+}(y^{\prime})\varphi_{b-x_{N}}\left(  x^{\prime}-y^{\prime}\right) \,dy^{\prime}
\end{equation*}
belongs to $\dot{W}^{m,p}(\Omega)$, obeys the estimate
\begin{equation*}
\int_{\Omega}|\nabla^{m}u^{+}(x)|^{p}dx
\leq c\sum_{k=0}^{m-1}\int_{\mathbb{R}^{N-1}}\int_{B^{\prime}(0,a)} \frac{|\nabla_{\shortparallel
}^{m-k-1}f_{k}^{+}(x^{\prime}+h^{\prime})-\nabla_{\shortparallel}^{m-k-1} f_{k}^{+}(x^{\prime})|^{p}}{|h^{\prime}|^{p+N-2}}\,dh^{\prime}dx^{\prime}, 
\end{equation*}
and satisfies $\operatorname*{Tr}(u^{+})=f_{0}^{+}$ on $\Gamma^{+}$ and $\operatorname*{Tr}(\partial_{N}^{l}u^{+})=f_{l}^{+}$ on $\Gamma^{+}$ for $1\leq l\leq m-1$.

Now let $\theta\in C^{\infty}([0,b])$ be such that $\theta=1$ in a neighborhood of $0$ and $\theta=0$ in a neighborhood of $b$. We then define $u : \Omega \to \mathbb{R}$ via 
\begin{equation*}
u(x):=\theta(x_{N})u^{-}(x)+(1-\theta(x_{N}))u^{+}(x).
\end{equation*}
For every $1\leq l\leq m$ we have that
\begin{align*}
\partial_{N}^{l}u(x)  &  =\theta(x_{N})\partial_{N}^{l}u^{-}(x)+(1-\theta(x_{N})) \partial_{N}^{l}u^{+}(x)\\
&  \quad+\sum_{i=0}^{l}\binom{l}{i}\theta^{(l-i)}(x_{N})(\partial_{N}^{i} u^{-}(x)-\partial_{N}^{i}u^{+}(x)).
\end{align*}
If $0\leq l<m$, then for every multi-index $\gamma=(\gamma^{\prime},0)$ with $|\gamma^{\prime}|=m-l$ we have
\begin{align*}
\partial^{\gamma}\partial_{N}^{l}u(x)  
&  =\theta(x_{N})\partial^{\gamma} \partial_{N}^{l}u^{-}(x) + (1-\theta(x_{N})) \partial^{\gamma} \partial_{N}^{l} u^{+}(x)\\
&  \quad+\sum_{i=0}^{l}\binom{l}{i}\theta^{(l-i)}(x_{N})(\partial^{\gamma}\partial_{N}^{i}u^{-}(x)-\partial^{\gamma}\partial_{N}^{i}u^{+}(x))
\end{align*}
if $l\geq1$, and
\begin{equation*}
\partial^{\gamma}u(x)=\theta(x_{N})\partial^{\gamma}u^{-}(x)+(1-\theta(x_{N})) \partial^{\gamma}u^{+}(x)
\end{equation*}
if $l=0$.  Since $u^{\pm}\in\dot{W}^{m,p}(\Omega)$, to prove that $u \in \dot{W}^{m,p}(\Omega)$ it suffices to estimate the lower order terms $\theta^{(l-i)}\partial^{\gamma}\partial_{N}^{i}(u^{-}-u^{+})$. 

By Taylor's formula and \eqref{formula taylor inverted m=2} applied to $\partial_{N}^{i}u^{-}(x^{\prime},\cdot)$ and $\partial_{N}^{i}u^{+}(x^{\prime},\cdot)$, respectively, we may write
\begin{equation*}
\partial_{N}^{i}u^{-}(x)    =\sum_{k=i}^{l-1}\frac{1}{(k-i)!}f_{k}^{-}(x^{\prime})x_{N}^{k-i}+\frac{1}{(l-i+1)!}\int_{0}^{x_{N}}\partial_{N}^{l}u^{-}(x^{\prime},s)\left(  x_{N}-s\right)  ^{l-i+1}ds 
\end{equation*}
and
\begin{equation*}
\partial_{N}^{i}u^{+}(x)    
=\sum_{k=i}^{l-1}\frac{(-1)^{k-i}}{(k-i)!}f_{k}^{+}(x^{\prime}) (b-x_{N})^{k-i}+\frac{1}{(l-i+1)!}\int_{x_{N}}^{b}\partial_{N}^{l}u^{+}(x^{\prime},s)\left(  x_{N}-s\right)  ^{l-i+1}ds,
\end{equation*}
where we have used the fact that $\operatorname*{Tr}(u^{\pm})=f_{0}^{\pm}$ and $\operatorname*{Tr}(\partial_{N}^{k}u^{\pm})=f_{k}^{\pm}$\ on $\Gamma^{\pm}$. Since $f_{k}^{\pm}\in W_{\operatorname*{loc}}^{m-1-k,p}(\mathbb{R}^{N-1})$ and $\gamma=(\gamma^{\prime},0)$ with $|\gamma^{\prime}|=m-l\leq m-1-k$ for $k\leq l-1$, we can apply $\partial^{\gamma}$ to both sides of the previous two identities to see that
\begin{equation*}
\partial^{\gamma}\partial_{N}^{i}u^{-}(x)    =\sum_{k=i}^{l-1}\frac
{1}{(k-i)!}\partial^{\gamma}f_{k}^{-}(x^{\prime})x_{N}^{k-i}+\frac
{1}{(l-i+1)!}\int_{0}^{x_{N}}\partial^{\gamma}\partial_{N}^{l}u^{-}(x^{\prime
},s)\left(  x_{N}-s\right)  ^{l-i+1}ds 
\end{equation*}
and 
\begin{equation*}
\partial^{\gamma}\partial_{N}^{i}u^{+}(x)    =\sum_{k=i}^{l-1}\frac
{(-1)^{k-i}}{(k-i)!}\partial^{\gamma}f_{k}^{+}(x^{\prime})(b-x_{N}%
)^{k-i}+\frac{1}{(l-i+1)!}\int_{x_{N}}^{b}\partial^{\gamma}\partial_{N}%
^{l}u^{+}(x^{\prime},s)\left(  x_{N}-s\right)  ^{l-i+1}ds.
\end{equation*}
Hence,
\begin{align*}
\partial^{\gamma}\partial_{N}^{i}(u^{+}(x)-u^{-}(x))  
&  =\sum_{k=i}^{l-1}\frac{1}{(k-i)!}[(-1)^{k-i}\partial^{\gamma}f_{k}^{+}(x^{\prime}) (b-x_{N})^{k-i}-\partial^{\gamma}f_{k}^{-}(x^{\prime})x_{N}^{k-i}]\\
&  \quad+\frac{1}{(l-i+1)!}\int_{x_{N}}^{b}\partial^{\gamma}\partial_{N}^{l}u^{+}(x^{\prime},s)\left(  x_{N}-s\right)  ^{l-i+1}ds\\
&  \quad-\frac{1}{(l-i+1)!}\int_{0}^{x_{N}}\partial^{\gamma}\partial_{N}^{l}u^{-}(x^{\prime},s)\left(  x_{N}-s\right)  ^{l-i+1}ds.
\end{align*}
Now define
\begin{equation*}
P_{i,l,\gamma}(x)    :=\sum_{k=i}^{l-1}\frac{1}{(k-i)!}[(-1)^{k-i} \partial^{\gamma}f_{k}^{+}(x^{\prime})(b-x_{N})^{k-i}-\partial^{\gamma} f_{k}^{-}(x^{\prime})x_{N}^{k-i}] 
\end{equation*}
and 
\begin{equation*}
Q_{j,l,\gamma}(x^{\prime})    :=\sum_{k=0}^{l-1-j}\frac{(-1)^{k}}{k!} (\partial^{\gamma}f_{k+j}^{\pm}(x^{\prime})+(-1)^{k+1}\partial^{\gamma} f_{k+j}^{\pm}(x^{\prime}))\left(  \frac{b}{2}\right)^{k} 
\end{equation*}
and note that \eqref{compatibility conditions} guarantees that $Q_{j,l,\gamma}\in L^{p}(\mathbb{R}^{N-1})$. Using the facts that for $j=1,\ldots,l-1$, with $j\leq k-i$,
\begin{equation*}
\partial_{N}^{j}((b-x_{N})^{k-i})=(-1)^{j}\frac{(k-i)!}{(k-i-j)!} (b-x_{N})^{k-i-j},\quad\partial_{N}^{j}(x_{N}{}^{k-i})=\frac{(k-i)!}{(k-i-j)!}x_{N}^{k-i-j},
\end{equation*}
we have that
\begin{align*}
\partial_{N}^{j}P_{i,l,\gamma}\left(  x^{\prime},\frac{b}{2}\right)   
&=\sum_{k=j+i}^{l-1}\frac{1}{(k-i-j)!}[(-1)^{k-i-j}\partial^{\gamma}f_{k}^{+}(x^{\prime})-\partial^{\gamma}f_{k}^{-}(x^{\prime})]\left(  \frac{b}{2}\right)^{k-i-j}\\
&  =\sum_{n=0}^{l-1-j-i}\frac{1}{n!}[(-1)^{n}\partial^{\gamma}f_{n+j+i}^{+}(x^{\prime})-\partial^{\gamma}f_{n+j+i}^{-}(x^{\prime})] \left(  \frac{b}{2}\right)^{n}=Q_{j+i,l,\gamma}(x^{\prime}).
\end{align*}
Also $P_{i,l,\gamma}\left(  x^{\prime},\frac{b}{2}\right)  =Q_{i,l,\gamma}(x^{\prime})$. Hence, if we fix $x^{\prime}\in\mathbb{R}^{N-1}$ and apply Taylor's formula centered at $\frac{b}{2}$ to the function $P_{i,l,\gamma}(x^{\prime},\cdot)$, we get
\begin{equation*}
P_{i,l,\gamma}(x)=\sum_{j=0}^{k-1}\partial_{N}^{j}P_{i,l,\gamma} \left(x^{\prime},\frac{b}{2}\right)  \left(  x_{N}-\frac{b}{2}\right)^{j}
=\sum_{j=0}^{k-1}Q_{j+i,l,\gamma}(x^{\prime})\left(  x_{N}-\frac{b}{2}\right)^{j}.
\end{equation*}
In turn,  H\"{o}lder's inequality and the fact that $|\theta^{(l-i)}(x_{N})|\leq cb^{-(l-i)}$ imply that
\begin{align*}
|\theta^{(l-1)}(x_{N})  &  \partial^{\gamma}\partial_{N}^{i}(u^{+}(x)-u^{-}(x))|^{p}\leq c\sum_{j=0}^{k-1}b^{(j-l+i)p}|Q_{j+i,l,\gamma}(x^{\prime})|^{p} \\
&  \quad+cb^{-1}\int_{x_{N}}^{b}|\partial^{\gamma}\partial_{N}^{l} u^{+}(x^{\prime},s)|^{p}ds+cb^{-1}\int_{0}^{x_{N}}|\partial^{\gamma} \partial_{N}^{l}u^{-}(x^{\prime},s)|^{p}ds.
\end{align*}
Integrating over $\Omega$ and using Tonelli's theorem then shows that
\begin{align*}
\int_{\Omega}|\theta^{(l-1)}(x_{N})\partial^{\gamma}\partial_{N}^{i} (u^{+}(x)-u^{-}(x))|^{p}dx  &  \leq c\sum_{j=0}^{k-1}b^{(j-l+i)p+1} \int_{\mathbb{R}^{N-1}}|Q_{j+i,l,\gamma}(x^{\prime})|^{p}dx^{\prime}\\
&  \quad+c\int_{\Omega}|\nabla^{m}u^{+}(x)|^{p}dx+c\int_{\Omega}|\nabla^{m}u^{-}(x)|^{p}dx.
\end{align*}
This completes the proof.

\end{proof}

\section{Traces in general strip-like domains}\label{section traces strip-like}

In this section we consider the case in which $\Omega$ is as in
\eqref{omega two graphs}. We define
\begin{equation*}
\Gamma^{\pm}:=\left\{  x\in\mathbb{R}^{N}\,|\,x_{N}=\eta^{\pm}(x^{\prime })\right\}  .
\end{equation*}

\subsection{The case $m=1$, $p>1$}

We begin with the case $m=1$ and $p>1$.  First we prove the trace estimate, the analog of Theorem \ref{theorem trace strip}.

\begin{theorem}\label{theorem trace m=1}
Let $\Omega$ be as in \eqref{omega two graphs}, where $\eta^{\pm}:\mathbb{R}^{N-1}\rightarrow\mathbb{R}$ are Lipschitz continuous functions such that $\eta^{-}<\eta^{+}$.  Set $L:=|\eta^{-}|_{0,1} + |\eta^{+}|_{0,1}$, and let $1<p<\infty$.  There exists a unique linear operator
\begin{equation*}
\operatorname*{Tr}:\dot{W}^{1,p}(\Omega)\rightarrow L_{\operatorname*{loc}}^{p}(\partial\Omega)
\end{equation*}
such that the following hold.
\begin{enumerate}
 \item $\operatorname*{Tr}(u)=u$ on $\partial\Omega$ for all $u\in\dot {W}^{1,p}(\Omega)\cap C^0(\overline{\Omega})$
 
 \item There exists a constant
$c=c(N,p)>0$ such that%
\begin{gather}
\int_{\mathbb{R}^{N-1}}\frac{|\operatorname*{Tr}(u)(x^{\prime},\eta^{+}(x^{\prime}))-\operatorname*{Tr}(u)(x^{\prime},\eta^{-}(x^{\prime}))|^{p} }{(\eta^{+}(x^{\prime})-\eta^{-}(x^{\prime}))^{p-1}}\,dx^{\prime}\leq
c\int_{\Omega}|\partial_{N}u(x)|^{p}dx,\label{trace 1}\\
\int_{\mathbb{R}^{N-1}}\int_{B^{\prime}(0,(\eta^{+}(x^{\prime})-\eta^{-}(x^{\prime}))/(2L))} \frac{|\operatorname*{Tr}(u)(x^{\prime}+h^{\prime}, \eta^{\pm}(x^{\prime}+h)) - \operatorname*{Tr}(u)(x^{\prime},\eta^{\pm}(x^{\prime}))|^{p}}{|h^{\prime}|^{p+N-2}} \, dh^{\prime}dx^{\prime}\nonumber\\
\leq(1+L)^{p}c\int_{\Omega}|\nabla u(x)|^{p}dx \label{trace2}%
\end{gather}
for all $u\in\dot{W}^{1,p}(\Omega)$.
 
 \item The integration by parts formula
\begin{equation*}
\int_{\Omega}u\partial_{i}\psi\,dx=-\int_{\Omega}\psi\partial_{i}%
u\,dx+\int_{\partial\Omega}\psi\operatorname*{Tr}(u)\nu_{i}\,d\mathcal{H}%
^{N-1}%
\end{equation*}
holds for all $u\in\dot{W}^{1,p}(\Omega)$, all $\psi\in C_{c}^{1}(\mathbb{R}^{N})$, and all $i=1,\ldots,N$.
\end{enumerate}
\end{theorem}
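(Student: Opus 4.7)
The strategy is to mirror the proof of Theorem \ref{theorem trace strip}, with the complication that the boundary heights $\eta^\pm$ now vary with $x'$. The trace operator $\operatorname*{Tr}$ can be constructed locally: any $u \in \dot{W}^{1,p}(\Omega)$ belongs to $W^{1,p}(U)$ for every bounded open Lipschitz set $U$ whose closure meets $\partial\Omega$ in a compact set, so the classical Gagliardo trace on $U$ yields a local trace, and these patch into a global $\operatorname*{Tr}:\dot{W}^{1,p}(\Omega) \to L^p_{\operatorname{loc}}(\partial\Omega)$. Items (1) and (3) then follow immediately, and uniqueness follows from (1) via a density argument (truncation plus mollification on compact subsets of $\Omega$). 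Estimate \eqref{trace 1} is a direct consequence of Theorem \ref{theorem AC}: the fundamental theorem of calculus along the vertical segment gives
\begin{equation*}
u(x',\eta^+(x')) - u(x',\eta^-(x')) = \int_{\eta^-(x')}^{\eta^+(x')} \partial_N u(x',t)\,dt
\end{equation*}
for $\mathcal{L}^{N-1}$-a.e. $x'$, H\"older's inequality produces the factor $(\eta^+(x') - \eta^-(x'))^{p-1}$, and Tonelli's theorem then yields \eqref{trace 1}.

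The heart of the theorem is \eqref{trace2}; I would treat only $\Gamma^-$, as $\Gamma^+$ is handled by the same argument. Writing $d(x') := \eta^+(x') - \eta^-(x')$, for $|h'| < d(x')/(2L)$ the Lipschitz condition gives $|\eta^\pm(x'+sh') - \eta^\pm(x')| \le L|h'|$ for every $s \in [0,1]$, so the band of ``safe'' heights $J(x',h') := [\eta^-(x') + L|h'|,\, \eta^+(x') - L|h'|]$ is nonempty and every point $(x'+sh', t)$ with $s \in [0,1]$ and $t \in J(x',h')$ lies inside $\Omega$. For any such $t$, Theorem \ref{theorem AC} combined with a standard density argument would give
\begin{align*}
u(x'+h', \eta^-(x'+h')) - u(x', \eta^-(x'))
&= -\int_{\eta^-(x'+h')}^{t} \partial_N u(x'+h', \tau)\,d\tau \\
&\quad + \int_0^1 \nabla_{\shortparallel} u(x'+sh', t) \cdot h'\,ds \\
&\quad + \int_{\eta^-(x')}^{t} \partial_N u(x', \tau)\,d\tau.
\end{align*}
The plan is then to average this identity in $t$ over the subinterval $[\eta^-(x')+L|h'|,\, \eta^-(x')+2L|h'|]$ (of length proportional to $L|h'|$), take the $L^p$-norm in $x'$ via Minkowski's integral inequality together with the changes of variable $y' = x' + sh'$ and $y' = x' + h'$, divide by $|h'|^{p+N-2}$, integrate over $h' \in B'(0, d(x')/(2L))$ in spherical coordinates, and finally invoke Hardy's inequality exactly as in the proof of Theorem \ref{theorem trace strip} to arrive at the bound $(1+L)^p c \int_\Omega |\nabla u|^p dx$.

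The main technical obstacle is the joint $x'$-dependence of the screening radius $d(x')/(2L)$ and of the integration bounds along the Lipschitz graphs $\{(x',\eta^-(x') + \sigma) : x' \in \mathbb{R}^{N-1}\}$: Tonelli's theorem must be applied carefully when swapping the $x'$ and $h'$ integrations (and later when exchanging $x'$ with $\tau$), and the tilted-slice integrals must be reassembled into $\int_\Omega |\nabla u|^p dx$ via the identity
\begin{equation*}
\int_\Omega |\nabla u|^p\,dx = \int_{\mathbb{R}^{N-1}} \int_0^{d(x')} |\nabla u(x', \eta^-(x') + \sigma)|^p \,d\sigma\,dx',
\end{equation*}
which is the change of variable $t = \eta^-(x') + \sigma$ with trivial Jacobian in $x'$. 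The Lipschitz factor $(1+L)^p$ accumulates from the horizontal term $\int_0^1 \nabla_{\shortparallel} u(x'+sh', t) \cdot h'\,ds$ combined with Hardy's inequality applied to a vertical integration range of length proportional to $L|h'|$ rather than $|h'|$; keeping track of these constants through the tilted slicing is where the bookkeeping becomes delicate.
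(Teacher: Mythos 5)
Your route to \eqref{trace2} is genuinely different from the paper's: you keep the curved lower graph and connect $(x'+h',\eta^-(x'+h'))$ to $(x',\eta^-(x'))$ by a tilted up--across--down path through a ``safe band,'' whereas the paper first flattens the lower boundary with the bi-Lipschitz map $\Phi(x)=(x',x_N-\eta^-(x'))$ and then runs the strip argument of Theorem \ref{theorem trace strip} on $U=\{0<y_N<\eta^+(y')-\eta^-(y')\}$, the factor $(1+L)^p$ entering through the chain rule for $\nabla(u\circ\Phi^{-1})$. Your path identity and the inclusion $(x'+sh',t)\in\Omega$ for $t\in J(x',h')=[\eta^-(x')+L|h'|,\eta^+(x')-L|h'|]$ are correct, and your treatment of \eqref{trace 1} is fine. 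The gap is in the averaging step: the interval $[\eta^-(x')+L|h'|,\,\eta^-(x')+2L|h'|]$ sits inside $J(x',h')$ only when $\eta^-(x')+2L|h'|\le\eta^+(x')-L|h'|$, i.e.\ $|h'|\le d(x')/(3L)$ with $d:=\eta^+-\eta^-$. For $d(x')/(3L)<|h'|<d(x')/(2L)$ your $t$-values (and the vertical segment up to $(x'+h',t)$) can exit $\Omega$ through the upper graph, so the identity you average is not available there.

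This is not a cosmetic constant: the guaranteed safe band has length $d(x')-2L|h'|$, which degenerates as $|h'|\to d(x')/(2L)$ while $|h'|$ stays bounded below, so you cannot average over an interval of length comparable to $|h'|$ near the outer edge of $B'(0,d(x')/(2L))$; shrinking the averaging interval to fit destroys the $|h'|^{-1}$ normalization needed before Hardy's inequality. As written, your method proves \eqref{trace2} only with screening radius $d(x')/(3L)$, which is strictly weaker than the statement, and a doubling argument to recover the larger radius is not immediate here because the screening function $d(x')/(2L)$ varies with $x'$ (Theorem \ref{theorem screened equivalence} only covers constant screening on all of $\mathbb{R}^N$). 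The paper's flattening avoids the issue entirely: after composing with $\Phi$ the lower boundary is $\{y_N=0\}$, so every height $y_N\in(0,|h'|)$ used in the averaging is automatically inside $U$ once $|h'|<\eta(y')/(2L)$, and only the \emph{upper} graph constrains the horizontal segment. If you want to keep your direct approach, either accept the radius $d/(3L)$ or flatten one graph before slicing.
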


\begin{proof}
 
We will only prove the second item.  The proof of the first and third follow as in the proof of Theorem \ref{theorem trace strip}. We divide the proof into steps.

\textbf{Step 1 -- Special case of \eqref{trace 1}}:   Suppose for now that $\eta^{-}=0$ and $\eta:=\eta^{+}$ and consider $u\in\dot{W}^{1,p}(\Omega)$.  By Theorem \ref{theorem AC} and the fundamental theorem of calculus, for $\mathcal{L}^{N-1}$ a.e. $x^{\prime}\in \mathbb{R}^{N-1}$  we have that
\begin{equation*}
u(x^{\prime},\eta(x^{\prime})) - u(x^{\prime},0) = \int_{0}^{\eta(x^{\prime})} \partial_{N} u(x^{\prime},x_{N}) \,dx_{N}.
\end{equation*}
It then follows from H\"{o}lder's inequality that
\begin{equation*}
|u(x^{\prime},\eta(x^{\prime})) - u(x^{\prime},0)|^{p}\leq(\eta(x^{\prime}))^{p-1} \int_{0}^{\eta(x^{\prime})} |\partial_{N} u(x^{\prime},x_{N})|^{p}dx_{N}.
\end{equation*}
By integrating in $x^{\prime}$ over $\mathbb{R}^{N-1}$ and using Tonelli's theorem, we then see that
\begin{equation*}
\int_{\mathbb{R}^{N-1}}\frac{|u(x^{\prime},\eta(x^{\prime}))-u(x^{\prime},0)|^{p}}{(\eta(x^{\prime}))^{p-1}} \,dx^{\prime} \leq \int_{U_{R}}|\partial_{N} u(x)|^{p}dx.
\end{equation*}
Hence, 
\begin{equation*}
\int_{\mathbb{R}^{N-1}}\frac{|\operatorname*{Tr}(u)(x^{\prime},\eta(x^{\prime
}))-\operatorname*{Tr}(u)(x^{\prime},0)|^{p}}{(\eta(x^{\prime}))^{p-1}%
}\,dx^{\prime}\leq\int_{\Omega}|\partial_{N}u(x)|^{p}dx,
\end{equation*}
which proves \eqref{trace 1} in the special case in which $\eta^-=0$. 

\textbf{Step 2 -- Special case of \eqref{trace2} }:  Again suppose $\eta^-=0$ and $\eta = \eta^+$. Let $u\in\dot{W}^{1,p}(\Omega)$.  We may suppose, without loss of generality, that $L := |\eta|_{0,1} \ge 1$.   Let $R>0$ and $M_{R}\geq 2R+\max_{\overline{B^{\prime}(0,R)}}\eta$.  

For $x^{\prime}\in B^{\prime}(0,R)$ we make two observations, both of which use the fact that $L \ge 1$.  First, $B^{\prime}(x^{\prime},\eta(x^{\prime})/2L) \subset B^\prime(0, M_{R}/2)$.  Second, if $h^{\prime}\in B^{\prime}(0,\eta(x^{\prime})/2L)$ and $0<x_{N}<\eta(x^{\prime})/(2L)$, then by the Lipschitz continuity of $\eta$,
\begin{equation*}
\eta(x^{\prime}+h^{\prime})\geq\eta(x^{\prime})-L|h^{\prime}|>\tfrac{1}{2} \eta(x^{\prime})>x_{N}.
\end{equation*}
With these observations in hand, we may use Theorem \ref{theorem AC} and the fundamental theorem of calculus for $\mathcal{L}^{N-1}$ a.e. $x^\prime \in B^\prime(0,R)$, $h^\prime \in B^\prime(0,\eta(x^{\prime})/2L)$ and $0<x_{N}< |h^\prime| < \eta(x^{\prime})/(2L)$  to see that
\begin{align*}
u(x^{\prime}+h^{\prime},0)-u(x^{\prime},0)  
& = u(x^{\prime}+h^{\prime},x_{N})-u(x^{\prime},x_{N})
-\int_{0}^{x_{N}}\partial_{N}u(x^{\prime} +h^{\prime},s)\,ds
 -\int_{0}^{x_{N}}\partial_{N}u(x^{\prime},s)\,ds \\
 &=\int_{0}^{1} \nabla_{\shortparallel}u(x^{\prime}+sh^{\prime},x_{N})\cdot h^{\prime}ds 
-\int_{0}^{x_{N}}\partial_{N}u(x^{\prime}+h^{\prime},s)\,ds
-\int_{0}^{x_{N}}\partial_{N}u(x^{\prime},s)\,ds.
\end{align*}
In turn, we may estimate
\begin{equation*}
|u(x^{\prime}+h^{\prime},0)-u(x^{\prime},0)|    \leq|h^{\prime}|\int_{0}^{1}|\nabla u(x^{\prime}+sh^{\prime},x_{N})|ds
 +\int_{0}^{|h^{\prime}|}|\nabla u(x^{\prime}+h^{\prime},s)|\,ds
 +\int_{0}^{|h^{\prime}|}|\nabla u(x^{\prime},s)|\,ds.
\end{equation*}
By averaging both sides in $x_{N}$ over the interval $(0,|h^{\prime}|)$ and raising to the power $p$ we then find that
\begin{align*}
|u(x^{\prime}+h^{\prime},0)-u(x^{\prime},0)|^{p}  
&  \leq 3^{p-1}\left(\int_{0}^{|h^{\prime}|}\int_{0}^{1}|\nabla u(x^{\prime}+sh^{\prime},x_{N}) |\,dsdx_{N}\right)^{p}\\
&  +3^{p-1}\left(  \int_{0}^{|h^{\prime}|}|\nabla u(x^{\prime}+h^{\prime},s)|\,ds\right)^{p}
  +3^{p-1}\left(  \int_{0}^{|h^{\prime}|}|\nabla u(x^{\prime},s)|\,ds\right)^{p}.
\end{align*}
We now divide both sides by $|h^{\prime}|^{p+N-2}$, integrate in $h^{\prime}$
over $B^{\prime}(0,\eta(x^{\prime})/2L)$ and in $x^{\prime}$ over $B^{\prime
}(0,R)$, and use Tonelli's theorem to arrive at the estimate
\begin{align*}
&  \int_{B^{\prime}(0,R)}\int_{B^{\prime}(0,\eta(x^{\prime})/2L)} 
\frac{|u(x^{\prime}+h^{\prime},0)-u(x^{\prime},0)|^{p}}{|h^{\prime}|^{p+N-2}} dh^{\prime}dx^{\prime}\\
&  \leq3^{p-1}\int_{B^{\prime}(0,R)}\int_{B^{\prime}(0,\eta(x^{\prime})/2L)} \frac{1}{|h^{\prime}|^{p+N-2}}\left(  \int_{0}^{1}\int_{0}^{|h^{\prime}|}| \nabla u(x^{\prime}+sh^{\prime},x_{N})|\,dx_{N}ds\right)  ^{p}dh^{\prime} dx^{\prime}\\
&  \quad+3^{p-1}\int_{B^{\prime}(0,R)}\int_{B^{\prime}(0,\eta(x^{\prime})/2L)}\frac{1}{|h^{\prime}|^{p+N-2}}\left(  \int_{0}^{|h^{\prime}|}|\nabla u(x^{\prime}+h^{\prime},s)|\,ds\right)  ^{p}dh^{\prime}dx^{\prime} \\
&  \quad+3^{p-1}\int_{B^{\prime}(0,R)}\int_{B^{\prime}(0,\eta(x^{\prime})/2L)} \frac{1}{|h^{\prime}|^{p+N-2}}\left(  \int_{0}^{|h^{\prime}|}|\nabla u(x^{\prime},s)|\,ds\right)^{p} dh^{\prime}dx^{\prime}\\
&  =:3^{p-1}I+3^{p-1}II+3^{p-1}III.
\end{align*}
By H\"{o}lder's inequality,
\begin{equation*}
\int_{0}^{1}\int_{0}^{|h^{\prime}|}|\nabla u(x^{\prime}+sh^{\prime},x_{N})|\,dx_{N}ds
\leq\left(  \int_{0}^{1}\left(  \int_{0}^{|h^{\prime}|} |\nabla u(x^{\prime}+sh^{\prime},x_{N})|\,dx_{N}\right)  ^{p}ds\right)^{1/p}.
\end{equation*}
In turn, by Tonelli's theorem and using spherical coordinates $h^{\prime} = r\sigma^{\prime}$, where $r>0$ and $\sigma^{\prime}\in \mathbb{S}^{N-2}$, and the change of variables $y^{\prime}=x^{\prime}+sr\sigma^{\prime}$, we may estimate the first term by
\begin{align*}
I  &  \leq\int_{0}^{1}\int_{\mathbb{S}^{N-2}}\int_{B^{\prime}(0,R)} \int_{0}^{\eta(x^{\prime})/2L}\frac{1}{r^{p}}\left(  \int_{0}^{r}|\nabla u(x^{\prime} + sr\sigma^{\prime},x_{N})|\,dx_{N}\right)^{p}drdx^{\prime}d\mathcal{H}^{N-2}(\sigma^{\prime})ds\\
&  =\int_{0}^{1}\int_{\mathbb{S}^{N-2}}\int_{0}^{\infty}\int_{B^{\prime}(0,R)} \chi_{(0,\eta(x^{\prime})/2L)}(r)\frac{1}{r^{p}}\left(  \int_{0}^{r}|\nabla u(x^{\prime}+sr\sigma^{\prime},x_{N})|\,dx_{N}\right)^{p} dx^{\prime
}drd\mathcal{H}^{N-2}(\sigma^{\prime})ds\\
&  \leq \beta_{N-1}\int_{0}^{\infty}\int_{B^{\prime}(0,M_{R})}\chi _{(0,\eta(y^{\prime}))}(r) \frac{1}{r^{p}}\left(  \int_{0}^{r}|\nabla u(y^{\prime},x_{N})|\,dx_{N}\right)^{p} dy^{\prime}dr\\
&  =\beta_{N-1}\int_{B^{\prime}(0,M_{R})}\int_{0}^{\eta(y^{\prime})}\frac{1}{r^{p}} \left(  \int_{0}^{r}|\nabla u(y^{\prime},x_{N})|\,dx_{N}\right)^{p}drdy^{\prime},
\end{align*}
where here we used the facts that $\eta(x^{\prime})/2L<\eta(x^{\prime}+sh^{\prime})$ for $x^{\prime}\in B^{\prime}(0,R)$ and $|h^{\prime}|\leq\eta(x^{\prime})/2L$ and that $|x^{\prime}+sr\sigma^{\prime}|<R+r\leq R+\eta(x^{\prime})/2L\leq M_{R}$.  Applying Hardy's inequality (see, for instance, Theorem C.41 in \cite{leoni2017book}) to the right-hand side and using Tonelli's theorem yields the bound
\begin{equation*}
I\leq\frac{\beta_{N-1}p^{p}}{(p-1)^{p}}\int_{B^{\prime}(0,M_{R})} \int_{0}^{\eta(y^{\prime})}|\nabla u(y^{\prime},x_{N})|^{p}dx_{N}dy^{\prime} 
\le \frac{\beta_{N-1}p^{p}}{(p-1)^{p}}\int_{\Omega }|\nabla u(y)|^{p}dy.
\end{equation*}
Similarly,
\begin{align*}
II  &  =\int_{\mathbb{S}^{N-2}}\int_{B^{\prime}(0,R)}\int_{0}^{\eta(x^{\prime})/2L} \frac{1}{r^{p}}\left(  \int_{0}^{r}|\nabla u(x^{\prime}+r\sigma^{\prime},s)|\,ds\right)^{p} dr dx^{\prime}d\mathcal{H}^{N-2}(\sigma^{\prime})\\
&  =\int_{\mathbb{S}^{N-2}}\int_{0}^{\infty}\int_{B^{\prime}(0,R)}\chi_{(0,\eta (x^{\prime})/2L)}(r)\frac{1}{r^{p}}\left(  \int_{0}^{r}|\nabla u(x^{\prime}+r\sigma^{\prime},s)|\,ds\right)^{p} dx^{\prime}drd\mathcal{H}^{N-2}(\sigma^{\prime})\\
&  \leq \beta_{N-1}\int_{0}^{\infty}\int_{B^{\prime}(0,M_{R})}\chi_{(0,\eta(y^{\prime}))}(r)\frac{1}{r^{p}}\left(  \int_{0}^{r}|\nabla u(y^{\prime},s)|\,ds\right)^{p} dx^{\prime} dr\\
&  =\beta_{N-1}\int_{B^{\prime}(0,M_{R})}\int_{0}^{\eta(y^{\prime})} \frac{1}{r^{p}}\left(  \int_{0}^{r}|\nabla u(y^{\prime},s)|\,ds\right)^{p}drdx^{\prime}.
\end{align*}
Again by Hardy's inequality and Tonelli's theorem, we get
\begin{equation*}
II\leq\frac{\beta_{N-1}p^{p}}{(p-1)^{p}}\int_{\Omega}|\nabla u(y)|^{p}dy.
\end{equation*}
The term $III$ is even simpler and can be bounded from above by the same expression. Hence, we arrive at the bound
\begin{equation*}
\int_{B^{\prime}(0,R)}\int_{B^{\prime}(0,\eta(x^{\prime})/2L)}\frac
{|u(x^{\prime}+h^{\prime},0)-u(x^{\prime},0)|^{p}}{|h^{\prime}|^{p+N-2}%
}dh^{\prime}dx^{\prime}\leq3^{p}\frac{\beta_{N-1}p^{p}}{(p-1)^{p}}%
\int_{\Omega}|\nabla u(y)|^{p}dy.
\end{equation*}
Letting $R\rightarrow\infty$ and using the Lebesgue monotone convergence theorem gives \eqref{trace2} in this special case.

\textbf{Step 3 -- General case: }Assume now that $\Omega$ is as in \eqref{omega two graphs} and let
\begin{equation*}
U =\left\{  y\in\mathbb{R}^{N}\,|\,0<y_{N}<\eta^{+}(y^{\prime})-\eta
^{-}(y^{\prime})\right\} . 
\end{equation*}
Consider the Lipschitz transformation $\Phi:\Omega\rightarrow U$ given by
\begin{equation*}
\Phi(x)  =(x^{\prime},x_{N}-\eta^{-}(x^{\prime})).
\end{equation*}
The function $\Phi$ is invertible with Lipschitz inverse $\Phi^{-1} : U \to \Omega$ given by
\begin{equation*}
\Phi^{-1}(y)=(y^{\prime},y_{N}+\eta^{-}(y^{\prime})).
\end{equation*}
Moreover, $\det J_{\Phi^{-1}}(y)=1$ at each point $y \in U$ where $\Phi^{-1}$ is differentiable.  Given $u\in\dot{W}^{1,p}(\Omega)$, we define $w : U \to \mathbb{R}$ via 
\begin{equation*}
w(y) = u(\Phi^{-1}(y))=u(y^{\prime},y_{N}+\eta^{-}(y^{\prime})).
\end{equation*}
Then  we compute
\begin{align*}
\frac{\partial w}{\partial y_{i}}(y)  &  =\frac{\partial u}{\partial x_{i}}(y^{\prime},y_{N}+\eta^{-}(y^{\prime}))+\frac{\partial u}{\partial x_{N}}(y^{\prime},y_{N} + \eta^{-}(y^{\prime}))\frac{\partial\eta^{-}}{\partial y_{i}}(y^{\prime}) \text{ for }  i=1,\ldots,N-1 \text{ and }\\
\frac{\partial w}{\partial y_{N}}(y)  &  =\frac{\partial u}{\partial x_{N}}(y^{\prime},y_{N} + \eta^{-}(y^{\prime})),
\end{align*}
which shows that $w\in\dot{W}^{1,p}(U)$. Hence, by Step 1,
\begin{align*}
\int_{\mathbb{R}^{N-1}}  &  \frac{|\operatorname*{Tr}(u)(x^{\prime},\eta
^{+}(x^{\prime}))-\operatorname*{Tr}(u)(x^{\prime},\eta^{-}(x^{\prime}))|^{p}}{(\eta^{+}(x^{\prime})-\eta^{-}(x^{\prime}))^{p-1}}\,dx^{\prime}\\
&  =\int_{\mathbb{R}^{N-1}}\frac{|\operatorname*{Tr}(w)(x^{\prime},\eta^{+}(x^{\prime})-\eta^{-}(x^{\prime}))-\operatorname*{Tr}(w)(x^{\prime},0)|^{p}}{(\eta^{+}(x^{\prime})-\eta^{-}(x^{\prime}))^{p-1}}\,dx^{\prime}\\
&  \leq\int_{U}|\partial_{N}w(y)|^{p}dy=\int_{U}|\partial_{N}u(\Phi^{-1}(y))|^{p}dy
= \int_{\Omega}|\partial_{N}u(x)|^{p}dx,
\end{align*}
where we used the fact that $\det J_{\Phi^{-1}}(y)=1$.  This proves \eqref{trace 1} in the general case.

Similarly, Step 2 shows that
\begin{align*}
\int_{\mathbb{R}^{N-1}}  &  \int_{B^{\prime}(0,(\eta^{+}(x^{\prime})-\eta
^{-}(x^{\prime}))/(2L))}\frac{|\operatorname*{Tr}(u)(x^{\prime}+h^{\prime
},\eta^{-}(x^{\prime}+h))-\operatorname*{Tr}(u)(x^{\prime},\eta^{-}(x^{\prime
}))|^{p}}{|h^{\prime}|^{p+N-2}}\,dh^{\prime}dx^{\prime}\\
&  =\int_{\mathbb{R}^{N-1}}\int_{B^{\prime}(0,(\eta^{+}(x^{\prime})-\eta
^{-}(x^{\prime}))/(2L))}\frac{|\operatorname*{Tr}(w)(x^{\prime}+h^{\prime
},0)-\operatorname*{Tr}(w)(x^{\prime},0)|^{p}}{|h^{\prime}|^{p+N-2}%
}\,dh^{\prime}dx^{\prime}\\
&  \leq3^{p}\frac{\beta_{N-1}p^{p}}{(p-1)^{p}}\int_{U}|\nabla w(y)|^{p}%
dy\leq(1+L)^{p}(2N)^{1/2}\frac{\beta_{N-1}(3p)^{p}}{(p-1)^{p}}\int_{\Omega
}|\nabla u(x)|^{p}dx.
\end{align*}
This proves \eqref{trace2} in the general case for the lower trace.  The analogous estimate for $\operatorname*{Tr}(u)(\cdot,\eta^{+}(\cdot))$ can be obtained by flattening the top and arguing similarly.  We omit the details for the sake of brevity.
\end{proof}

\begin{remark}
In Step 1 we could have written instead
\begin{align*}
v(x^{\prime}+h^{\prime},0)-v(x^{\prime},0)  &  =v(x^{\prime}+h^{\prime},0)-v(x^{\prime},|h^{\prime}|)+v(x^{\prime},|h^{\prime}|)-v(x^{\prime},0)\\
&  =\int_{0}^{1}\nabla v((x^{\prime},|h^{\prime}|)+s(h^{\prime},-|h^{\prime}|)) \cdot(h^{\prime},-|h^{\prime}|)\,ds
+\int_{0}^{|h^{\prime}|}\partial_{N}v(x^{\prime}+h^{\prime},s)\,ds.
\end{align*}
This would have lead to the better estimate
\begin{equation*}
\int_{\mathbb{R}^{N-1}}\int_{B^{\prime}(0,\eta(x^{\prime})/L)} \frac{|\operatorname*{Tr}(u)(x^{\prime}+h^{\prime},0)-\operatorname*{Tr}(u)(x^{\prime},0)|^{p}}{|h^{\prime}|^{p+N-2}} dh^{\prime} dx^{\prime}
\leq c(N,p)\int_{\Omega}|\nabla u(x)|^{p}dx.
\end{equation*}
However, the proof in Step 1 is simpler to extend to higher order Sobolev spaces.
\end{remark}

Next we prove the corresponding lifting result, which is the analog of Theorem {\ref{theorem lifting strip}.

\begin{theorem}\label{theorem lifting m=1}
Let $\Omega$ be as in \eqref{omega two graphs}, where $\eta^{\pm}:\mathbb{R}^{N-1}\rightarrow\mathbb{R}$ are Lipschitz continuous functions, with $\eta^{-}<\eta^{+}$ and $L:=|\eta^{-}|_{0,1} +|\eta^{+}|_{0,1}$. Let  $0<a<1$ and $1<p<\infty$.  Suppose that $f^{\pm} \in L_{\operatorname*{loc}}^{p}(\mathbb{R}^{N-1})$ satisfy
\begin{equation}\label{difference f plus of minus two graphs}
\int_{\mathbb{R}^{N-1}}\frac{|f^{+}(x^{\prime})-f^{-}(x^{\prime})|^{p}}{(\eta^{+}(x^{\prime}) - \eta^{-}(x^{\prime}))^{p-1}}\,dx^{\prime} <\infty 
\end{equation}
and 
\begin{equation} \label{seminorn two graphs}
|f^{-}|_{\W_{(a(\eta^{+}-\eta^{-}))}^{s,p}(\mathbb{R}^{N-1})}<\infty
,\quad|f^{+}|_{\W_{(a(\eta^{+}-\eta^{-}))}^{s,p}(\mathbb{R}^{N-1})}%
<\infty. 
\end{equation}
Then there exists $u\in\dot{W}^{1,p}(\Omega)$ such that $\operatorname*{Tr}(u)(x^{\prime},\eta^{\pm}(x^{\prime}))=f^{\pm}(x^{\prime})$ for $x^{\prime} \in\mathbb{R}^{N-1}$, and
\begin{align*}
\int_{\Omega}|\nabla u(x)|^{p}dx  &  \leq c(1+L)^{p}\int_{\mathbb{R}^{N-1}}\frac{|f^{+}(x^{\prime})-f^{-}(x^{\prime})|^{p}}{(\eta^{+}(x^{\prime})-\eta^{-}(x^{\prime}))^{p-1}}\,dx^{\prime}\\
&  \quad+c(1+L)^{p}|f^{-}|_{\W_{(a(\eta^{+}-\eta^{-}))}^{s,p}(\mathbb{R}^{N-1})}^{p}
+c(1+L)^{p}|f^{+}|_{\W_{(a(\eta^{+}-\eta^{-}))}^{s,p}(\mathbb{R}^{N-1})}^{p}
\end{align*}
for some constant $c=c(a,N,p)>0$.   Moreover, the map $(f^-,f^+) \mapsto u$ is linear. 
\end{theorem}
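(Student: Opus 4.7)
My plan is to flatten $\Omega$ and then follow the gluing strategy of the proof of Theorem \ref{theorem lifting strip}, with the key difference that the mollifier scale must be allowed to depend on $y'$ to track the variable width of the flattened domain. As in the proof of Theorem \ref{theorem trace m=1}, the Lipschitz bijection $\Phi(x) = (x', x_N - \eta^-(x'))$ sends $\Omega$ onto $U := \{(y', y_N) : 0 < y_N < \zeta(y')\}$, where $\zeta := \eta^+ - \eta^-$ is positive and Lipschitz. Since $\det D\Phi = 1$ and $\|D\Phi\|, \|D\Phi^{-1}\| \leq 1 + L$, the pullback $u := w \circ \Phi$ of any $w \in \dot{W}^{1,p}(U)$ satisfies $\int_\Omega |\nabla u|^p dx \leq c(1+L)^p \int_U |\nabla w|^p dy$ and preserves boundary values. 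It therefore suffices to produce $w \in \dot{W}^{1,p}(U)$ with $w(y', 0) = f^-(y')$ and $w(y', \zeta(y')) = f^+(y')$ and a matching norm bound.

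To construct $w$, fix a nonnegative $\varphi \in C^\infty_c(\mathbb{R}^{N-1})$ with $\int \varphi = 1$ and $\operatorname{supp} \varphi \subseteq B'(0, a)$, and set
\begin{equation*}
w^-(y) := \int_{\mathbb{R}^{N-1}} f^-(z') \varphi_{y_N}(y' - z')\,dz', \qquad w^+(y) := \int_{\mathbb{R}^{N-1}} f^+(z') \varphi_{\zeta(y') - y_N}(y' - z')\,dz'
\end{equation*}
for $y \in U$, so that standard mollifier theory delivers the correct boundary values. Following the pattern of Propositions \ref{proposition derivatives mollifiers} and \ref{proposition structure}, each partial derivative can be rewritten in the form $H(y)^{-N} \int (f^\pm(z') - f^\pm(y'))\, \psi\bigl((y'-z')/H(y)\bigr)\,dz'$ for a function $\psi$ with zero mean supported in $B'(0,a)$, where $H := y_N$ for $w^-$ and $H := \zeta - y_N$ for $w^+$. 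For $w^+$, chain-rule differentiation of the $y'$-dependent scale spawns additional terms proportional to $\partial_i \zeta = \partial_i \eta^+ - \partial_i \eta^-$, each bounded by $L$, yielding the pointwise estimate
\begin{equation*}
|\partial_i w^\pm(y)|^p \leq c\,(1 + L)^p\, H(y)^{-N-p+1} \int_{B'(y', a H(y))} |f^\pm(z') - f^\pm(y')|^p \, dz'.
\end{equation*}
Integrating in $y \in U$, swapping the order of integration via Tonelli (substituting $\tau = \zeta(y') - y_N$ for $w^+$), and evaluating $\int_{|z'-y'|/a}^{\infty} \tau^{-N-p+1}\,d\tau \asymp |z'-y'|^{-N-p+2}$ produces exactly the screened seminorm at scale $a\zeta(y')$, so that $\int_U |\nabla w^\pm|^p \,dy \leq c(a, N, p)(1+L)^p\, |f^\pm|^p_{\W^{1-1/p, p}_{(a\zeta)}(\mathbb{R}^{N-1})}$.

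The final step is to glue $w^-$ and $w^+$ using $\theta \in C^\infty(\mathbb{R})$ with $\theta = 1$ on $(-\infty, 1/3]$, $\theta = 0$ on $[2/3, \infty)$, and $|\theta'| \leq c$. Setting $T(y) := \theta(y_N/\zeta(y'))$ and $w := T w^- + (1-T) w^+$ delivers the correct traces by construction, and a direct computation using $|\nabla \zeta| \leq L$ gives $|\nabla T(y)| \leq c(1+L)/\zeta(y')$, supported where $y_N/\zeta(y') \in [1/3, 2/3]$. To handle the commutator $(\nabla T)(w^- - w^+)$ I invoke Theorem \ref{theorem AC} and the fundamental theorem of calculus on slices $y' = \operatorname{const}$ to write
\begin{equation*}
|w^-(y) - w^+(y)| \leq |f^+(y') - f^-(y')| + \int_0^{\zeta(y')} \bigl( |\partial_N w^-(y', s)| + |\partial_N w^+(y', s)| \bigr) ds,
\end{equation*}
raise to the $p^\text{th}$ power with H\"older (gaining $\zeta^{p-1}$ from the integral term), multiply by $|\nabla T|^p \leq c(1+L)^p/\zeta^p$, and integrate. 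The $y_N$-integration over the length-$(\zeta/3)$ support of $T'$ contributes precisely the extra power of $\zeta$ needed to produce the $|f^+-f^-|^p/\zeta^{p-1}$ hypothesis term. Combining all bounds and pulling back through $\Phi$ yields the stated estimate; linearity of $(f^-, f^+) \mapsto u$ is automatic. The main obstacle is the derivative expansion of $w^+$: because the mollifier scale $\zeta(y') - y_N$ depends on $y'$, extra terms involving $\nabla \zeta$ appear, and care must be taken to absorb these into a single $(1+L)^p$ factor while still producing a screened seminorm at scale exactly $a\zeta(y')$ to synchronize with hypothesis \eqref{seminorn two graphs}.
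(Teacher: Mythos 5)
Your proposal is correct and follows essentially the same route as the paper: flatten via $\Phi(x)=(x',x_N-\eta^-(x'))$, mollify $f^\pm$ at a scale proportional to the vertical distance to the relevant boundary, glue with a cutoff $\vartheta\bigl((x_N-\eta^-(x'))/(\eta^+(x')-\eta^-(x'))\bigr)$, and control the commutator term by the fundamental theorem of calculus on vertical slices plus H\"older. The only organizational difference is that the paper constructs $u^+$ by flattening the \emph{top} graph separately (so its mollification scale is again the flat vertical coordinate), whereas you keep a single flattening and carry the $y'$-dependent scale $\zeta(y')-y_N$; the resulting lift of $f^+$ is the same function $x\mapsto\int f^+(z')\varphi_{\eta^+(x')-x_N}(x'-z')\,dz'$, and the extra $\nabla\zeta$ terms you must absorb are exactly the chain-rule terms the paper incurs when pulling back through its second flattening, so both bookkeepings yield the same $(1+L)^p$ factor.
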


\begin{proof}
We divide the proof into steps.

\textbf{Step 1 -- The case $\eta^-=0$}:  Assume first that $\eta^- =0$ and $\eta :=\eta^+ >0$.   Given $0<b\leq a$, let $\varphi\in C_{c}^{\infty}(\mathbb{R}^{N-1})$ be a nonnegative function such that $\int_{\mathbb{R}^{N-1}} \varphi(y^{\prime})\,dy^{\prime}=1$ and $\operatorname*{supp}\varphi\subseteq B^{\prime}(0,b)$. Define $u^- : \Omega \to \mathbb{R}$ via 
\begin{equation*}
u^{-}(x):=(\varphi_{x_{N}}\ast f^{-})(x^{\prime})
=\frac{1}{x_{N}^{N-1}} \int_{\mathbb{R}^{N-1}}f^{-}(y^{\prime})\varphi\left(  \frac{x^{\prime} - y^{\prime}}{x_{N}}\right)  \,dy^{\prime}.
\end{equation*}
As in the proof of Theorem \ref{theorem lifting strip}, for every $i=1,\ldots,N$, we have that
\begin{equation*}
|\partial_{i}u^{-}(x)|^{p}\leq\frac{c}{x_{N}^{N+p-1}}\int_{B^{\prime}(x^{\prime}, x_{N}b)} |f^{-}(y^{\prime})-f^{-}(x^{\prime})|^{p}dy^{\prime}.
\end{equation*}
Integrating both sides over $\Omega$ and using Tonelli's theorem shows that
\begin{equation*}
\begin{split}
\int_{\Omega}    |\partial_{i}u^{-}(x)|^{p}dx  
& \leq c\int_{\mathbb{R}^{N-1}}\int_{0}^{\eta(x^{\prime})}\frac{1}{x_{N}^{N+p-1}} \int_{B^{\prime}(x^{\prime},x_{N}b)}|f^{-}(y^{\prime}) - f^{-}(x^{\prime})|^{p}dy^{\prime}dx_{N} dx^{\prime} \\
&  =c\int_{\mathbb{R}^{N-1}}\int_{B^{\prime}(x^{\prime},b\eta(x^{\prime}))}|f^{-}(y^{\prime})-f^{-}(x^{\prime})|^{p}\int_{b^{-1}|x^{\prime}-y^{\prime}|}^{\eta(x^{\prime})}\frac{1}{x_{N}^{N+p-1}}dx_{N}dy^{\prime}dx^{\prime
}\\
&  \leq c\int_{\mathbb{R}^{N-1}}\int_{B^{\prime}(x^{\prime},b\eta(x^{\prime}))}\frac{|f^{-}(y^{\prime})-f^{-}(x^{\prime})|^{p}}{|x^{\prime}-y^{\prime}|^{N+p-2}}\,dy^{\prime}dx^{\prime}.
\end{split}
\end{equation*}
This shows that $u^{-}\in\dot{W}^{1,p}(\Omega)$. Moreover, since by standard properties of mollifiers $\varphi_{x_{N}}\ast f^{-}\rightarrow f^{-}$ in $L_{\operatorname*{loc}}^{p}(\mathbb{R}^{N-1})$ as $x_{N}\rightarrow0^{+}$, we have that $\operatorname*{Tr}(u^{-})(x^{\prime},0)=f^{-}(x^{\prime})$ for
$x^{\prime}\in\mathbb{R}^{N-1}$.

\textbf{Step 2 -- The general case} Assume now that $\Omega$ is as in \eqref{omega two graphs} and let $U$ and $\Phi$ be as in Step 2 of the proof of Theorem \ref{theorem trace m=1}. Let $v^{-}\in W^{1,p}(U)$ be the function constructed in the previous step with $\operatorname*{Tr}(v^{-})(y^{\prime
},0)=f^{-}(y^{\prime})$ for $y^{\prime}\in\mathbb{R}^{N-1}$. Set $u^- : \Omega \to \mathbb{R}$ via
\begin{equation*}
u^{-}(x):=v^{-}(\Phi(x))=v^{-}(x^{\prime},x_{N}-\eta^{-}(x^{\prime})).
\end{equation*}
Then $\operatorname*{Tr}(u^{-})(x^{\prime},\eta^{-}(x^{\prime}))=\operatorname*{Tr}(v^{-})(x^{\prime},0)=f^{-}(x^{\prime})$ for $x^{\prime}\in\mathbb{R}^{N-1}$, and 
\begin{align*}
\frac{\partial u^{-}}{\partial x_{i}}(x)  &  =\frac{\partial v^{-}}{\partial y_{i}}(x^{\prime},x_{N}-\eta^{-}(x^{\prime}))-\frac{\partial v^{-}}{\partial y_{N}}(x^{\prime},x_{N}-\eta^{-}(x^{\prime}))\frac{\partial\eta^{-}}{\partial x_{i}}(x^{\prime}) \text{ for }i=1,\ldots,N-1, \text{ and } \\
\frac{\partial u^{-}}{\partial x_{N}}(x)  &  =\frac{\partial v^{-}}{\partial
y_{N}}(x^{\prime},x_{N}-\eta^{-}(x^{\prime})).
\end{align*}
Then Step 1 allows us to bound
\begin{align*}
\int_{\Omega}|\partial_{i}u^{-}(x)|^{p}dx  &  \leq c(1+L)^{p}\int_{\Omega}|\nabla_{y}v^{-}(\Phi(x))|^{p}dx=c(1+L)^{p}\int_{U}|\nabla_{y}v^{-} (y)|^{p}dy\\
&  \leq c(1+L)^{p}\int_{\mathbb{R}^{N-1}}\int_{B^{\prime}(x^{\prime},(\eta^{+}(x^{\prime})-\eta^{-}(x^{\prime}))b)}\frac{|f^{-}(y^{\prime})-f^{-}(x^{\prime})|^{p}}{|x^{\prime}-y^{\prime}|^{N+p-2}}\,dy^{\prime
}dx^{\prime},
\end{align*}
and
\begin{align*}
\int_{\Omega}|\partial_{N}u^{-}(x)|^{p}dx  &  =\int_{\Omega}|\partial_{N} v^{-}(\Phi(x))|^{p}dx=\int_{U}|\partial_{N}v^{-}(y)|^{p}dy\\
&  \leq c\int_{\mathbb{R}^{N-1}}\int_{B^{\prime}(x^{\prime},(\eta^{+}(x^{\prime})-\eta^{-}(x^{\prime}))b)}\frac{|f^{-}(y^{\prime} ) - f^{-}(x^{\prime})|^{p}}{|x^{\prime}-y^{\prime}|^{N+p-2}}\,dy^{\prime
}dx^{\prime},
\end{align*}
which in particular means that $u^- \in \dot{W}^{1,p}(\Omega)$.

Arguing similarly, we can construct a function $u^{+}\in\dot{W}^{1,p}(\Omega)$ such that $\operatorname*{Tr}u^{+}(x^{\prime},\eta^{+}(x^{\prime}))=f^{+}(x^{\prime})$ for $x^{\prime}\in\mathbb{R}^{N-1}$ and which obeys the estimates 
\begin{equation*}
 \int_{\Omega}|\partial_{i}u^{+}(x)|^{p}dx    \leq c(1+L)^{p} \int_{\mathbb{R}^{N-1}}\int_{B^{\prime}(x^{\prime},(\eta^{+}(x^{\prime})-\eta^{-} (x^{\prime}))b)}\frac{|f^{+}(y^{\prime}) - f^{+}(x^{\prime})|^{p}%
}{|x^{\prime}-y^{\prime}|^{N+p-2}}\,dy^{\prime}dx^{\prime}
\end{equation*}
and 
\begin{equation*}
\int_{\Omega}|\partial_{N}u^{+}(x)|^{p}dx    \leq c\int_{\mathbb{R}^{N-1}} \int_{B^{\prime}(x^{\prime},(\eta^{+}(x^{\prime})-\eta^{-}(x^{\prime}))b)}\frac{|f^{+}(y^{\prime})-f^{+}(x^{\prime})|^{p}}{|x^{\prime}-y^{\prime }|^{N+p-2}}\,dy^{\prime}dx^{\prime}. 
\end{equation*}
Consider now a function $\vartheta\in C^{\infty}([0,1])$ such that $\vartheta=1$ in $[0,\delta]$ and $\vartheta=0$ in $[1-\delta,1]$, and define $\theta: \Omega \to \mathbb{R}$ via
\begin{equation*}
\theta(x) =\vartheta\left(  \frac{x_{N}-\eta^{-}(x^{\prime})}{\eta^{+}(x^{\prime}) - \eta^{-}(x^{\prime})}\right).
\end{equation*}
We then define the function $u: \Omega \to \mathbb{R}$ via
\begin{equation*}
u(x) = \theta(x)u^{-}(x)+(1-\theta(x))u^{+}(x).
\end{equation*}
Then for $i=1,\ldots,N$,
\begin{equation*}
\partial_{i}u(x)=\partial_{i}\theta(x)(u^{-}(x)-u^{+}(x))+\theta
(x)\partial_{i}u^{-}(x)+(1-\theta(x))\partial_{i}u^{+}(x).
\end{equation*}
From this and the fact that $u^\pm \in \dot{W}^{1,p}(\Omega)$ we see that in order to prove that $u \in \dot{W}^{1,p}(\Omega)$ we must only prove that  $\partial_{i}\theta(u^{-}-u^{+})\in L^{p}(\Omega)$.

For $i=1,\ldots,N-1$, we compute 
\begin{equation*}
\partial_{i}\theta(x)    =\vartheta^{\prime}\left(  \frac{x_{N}-\eta^{-}(x^{\prime})}{\eta^{+}(x^{\prime})-\eta^{-}(x^{\prime})}\right)  
\left[ \frac{-\partial_{i}\eta^{-}(x^{\prime})}{\eta^{+}(x^{\prime})-\eta^{-}(x^{\prime})}  -  \frac{(x_{N}-\eta^{-}(x^{\prime}))(\partial_{i}\eta^{+}(x^{\prime}) - \partial_{i} \eta^{-}(x^{\prime}))}{(\eta^{+}(x^{\prime})-\eta^{-} (x^{\prime}))^{2}}\right]  ,
\end{equation*}
while for $i=N$ we compute 
\begin{equation*}
 \partial_{N}\theta(x)    =\vartheta^{\prime}\left(  \frac{x_{N}-\eta^{-}(x^{\prime})}{\eta^{+}(x^{\prime})-\eta^{-}(x^{\prime})}\right)  \frac{1}{\eta^{+}(x^{\prime})-\eta^{-}(x^{\prime})}.
\end{equation*}
Hence, for $x\in\Omega$,
\begin{equation*}
|\nabla\theta(x)|\leq\frac{c(1+L)}{\eta^{+}(x^{\prime})-\eta^{-}(x^{\prime})}%
\end{equation*}
for some constant $c=c(N,\delta)>0$. By the fundamental theorem of calculus, which can be applied thanks to Theorem \ref{theorem AC}, we have that
\begin{equation*}
u^{-}(x)    =f^{-}(x^{\prime})+\int_{\eta^{-}(x^{\prime})}^{x_{N}} \partial_{N}u^{-}(x^{\prime},s)\,ds
\end{equation*}
and 
\begin{equation*}
u^{+}(x)    =f^{+}(x^{\prime})-\int_{x_{N}}^{\eta^{+}(x^{\prime})}\partial_{N}u^{+}(x^{\prime},s)\,ds.
\end{equation*}
Hence,
\begin{equation*}
|u^{+}(x)-u^{-}(x)|    \leq|f^{+}(x^{\prime})-f^{-}(x^{\prime})|
+\int_{\eta^{-}(x^{\prime})}^{\eta^{+}(x^{\prime})}|\partial_{N}u^{-}(x^{\prime
},s)|\,ds
 +\int_{\eta^{-}(x^{\prime})}^{\eta^{+}(x^{\prime})}|\partial_{N} u^{+}(x^{\prime},s)|\,ds.
\end{equation*}
In turn, we may use H\"{o}lder's inequality to bound
\begin{align*}
|u^{+}(x)  &  -u^{-}(x)|^{p}\leq c|f^{+}(x^{\prime})-f^{-}(x^{\prime})|^{p}\\
&  +c(\eta^{+}(x^{\prime})-\eta^{-}(x^{\prime}))^{p-1}\int_{\eta^{-}(x^{\prime})}^{\eta^{+}(x^{\prime})}(|\partial_{N}u^{-}(x^{\prime},s)|^{p}+|\partial_{N}u^{+}(x^{\prime},s)|^{p})\,ds,
\end{align*}
and it follows that
\begin{align*}
& |\partial_{i}\theta(x)    (u^{+}(x)-u^{-}(x))|^{p}\leq c(1+L)^{p} \frac{|(u^{+}(x)-u^{-}(x))|^{p}}{(\eta^{+}(x^{\prime})-\eta^{-}(x^{\prime}))^{p}}\\
&  \leq c(1+L)^{p}\frac{|f^{+}(x^{\prime})-f^{-}(x^{\prime})|^{p}}{(\eta^{+}(x^{\prime})-\eta^{-}(x^{\prime}))^{p}}  +\frac{c(1+L)^{p}}{\eta^{+}(x^{\prime})-\eta^{-}(x^{\prime})} \int_{\eta^{-}(x^{\prime})}^{\eta^{+}(x^{\prime})}(|\partial_{N} u^{-}(x^{\prime},s)|^{p}+|\partial_{N}u^{+}(x^{\prime},s)|^{p})\,ds.
\end{align*}
Integrating both sides over $\Omega$ and using Tonelli's theorem gives%
\begin{align*}
\int_{\Omega}|\partial_{i}\theta(x)  &  (u^{+}(x)-u^{-}(x))|^{p}dx
\leq c(1+L)^{p}\int_{\mathbb{R}^{N-1}}\frac{|f^{+}(x^{\prime})-f^{-}(x^{\prime})|^{p}} {(\eta^{+}(x^{\prime})-\eta^{-}(x^{\prime}))^{p-1}}dx^{\prime}\\
&  +c(1+L)^{p}\int_{\Omega}(|\partial_{N}u^{-}(x)|^{p}+|\partial_{N} u^{+}(x)|^{p})\,dx.
\end{align*}
Thus $u \in \dot{W}^{1,p}(\Omega)$, and the desired estimate follows readily from this and the above analysis.

\end{proof}

\subsection{The case $m\geq2$, $p>1$}

Finally, in this subsection we consider the case $m \ge 2$ and $p >0$.   For $1\leq n\leq m-1$, we define
\begin{equation*} 
\operatorname*{Tr}\Bigl(\frac{\partial^{n}u}{\partial\nu^{n}}\Bigr)
:=\sum_{|\alpha|=n}\frac{1}{\alpha!}\operatorname*{Tr}(\partial^{\alpha} u) \nu^{\alpha}. 
\end{equation*}
We will study the linear mapping%
\begin{equation}
u\in W^{m,p}(\Omega)\mapsto\operatorname*{Tr}\nolimits_{m}%
(u):=\Bigl(\operatorname*{Tr}(u),\operatorname*{Tr}\Bigl(\frac{\partial
u}{\partial\nu}\Bigr),\cdots,\operatorname*{Tr}\Bigl(\frac{\partial^{m-1}%
u}{\partial\nu^{m-1}}\Bigr)\Bigr). \label{tr linear mapping}%
\end{equation}
\bigskip

We begin with the case $m=2$.   

\begin{theorem}\label{theorem trace m=2}
Let $\Omega$ be as in \eqref{omega two graphs}, where $\eta^{\pm}:\mathbb{R}^{N-1}\rightarrow\mathbb{R}$ are Lipschitz continuous functions, with $\eta^{-}<\eta^{+}$ and $L:=|\eta^{-}|_{0,1}+|\eta^{+}|_{0,1}$. Let $1<p<\infty$.  Then there exists a constant $c=c(N,p)>0$ such that for every $u\in\dot{W}^{2,p}\left(  \Omega\right)$ we have the estimates
\begin{equation*}
\sum_{i=1}^{N}  \int_{\mathbb{R}^{N-1}} \frac{|\operatorname*{Tr}(\partial_{i}u)(x^{\prime},\eta^{+}(x^{\prime})) - \operatorname*{Tr}(\partial_{i}u)(x^{\prime},\eta^{-}(x^{\prime}))|^{p}}{(\eta^{+}(x^{\prime})-\eta^{-}(x^{\prime}))^{p-1}} dx 
\leq c\int_{\Omega}|\nabla^{2}u(x)|^{p}dx, 
\end{equation*}
\begin{align*}
\int_{\mathbb{R}^{N-1}}
\frac{
\left\vert \sum\limits_{k=0}^1 \left(
  \operatorname*{Tr}(\partial_N^k u)(x^{\prime},\eta^{+}(x^{\prime})) 
+ (-1)^{k+1} \operatorname*{Tr}(\partial_N^k u)(x^{\prime},\eta^{-}(x^{\prime}))
\right)
\left(\frac{\eta^{+}(x^{\prime})-\eta^{-}(x^{\prime})}{2} \right)^k
\right\vert ^{p}
}
{(\eta^{+}(x^{\prime})-\eta^{-}(x^{\prime}))^{2p-1}}dx^{\prime}\\
\leq c\int_{\Omega}|\nabla^{2}u(x)|^{p}dx,
\end{align*}
and
\begin{align*}
\sum_{i=1}^{N} \int_{\mathbb{R}^{N-1}}\int_{B^{\prime}(0,(\eta^{+}(x^{\prime}) - \eta^{-}(x^{\prime}))/(2L))}\frac{|\operatorname*{Tr}(\partial_{i}u)(x^{\prime} + h^{\prime},\eta^{\pm}(x^{\prime}+h))-\operatorname*{Tr}(\partial_{i}u) (x^{\prime},\eta^{\pm}(x^{\prime}))|^{p}}{|h^{\prime}|^{p+N-2}
}\,dh^{\prime}dx^{\prime}\\
\leq(1+L)^{p}c\int_{\Omega}|\nabla^{2}u(x)|^{p}dx.
\end{align*}
\end{theorem}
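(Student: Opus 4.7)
The theorem splits into three estimates, and my plan is to derive the first and third by direct application of Theorem \ref{theorem trace m=1} to the components of $\nabla u$, while handling the second-order Taylor-type identity by a slicewise pointwise argument based on Proposition \ref{proposition by parts}. The overarching observation is that $u \in \dot{W}^{2,p}(\Omega)$ forces $\partial_i u \in \dot{W}^{1,p}(\Omega)$ for each $i = 1,\dotsc,N$, so the first-order trace theory applies componentwise to $\nabla u$. Unlike the proof of Theorem \ref{theorem trace m=1}, no global flattening transformation is needed for the second-order part, since the requisite identity is intrinsically one-dimensional along vertical slices.

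For the first estimate, I would apply the bound \eqref{trace 1} of Theorem \ref{theorem trace m=1} to each $\partial_i u$ to obtain
\begin{equation*}
\int_{\mathbb{R}^{N-1}} \frac{|\operatorname*{Tr}(\partial_i u)(x',\eta^+(x')) - \operatorname*{Tr}(\partial_i u)(x',\eta^-(x'))|^p}{(\eta^+(x') - \eta^-(x'))^{p-1}} \, dx' \le c \int_\Omega |\partial_N \partial_i u(x)|^p \, dx,
\end{equation*}
and then sum in $i$ using $|\partial_N \partial_i u| \le |\nabla^2 u|$ to get the stated estimate (with a constant enlarged by at most a factor of $N$). The third estimate follows in exactly the same way from \eqref{trace2} applied to each $\partial_i u$, upon bounding $|\nabla \partial_i u| \le |\nabla^2 u|$; the prefactor $(1+L)^p$ propagates unchanged from Theorem \ref{theorem trace m=1}.

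The second estimate is the main new content. By Theorem \ref{theorem AC} there is a representative $\bar u$ of $u$ such that for $\mathcal{L}^{N-1}$-a.e.\ $x' \in \mathbb{R}^{N-1}$ the slice $t \mapsto \bar u(x',t)$ is of class $C^1([\eta^-(x'),\eta^+(x')])$ with absolutely continuous derivative, and $\partial_N^2 \bar u = \partial_N^2 u$ a.e.\ along the slice. Setting $b := \eta^+(x') - \eta^-(x')$ and $f(t) := \bar u(x', \eta^-(x')+t)$ on $[0,b]$, I would invoke Proposition \ref{proposition by parts} with $m=2$ applied to $f$ — observing that its proof uses only the fundamental theorem of calculus and integration by parts, and thus extends verbatim from $C^2([0,b])$ to the absolutely continuous class supplied by Theorem \ref{theorem AC} — to arrive at the pointwise identity
\begin{equation*}
\bar u(x',\eta^+(x')) - \bar u(x',\eta^-(x')) - \bigl(\partial_N \bar u(x',\eta^+(x')) + \partial_N \bar u(x',\eta^-(x'))\bigr)\tfrac{b}{2} = \int_{\eta^-(x')}^{\eta^+(x')} \partial_N^2 u(x',s)\left(\tfrac{\eta^+(x')+\eta^-(x')}{2} - s\right) ds.
\end{equation*}
H\"older's inequality on the right-hand side with conjugate exponents $p,p'$, combined with the elementary calculation $\int_{\eta^-}^{\eta^+} |\tfrac{\eta^+ + \eta^-}{2} - s|^{p'}\, ds = c\, b^{p'+1}$ and the identity $(p'+1)(p-1) = 2p-1$, then gives the slicewise $L^p$ bound
\begin{equation*}
\bigl|\bar u(x',\eta^+) - \bar u(x',\eta^-) - (\partial_N \bar u(x',\eta^+) + \partial_N \bar u(x',\eta^-))\tfrac{b}{2}\bigr|^p \le c\, b^{2p-1} \int_{\eta^-(x')}^{\eta^+(x')} |\partial_N^2 u(x',s)|^p \, ds.
\end{equation*}
Dividing by $b^{2p-1}$, integrating in $x' \in \mathbb{R}^{N-1}$, and using Tonelli's theorem to majorize the right-hand side by $c\int_\Omega |\nabla^2 u|^p\, dx$ concludes the estimate. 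The main potential obstacle — checking the pointwise validity of Proposition \ref{proposition by parts} under Sobolev rather than classical regularity — is resolved directly by Theorem \ref{theorem AC}; the apparent sign/factor mismatch between the theorem's summand and Proposition \ref{proposition by parts} (where the latter carries an additional $(-1)^k/k!$) is absorbed by $|\cdot|^p$ and plays no analytical role.
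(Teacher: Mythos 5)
Your proof follows the paper's argument essentially verbatim: the first and third estimates come from applying Theorem \ref{theorem trace m=1} to each $\partial_i u \in \dot{W}^{1,p}(\Omega)$, and the second from combining Theorem \ref{theorem AC} with the $m=2$ integration-by-parts identity \eqref{formula by parts m=2} on vertical slices, followed by H\"older's inequality and Tonelli's theorem (your variant of H\"older, keeping the weight $\bigl|\tfrac{\eta^++\eta^-}{2}-s\bigr|$ inside, is an immaterial difference). One caveat: your claim that the mismatch with the theorem's displayed summand is ``absorbed by $|\cdot|^p$'' is not correct, since the missing factor $\tfrac{(-1)^k}{k!}$ flips the sign of the $k=1$ term only --- for $u=x_N$ the quantity as literally displayed equals $2(\eta^+-\eta^-)$ while $\nabla^2 u=0$, so that version of the estimate is false. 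The correct resolution is that the displayed sum carries a typo (compare Theorem \ref{theorem trace m>2}, which includes $\tfrac{(-1)^k}{k!}$), and the intended quantity is exactly the one produced by \eqref{formula by parts m=2}, which is what you (and the paper) actually bound.
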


\begin{proof}
Since $\partial_{i}u\in\dot{W}^{1,p}\left(  \Omega\right)$, the first and third inequalities follow by applying Theorem \ref{theorem trace m=1} to $\partial_{i}u$.  It remains to prove the second inequality.

For $\mathcal{L}^{N-1}$ a.e. $x' \in \mathbb{R}^{N-1}$ Theorem \ref{theorem AC} allows us to use the identity \eqref{formula by parts m=2} applied to the function  $u(x^{\prime},\cdot+\eta^{-}(x^{\prime}))$ with $b = \eta^{+}(x^{\prime}) - \eta^{-}(x^{\prime})$ in order to compute
\begin{align*}
u(x^{\prime},\eta^{+}(x^{\prime}))  &  -u(x^{\prime},\eta^{-}(x^{\prime
}))-(\partial_{N}u(x^{\prime},\eta^{+}(x^{\prime}))+\partial_{N}u(x^{\prime
},\eta^{-}(x^{\prime})))\frac{\eta^{+}(x^{\prime})-\eta^{-}(x^{\prime})}{2}\\
&  =\int_{0}^{\eta^{+}(x^{\prime})-\eta^{-}(x^{\prime})}\partial_{N}^{2}u(x^{\prime},t+\eta^{-}(x^{\prime}))\left(  \frac{\eta^{+}(x^{\prime})-\eta^{-}(x^{\prime})}{2}-t\right)  \,dt.
\end{align*}
From this we estimate
\begin{align*}
&  \left\vert u(x^{\prime},\eta^{+}(x^{\prime}))-u(x^{\prime},\eta^{-}(x^{\prime}))-(\partial_{N}u(x^{\prime},\eta^{+}(x^{\prime}))+\partial_{N}u(x^{\prime},\eta^{-}(x^{\prime})))\frac{\eta^{+}(x^{\prime})-\eta
^{-}(x^{\prime})}{2}\right\vert \\
&  \leq\frac{\eta^{+}(x^{\prime})-\eta^{-}(x^{\prime})}{2}\int_{0}^{\eta^{+}(x^{\prime})-\eta^{-}(x^{\prime})}|\partial_{N}^{2}u(x^{\prime},t+\eta^{-}(x^{\prime}))|\,dt=\frac{\eta^{+}(x^{\prime})-\eta^{-}(x^{\prime})}{2}%
\int_{\eta^{-}(x^{\prime})}^{\eta^{+}(x^{\prime})}|\partial_{N}^{2} u(x^{\prime},s)|\,ds.
\end{align*}
where in the second equality we have made the change of variables $s=t+\eta^{-}(x^{\prime})$. We then apply H\"{o}lder's inequality to see that
\begin{gather*}
\left\vert u(x^{\prime},\eta^{+}(x^{\prime}))-u(x^{\prime},\eta^{-}(x^{\prime}))-(\partial_{N}u(x^{\prime},\eta^{+}(x^{\prime}))+\partial_{N}u(x^{\prime},\eta^{-}(x^{\prime})))\frac{\eta^{+}(x^{\prime})-\eta^{-}(x^{\prime})}{2}\right\vert^{p}\\
\leq\frac{(\eta^{+}(x^{\prime})-\eta^{-}(x^{\prime}))^{2p-1}}{2}\int_{\eta^{-}(x^{\prime})}^{\eta^{+}(x^{\prime})}|\partial_{N}^{2}u(x^{\prime},s)|^{p}ds.
\end{gather*}
We then divide by  $(\eta^{+}(x^{\prime})-\eta^{-}(x^{\prime}))^{2p-1}$, integrate over $x^{\prime} \in \mathbb{R}^{N-1}$, and use Tonelli's theorem to arrive at the bound
\begin{gather*}
\int_{\mathbb{R}^{N-1}}\frac{\left\vert u(x^{\prime},\eta^{+}(x^{\prime} ) ) - u(x^{\prime},\eta^{-}(x^{\prime}))-(\partial_{N}u(x^{\prime},\eta^{+}(x^{\prime}))+\partial_{N}u(x^{\prime},\eta^{-}(x^{\prime})))\frac{\eta^{+}(x^{\prime}) - \eta^{-}(x^{\prime})}{2}\right\vert ^{p}}{( \eta^{+}(x^{\prime})-\eta^{-}(x^{\prime}))^{2p-1}}\,dx^{\prime}\\
\leq\int_{U_{R}}|\partial_{N}^{2}u(x)|^{p}dx,
\end{gather*}
which is the desired second estimate.
 \end{proof}

\begin{remark}\label{remark no lifting}
We have not been able to prove the extension of Theorem \ref{theorem lifting strip m=2} for domains of the form
\eqref{omega two graphs}, even assuming that $\eta^{\pm}$ are more regular.  If we follow the strategy of our proof of Theorem \ref{theorem lifting m=1} and consider first the case in which $\eta^-=0$ and $\eta= \eta^+$, then the desired function $v^{-}$ has the form (see the proof of Theorem \ref{theorem lifting strip m=2})
\begin{equation*}
v^{-}(x):=\int_{\mathbb{R}^{N-1}}[f_{0}^{-}(y^{\prime})+f_{1}^{-}(y^{\prime
})x_{N}]\varphi_{x_{N}}\left(  x^{\prime}-y^{\prime}\right)  \,dy^{\prime}.
\end{equation*}
However, to return to the general case \eqref{omega two graphs} one should define $u^- :\Omega \to \mathbb{R}$ via
\begin{equation*}
u^{-}(x):=v^{-}(x^{\prime},x_{N}-\eta^{-}(x^{\prime})).
\end{equation*}
Then for $i,j=1,\ldots,N-1$,
\begin{align*}
\frac{\partial^{2}u^{-}}{\partial x_{j}\partial x_{i}} &  (x) = \frac{\partial^{2}v^{-}}{\partial y_{j}\partial y_{i}}(x^{\prime},x_{N}-\eta^{-}(x^{\prime}))-\frac{\partial^{2}v^{-}}{\partial y_{N}\partial y_{i}%
}(x^{\prime},x_{N}-\eta^{-}(x^{\prime}))\frac{\partial\eta^{-}}{\partial x_{j}}(x^{\prime}) \\
&  -\frac{\partial^{2}v^{-}}{\partial y_{j}\partial y_{N}}(x^{\prime},x_{N}+\eta^{-}(x^{\prime}))\frac{\partial\eta^{-}}{\partial x_{i}}(x^{\prime})+\frac{\partial^{2}v^{-}}{\partial y_{N}^{2}}(x^{\prime},x_{N}-\eta^{-}(x^{\prime}))\frac{\partial\eta^{-}}{\partial x_{i}}(x^{\prime}) \frac{\partial\eta^{-}}{\partial x_{j}}(x^{\prime}) \\
&  -\frac{\partial v^{-}}{\partial y_{N}}(x^{\prime},x_{N}-\eta^{-}(x^{\prime})) \frac{\partial^{2}\eta^{-}}{\partial x_{j}\partial x_{i}}(x^{\prime}),
\end{align*}
while
\begin{align*}
\frac{\partial^{2}u^{-}}{\partial x_{j}\partial x_{N}}(x) &  = \frac{\partial^{2}v^{-}}{\partial y_{j}\partial y_{N}}(x^{\prime},x_{N}-\eta^{-}(x^{\prime}))-\frac{\partial^{2}v^{-}}{\partial y_{N}^{2}}(x^{\prime},x_{N}-\eta^{-}(x^{\prime}))\frac{\partial\eta^{-}}{\partial x_{j}}(x^{\prime}), \\
\frac{\partial^{2}u^{-}}{\partial x_{N}^{2}}(x) &  =\frac{\partial^{2}v^{-}}{\partial y_{N}^{2}}(x^{\prime},x_{N}-\eta^{-}(x^{\prime})). 
\end{align*}
These computations show that all the second derivatives of $v^{-}$ are in $L^{p}$, but unfortunately $\frac{\partial v^{-}}{\partial y_{N}}$ is not in general, so it is unclear if $u^- \in \dot{W}^{2,p}(\Omega)$. Other types of lifting arguments (see, for instance, Proposition 7.3 in \cite{mazya-mitrea-shaposhnikova2010}) seem to present the same pathology.
\end{remark}

Next we present the trace estimate in the general case $m\geq2$.

\begin{theorem} \label{theorem trace m>2}
Let $\Omega$ be as in \eqref{omega two graphs}, where $\eta^{\pm}:\mathbb{R}^{N-1}\rightarrow\mathbb{R}$ are Lipschitz continuous functions, with $\eta^{-}<\eta^{+}$ and $L:=|\eta^{+}|_{0,1}+|\eta^{-}|_{0,1}$.  Let $m\in\mathbb{N}$ with $m\geq 2$ and  $1<p<\infty$.  Then there exists a constant $c=c(m,N,p)>0$ such that for every $u\in\dot{W}^{m,p}\left(\Omega\right)$ and for every $i,l\in\mathbb{N}_{0}$ with $0\leq i+l\leq m-1$,
\begin{gather*}
\int_{\mathbb{R}^{N-1}}\frac{\left\vert \sum\limits_{k=0}^{m-l-i-1}\frac{(-1)^{k}}{k!}(\nabla_{\shortparallel}^{i}\operatorname*{Tr}(\partial_{N}^{k+l}u)(x^{\prime},\eta^{+}(x^{\prime}))+(-1)^{k+1}\nabla_{\shortparallel }^{i}\operatorname*{Tr}(\partial_{N}^{k+l}u)(x^{\prime},\eta^{-}(x^{\prime}))) \left(  \frac{\eta^{+}(x^{\prime})-\eta^{-}(x^{\prime})}{2}\right)^{k} \right\vert^{p}}{(\eta^{+}(x^{\prime})-\eta^{-}(x^{\prime}))^{(m-i-l)p-1}} dx^{\prime}\\
\leq c\int_{\Omega}|\nabla^{m}u(x)|^{p}dx,\\
\int_{\mathbb{R}^{N-1}}\int_{B^{\prime}(0,(\eta^{+}(x^{\prime})-\eta^{-}(x^{\prime}))/(2L))}\frac{|\operatorname*{Tr}(\nabla^{m-1}u)(x^{\prime} + h^{\prime},\eta^{\pm}(x^{\prime}+h^{\prime})) - \operatorname*{Tr}(\nabla^{m-1}u)(x^{\prime},\eta^{\pm}(x^{\prime}))|^{p}}{|h^{\prime}|^{p+N-2}} \,dh^{\prime}dx^{\prime}\\
\leq(1+L)^{p}c\int_{\Omega}|\nabla^{m}u(x)|^{p}dx.
\end{gather*}

\end{theorem}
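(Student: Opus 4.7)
The proof is a direct adaptation of Theorem \ref{theorem trace strip m>2} to the variable-width setting, combined with a componentwise application of Theorem \ref{theorem trace m=1} for the screened fractional estimate.

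For the first estimate I fix $l,i\in\mathbb{N}_0$ with $0\leq l+i\leq m-1$ and a multi-index $\beta=(\beta',0)$ with $|\beta'|=i$, and invoke Theorem \ref{theorem AC} to obtain a representative of $u$ whose derivatives up to order $m-1$ are absolutely continuous on $\mathcal{L}^{N-1}$-a.e.\ vertical line. On such a line over $x'\in\mathbb{R}^{N-1}$, the function $f(t):=\partial^{\beta}\partial_N^l u(x',t+\eta^-(x'))$ is then of class $C^{m-l-i-1}$ on $[0,b(x')]$ with $b(x'):=\eta^+(x')-\eta^-(x')$, and the classical derivative $f^{(m-l-i)}(t)$ equals $\partial^{\beta}\partial_N^{m-i}u(x',t+\eta^-(x'))$ for a.e.\ $t$. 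Applying the identity \eqref{formula by parts} of Proposition \ref{proposition by parts} to this $f$ with $b=b(x')$ expresses the scalar quantity
\begin{equation*}
S_\beta(x'):=\sum_{k=0}^{m-l-i-1}\frac{(-1)^k}{k!}\bigl(\partial^{\beta}\partial_N^{l+k}u(x',\eta^+(x'))+(-1)^{k+1}\partial^{\beta}\partial_N^{l+k}u(x',\eta^-(x'))\bigr)\bigl(\tfrac{b(x')}{2}\bigr)^k
\end{equation*}
as the weighted integral $\tfrac{1}{(m-l-i-1)!}\int_0^{b(x')}\partial^{\beta}\partial_N^{m-i}u(x',t+\eta^-(x'))(\tfrac{b(x')}{2}-t)^{m-l-i-1}\,dt$.

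Next I would bound the kernel by $(b(x')/2)^{m-l-i-1}$ and apply H\"older's inequality on $[0,b(x')]$; raising to the power $p$ produces the pointwise estimate
\begin{equation*}
|S_{\beta}(x')|^p\leq C\,b(x')^{(m-l-i)p-1}\int_{\eta^-(x')}^{\eta^+(x')}|\partial^{\beta}\partial_N^{m-i}u(x',s)|^p\,ds,
\end{equation*}
in which the exponent $(m-l-i)p-1=(m-l-i-1)p+(p-1)$ is precisely the sum of the kernel contribution and the H\"older Jacobian. Dividing by $b(x')^{(m-l-i)p-1}$, integrating over $x'\in\mathbb{R}^{N-1}$, and applying Tonelli's theorem yields the bound by $C\int_\Omega|\partial^{\beta}\partial_N^{m-i}u|^p\,dx\leq C\int_\Omega|\nabla^m u|^p\,dx$. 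Summing over multi-indices $\beta'$ with $|\beta'|=i$ then gives the first estimate.

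For the second estimate, observe that each component of $\nabla^{m-1}u$ belongs to $\dot{W}^{1,p}(\Omega)$, so the screened fractional seminorm bound follows immediately by applying \eqref{trace2} of Theorem \ref{theorem trace m=1} componentwise, with the factor $(1+L)^p$ inherited from that theorem. No serious obstacle arises: the main care required is tracking the exponent of $b(x')$ through the H\"older step of the first estimate to match the denominator $(\eta^+(x')-\eta^-(x'))^{(m-i-l)p-1}$ in the stated theorem.
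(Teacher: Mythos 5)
Your proof is correct and follows essentially the same route the paper intends: the paper's own proof of this theorem is stated as "very similar to that of Theorem \ref{theorem trace m=2}, using \eqref{formula by parts} instead of \eqref{formula by parts m=2}," and your argument — applying \eqref{formula by parts} to $t\mapsto\partial^{\beta}\partial_N^l u(x',t+\eta^-(x'))$ on $[0,\eta^+(x')-\eta^-(x')]$, bounding the kernel and using H\"older to produce the exponent $(m-l-i)p-1$, then Tonelli, plus a componentwise application of \eqref{trace2} to $\nabla^{m-1}u\in\dot{W}^{1,p}(\Omega)$ — is precisely the omitted detail. The exponent bookkeeping and the source of the $(1+L)^p$ factor are both handled correctly.
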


\begin{proof}
The proof is very similar to the that of Theorem \ref{theorem trace m=2}, using \eqref{formula by parts} instead of \eqref{formula by parts m=2}. We omit the details for the sake of brevity.
\end{proof}

\section{Applications}\label{section applications}

In this section we present some applications of the previous results to quasilinear elliptic PDEs in domains $\Omega$ of the form \eqref{omega two graphs} or \eqref{omega infinite strip}.

\subsection{Lagrangians}

We record here some properties of Lagrangians that will be used to define our quasilinear PDEs.  We begin with a notion of admissibility.

\begin{definition}\label{def admissible lagrangian}
Let $\Omega \subseteq \mathbb{R}^N$ be open and $1 < p < \infty$.  We say that a function $G: \Omega \times \mathbb{R}^N \to \mathbb{R}$ is a $p-$admissible Lagrangian if the following conditions are satisfied.
\begin{enumerate}
 \item For each $\xi \in \mathbb{R}^N$ the function $G(\cdot,\xi)$ is measurable on $\Omega$, and for almost every $x \in \Omega$ the function $G(x,\cdot)$ is continuously differentiable on $\mathbb{R}^N$.
 \item For almost every $x \in \Omega$ the function $G(x,\cdot)$ is convex on $\mathbb{R}^N$.
 \item There exists a constant $A_- \in (0,\infty)$ and a function $\psi_- \in L^1(\Omega)$ such that 
\begin{equation*}
 A_- |\xi|^p - \psi_-(x) \le G(x,\xi) \text{ for almost every } x  \in \Omega \text{ and every } \xi \in \mathbb{R}^N.
\end{equation*}
 \item  There exists a constant $A_+ \in (0,\infty)$ and a function $\psi_+ \in L^{p'}(\Omega)$ such that 
\begin{equation*}
 |\nabla_\xi G(x,\xi) | \le \psi_+(x) + A_+ |\xi|^{p-1} \text{ for almost every } x  \in \Omega \text{ and every } \xi \in \mathbb{R}^N.
\end{equation*}
 \item $G(\cdot,0) \in L^1(\Omega)$.
\end{enumerate}
\end{definition}

The next lemma records an upper bound that follows from the assumptions in Definition \ref{def admissible lagrangian}.

\begin{lemma}\label{lemma admissible bound}
Let $\Omega \subseteq \mathbb{R}^N$ be open and $1 < p < \infty$.  If $G: \Omega \times \mathbb{R}^N \to \mathbb{R}$ is a $p-$admissible Lagrangian in the sense of Definition \ref{def admissible lagrangian}, then 
\begin{equation*}
|G(x,\xi)| \le |G(x,0)| + \frac{|\psi_+(x)|^{p'}}{p'}  + \frac{(1+A_+)}{p}|\xi|^p
\end{equation*}
for almost every $x \in \Omega$ and each $\xi \in \mathbb{R}^N$.
\end{lemma}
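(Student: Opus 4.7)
The plan is to reduce the bound to two standard ingredients: the fundamental theorem of calculus applied to the $\xi$-variable, and Young's inequality with conjugate exponents $p$ and $p'$.  Fix $x \in \Omega$ in the almost-everywhere set where both the $C^1$ regularity of $G(x,\cdot)$ from item (1) and the growth bound from item (4) of Definition \ref{def admissible lagrangian} hold.  Since $G(x,\cdot) \in C^1(\mathbb{R}^N)$, I can write
\begin{equation*}
G(x,\xi) - G(x,0) = \int_0^1 \nabla_\xi G(x,t\xi) \cdot \xi \, dt,
\end{equation*}
and hence by the triangle inequality
\begin{equation*}
|G(x,\xi)| \le |G(x,0)| + |\xi| \int_0^1 |\nabla_\xi G(x,t\xi)| \, dt.
\end{equation*}

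Next I plug in the bound $|\nabla_\xi G(x,t\xi)| \le \psi_+(x) + A_+ |t\xi|^{p-1}$ from Definition \ref{def admissible lagrangian}(4) and evaluate the elementary integral $\int_0^1 t^{p-1}\,dt = 1/p$ to obtain
\begin{equation*}
|G(x,\xi)| \le |G(x,0)| + \psi_+(x) |\xi| + \frac{A_+}{p}|\xi|^p.
\end{equation*}
The only remaining cross term is $\psi_+(x)|\xi|$, which I handle with Young's inequality in the form
\begin{equation*}
\psi_+(x) |\xi| \le \frac{|\psi_+(x)|^{p'}}{p'} + \frac{|\xi|^p}{p}.
\end{equation*}
Adding this to the previous estimate and combining $\frac{1}{p}|\xi|^p + \frac{A_+}{p}|\xi|^p = \frac{1+A_+}{p}|\xi|^p$ yields the claimed inequality.

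There is no real obstacle here: the argument is a routine FTC-plus-Young computation, and the coefficient $(1+A_+)/p$ in the statement is explicitly engineered to absorb the $1/p$ coming from Young's inequality together with the $A_+/p$ coming from integrating $t^{p-1}$.  The only subtlety worth flagging in the write-up is that the identity and estimates are justified pointwise on the full-measure set where items (1) and (4) apply, so the resulting bound holds for almost every $x \in \Omega$ and every $\xi \in \mathbb{R}^N$, as required.
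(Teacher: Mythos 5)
Your proof is correct and follows exactly the same route as the paper's: the fundamental theorem of calculus in the $\xi$-variable, the growth bound on $\nabla_\xi G$ from Definition \ref{def admissible lagrangian}(4), and Young's inequality to absorb the cross term $\psi_+(x)|\xi|$. You have simply spelled out the integration of $t^{p-1}$ and the final bookkeeping that the paper leaves as "follows easily."
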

\begin{proof}
The fundamental theorem of calculus and the fourth admissibility condition for $G$ allow us to estimate 
\begin{equation*}
 |G(x,\xi)| \le |G(x,0)| + \int_0^1 |\xi| |\nabla_\xi G(x,t\xi)| dt  \le |G(x,0)| + \int_0^1 [|\xi| |\psi_+(x)| + A_+ t^{p-1} |\xi|^p] dt 
\end{equation*}
The stated estimate follows easily from this and Young's inequality.

\end{proof}

\subsection{The Dirichlet problem}

We now turn our attention to the Dirichlet problem associated to $p-$admissible Lagrangians.  Our main result establishes necessary and sufficient conditions for the solvability of the associated Dirichlet problem.

\begin{theorem}\label{theorem dirichlet G}
Let $\Omega \subset \mathbb{R}^N$ be as in \eqref{omega infinite strip}, $a>0$, $1<p<\infty$, and $f^{\pm}\in L_{\operatorname*{loc}}^{1}(\mathbb{R}^{N-1})$.  Suppose that $G: \Omega \times \mathbb{R}^N \to \mathbb{R}$ is a $p-$admissible Lagrangian in the sense of Definition \ref{def admissible lagrangian}.  Then the quasilinear Dirichlet problem
\begin{equation}\label{dirichlet G}
\left\{
\begin{array}
[c]{ll}
-\operatorname{div}(\nabla_\xi G(\cdot,\nabla u)  ) = 0 & \text{in
}\Omega,\\
u=f^{\pm} & \text{on }\Gamma^{\pm}
\end{array}
\right.  
\end{equation}
admits a solution in $\dot{W}^{1,p}(\Omega)$ if and only if $f^{\pm}$ satisfy \eqref{difference f plus of minus} and \eqref{seminorm strip}.  In either case, there exists a constant $c=c(a,N,p,A_-,A_+)>0$ such that 
\begin{align}\label{dirichlet G bounds}
\begin{split}
\int_{\Omega}|\nabla u(x)|^{p}dx  &  \leq c\int_{\Omega} [|G(x,0)| + |\psi_-(x)| +    |\psi_+(x)|^{p^{\prime}}] dx + c\int_{\mathbb{R}^{N-1}}|f^{+}(x^{\prime})-f^{-}(x^{\prime})|^{p}\, dx^{\prime}\\
&  \quad+c|f^{-}|_{\W_{(a)}^{1-1/p,p}(\mathbb{R}^{N-1})}^{p} 
+ c|f^{+}|_{\W_{(a)}^{1-1/p,p}(\mathbb{R}^{N-1})}^{p},
\end{split}
\end{align}
where $\psi_\pm$ are as in Definition \ref{def admissible lagrangian}, and $u$ minimizes the energy functional  $F : \{v\in\dot{W}^{1,p}(\Omega)\,|\,\operatorname*{Tr}(v)=f^{\pm} \text{ on }\Gamma^{\pm}\} \to \mathbb{R}$ given by 
\begin{equation}\label{dirichlet energy}
 F(v) = \int_{\Omega} G(x,\nabla v(x))  dx.
\end{equation}

\end{theorem}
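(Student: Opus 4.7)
The plan is to split the proof into the (easy) necessity direction and the (more substantive) sufficiency direction based on the direct method of the calculus of variations. For necessity, I would observe that if $u \in \dot{W}^{1,p}(\Omega)$ satisfies $\operatorname{Tr}(u) = f^\pm$ on $\Gamma^\pm$, then applying \eqref{theorem trace strip est 1} and \eqref{theorem trace strip est 2} of Theorem \ref{theorem trace strip} to $u$ immediately yields \eqref{difference f plus of minus} and \eqref{seminorm strip}. The PDE itself plays no role here.

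For sufficiency, assuming $f^\pm$ satisfy \eqref{difference f plus of minus} and \eqref{seminorm strip}, my plan is to minimize $F$ over the affine set $\mathcal{A} := \{v \in \dot{W}^{1,p}(\Omega) : \operatorname{Tr}(v) = f^\pm \text{ on } \Gamma^\pm\}$ and then identify the minimizer as the desired weak solution. Theorem \ref{theorem lifting strip} supplies a specific $w \in \mathcal{A}$ (so $\mathcal{A}$ is nonempty) with $\|\nabla w\|_{L^p(\Omega)}^p$ dominated by the right-hand side of \eqref{dirichlet G bounds}. Lemma \ref{lemma admissible bound} ensures $F$ is finite on $\mathcal{A}$, and the lower bound in Definition \ref{def admissible lagrangian} gives the coercivity estimate $F(v) \ge A_- \|\nabla v\|_{L^p(\Omega)}^p - \|\psi_-\|_{L^1(\Omega)}$, so any minimizing sequence $\{v_n\} \subset \mathcal{A}$ has $\|\nabla v_n\|_{L^p(\Omega)}$ uniformly bounded.

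The main technical step is extracting a weakly convergent subsequence whose limit still lies in $\mathcal{A}$, and then using the convexity of $\xi \mapsto G(x,\xi)$ to pass to the liminf. Since $1<p<\infty$, the quotient space $\dot{W}^{1,p}(\Omega)/\mathbb{R}$ is reflexive, so as in Section \ref{subsection seminormed spaces} Kakutani's theorem yields a subsequence (not relabeled) and $v^*$ with $v_n \rightharpoonup v^*$ in the quotient sense. By adjusting representatives by constants and invoking local Rellich--Kondrachov on exhausting bounded Lipschitz subdomains of $\Omega$, I may further arrange $\nabla v_n \rightharpoonup \nabla v^*$ in $L^p(\Omega;\mathbb{R}^N)$ and $v_n \to v^*$ in $L^p_{\operatorname*{loc}}(\Omega)$. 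The subtle point will be ensuring $\operatorname{Tr}(v^*) = f^\pm$: since the trace is defined on representatives rather than equivalence classes, I would pass to the limit in the integration-by-parts identity of item (3) of Theorem \ref{theorem trace strip}, using local $L^p$ convergence of $v_n$ together with weak $L^p$ convergence of $\nabla v_n$, to identify $\operatorname{Tr}(v^*)$ with $f^\pm$ and thus place $v^*$ in $\mathcal{A}$. Classical weak lower semicontinuity of convex integrands (Ioffe's theorem) then gives $F(v^*) \le \liminf_n F(v_n) = \inf_\mathcal{A} F$, so $u := v^*$ minimizes $F$ on $\mathcal{A}$.

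To finish, for any test function $\phi \in C_c^1(\Omega)$ the perturbation $u + t\phi$ lies in $\mathcal{A}$ for every $t \in \mathbb{R}$, and differentiating $t \mapsto F(u+t\phi)$ at $t=0$ yields $\int_\Omega \nabla_\xi G(x,\nabla u) \cdot \nabla \phi\,dx = 0$, which is the weak form of \eqref{dirichlet G}. The growth bound $|\nabla_\xi G(x,\xi)| \le \psi_+(x) + A_+ |\xi|^{p-1}$ with $\psi_+ \in L^{p'}(\Omega)$ places $\nabla_\xi G(x,\nabla u)$ in $L^{p'}(\Omega;\mathbb{R}^N)$, which both justifies differentiating under the integral (via dominated convergence applied to the convex difference quotients) and makes the weak formulation well-posed. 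The quantitative bound \eqref{dirichlet G bounds} follows from the chain
\begin{equation*}
A_- \|\nabla u\|_{L^p(\Omega)}^p - \|\psi_-\|_{L^1(\Omega)} \le F(u) \le F(w),
\end{equation*}
estimating $F(w)$ via Lemma \ref{lemma admissible bound} in terms of $\|G(\cdot,0)\|_{L^1}$, $\|\psi_+\|_{L^{p'}}^{p'}$, and $\|\nabla w\|_{L^p}^p$, and then controlling $\|\nabla w\|_{L^p}^p$ by the trace data using Theorem \ref{theorem lifting strip}.
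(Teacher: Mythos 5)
Your proposal is correct and follows essentially the same route as the paper: the direct method of the calculus of variations, with the lifting theorem supplying a competitor $w$, coercivity from $p$-admissibility, local Rellich--Kondrachov on an exhaustion to recover the boundary condition for the weak limit, convexity for weak lower semicontinuity, and the comparison $F(u)\le F(w)$ for the quantitative bound. The only cosmetic differences (identifying the trace of the limit via the integration-by-parts identity rather than via weak $W^{1,p}$ convergence on bounded subdomains, and testing the Euler--Lagrange equation against $C_c^1(\Omega)$ rather than all zero-trace variations) do not change the substance of the argument.
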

\begin{proof}
The proof is a standard application of the direct method of the calculus of variations. We present it for the convenience of the reader.

Assume that $f^{\pm}$ satisfy \eqref{difference f plus of minus} and \eqref{seminorm strip}.  Let $X:=\{v\in\dot{W}^{1,p}(\Omega)\,|\,\operatorname*{Tr}(v)=f^{\pm} \text{ on }\Gamma^{\pm}\}$ and note that $X$ is nonempty thanks to Theorem \ref{theorem lifting strip}. Consider the functional $F: X \to \mathbb{R}$ given by \eqref{dirichlet energy}, which is well-defined in light of Lemma \ref{lemma admissible bound}.  Since $G$ is $p-$admissible, we can bound
\begin{equation*}
F(v)\geq \int_\Omega A_- |\nabla v(x)|^p dx - \int_\Omega  \psi_-(x)  dx.
\end{equation*}
It follows that
\begin{equation*}
\ell:=\inf_{v\in X}F(v)>-\infty
\end{equation*}
and that if $\{u_{n}\}_{n\in \mathbb{N}}$ is a minimizing sequence in $X$, i.e. 
\begin{equation*}
\lim_{n\rightarrow\infty}F(u_{n})=\ell,
\end{equation*}
then $\{\nabla u_{n}\}_{n \in \mathbb{N}}$ is bounded in $L^{p}(\Omega;\mathbb{R}^{N})$. By
applying Poincar\'{e}'s inequality on an increasing sequence of bounded Lipschitz domains and employing a diagonalization argument, we can find a subsequence, not relabeled, and $u\in\dot{W}^{1,p}(\Omega)$ such that $\nabla u_{n}\rightharpoonup\nabla u$ in $L^{p}(\Omega;\mathbb{R}^{N})$.  The $p-$admissibility conditions guarantee that $F$ is sequentially weakly lower semicontinuous (see, for instance, Theorem 6.54 of \cite{fonseca-leoni2007}), so 
\begin{equation*}
F(u)\leq\lim_{n\rightarrow\infty}F(u_{n})=\ell.
\end{equation*}
On the other hand, by applying the Rellich--Kondrachov theorem on an increasing sequence of bounded Lipschitz domains $\Omega_{k}$, we can assume that $u_{n}\rightarrow u$ in $L^{p}(\Omega_{k})$ as $n\rightarrow\infty$ for every $k$. Hence, $u_{n}\rightharpoonup u$ in $W^{1,p}(\Omega_{k})$, which
implies that $\operatorname*{Tr}(u)=f^{\pm}$ on $\Gamma^{\pm}\cap \partial\Omega_{k}$. Since this is true for every $k$, we have that $u\in X$ and so $F(u)=\ell$.

Since $u$ is a minimizer of $F$ we may take variations to see that $u$ satisfies
\begin{equation*}
\int_{\Omega}\nabla_\xi G (x,\nabla u)\cdot \nabla v\,dx=0
\end{equation*}
for every $v\in\dot{W}^{1,p}(\Omega)$ with $\operatorname*{Tr}(v)=0$ on $\Gamma^{\pm}$. This shows that $u$ is a weak solution of \eqref{dirichlet G}.

Conversely, if $u\in\dot{W}^{1,p}(\Omega)$ is a weak solution of \eqref{dirichlet G}, then since $\operatorname*{Tr}(u)=f^{\pm}$ on $\Gamma^{\pm}$, it follows from Theorem \ref{theorem trace strip} that $f^{\pm}$ satisfies conditions \eqref{difference f plus of minus} and \eqref{seminorm strip}.

It remains to prove \eqref{dirichlet G bounds} when either (and hence both) of the equivalent conditions is satisfied.  By Theorem \ref{theorem lifting strip} there exists $w\in\dot{W}^{1,p}(\Omega)$ with $\operatorname*{Tr}(w)=f^{\pm}$ on $\Gamma^{\pm}$ and
\begin{equation*}
\Vert\nabla w\Vert_{L^{p}(\Omega)}\leq c|(f^{-},f^{+})|_{\dot{X}%
_{(a)}^{1-1/p,p}(\mathbb{R}^{N-1})},
\end{equation*}
where we recall that $\dot{X}_{(a)}^{1-1/p,p}(\mathbb{R}^{N-1})$ is the space given in Definition \ref{definition space X}, with $\sigma = a$.  Then $w \in X$, and so by minimality, $p-$admissibility, and Lemma \ref{lemma admissible bound} we have that 
\begin{equation*}
\int_\Omega [A_- |\nabla u(x)|^p  -  \psi_-(x) ] dx  \le F(u) \le F(w) 
 \le \int_\Omega [|G(x,0)| + |\psi_+(x)|^p +(1+A_+) |\nabla w(x)|^p ] dx
\end{equation*}
Then \eqref{dirichlet G bounds} follows immediately by combining these bounds. 
\end{proof}

\begin{remark}
Theorem \ref{theorem dirichlet G} continues to hold for domains of the type \eqref{omega two graphs}, provided we replace \eqref{difference f plus of minus} and \eqref{seminorm strip} with \eqref{difference f plus of minus two graphs} and \eqref{seminorn two graphs}. The proof remains unchanged.
\end{remark}

We can now present the proof of Theorem \ref{theorem dirichlet laplacian}.

\begin{proof}[Proof of Theorem \ref{theorem dirichlet laplacian}]
We simply observe that the map $G: \Omega \times \mathbb{R}^N \to \mathbb{R}$ given by $G(x,\xi) = |\xi|^p/p + g(x) \cdot \xi$ satisfies the conditions of Definition \ref{def admissible lagrangian}, and so we may apply Theorem \ref{theorem dirichlet G}.
\end{proof}

\subsection{The Neumann problem}

We now turn our attention to the Neumann problem associated to an admissible Lagrangian $G$, i.e. the problem
\begin{equation*}
\left\{
\begin{array}
[c]{ll}%
-\operatorname{div}(\nabla_\xi G(\cdot,\nabla u)  ) = \psi & \text{in }\Omega,\\
\nabla_\xi G(\cdot ,\nabla u) \cdot\nu=h & \text{on }\partial\Omega.
\end{array}
\right.
\end{equation*}
Assuming for the moment that $u$, $\psi$, and $h$ are sufficiently smooth and have the right decaying properties, by multiplying the equation by $v \in C_{c}^{\infty}(\mathbb{R}^{N})$ and integrating by parts we get
\begin{equation*}
\int_{\Omega}\nabla_\xi G(x,\nabla u(x)) \cdot\nabla v(x) \,dx
=\int_{\Omega} v(x)\psi(x) \,dx + \int_{\partial\Omega}v(x)h(x) \,d\mathcal{H}^{N-1}(x).
\end{equation*}
More generally, we may replace the linear functionals
\begin{equation*}
v\mapsto\int_{\Omega}v\psi\,dx \text{ and }
v\mapsto\int_{\partial\Omega }vh\,d\mathcal{H}^{N-1}
\end{equation*}
with generic linear functionals  
\begin{equation}\label{functional Lambda}
\Psi:\dot{W}^{1,p}(\Omega)\rightarrow\mathbb{R} \text{ and }
\Lambda : \dot{X}_{(1)}^{1-1/p,p}(\mathbb{R}^{N-1})  = \operatorname*{Tr}(\dot{W}^{1,p}(\Omega))\rightarrow\mathbb{R}.
\end{equation}
Thus a weak solution $u\in\dot{W}^{1,p}(\Omega)$ of this generalized problem must satisfy
\begin{equation} \label{weak solution Neumann}
\int_{\Omega} \nabla_\xi G(x,\nabla u(x)) \cdot\nabla v(x) \,dx=\Psi(v)+\Lambda
(\operatorname*{Tr}(v))
\end{equation}
for all $v\in\dot{W}^{1,p}(\Omega)$. Note that since a constant function $v\equiv c$ belongs to $\dot{W}^{1,p}(\Omega)$, from \eqref{weak solution Neumann} we get the compatibility condition
\begin{equation} \label{compatibility conditions neumann}
\Psi(c)+\Lambda(c)=0
\end{equation}
for every $c\in\mathbb{R}$. Thus, if we define $\Lambda_1 : \dot{W}^{1,p}(\Omega) \to \mathbb{R}$ via 
\begin{equation}\label{functional Lambda 1}
\Lambda_{1}(v):=\Lambda(\operatorname*{Tr}(v)), 
\end{equation}
then
\begin{equation*}
\Psi(v+c)+\Lambda_{1}(v+c)=\Psi(v)+\Lambda_{1}(v)
\end{equation*}
for every $v\in\dot{W}^{1,p}(\Omega)$ and every $c\in\mathbb{R}$, and so we can define $\Psi+\Lambda_{1}$ on the quotient space $\dot{W}^{1,p}(\Omega)/\mathbb{R}.$

Given $v\in\dot{W}^{1,p}(\Omega)$, we set
\begin{equation*}
\operatorname*{Tr}\nolimits^{\pm}(v):=\left.  \operatorname*{Tr}(v)\right\vert_{\Gamma^{\pm}}.
\end{equation*}
In view of Theorem \ref{theorem trace strip}, the pair $(\operatorname*{Tr} \nolimits^{-}(v),\operatorname*{Tr}\nolimits^{+}(v))$ belongs to $\dot{X}_{(a)}^{1-1/p,p}(\mathbb{R}^{N-1})$, where $\dot{X}_{(a)}^{1-1/p,p}(\mathbb{R}^{N-1})$ is the space given in Definition \ref{definition space X} with $\sigma= a$ for $0<a\leq b$.  In what follows, with a slight abuse of notation, we identify
$\operatorname*{Tr}(v)$ with $(\operatorname*{Tr}\nolimits^{-}(v),\operatorname*{Tr}\nolimits^{+}(v))$. Conversely, given $(f^{-},f^{+})\in\dot{X}_{(a)}^{1-1/p,p}(\mathbb{R}^{N-1})$ by Theorem
\ref{theorem lifting strip} we have that there is $v\in\dot{W}^{1,p}(\Omega)$ such that $\operatorname*{Tr}\nolimits^{\pm}(f)=f^{\pm}$. If we set
\begin{equation}\label{function f}
f(x^{\prime},x_{N}):=\left\{
\begin{array}
[c]{ll}%
f^{-}(x^{\prime}) & \text{if }x^{\prime}\in\mathbb{R}^{N-1},\,x_{N}=b^{-},\\
f^{+}(x^{\prime}) & \text{if }x^{\prime}\in\mathbb{R}^{N-1},\,x_{N}=b^{+},
\end{array}
\right.  
\end{equation}
then $f=\operatorname*{Tr}(v)$.

We can now state our main result on the Neumann problem.

\begin{theorem}\label{theorem neumann laplacian}
Let $\Omega\subset \mathbb{R}^n$ be as in \eqref{omega infinite strip}, $1<p<\infty$, and assume that $\Psi:\dot{W}^{1,p}(\Omega)\rightarrow\mathbb{R}$ and $\Lambda:\operatorname*{Tr}(\dot
{W}^{1,p}(\Omega))\rightarrow\mathbb{R}$ are linear functionals satisfying \eqref{compatibility conditions neumann}.  Suppose that $G: \Omega \times \mathbb{R}^N \to \mathbb{R}$ is a $p-$admissible Lagrangian in the sense of Definition \ref{def admissible lagrangian}.  Then the quasilinear Neumann problem
\begin{equation} \label{neumann problem}
\left\{
\begin{array}
[c]{ll}%
-\operatorname{div}(\nabla_\xi G(\cdot,\nabla u) )=\Psi & \text{in }\Omega,\\
\nabla_\xi G(\cdot,\nabla u)\cdot\nu=\Lambda & \text{on }\partial\Omega
\end{array}
\right. 
\end{equation}
admits a weak solution in $\dot{W}^{1,p}(\Omega)$ if and only if there exists $c>0$ such that
\begin{equation}\label{functionals bounded}
|\Psi(v)+\Lambda_{1}(v)|\leq c\Vert\nabla v\Vert_{L^{p}(\Omega)}
\end{equation}
for every $v\in\dot{W}^{1,p}(\Omega)$, where $\Lambda_{1}$ is given in \eqref{functional Lambda 1}. In either case, there exists a constant $c=c(a,N,p,A_-)$ such that
\begin{equation} \label{neumann bounds}
\int_{\Omega}|\nabla u(x)|^{p}dx   \leq c\int_{\Omega} [|G(x,0)| + |\psi_-(x)| ] dx +  c\Vert\Psi+\Lambda_{1}\Vert_{(\dot{W}^{1,p}(\Omega))\prime}^{p^{\prime}}, 
\end{equation}
and $u$ minimizes of the energy functional $F: \dot{W}^{1,p}(\Omega) \to \mathbb{R}$ given by 
\begin{equation}\label{neumann energy}
F(v) = \int_{\Omega}G(x,\nabla v(x)) dx-\Psi(v)-\Lambda_{1}(v).
\end{equation}

\end{theorem}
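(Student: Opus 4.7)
The plan is to apply the direct method of the calculus of variations to the functional $F$ from \eqref{neumann energy}, in close analogy with the proof of Theorem \ref{theorem dirichlet G}. The compatibility condition \eqref{compatibility conditions neumann} guarantees that $\Psi+\Lambda_{1}$ descends to a well-defined linear map on the quotient $\dot{W}^{1,p}(\Omega)/\mathbb{R}$, and therefore so does $F$. For $1<p<\infty$ this quotient is reflexive, since $[v]\mapsto\nabla v$ embeds it isometrically into $L^{p}(\Omega;\mathbb{R}^{N})$.

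For the necessity of \eqref{functionals bounded}, I would assume that $u\in\dot{W}^{1,p}(\Omega)$ is a weak solution and test \eqref{weak solution Neumann} against an arbitrary $v\in\dot{W}^{1,p}(\Omega)$. The growth bound of Definition \ref{def admissible lagrangian}(4) together with H\"older's inequality then yields
\begin{equation*}
|\Psi(v)+\Lambda_{1}(v)| \le \left(\Vert\psi_{+}\Vert_{L^{p'}(\Omega)} + A_{+}\Vert\nabla u\Vert_{L^{p}(\Omega)}^{p-1}\right)\Vert\nabla v\Vert_{L^{p}(\Omega)},
\end{equation*}
which is exactly \eqref{functionals bounded}.

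For sufficiency, I would assume \eqref{functionals bounded} and apply the direct method. Lemma \ref{lemma admissible bound} ensures $F$ is finite-valued, while the lower bound from Definition \ref{def admissible lagrangian}(3), combined with \eqref{functionals bounded} and Young's inequality, gives a coercivity estimate of the form
\begin{equation*}
F(v) \ge \frac{A_{-}}{2}\Vert\nabla v\Vert_{L^{p}(\Omega)}^{p} - \Vert\psi_{-}\Vert_{L^{1}(\Omega)} - C\Vert\Psi+\Lambda_{1}\Vert_{(\dot{W}^{1,p}(\Omega))^\prime}^{p'}
\end{equation*}
for a constant $C=C(A_{-},p)$. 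Hence $\inf F>-\infty$ and every minimizing sequence has uniformly bounded gradient in $L^p$. Next I would take a minimizing sequence $\{u_{n}\}$, use the fact that $F$ is constant on equivalence classes to shift each $u_n$ by an appropriate constant, and then apply the Poincar\'e inequality (Proposition \ref{proposition poincare}) and a Rellich-Kondrachov diagonal extraction along a bounded Lipschitz exhaustion $\Omega_{k}\nearrow\Omega$, exactly as in the proof of Theorem \ref{theorem dirichlet G}, to produce $u\in\dot{W}^{1,p}(\Omega)$ and a subsequence with $\nabla u_{n}\rightharpoonup\nabla u$ weakly in $L^{p}(\Omega;\mathbb{R}^{N})$ and $u_{n}\to u$ in $L^{p}(\Omega_{k})$ for each $k$. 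Convexity of $G(x,\cdot)$ yields sequential weak lower semicontinuity of $v\mapsto\int_{\Omega}G(\cdot,\nabla v)\,dx$ (Theorem 6.54 of \cite{fonseca-leoni2007}), while continuity of $\Psi+\Lambda_{1}$ along this weak convergence is immediate from \eqref{functionals bounded}; hence $F(u)\le\liminf_{n\to\infty}F(u_{n})=\inf F$ and $u$ minimizes $F$. Taking variations then produces the Euler-Lagrange equation \eqref{weak solution Neumann}, which is the weak form of \eqref{neumann problem}. The estimate \eqref{neumann bounds} will follow by comparing $F(u)\le F(0)\le\int_{\Omega}|G(x,0)|\,dx$ with the coercivity bound above and applying Young's inequality to absorb the term $\Vert\Psi+\Lambda_{1}\Vert\,\Vert\nabla u\Vert_{L^{p}(\Omega)}$ into $A_{-}\Vert\nabla u\Vert_{L^{p}(\Omega)}^{p}$.

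The main obstacle is the compactness step: weak convergence in the reflexive quotient $\dot{W}^{1,p}(\Omega)/\mathbb{R}$ must be upgraded to convergence of genuine functions $u_{n}\in\dot{W}^{1,p}(\Omega)$ with a well-defined limit $u\in\dot{W}^{1,p}(\Omega)$, which is why the representative-selection and diagonal-exhaustion argument of Theorem \ref{theorem dirichlet G} is required; the remainder of the proof then proceeds by the familiar convexity plus coercivity template.
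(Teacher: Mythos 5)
Your proposal is correct and follows essentially the same route as the paper: the direct method applied to $F$, coercivity from the $p$-admissibility lower bound plus \eqref{functionals bounded}, the Poincar\'e/Rellich--Kondrachov diagonal extraction borrowed from Theorem \ref{theorem dirichlet G}, weak lower semicontinuity via convexity, and the necessity direction via H\"older and the growth bound on $\nabla_\xi G$. The final estimate \eqref{neumann bounds} is also obtained exactly as in the paper, by comparing $F(u)\le F(0)$ with the coercivity bound and absorbing the linear term via Young's inequality.
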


\begin{proof}
Assume that $\Psi$ and $\Lambda$ satisfy \eqref{functionals bounded}. Consider
the functional \eqref{neumann energy} defined for $v\in\dot{W}^{1,p}(\Omega)$. By \eqref{functionals bounded} and the $p$-admissibility conditions, 
\begin{equation*}
F(v)\geq   \int_\Omega A_- |\nabla v(x)|^p dx - \int_\Omega  \psi_-(x)  dx -c\Vert\nabla
v\Vert_{L^{p}(\Omega)}.
\end{equation*}
We can now proceed as in the proof of Theorem \ref{theorem dirichlet G} to show that there exists a minimizer $u\in\dot{W}^{1,p}(\Omega)$ of $F$. Using the continuity and linearity of $\Lambda_{1}$, taking variations we get that \eqref{weak solution Neumann} holds for every $v\in\dot{W}^{1,p}(\Omega)$, which shows that $u$ is a weak solution of \eqref{neumann problem}.

Conversely, assume that $u\in\dot{W}^{1,p}(\Omega)$ satisfies \eqref{weak solution Neumann}. Since $(p-1)p^{\prime}=p$, it follows from the $p-$admissibility conditions and H\"older's inequality that
\begin{equation*}
|\Psi(v)+\Lambda_{1}(v)| \le \int_\Omega (|\psi_+(x)| + A_+ |\nabla u(x)|^{p-1} )|\nabla v(x)| dx \le c(\psi_+,u,A_+)  \Vert\nabla v\Vert_{L^{p}(\Omega)}
\end{equation*}
for every $v\in\dot{W}^{1,p}(\Omega)$. 

To conclude the proof it remains to show \eqref{neumann bounds}.  Using the $p-$admissibility conditions, we may bound
\begin{equation*}
\int_\Omega [A_- |\nabla u(x)|^p  -  \psi_-(x) ] dx  -  (\Psi(u) + \Lambda_1(u) ) \le F(u) \le F(0) 
 \le \int_\Omega |G(x,0)| dx.
\end{equation*}
Then \eqref{neumann bounds} follows directly from this and  \eqref{functionals bounded}.
\end{proof}

We next record a corollary about the Neumann problem with $\Psi =0$.

\begin{corollary}
Let $\Omega\subset \mathbb{R}^n$ be as in \eqref{omega infinite strip}, $0<a\leq b^{+}-b^{-}$, $1<p<\infty$, and assume that  $\Lambda:\operatorname*{Tr}(\dot{W}^{1,p}(\Omega))\rightarrow\mathbb{R}$ is a linear functionals satisfying $\Lambda(1) =0$.   Suppose that $G: \Omega \times \mathbb{R}^N \to \mathbb{R}$ is a $p-$admissible Lagrangian in the sense of Definition \ref{def admissible lagrangian}. Then the quasilinear Neumann problem
\begin{equation}\label{neumann problem zero}
\left\{
\begin{array}
[c]{ll}%
-\operatorname{div}(\nabla_\xi G(\cdot,\nabla u))=0 & \text{in }\Omega,\\
\nabla_\xi G(\cdot,\nabla u) \cdot\nu=\Lambda & \text{on }\partial\Omega
\end{array}
\right.  
\end{equation}
admits a weak solution in $\dot{W}^{1,p}(\Omega)$ if and only if there exists
$c>0$ such that
\begin{equation}\label{Lambda bounded}
|\Lambda(f)|\leq c|(f^{-},f^{+})|_{\dot{X}_{(a)}^{1-1/p,p}(\mathbb{R}^{N-1})}
\end{equation}
for every $(f^{-},f^{+})\in\dot{X}_{(a)}^{1-1/p,p}(\mathbb{R}^{N-1})$, where $f$ is given in \eqref{function f}. In either case, there exists a constant $c=c(a,N,p,A_-)$ such that
\begin{equation}\label{neumann bounds zero}
\Vert\nabla u\Vert_{L^{p}(\Omega)}^p \leq c\int_{\Omega} [|G(x,0)| + |\psi_-(x)| ] dx + c|\Lambda|_{(\dot{X}_{(a)}^{1-1/p,p}(\mathbb{R}^{N-1}))^{\prime}}^{p^{\prime}},
\end{equation}
where $|\Lambda|_{(\dot{X}_{(a)}^{1-1/p,p}(\mathbb{R}^{N-1}))^{\prime}}$ is defined in \eqref{seminorm T}.
\end{corollary}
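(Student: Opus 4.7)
The plan is to reduce the corollary to a direct application of Theorem \ref{theorem neumann laplacian} with $\Psi \equiv 0$, then translate the abstract hypothesis \eqref{functionals bounded} on $\Lambda_1 = \Lambda \circ \operatorname*{Tr}$ into the trace-space bound \eqref{Lambda bounded} using the trace and lifting theorems of the previous sections. The compatibility condition \eqref{compatibility conditions neumann} with $\Psi \equiv 0$ reduces to $\Lambda(c) = 0$ for every $c \in \mathbb{R}$, which, by linearity of $\Lambda$, is equivalent to the stated hypothesis $\Lambda(1) = 0$.

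First I would establish the equivalence of the two boundedness statements. For the forward direction, assume \eqref{Lambda bounded} holds. Given $v \in \dot{W}^{1,p}(\Omega)$, set $(f^-,f^+) := (\operatorname*{Tr}\nolimits^-(v), \operatorname*{Tr}\nolimits^+(v))$. By Theorem \ref{theorem trace strip}, together with Theorem \ref{theorem screened equivalence} to pass between the screening constants $b^+ - b^-$ and $a$, we have
\begin{equation*}
|(f^-,f^+)|_{\dot{X}_{(a)}^{1-1/p,p}(\mathbb{R}^{N-1})} \le c(a,N,p,b^+-b^-)\,\|\nabla v\|_{L^p(\Omega)}.
\end{equation*}
Chaining this with \eqref{Lambda bounded} yields $|\Lambda_1(v)| = |\Lambda(\operatorname*{Tr}(v))| \le c \|\nabla v\|_{L^p(\Omega)}$, which is \eqref{functionals bounded} for $\Psi \equiv 0$. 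For the converse, assume $\Lambda_1$ is bounded on $\dot{W}^{1,p}(\Omega)$. Given $(f^-,f^+) \in \dot{X}_{(a)}^{1-1/p,p}(\mathbb{R}^{N-1})$, invoke Theorem \ref{theorem lifting strip} (again combined with Theorem \ref{theorem screened equivalence} to interchange the screening constant) to produce $v \in \dot{W}^{1,p}(\Omega)$ with $\operatorname*{Tr}(v) = (f^-,f^+)$ and $\|\nabla v\|_{L^p(\Omega)} \le c |(f^-,f^+)|_{\dot{X}_{(a)}^{1-1/p,p}(\mathbb{R}^{N-1})}$; since $\Lambda(f) = \Lambda_1(v)$, the hypothesis on $\Lambda_1$ gives \eqref{Lambda bounded}. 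These two arguments simultaneously show that the two relevant operator norms are comparable:
\begin{equation*}
\|\Lambda_1\|_{(\dot{W}^{1,p}(\Omega))'} \asymp |\Lambda|_{(\dot{X}_{(a)}^{1-1/p,p}(\mathbb{R}^{N-1}))'}.
\end{equation*}

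With the equivalence in hand, the existence assertion follows directly from Theorem \ref{theorem neumann laplacian} applied with $\Psi \equiv 0$, and the energy estimate \eqref{neumann bounds zero} is obtained by specializing \eqref{neumann bounds} and substituting the operator-norm equivalence displayed above into the right-hand side. The main point to be careful about is not a technical obstacle but rather the bookkeeping: one must ensure that the screening constants match between Theorems \ref{theorem trace strip} and \ref{theorem lifting strip} (where they appear as $b^+-b^-$ and $a$, respectively) via the equivalence of screened spaces from Theorem \ref{theorem screened equivalence}, so that the constants absorbed into the final bound \eqref{neumann bounds zero} depend only on the stated parameters $(a, N, p, A_-)$. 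No genuinely hard step arises; the corollary is essentially a translation of the abstract Neumann theorem into the concrete trace-space language afforded by the earlier characterization results.
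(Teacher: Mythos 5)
Your proposal is correct and follows essentially the same route as the paper: reduce to Theorem \ref{theorem neumann laplacian} with $\Psi\equiv 0$, use Theorem \ref{theorem trace strip} to pass from \eqref{Lambda bounded} to \eqref{functionals bounded} and Theorem \ref{theorem lifting strip} for the converse, and then substitute the resulting comparison of operator norms into \eqref{neumann bounds}. Your explicit invocation of Theorem \ref{theorem screened equivalence} to reconcile the screening constants $a$ and $b^+-b^-$ is a reasonable (if slightly heavier) way to handle a point the paper leaves implicit via the assumption $0<a\le b^+-b^-$ and Proposition \ref{proposition screened nesting}.
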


\begin{proof}
Since $\Psi=0$  Theorem \ref{theorem neumann laplacian} tells us that \eqref{neumann problem zero} admits a weak solution $u \in \dot{W}^{1,p}(\Omega)$ if and only if
\begin{equation}\label{Lamda1 bounded}
|\Lambda_{1}(v)|\leq c\Vert\nabla v\Vert_{L^{p}(\Omega)}
\end{equation}
for every $v\in\dot{W}^{1,p}(\Omega)$. Assume that \eqref{Lambda bounded}
holds. By Theorem \ref{theorem trace strip}, given $v\in\dot{W}^{1,p}(\Omega
)$, we have that $(\operatorname*{Tr}\nolimits^{-}(v),\operatorname*{Tr}%
\nolimits^{+}(v))\in\dot{X}_{(a)}^{1-1/p,p}(\mathbb{R}^{N-1})$ with
$|(\operatorname*{Tr}\nolimits^{-}(v),\operatorname*{Tr}\nolimits^{+}%
(v))|_{\dot{X}_{(a)}^{1-1/p,p}(\mathbb{R}^{N-1})}\leq c\Vert\nabla
v\Vert_{L^{p}(\Omega)}$, and so by \eqref{Lambda bounded},
\begin{equation*}
|\Lambda_{1}(v)|=\left\vert \Lambda(\operatorname*{Tr}(v))\right\vert \leq
c|(\operatorname*{Tr}\nolimits^{-}(v),\operatorname*{Tr}\nolimits^{+}%
(v))|_{\dot{X}_{(a)}^{1-1/p,p}(\mathbb{R}^{N-1})}\leq c\Vert\nabla
v\Vert_{L^{p}(\Omega)}.
\end{equation*}
This shows that the functional $\Lambda_{1}$ satisfies \eqref{Lamda1 bounded}.
Conversely, assume that \eqref{Lamda1 bounded} holds. Given $(f^{-},f^{+}%
)\in\dot{X}_{(a)}^{1-1/p,p}(\mathbb{R}^{N-1})$ by Theorem
\ref{theorem lifting strip} we can find $v\in\dot{W}^{1,p}(\Omega)$ such that
$\operatorname*{Tr}(v)=f$ and
\begin{equation*}
\Vert\nabla v\Vert_{L^{p}(\Omega)}\leq c|(f^{-},f^{+})|_{\dot{X}%
_{(a)}^{1-1/p,p}(\mathbb{R}^{N-1})}.
\end{equation*}
It follows from \eqref{Lamda1 bounded} that
\begin{equation*}
\left\vert \Lambda(f)\right\vert =|\Lambda_{1}(v)|\leq c\Vert\nabla
w\Vert_{L^{p}(\Omega)}\leq c|(f^{-},f^{+})|_{\dot{X}_{(a)}^{1-1/p,p}%
(\mathbb{R}^{N-1})},
\end{equation*}
which shows \eqref{Lambda bounded}.

To conclude the proof it remains to show \eqref{neumann bounds zero}. This follows from \eqref{neumann bounds} once we observe that
\begin{align*}
| \Lambda(\operatorname*{Tr}(u))| & \leq|\Lambda|_{(\dot{X}_{(a)}^{1-1/p,p}(\mathbb{R}^{N-1}))^{\prime}%
}|(\operatorname*{Tr}\nolimits^{-}(u),\operatorname*{Tr}\nolimits^{+}%
(u))|_{\dot{X}_{(a)}^{1-1/p,p}(\mathbb{R}^{N-1})}\\
&  \leq c|\Lambda|_{(\dot{X}_{(a)}^{1-1/p,p}(\mathbb{R}^{N-1}))^{\prime}}%
\Vert\nabla u\Vert_{L^{p}(\Omega)},
\end{align*}
where the last inequality follows from Theorem \ref{theorem trace strip}.
\end{proof}

\begin{remark}
In view of Theorems \ref{theorem trace m=1} and \ref{theorem lifting m=1}, the
previous corollary continues to hold for domains of the type \ref{omega two graphs}, provided we replace $\dot{X}_{(a)}^{1-1/p,p}(\mathbb{R}^{N-1})$ with $\dot{X}_{(\eta)}^{1-1/p,p}(\mathbb{R}^{N-1})$, where
$\eta:=(\eta^{+}-\eta^{-})/(2L)$. The proof remains unchanged.
\end{remark}

\bibliographystyle{abbrv}
\bibliography{traces-references}

\end{document}